\newcommand{\qe}{}
\newtheorem{Thm}{Theorem}[section]
\newtheorem{Lem}[Thm]{Lemma}
\newtheorem{Coro}[Thm]{Corollary}
\newcommand{\1}{\mathbf{1}}
\newcommand{\R}{\mathbb{R}}
\newcommand{\Rd}{{\mathbb{R}^3}}
\renewcommand{\P}{\mathbf{P}}
\renewcommand{\Re}{\text{Re}}
\newcommand{\F}{\mathscr{F}}
\newcommand{\N}{\mathbb{N}}
\newcommand{\E}{\mathcal{E}}
\newcommand{\D}{\mathcal{D}}
\renewcommand{\Re}{\text{Re}}
\renewcommand{\S}{\mathbb{S}}
\newcommand{\<}{\langle}
\renewcommand{\>}{\rangle}
\newcommand{\I}{\mathbf{I}}
\newcommand{\II}{\mathbf{I}_{\pm}}
\renewcommand{\P}{\mathbf{P}}
\newcommand{\PP}{\mathbf{P}_{\pm}}
\newcommand{\vertiii}[1]{{\left\vert\kern-0.25ex\left\vert\kern-0.25ex\left\vert #1 \right\vert\kern-0.25ex\right\vert\kern-0.25ex\right\vert}}
\title{Smoothing Estimates of the Vlasov-Poisson-Landau System}
\author{Dingqun DENG 
	\thanks{email: dingqdeng2-c@my.cityu.edu.hk, Department of Mathematics, City University of Hong Kong, ORCID: 0000-0001-9678-314X } }
\begin{document}

\maketitle

\begin{abstract}
	In this work, we consider the smoothing effect of Vlasov-Poisson-Landau system for both hard and soft potential. In particular, we prove that any classical solutions becomes immediately smooth with respect to all variables. We also give a proof on the global existence to Vlasov-Poisson-Landau system with optimal large time decay. These results give the regularity to Vlasov-Poisson-Landau system. The proof is based on the time-weighted energy method building upon the pseudo-differential calculus. 

	\paragraph{Keywords} Vlasov-Poisson-Landau system, regularity, smoothing effect.
\end{abstract}

\tableofcontents

\section{Introduction}


\paragraph{Model and Equation.}The Vlasov-Poisson-Landau system is an important physical model to describe the time evolution of charged dilute particles of two species (e.g. ions and electrons). We consider the Vlasov-Poisson-Landau system of two species in the whole space $\R^3$:
\begin{equation}\begin{aligned}\label{1}
	\partial_tF_+ + v\cdot\nabla_xF_+ +\frac{e_+}{m_+}E\cdot\nabla_vF_+ = Q(F_+,F_+) + Q(F_-,F_+),\\
	\partial_tF_- + v\cdot\nabla_xF_- -\frac{e_-}{m_-}E\cdot\nabla_vF_- = Q(F_-,F_-) + Q(F_+,F_-).
\end{aligned}
\end{equation}
Here $F_\pm(t,x,v)$ represents the density distribution for the ions (+) and electrons (-) respectively, at time $t\ge 0$ with position $x\in \R^3$ and velocity $v\in\R^3$. $e_\pm$, $m_\pm$ are the magnitude of their charges and masses.
The self-consistent electrostatic field is taken as $E(t,x) = -\nabla_x\phi$, with the electric potential $\phi$ given by 
\begin{align}\label{2}
	-\Delta_x\phi = \int_{\Rd}(e_+F_+-e_-F_-)\,dv, 
\end{align}
The initial data of the system is 
\begin{align}\label{3}
	F_\pm(0,x,v) = F_{\pm,0}(x,v). 
\end{align}
The bilinear collision term $Q(F,G)$ on the right hand side of \eqref{1} is given by 
\begin{align}
	Q(G_1,G_2)(v) = \frac{c_{12}}{m_1}\nabla_v\cdot\int_{\Rd}\Phi(v-v_*)\Big(\frac{G_1(v_*)\nabla_vG_2(v)}{m_1}-\frac{G_2(v)\nabla_vG_1(v_*)}{m_2}\Big)\,dv_*.
	\end{align} 
The Landau (Fokker-Planck) collision kernel $\Phi$, cf. \cite{Guo2002a}, is a non-negative symmetric matrix-valued function defined for $0\neq v\in\R^3$ as:
\begin{align}\label{5}
\Phi(v) = |v|^{\gamma+2}\Big(I-\frac{v\otimes v}{|v|^2}\Big).
\end{align}
	where $-3\le \gamma\le 1$ is a parameter determined by the interaction potential between particles. It's convenient to call hard potential when $\gamma+2\ge 0$ and soft potential when $-1\le\gamma+2<0$. The original Landau collision operator for the Coulombic interaction corresponds to the case $\gamma=-3$. The constant $c_{12} = 2\pi e^2_1e^2_2\ln\Lambda$, $\ln\Lambda=\ln(\frac{\lambda_D}{b_0})$, where $\lambda_D=(\frac{T}{4\pi n_ee^2})^{1/2}$ is the Debye shielding distance and $b_0=\frac{e^2}{3T_0}$ is the typical 'distance of closest approach' for a thermal particle \cite{Hilton1983}. 

For notational simplicity and without loss of generality, we normalize all constants in the Vlasov-Poisson-Landau system to be one. Now we linearize the Vlasov-Poisson-Landau system around a normalized Maxwellian 
\begin{align*}
\mu(v) = (2\pi)^{-3/2}e^{-\frac{|v|^2}{2}},
\end{align*}
with the standard perturbation $f(t,x,v)$ to $\mu$ as 
\begin{align*}
F_\pm = \mu + \mu^{1/2}f_\pm.
\end{align*}
Consider the vector $f=(f_+,f_-)$, the Vlasov-Poisson-Landau system for the perturbation takes the form:
\begin{equation}\label{7}
	\partial_tf_\pm + v\cdot\nabla_xf_\pm \pm \frac{1}{2}\nabla_x\phi\cdot vf_\pm  \mp\nabla_x\phi\cdot\nabla_vf_\pm \pm \nabla_x\phi\cdot v\mu^{1/2} - L_\pm f = \Gamma_{\pm}(f,f),
\end{equation}
\begin{equation}\label{8}
	-\Delta_x \phi = \int_{\Rd}(f_+-f_-)\mu^{1/2}\,dv,
\end{equation}
with initial data 
\begin{align}\label{9}
	f_\pm(0,x,v) = f_{\pm,0}(x,v). 
\end{align}
The linear operator $L=(L_+,L_-)$ and $\Gamma = (\Gamma_+,\Gamma_-)$ are gives as 
\begin{equation*}
	L_\pm f = \mu^{-1/2}\Big(2Q(\mu,\mu^{1/2}f_\pm) + Q(\mu^{1/2}(f_\pm+f_\mp),\mu)\Big),
\end{equation*}
\begin{equation}\label{9a}
	\Gamma_\pm(f,g) = \mu^{-1/2}\Big(Q(\mu^{1/2}f_\pm,\mu^{1/2}g_\pm) + Q(\mu^{1/2}f_\mp,\mu^{1/2}g_\pm)\Big).
\end{equation}

In this paper, we are going to establish the global existence and smoothing effect of the classical solutions to Cauchy problem \eqref{1}-\eqref{3} of the Vlasov-Poisson-Landau system near the global Maxwellian equilibrium. For global existence, \cite{Guo2002a} firstly investigate Landau system in a periodic box. Later, \cite{Guo2012} investigate Vlasov-Poisson-Landau system in a periodic box and \cite{Strain2013} prove the global existence in the whole space; see also \cite{Strain2007}. 
For smoothing effect for solution of Landau equation, one can refer to \cite{Chen2009} for classical solution and \cite{Henderson2019} for weak solution. The solution found in \cite{Guo2002a} becomes immediately smooth with respect to all variables. 

In this work, we will prove the smoothing effect directly from the initial data and require only weighted estimate on the solution for soft potential. It turns out that for $\gamma+2>0$, requiring only the same regularity assumption on initial data, the global solutions we found becomes immediately smooth for any positive time $t>0$. For $-1\le\gamma+2\le0$, we will require additionally $\|\<v\>^Cf\|_{L^2_{v,x}}<\infty$ on the solution, and the solution will become smooth immediately. Our approach is based on the time-weighted energy method building upon pseudo-differential calculus. We found the smoothness on $v$ from dissipation rate and smoothness on $x$ from Poisson bracket $\{v\cdot\nabla_x,\theta\}$, where $\theta$ is defined by \eqref{107}. The method in this work should be applicable to torus $\mathbb{T}^3_x$ case. 

\paragraph{Notations.}
Through the paper, $C$ denotes some positive constant (generally large) and $\lambda$ denotes some positive constant (generally small), where both $C$ and $\lambda$ may take different values in different lines. 
For any $v\in\Rd$, we denote $\<v\>=(1+|v|^2)^{1/2}$. For multi-indices $\alpha=(\alpha_1,\alpha_2,\alpha_3)$ and $\beta=(\beta_1,\beta_2,\beta_3)$, write 
\begin{align*}
	\partial^\alpha_\beta = \partial^{\alpha_1}_{x_1}\partial^{\alpha_2}_{x_2}\partial^{\alpha_3}_{x_3}\partial^{\beta_1}_{v_1}\partial^{\beta_2}_{v_2}\partial^{\beta_3}_{v_3}.
\end{align*}The length of $\alpha$ is $|\alpha|=\alpha_1+\alpha_2+\alpha_3$. 
The notation $a\approx b$ (resp. $a\gtrsim b$, $a\lesssim b$) for positive real function $a$, $b$ means there exists $C>0$ not depending on possible free parameters such that $C^{-1}a\le b\le Ca$ (resp. $a\ge C^{-1}b$, $a\le Cb$) on their domain. $\mathscr{S}$ denotes the Schwartz space. $\Re (a)$ means the real part of complex number $a$. $[a,b]=ab-ba$ is the commutator between operators. $\{a(v,\eta),b(v,\eta)\} =  \partial_\eta a_1\partial_va_2 - \partial_va_1\partial_\eta a_2$ is the Poisson bracket. $\Gamma=|dv|^2+|d\eta|^2$ is the admissible metric and $S(m)=S(m,\Gamma)$ is the symbol class. 
For pseudo-differential calculus, we write $(x,v)\in \Rd\times\Rd$ to be the space-velocity variable and $(y,\eta)\in \Rd\times\Rd$ to be the corresponding variable in frequency space (the variable after Fourier transform). We introduce a weight function of $v$ as 
\begin{equation}\label{10w}
w=w(v)=\left\{\begin{aligned}
&\<v\>,\quad\quad\text{ if }\gamma+2\ge 0,\\
&\<v\>^{-\gamma},\text{ if }-1\le\gamma+2<0. 
\end{aligned}\right.
\end{equation}

(i) As in \cite{Guo2012,Guo2003a}, the null space of $L$ is given by 
\begin{equation*}
	\ker L  = \text{span}\Big\{(1,0)\mu^{1/2},(0,1)\mu^{1/2},(1,1)v_i\mu^{1/2}(1\le i\le 3),(1,1)|v|^2\mu^{1/2}\Big\}. 
\end{equation*}
We denote $\PP$ to be the orthogonal projection from $L^2_v\times L^2_v$ onto $\ker L$, which is defined by 
\begin{equation}\label{10}
	\P f = \Big(a_+(t,x)(1,0)+a_-(t,x)(0,1)+v\cdot b(t,x)(1,1)+(|v|^2-3)c(t,x)(1,1)\Big)\mu^{1/2},
\end{equation}or equivalently by 
\begin{equation*}
	\PP f = \Big(a_\pm(t,x)+v\cdot b(t,x)+(|v|^2-3)c(t,x)\Big)\mu^{1/2}.
\end{equation*}
Then for given $f$, one can decompose $f$ uniquely as 
\begin{equation*}
	f = \P f+ (\I-\P)f. 
\end{equation*}
The function $a_\pm,b,c$ are given by 
\begin{align*}
	a_\pm &= (\mu^{1/2},f_\pm)_{L^2_v} = (\mu^{1/2},\PP f)_{L^2_v},\\
	b_j&= \frac{1}{2}(v_j\mu^{1/2},f_++f_-)_{L^2_v} = (v_j\mu^{1/2},\PP f)_{L^2_v},\\
	c&=\frac{1}{12}((|v|^2-3)\mu^{1/2},f_++f_-)_{L^2_v} = \frac{1}{6}((|v|^2-3)\mu^{1/2},\PP f)_{L^2_v}. 
\end{align*}

(ii) To describe the behavior of linearized Landau collision operator, we define the following velocity weighted norms; cf. \cite{Guo2002a}:
\begin{align*}
|g|^2_{\sigma,l} &= \sum_{i,j=1}^3\int_{\R^3}w^{2l}\Big(\sigma^{ij}\partial_{v_i}f\partial_{v_j}\bar{f}+\sigma_{ij}v_iv_j|g|^2\Big)\,dv,\\
\|g\|^2_{\sigma,l} &= \sum_{i,j=1}^3\int_{\R^3\times\R^3}w^{2l}\Big(\sigma^{ij}\partial_{v_i}f\partial_{v_j}\bar{f}+\sigma_{ij}v_iv_j|g|^2\Big)\,dvdx,
\end{align*}
where $\sigma^{ij}=\sigma^{ij}(v)=\int_{\R^3}\Phi^{ij}(v-v_*)\mu(v_*)\,dv_*$, with $\Phi$ defined by \eqref{5}. For any vector-valued function $h(v)=(h_1(v),h_2(v),h_3(v))$, we define projection 
\begin{align}
P_vh_i = \sum^3_{j=1}h_jv_j\frac{v_i}{|v|^2},\ j\in\{1,2,3\}.
\end{align}Then by \cite[Lemma 3 and Corollary 1]{Guo2002a}, we have 
\begin{align}\label{13b}
|g|^2_{\sigma,l}\approx |w^l\<v\>^{\gamma/2}P_v\partial_ig|^2_{L^2_v}+|w^l\<v\>^{(\gamma+2)/2}(I-P_v)\partial_ig|^2_{L^2_v}+|w^l\<v\>^{(\gamma+2)/2}g|^2_{L^2_v}.
\end{align}The upper bound was not written down in \cite{Guo2002a}, but the proof is the same; see also \cite[Lemma 5]{Strain2007}.

We write $|\cdot|_{L^2_v}$ to be the $L^2_v(\R^3)$ norm and $\|\cdot\|_{L^2_{v,x}}$ to be the $L^2_{v,x}(\R^3\times\R^3)$ norm. $L^2(B_C)$ is the $L^2_v$ space on Euclidean ball $B_C$ of radius $C$ at the origin. For usual Sobolev space, we will use notation 
\begin{align*}
	\|f\|_{H^k_vH^m_x} = \sum_{|\beta|\le k,|\alpha|\le m}\|\partial^\alpha_\beta f\|_{L^2_{v,x}},
\end{align*}for $k,m\ge 0$. 
We also define the standard velocity-space mixed Lebesgue space $Z_1=L^2(\R^3_v;L^1(\R^3_x))$ with the norm
\begin{equation*}
	\|f\|_{Z_1} = \Big\|\|f\|_{L^1_x}\Big\|_{L^2_v}.
\end{equation*}
In this paper, we write Fourier transform on $x$ as 
\begin{align*}
	\widehat{f}(y) = \int_{\R^3}f(x)e^{-ix\cdot y}\,dx. 
\end{align*}

For pseudo-differential calculus, one may refer to the appendix as well as \cite{Lerner2010} for more information. Let $\Gamma=|dv|^2+|d\eta|^2$ be an admissible metric. 
Define
\begin{align}\label{11a}
	\tilde{a}(v,\eta):=\<v\>^\gamma(1+|\eta|^2+|v|^2)+K_0\<v\>^{\gamma+2}
\end{align}to be a $\Gamma$-admissible weight, where $K_0>0$ is chosen as the following. 
Applying Lemma 2.1 and 2.2 in \cite{Deng2020a}, there exists $K_0>0$ such that the Weyl quantization $\tilde{a}^w:H(\tilde{a}c)\to H(c)$ and $(\tilde{a}^{1/2})^w:H(\tilde{a}^{1/2}c)\to H(c)$ are invertible, with $c$ being any $\Gamma$-admissible metric. The weighted Sobolev space $H(c)$ is defined by \eqref{sobolev_space}. 
By using the equivalence \eqref{13b}, we have 
\begin{align}\label{144}
	|(\tilde{a}^{1/2})^wf|_{L^2_v}&\lesssim |\<v\>^{\frac{\gamma+2}{2}}f|_{L^2_v}+|\<v\>^{\frac{\gamma}{2}}\nabla_vf|_{L^2_v}
	\lesssim|f|_{\sigma,0},
\end{align}for any suitable $f$. 
In order to extract the smoothing effect on $x$, we define a symbol $\tilde{b}$ by 
\begin{align}\label{11}
\tilde{b}(v,y) = \<v\>^{l_0}|y|^{\delta_1}, 
\end{align}
where $l_0\in\R$, $\delta_1=\delta_1(\alpha,\beta)>0$ are defined by \eqref{106b}.

\paragraph{Main results.}
To state the result of the paper, we let $K\ge 0$ to be the total order of derivatives on $v,x$.
In order to extract the smoothing effect, we define a useful coefficient  
\begin{equation*}
\psi_k=\left\{\begin{aligned}
	1, \text{  if $k\le 0$},\\
	\psi^k, \text{ if $k> 0$}, 
\end{aligned}\right.
\end{equation*}
where $\psi = 1$ in Theorem \ref{main1} and $\psi = t^N$ with $N=N(\alpha,\beta)>0$ large in Section \ref{sec5} and Theorem \ref{main2}.
When considering $\psi=t^N$ in proving regularity, we always assume $0\le t\le 1$, since regularity is a local-in-time property. In any cases, we have $\psi\le 1$. 
Corresponding to given $f=f(t,x,v)$, we introduce the instant energy functional $\E_{K,l}(t)$
 and the instant high-order energy functional $\E^{h}_{K,l}(t)$ to be functionals
satisfying the equivalent relation
\begin{align}\label{Defe}
	\E_{K,l}(t)\notag &\approx \sum_{|\alpha|\le K}\|\psi_{|\alpha|-3}\partial^\alpha E\|^2_{L^2_x}+\sum_{|\alpha|\le K}\|\psi_{|\alpha|-3}\partial^\alpha\P f\|^2_{L^2_{v,x}}\\
	&\qquad+\sum_{\substack{|\alpha|+|\beta|\le K}}\|\psi_{|\alpha|+|\beta|-3}w^{l-|\alpha|-|\beta|}\partial^\alpha_\beta(\I-\P) f\|^2_{L^2_{v,x}}.
\end{align}
\begin{align}\label{Defeh}
	\E^h_{K,l}(t)\notag &\approx \sum_{|\alpha|\le K}\|\psi_{|\alpha|-3}\partial^\alpha E\|^2_{L^2_x}+\sum_{1\le|\alpha|\le K}\|\psi_{|\alpha|-3}\partial^\alpha\P f\|^2_{L^2_{v,x}}\\
	&\qquad+\sum_{\substack{|\alpha|+|\beta|\le K}}\|\psi_{|\alpha|+|\beta|-3}w^{l-|\alpha|-|\beta|}\partial^\alpha_\beta(\I-\P) f\|^2_{L^2_{v,x}}.
\end{align}
Also, we define the dissipation rate functional $\D_{K,l}$ by 
\begin{align}\label{Defd}
	\D_{K,l}(t) \notag&= \sum_{|\alpha|\le K-1}\|\psi_{|\alpha|-3}\partial^\alpha E\|^2_{L^2_x}+\sum_{1\le|\alpha|\le K}\|\psi_{|\alpha|-3}\partial^\alpha\P f\|^2_{L^2_{v,x}}\\
	&\qquad+\sum_{\substack{|\alpha|+|\beta|\le K}}\|\psi_{|\alpha|+|\beta|-3}\partial^\alpha_\beta(\I-\P) f\|^2_{\sigma,l-|\alpha|-|\beta|}.
\end{align}
Here $E=E(t,x)$ is determined by $f(t,x,v)$ in terms of $E=-\nabla_x\phi$ and \eqref{8}. 
 The main result of this paper is stated as follows.

\begin{Thm}\label{main1}
	Let $\gamma+2\ge -1$, $K\ge 3$, $\psi=1$. 
	Assume $l_0\ge K$ satisfies that 
	\begin{equation}\label{20b}l_0\ge\left\{\begin{aligned}
			& \frac{3(\gamma+2)}{-4\gamma}+3,\  \text{ if }-1\le\gamma+2<0,\\
			& 2(\gamma+2)+3,\quad\text{ if }\gamma+2\ge0.
		\end{aligned}\right.
	\end{equation}Define 
\begin{equation}\label{88}
X(t) = \left\{\begin{aligned}
	&\sup_{0\le\tau\le t}(1+\tau)^{3/2}\E_{K,l_0}(\tau)+\sup_{0\le\tau\le t}(1+\tau)^{5/2}\E^h_{K,l_0}(\tau),\text{ if }\gamma+2\ge 0,\\
	\sup_{0\le\tau\le t}\E_{K,l_0+l_1}&(\tau)+\sup_{0\le\tau\le t}(1+\tau)^{3/2}\E_{K,l_0}(\tau)+\sup_{0\le\tau\le t}(1+\tau)^{3/2+p}\E^h_{K,l_0}(\tau),\text{ if }-1\le\gamma+2< 0,
\end{aligned}\right.
\end{equation}and
\begin{equation*}
\epsilon_0 = (\E_{K,l_0+l_1}(0))^{1/2}+\|w^{l_2}f_0\|_{Z_1}+\|E_0\|_{L^1},
\end{equation*}
where $E_0(x)=E(0,x)$, $l_1=\frac{5(\gamma+2)}{4(1-p)\gamma}$, $l_2=\frac{5(\gamma+2)}{4\gamma}$ for soft potential $-1\le\gamma+2< 0$ and $l_1=l_2=0$ for hard potential $\gamma+2\ge 0$.
Let $f_0(x,v)=(f_{0,+}(x,v),f_{0,-}(x,v))$ satisfying $F_\pm(0,x,v)=\mu(v)+(\mu(v))^{1/2}f_{0,\pm}(x,v)\ge 0$.

	If $\epsilon_0$	is sufficiently small, then there exists a unique global solution $f(t,x,v)$ to the Cauchy problem \eqref{7}-\eqref{9} of the Vlasov-Poisson-Boltzmann system such that $F_\pm(t,x,v)=\mu(v)+(\mu(v))^{1/2}f_\pm(t,x,v)\ge 0$ and 
	\begin{equation}\label{15a}
	X(t)\lesssim \epsilon_0^2.
	\end{equation}
	for any $t\ge 0$. 
\end{Thm}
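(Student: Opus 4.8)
The plan is to run a time-weighted energy method with the weights $\psi_k$ built into the energy functionals $\E_{K,l_0}$, $\E^h_{K,l_0}$ and dissipation rate $\D_{K,l}$, and to close a nonlinear estimate of the form $\frac{d}{dt}\E + \lambda\D \lesssim (\text{small})\D + (\text{source})$ uniformly in $t$. Since here $\psi=1$, no genuine smoothing is sought in this theorem; the content is global existence together with the stated polynomial decay rates encoded in $X(t)$. I would set up a continuity/bootstrap argument: assume $X(t)\le \delta$ for some small $\delta$ on a maximal time interval, and improve this to $X(t)\lesssim \epsilon_0^2$, which by the usual local existence theory extends the solution globally.

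\textbf{Key steps.} First, establish local-in-time existence and uniqueness together with the non-negativity $F_\pm\ge0$, which follows from the standard scheme for the Vlasov-Poisson-Landau system (iteration on the linear Landau operator plus the transport and Poisson coupling), using that the initial energy $\E_{K,l_0+l_1}(0)$ is finite. Second, derive the basic macroscopic--microscopic energy inequality: differentiate $\sum_{|\alpha|+|\beta|\le K}\|w^{l-|\alpha|-|\beta|}\partial^\alpha_\beta f\|^2$ in $t$, integrate by parts in $x,v$, and use the coercivity of $L$ on $(\I-\P)f$ (so the linearized Landau operator produces $\lambda\|(\I-\P)f\|^2_{\sigma,l}$), together with the equivalence \eqref{13b} and the mapping properties of $\tilde a^w$ from \eqref{144}; the weight loss $w^{-|\alpha|-|\beta|}$ is chosen precisely to absorb the commutators $[\partial_\beta, L]$ and the streaming term $v\cdot\nabla_x$. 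Third, recover the macroscopic dissipation for $\P f$ (the quantities $a_\pm, b, c$ and the field $E$): this is the usual argument using the local conservation laws / the structure of \eqref{7}-\eqref{8}, testing against carefully chosen multipliers to produce $\sum_{1\le|\alpha|\le K}\|\partial^\alpha\P f\|^2 + \sum_{|\alpha|\le K-1}\|\partial^\alpha E\|^2$ in the dissipation, with the Poisson equation \eqref{8} controlling $E$ via $\nabla_x\phi$. Fourth, estimate the nonlinear terms $\Gamma_\pm(f,f)$ and the field-transport nonlinearities $\nabla_x\phi\cdot\nabla_v f$, $\nabla_x\phi\cdot v f$, $\nabla_x\phi\cdot v\mu^{1/2}$: each is bounded by $\sqrt{\E}\,\D$ (or $\sqrt{\E^h}\,\D$ at top order) using the trilinear estimates for $\Gamma$ in the $\|\cdot\|_{\sigma,l}$ norm and Sobolev embedding $H^2_x\hookrightarrow L^\infty_x$, which is why $K\ge3$ is required; here the weight index $l_0$ constraints \eqref{20b} enter, since for soft potential one loses velocity weights in $\Gamma$ and must pay with the extra index $l_1$. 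Fifth, upgrade to decay: for hard potential, use the standard interpolation between the energy and a negative Sobolev / $Z_1$-type bound on $(\I-\P)f$ and on $E$ (the $\|w^{l_2}f_0\|_{Z_1}$, $\|E_0\|_{L^1}$ terms in $\epsilon_0$ feed a linear decay estimate of rate $(1+t)^{-3/4}$, squared gives $(1+t)^{-3/2}$, and the high-order functional decays one power faster by a cascade/time-splitting argument); for soft potential, run the same scheme with an extra velocity-weighted energy $\E_{K,l_0+l_1}$ providing the interpolation inequality $\D \gtrsim \E^{1+1/(\text{gap})}$ up to a polynomially small remainder, yielding the rates $(1+t)^{-3/2}$ and $(1+t)^{-3/2-p}$ with the parameter $p$ calibrated by $l_1=\frac{5(\gamma+2)}{4(1-p)\gamma}$. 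Finally, combine all of these into a single differential inequality for $X(t)$ and close the bootstrap.

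\textbf{Main obstacle.} The hardest part will be Steps three and four together with the weighted soft-potential bookkeeping in Step five. Recovering macroscopic dissipation in the whole space (as opposed to the torus) for a two-species Vlasov-Poisson system requires handling the non-decaying field term $\pm\nabla_x\phi\cdot v\mu^{1/2}$ and the fact that $E$ only gains one derivative from $\phi$; one must carefully choose the interaction functionals so that the cross terms close without a spectral gap, and track how the weight $w^{l-|\alpha|-|\beta|}$ degrades at each differentiation. For soft potential the velocity weights are genuinely lossy in every nonlinear estimate, so the index conditions \eqref{20b} and the choice of $l_1,l_2$ must be reverse-engineered from the interpolation exponent $p$; getting this arithmetic consistent — so that the energy that decays, the energy used for interpolation, and the regularity available all fit the same $l_0$ — is the delicate accounting that makes or breaks the theorem. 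By contrast, the local existence and the structure of the coercivity estimate for $L$ are essentially quotable from \cite{Guo2002a,Strain2013,Deng2020a}.
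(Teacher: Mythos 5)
Your outline is correct and follows essentially the same route as the paper: a weighted energy--dissipation inequality with coercivity of $L$ and recovery of macroscopic dissipation from the local conservation laws, nonlinear bounds of the form $\E^{1/2}\D$ (plus $\E\D^{1/2}$ for the weighted $\Gamma$ term), linear semigroup decay fed by the $Z_1$ and $L^1$ data combined with Duhamel, and the Strain-type velocity-weight interpolation/time-splitting with the extra energy $\E_{K,l_0+l_1}$ for soft potentials. The only device your sketch glosses over is the $e^{\pm\phi}$ conjugation used to cancel the $\alpha_1=0$ part of $\tfrac12\nabla_x\phi\cdot v f_\pm$ against the streaming term, which produces the term $\|\partial_t\phi\|_{L^\infty}\E_{K,l}$ on the right-hand side; this is closed by Gronwall because $\|\partial_t\phi\|_{L^\infty}\lesssim(\E^h_{K,l})^{1/2}$ is time-integrable once the high-order decay is in place.
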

This gives the global existence to the Vlasov-Poisson-Landau system with the optimal large time decay for $\gamma\ge 3$, which generalizes the index $\gamma=-3$ in \cite{Strain2013}.
Notice that in \eqref{88}, the high-order energy functional $\E^h_{K,l}$ has time decay rate $(1+t)^{-5/2}$ for hard potential and $(1+t)^{-3/2-p}$ for soft potential. $\E_{K,l}$ has time decay rate $(1+t)^{-3/2}$. They are all optimal as in the Boltzmann equation case \cite{Strain2012} and the Vlasov-Maxwell-Boltzmann system case \cite{Duan2011}. 
Let $\delta_0>0$ and the $a$ $priori$ assumption to be 
\begin{align}
	\label{priori}\sup_{0\le t\le T}X(t)\le \delta_0. 
\end{align}
Our main approach is to prove the following closed $a$ $priori$ estimate:
\begin{align*}
	X(t)\lesssim \epsilon^2_0+X^{3/2}(t)+X^2(t).
\end{align*}

Next Theorem concerns the smoothing effect. 
\begin{Thm}\label{main2}Let $\gamma+2\ge -1$, $0<\tau<T\le \infty$. 
For any $K\ge 3$ and multi-indices $|\alpha|+|\beta|\le K$, assume $\psi=t^N$ with $N>0$ large when $|\alpha|\le 3$ and $N=N(\alpha)>0$ defined by \eqref{106a} when $|\alpha|>3$. 
Fix $l\ge K$ and let $(f,E)$ be the solution to \eqref{7}-\eqref{9} satisfying that
\begin{align}\label{15aa}
\epsilon_1 = (\E_{3,l}(0))^{1/2}
\end{align}
is sufficiently small. If $-1\le\gamma+2\le0$, we assume additionally that any for $C>0$, 
 \begin{align}
 \sup_{0\le t\le T}\|\<v\>^{C}f(t)\|_{L^2_{v,x}} <\infty.
 \end{align}
 Then the followings holds true.

(1) For $|\alpha|+|\beta|\le K$, $T<\infty$,
\begin{align}\label{19a}
\sup_{\tau\le t\le T}\|w^{l-|\alpha|-|\beta|}\partial^\alpha_\beta f\|^2_{L^2_{v,x}}\le \epsilon^2_1C_{\tau,T}<\infty,
\end{align}
where $C_{\tau,T}>0$ depends on $\tau,T$.
Moreover, if additionally  
\begin{align*}
\sup_{l^*\ge 3}\E_{3,l^*}(0)
\end{align*}is sufficiently small, then for any $l_0\ge K$, $|\alpha|+|\beta|\le K$, $k\ge 0$, $T<\infty$, we have 
\begin{align}\label{19b}
\sup_{\tau\le t\le T}\|w^{l_0-|\alpha|-|\beta|}\partial^\alpha_\beta\partial^{k}_tf\|^2_{L^2_{v,x}}\le C_{\tau,T,k}<\infty,
\end{align}where $C_{\tau,T,k}$ is a constant depending on $\tau$, $T$, $k$. 
Consequently, $f\in C^\infty(\R^+_t;C^\infty(\R^3_x;\mathscr{S}(\R^3_v)))$. 

(2) If additionally, the solution $(f,E)$ satisfies that 
\begin{align}\label{19aa}
	\epsilon_0 = (\E_{3,l_0+l_1}(0))^{1/2}+\|w^{l_2}f_0\|_{Z_1}+\|E_0\|_{L^1_x},
\end{align}
is sufficiently small, where $l_0$ is defined by \eqref{20b}, $l_1=\frac{5(\gamma+2)}{4(1-p)\gamma}$, $l_2=\frac{5(\gamma+2)}{4\gamma}$ for soft potential $-1\le\gamma+2< 0$ and $l_1=l_2=0$ for hard potential $\gamma+2\ge 0$. Then the constants in (1) can be chosen independent of $T$ and $T$ can take the value $\infty$.
	 
\end{Thm}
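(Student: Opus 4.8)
The plan is to bootstrap from the global existence theorem (Theorem \ref{main1}) together with the time-weighted, pseudo-differential machinery announced in the introduction, adding derivatives one order at a time. For part (1) with $T<\infty$, I would first run the basic energy estimate for $\partial^\alpha_\beta f$ against the weight $\psi_{|\alpha|+|\beta|-3}w^{l-|\alpha|-|\beta|}$, but instead of the plain derivative I would conjugate by the Weyl quantizations $(\tilde a^{1/2})^w$ and $\tilde b^w$ from \eqref{144}, \eqref{11}: the operator $(\tilde a^{1/2})^w$ converts the Landau dissipation $|\cdot|_{\sigma,l}$ into control of one extra velocity derivative (this is exactly \eqref{144}), while $\tilde b^w=\<v\>^{l_0}|y|^{\delta_1}$ extracts a fractional $x$-derivative through the commutator $\{v\cdot\nabla_x,\theta\}$ with the symbol $\theta$ of \eqref{107}. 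The time factor $\psi=t^N$ with $N=N(\alpha,\beta)$ large is what makes the scheme work: differentiating $t^N$ in $t$ produces $N t^{N-1}$, which is absorbed by the dissipation of the \emph{lower}-order quantity (which already has a positive power of $t$), so the singular-at-$t=0$ terms are traded down the hierarchy and one never needs more regularity on the data than $\E_{3,l}(0)$. The induction is on $K=|\alpha|+|\beta|$: assuming the bound \eqref{19a} at all orders $\le K-1$ (with their own $\psi_k$ weights), the order-$K$ estimate closes because the commutators and the nonlinear term $\Gamma$ lose at most one derivative, which lands on a lower-order term already controlled.

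The nonlinear term is where the soft-potential hypothesis $\sup_t\|\<v\>^C f\|_{L^2_{v,x}}<\infty$ enters. For hard potentials the weight $w=\<v\>$ and the collision kernel's growth handle the large-$v$ behavior of $\Gamma_\pm(f,f)$ directly; for soft potentials $\gamma+2<0$ the kernel \emph{decays}, so to absorb $\Gamma$ into the dissipation one must pay with polynomial velocity weights, and since each derivative step costs a fixed number of powers of $\<v\>$, one needs the a priori assumption that arbitrarily high velocity moments stay finite. With that in hand, the standard $\Gamma$-estimates (e.g. of the type in \cite{Guo2002a,Strain2007,Deng2020a}, now with the extra $(\tilde a^{1/2})^w$ and $\tilde b^w$ insertions handled by the symbolic calculus — $S(m)$ composition, the Poisson-bracket expansion of the principal symbol, and the boundedness results for $\tilde a^w$ quoted from \cite{Deng2020a}) give a closed differential inequality $\frac{d}{dt}\E + \lambda\D \lesssim (\text{lower order}) + (\text{small})\cdot\D$, which integrates on $[\tau,T]$ to \eqref{19a}. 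For the time derivatives \eqref{19b}, I would differentiate the equation \eqref{7} in $t$: $\partial_t^k f$ solves the same linear system with source $\partial_t^k\Gamma(f,f)$ plus commutators of $\partial_t^k$ with the transport and force terms; using the equation itself to trade $\partial_t$ for $v\cdot\nabla_x$ and $\nabla_x\phi\cdot\nabla_v$ (both already controlled by the $(\alpha,\beta)$-estimates) reduces \eqref{19b} to finitely many applications of \eqref{19a}, provided the small-data hypothesis $\sup_{l^*\ge3}\E_{3,l^*}(0)$ ensures all the velocity moments invoked are themselves propagated; this yields $f\in C^\infty_t C^\infty_x\mathscr{S}_v$.

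Part (2) is a uniformity statement: I would observe that under the stronger smallness \eqref{19aa} — which is precisely the hypothesis of Theorem \ref{main1} — the quantity $X(t)$ of \eqref{88} is globally bounded by $\epsilon_0^2$, so all the lower-order energies $\E_{K,l_0}(t)$, $\E^h_{K,l_0}(t)$ carry their optimal decay $(1+t)^{-3/2}$, $(1+t)^{-5/2}$ (resp. $(1+t)^{-3/2-p}$) for \emph{all} time, not just locally. Re-running the induction of part (1) on $[\tau,\infty)$ with these global-in-time lower-order bounds as the input, the constants $C_{\tau,T}$, $C_{\tau,T,k}$ no longer grow with $T$ because the "lower order" right-hand sides are now integrable in $t$ up to $+\infty$; hence one may send $T\to\infty$. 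The only subtlety is that for soft potentials the decay of $\E^h$ is only $(1+t)^{-3/2-p}$ with $p<1$, which is where the extra weight indices $l_1=\frac{5(\gamma+2)}{4(1-p)\gamma}$, $l_2=\frac{5(\gamma+2)}{4\gamma}$ in \eqref{19aa} are calibrated — they give just enough room in the interpolation between $Z_1$ and the weighted $L^2$ to make the time integrals converge.

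\textbf{Main obstacle.} I expect the principal difficulty to be the symbolic bookkeeping for the $x$-smoothing: one must show that the commutator of the transport operator $v\cdot\nabla_x$ with the $x$-frequency weight $\tilde b^w$ (through $\theta$ in \eqref{107}) genuinely produces a \emph{gain} of a fractional $x$-derivative rather than just a bounded error, and that this gain survives being combined with the $v$-derivative gain from $(\tilde a^{1/2})^w$ without the two weights' commutator $[\tilde a^w,\tilde b^w]$ destroying the positivity — this is a delicate point in the $S(m,\Gamma)$ calculus, since $\tilde a$ mixes $v$ and $\eta$ while $\tilde b$ mixes $v$ and $y$, so one is working with a non-split metric and must track lower-order terms carefully. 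The exponents $\delta_1(\alpha,\beta)$ and $N(\alpha,\beta)$ in \eqref{106b}, \eqref{106a} have to be chosen consistently across the whole induction so that every error term at order $K$ is genuinely dominated by the dissipation at order $K$ plus controlled quantities at order $<K$; verifying that such a choice exists is the technical heart of the argument.
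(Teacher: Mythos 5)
Your overall architecture matches the paper's: a time-weighted energy estimate with $\psi=t^N$ so that $\E_{K,l}(0)=\E_{3,l}(0)$, the Poisson bracket $\{v\cdot y,\theta\}$ to extract the $x$-gain, the equation itself to convert $\partial_t^k$ into already-controlled $(x,v)$-derivatives for \eqref{19b}, and Theorem \ref{main1} to make the constants uniform in $T$ for part (2) by restarting the local estimate on intervals $[\tau_0,\tau_0+t_0]$ of fixed length. However, there is a genuine gap in your central mechanism. You claim that differentiating $t^N$ produces a term "absorbed by the dissipation of the \emph{lower}-order quantity," i.e.\ that the singular terms are "traded down the hierarchy." This is not what happens and, as stated, it fails: the term $\partial_t(\psi_{|\alpha|+|\beta|-3})\,\partial^\alpha_\beta f$ carries the \emph{same} number of $x,v$-derivatives as the energy being estimated, only with the time weight reduced to $\psi_{|\alpha|+|\beta|-3-\frac{1}{2N}}$. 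No dissipation at order $\le K-1$ can control a quantity with $K$ derivatives, regardless of how the powers of $t$ are distributed. The paper's resolution (Lemma \ref{Lem53}) is a frequency interpolation, not a hierarchy argument: by Young's inequality one writes
\begin{align*}
\psi_{|\alpha|-3-\frac{1}{2N}}\lesssim \delta\,\tilde b^{1/2}\psi_{|\alpha|-3}+C_\delta\<v\>^{-l_0|\alpha|/\delta_1}|y|^{-|\alpha|},
\end{align*}
where the factor $|y|^{-|\alpha|}$ exactly cancels the $x$-derivatives of $(\partial^\alpha_\beta f)^\wedge$, and a second interpolation against $\tilde a^{1/2}$ removes the $\beta$-derivatives (this is where the $\<v\>^{C}$ moment hypothesis enters for soft potentials, and where $N(\alpha)$ in \eqref{106a} and $\delta_1(\alpha,\beta)$ in \eqref{110} are pinned down so the exponents match). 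Only the remaining $\delta\,\tilde b^{1/2}$ piece is then controlled by the commutator identity $\{\theta,v\cdot y\}=\tilde b+R_1+R_2$ with $R_1,R_2\in S(\tilde a)$, combined with the equation. Your "main obstacle" paragraph correctly flags the commutator estimate, but the interpolation step that reduces the time-weight loss to a zero-derivative term is the missing idea without which the scheme does not close.

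A secondary imprecision: you invoke an induction on $K$ with lower-order bounds as input, whereas the paper runs the estimate at all orders $\le K$ simultaneously and closes it in one pass via \eqref{72} plus \eqref{112}; this is a stylistic difference and either could be made to work, but only once the interpolation above is in place.
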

This result shows that the smoothing effect proven (for example in \cite{Chen2009}) for the Landau equation extends to Vlasov-Poisson-Landau system for all variables $t,x,v$. 
This behavior was also observed for Boltzmann equation \cite{Alexandre2010,Chen2012} and Vlasov-Poisson-Boltzmann system \cite{Deng2020c}. Notice that the smoothing property holds uniformly in Theorem \ref{main2} when the time goes to infinity.

Now we illustrate several technical points in the proof of Theorem \ref{main1} and Theorem \ref{main2}. 
We use the velocity weight $w^{l-|\alpha|-|\beta|}$ to deal with the term $v\cdot\nabla_x\phi f$ when doing estimate on  
\begin{align*}
	\big(\sum_{\alpha_1\le\alpha}v\cdot\nabla_x\partial^{\alpha_1}\phi \partial^{\alpha-\alpha_1} f,e^{\pm\phi}w^{2l-2|\alpha|-2|\beta|}\partial^\alpha f\big)_{L^2_{v,x}},
\end{align*}where $|\alpha|\le K$. By using the term corresponding to $v\cdot\nabla_xf$, the case $\alpha_1=0$ will be eliminated as in \cite{Guo2012}. This is why we add the term $e^{\pm\phi}$. The case $\alpha_1\neq 0$ can be bounded by $\E^{1/2}_{K,l}\D_{K,l}$ due to the weight $w^{l-|\alpha|-|\beta|}$, since $vw^{l-|\alpha|-|\beta|}$ can be absorbed in $w^{l-|\alpha-\alpha_1|-|\beta|}$. 
For the term $\nabla_x\phi\cdot\nabla_vf$, one will need to estimate 
\begin{align*}
	\big(\sum_{\alpha_1\le\alpha}\nabla_x\partial^{\alpha_1}\phi \partial^{\alpha-\alpha_1}\nabla_v f,e^{\pm\phi}w^{2l-2|\alpha|-2|\beta|}\partial^\alpha f\big)_{L^2_{v,x}}.
\end{align*}
If $\alpha_1=0$, we can use integration by parts to move $\nabla_v$ to the weight $w^{2l-2|\alpha|-2|\beta|}$ and it becomes a lower order term, while if $\alpha_1\neq 0$, this term will transfer one derivative from $x$ to one derivative on $v$. The total order on the first $f$ is less or equal to $K$ and hence can be controlled by our energy functional $\E_{K,l}$ and $\D_{K,l}$. As in \cite{Guo2012,Duan2013}, one has to bound the term 
\begin{align*}
	\|\partial_t\phi\|_{L^\infty_x}\E_{K,l},
\end{align*}which cannot be absorbed by the energy dissipation norm. But observing that $\|\partial_t\phi\|_{L^\infty_x}$ is bounded by the high-order energy functional $\E^h_{K,l}$ and hence integrable as shown in \cite{Guo2012}, one can use the Gronwall's inequality to close the $a$ $priori$ estimate. 
The next technical point concerns the choice of $\psi=t^N$ in Theorem \ref{main2} and the usage of $\tilde{b}$, $\psi_{|\alpha|+|\beta|-3}$ is Section \ref{sec5}. 
Firstly, denote \begin{equation*}
\psi_k=\left\{\begin{aligned}
1, \text{  if $k\le 0$},\\
\psi^k, \text{ if $k> 0$}. 
\end{aligned}\right.
\end{equation*} Whenever $|\alpha|+|\beta|> 3$, $\psi_{|\alpha|+|\beta|-3} = t^{N(|\alpha|+|\beta|-3)}$ is equal to $0$ at $t=0$ and hence, $\E_{K,l}(0)=\E_{3,l}(0)$. Plugging this into energy estimate, the higher order derivatives are canceled at $t=0$ and one can control the higher order instant energy by lower order initial data. Then one can deduce the smoothing effect locally in time. By using the global energy control obtained in Theorem \ref{main1}, the local-in-time regularity becomes uniform when time goes to infinity. Notice that we use $-3$ to eliminate the index arising from Sobolev embedding $\|\cdot\|_{H^1_vL^\infty_x}\lesssim \|\cdot\|_{H^1_vH^2_x}$, where the latter has derivatives of order three. However, after adding $\psi_{|\alpha|+|\beta|-3}$, one need to control the term 
\begin{align}\label{22}
	\big(\partial_t(\psi_{|\alpha|+|\beta|-3})\partial^\alpha_\beta f,e^{\pm\phi}w^{2l-2|\alpha|-2|\beta|}\partial^\alpha_\beta f\big)_{L^2_{v,x}}.
\end{align}
This is where we make use of $\tilde{b}$. By choosing $N=N(\alpha)$ properly, one has interpolation 
\begin{align*}
\psi_{|\alpha|-3-\frac{1}{2N}}
&\lesssim \delta\,\tilde{b}^{1/2}+C_{0,\delta}\<v\>^{\frac{-l_0|\alpha|}{\delta_1}}|y|^{-|\alpha|}.
\end{align*} 
The first term can be absorbed by instant energy involving $\tilde{b}^{1/2}$ while the second term eliminate $\alpha$ derivatives on $x$:
 Applying a similar interpolation on $v$ with $\tilde{a}$, we can control \eqref{22} by a high-order term and a algebraic decay term if $-1\le\gamma+2\le 0$:
\begin{align*}
	\delta^2\|\psi_{|\alpha|+|\beta|-3}\tilde{b}^{1/2}w^{l-|\alpha|-|\beta|}(\partial^\alpha_\beta f)^\wedge(v,y)\|^2_{L^2_{v,y}}+\delta^2\D_{K,l}+C_\delta\|\<v\>^{C_{K,l}}f\|^2_{L^2_{v,x}},
\end{align*}
while for $\gamma+2>0$, we have 
\begin{align*}
	\delta^2\|\psi_{|\alpha|+|\beta|-3}\tilde{b}^{1/2}w^{l-|\alpha|-|\beta|}(\partial^\alpha_\beta f)^\wedge(v,y)\|^2_{L^2_{v,y}}+\delta^2\D_{K,l}+C_\delta\E_{K,l}.
\end{align*}
Defining $\theta$ by \eqref{107}, using the equation \eqref{7}-\eqref{9} and Poisson bracket $\{v\cdot y,\theta\}$, one can control the high-order term $\|\psi_{|\alpha|+|\beta|-3}\tilde{b}^{1/2}w^{l-|\alpha|-|\beta|}(\partial^\alpha_\beta f)^\wedge(v,y)\|^2_{L^2_{v,y}}$ by using functional $\E_{K,l}$ and $\D_{K,l}$, where $\delta_1$ in $\tilde{b}$ should be chosen properly, i.e. Lemma \ref{Lem53}. Hence, we can obtain a closed energy estimate locally. 
Here, when dealing with case $-1\le\gamma+2\le 0$, there occurs a algebraic decay term in $v$: $\|\<v\>^{C_{K,l}}f\|_{L^2_{v,x}}$ and we will need to assume such norms are bounded initially, as observed in the Landau equation and Boltzmann equation; cf. \cite{Chen2009,Chen2012}. 
After obtaining the local regularity, we can combine it with the global energy control from global existence Theorem \ref{main1} and one can find the regularity globally in time.

The rest of the paper is arranged as follows. In Section 2, we present some estimates on $L, \Gamma$, and some tricks in energy estimates. In Section 3, we present macroscopic estimate. In Section 4, we prove the global existence to Vlasov-Poisson-Landau system. In Section 5, we give the proof for regularity. 
The Appendix is devoted to pseudo-differential calculus.

\section{Preliminaries}

In this section, we list several basic lemmas corresponding to linearized Landau collision operator $L_\pm$ and the bilinear Landau collision operator $\Gamma_\pm$.

In the following lemma, we collect some known basic estimates for linearized Landau operator $L_\pm$ from \cite{Guo2002a,Guo2012}.  
\begin{Lem}\label{lemmaL}For any $l\in\R$, multi-indices $\alpha,\beta$, we have the followings. 
	
	(i) It holds that 
	\begin{equation*}
		\sum_\pm(-L_\pm g,g_\pm)_{L^2_v}\gtrsim |(\I-\P)g|^2_{\sigma,0}.
	\end{equation*}

(ii) There exists $C>0$ such that 
\begin{align*}
	-\sum_\pm(w^{2l}L_\pm g,g_\pm)_{L^2_v}\gtrsim |g|^2_{\sigma,l}-C|g|^2_{L^2_v(B_C)}.
\end{align*}

(iii) For any $\eta>0$, $|\beta|>0$,
\begin{align*}
	-\sum_\pm(w^{2l-2|\alpha|-2|\beta|}\partial^\alpha_\beta L_\pm g,\partial^\alpha_\beta g_\pm)_{L^2_v}&\gtrsim |\partial^\alpha_\beta g|^2_{\sigma,l-|\alpha|-|\beta|} - \eta\sum_{|\beta_1|\le|\beta|}|\partial^\alpha_{\beta_1}g|^2_{\sigma,l-|\alpha|-|\beta_1|}-C_\eta|\partial^\alpha g|^2_{L^2(B_{C_\eta})}.
\end{align*}
\end{Lem}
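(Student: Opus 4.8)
All three bounds are weighted refinements of Guo's coercivity estimates for the linearized Landau operator, so the plan is to follow \cite{Guo2002a,Guo2012}, carefully tracking the velocity weight $w$ and the derivative‑dependent exponents. Recall the splitting $L_\pm = -\mathcal{A}_\pm-\mathcal{K}_\pm$, where $\mathcal{A}_\pm$ is the diffusion part built from $\sigma^{ij}(v)=\int_{\R^3}\Phi^{ij}(v-v_*)\mu(v_*)\,dv_*$ and $\mathcal{K}_\pm$ is an integral operator with a fast‑decaying kernel that is relatively compact with respect to $\mathcal{A}_\pm$. Using the formula for $Q$ and one integration by parts in $v$ one gets, modulo lower order terms, $-\sum_\pm(\mathcal{A}_\pm g,g_\pm)_{L^2_v}=\sum_\pm|g_\pm|^2_{\sigma,0}$; since $L$ is self‑adjoint and $\le 0$ on $L^2_v\times L^2_v$ with $\ker L$ exactly as displayed in the Introduction, a standard argument (Weyl's theorem for $\mathcal{A}$‑compact perturbations, then Lax–Milgram on $(\ker L)^\perp$) upgrades this to the strict coercivity $-\sum_\pm(L_\pm g,g_\pm)_{L^2_v}\gtrsim|(\I-\P)g|^2_{\sigma,0}$, which is part (i); this is \cite[Lemma 5]{Guo2002a} together with the projection bound in \cite{Guo2012}.

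For part (ii) I would insert the weight $w^{2l}$ into the pairing. The pure multiplication part of $L_\pm$ (the $\sigma_{ij}v_iv_j$ zeroth‑order contribution) commutes with $w^{2l}$ and reproduces the weighted term inside $|g|^2_{\sigma,l}$ directly. The diffusion part $\mathcal{A}_\pm$ needs one $\partial_{v_i}$ to be integrated by parts past $w^{2l}$; the commutator $[\partial_{v_i},w^{2l}]=\partial_{v_i}(w^{2l})$ carries one extra power of $\<v\>^{-1}$ relative to $w^{2l}$, so after Cauchy–Schwarz against $w^{2l}\partial_{v_j}g$ it is absorbed into $|g|^2_{\sigma,l}$ with an arbitrarily small constant plus a contribution supported on $\{\<v\>\lesssim C\}$, namely $C|g|^2_{L^2_v(B_C)}$. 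The compact part $\mathcal{K}_\pm$ contributes $\eta|g|^2_{\sigma,l}+C_\eta|g|^2_{L^2(B_{C_\eta})}$ by the fast kernel decay. Collecting these and fixing the small constants yields $-\sum_\pm(w^{2l}L_\pm g,g_\pm)_{L^2_v}\gtrsim|g|^2_{\sigma,l}-C|g|^2_{L^2_v(B_C)}$.

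For part (iii), note first that the $x$‑derivatives $\partial^\alpha$ commute with $L_\pm$ (which acts only in $v$), so $\partial^\alpha_\beta L_\pm g=\partial_\beta L_\pm(\partial^\alpha g)$. For the $v$‑derivatives, apply Leibniz inside the integral defining $L_\pm$: $\partial_\beta L_\pm h=L_\pm\partial_\beta h+\sum_{\beta_1<\beta}C^{\beta_1}_\beta L^{\beta-\beta_1}_\pm\partial_{\beta_1}h$, where the $L^{\beta-\beta_1}_\pm$ are again Landau‑type operators with the same or better velocity decay. Pairing with $w^{2l-2|\alpha|-2|\beta|}\partial^\alpha_\beta g_\pm$, the main term $L_\pm\partial^\alpha_\beta g$ is handled by part (ii) with weight index $l-|\alpha|-|\beta|$, producing $|\partial^\alpha_\beta g|^2_{\sigma,l-|\alpha|-|\beta|}-C_\eta|\partial^\alpha g|^2_{L^2(B_{C_\eta})}$ (the $\partial_\beta$ on the ball term is removed by a further integration by parts, generating only lower‑order compactly supported remainders). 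Each commutator term, by Cauchy–Schwarz, is dominated by $\eta\,|\partial^\alpha_{\beta_1}g|^2_{\sigma,l-|\alpha|-|\beta_1|}$ with $|\beta_1|\le|\beta|$, plus $C_\eta$ times a ball term; here the decreasing weight exponent $l-|\alpha|-|\beta|$ is precisely what lets each extra $v$‑derivative "cost" one power of $w$, so the exponents on the two sides match. Summing over the relevant multi‑indices and choosing $\eta$ small gives the stated inequality.

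I expect the bookkeeping in part (iii) to be the main obstacle: one must check that every commutator term genuinely sits at a strictly lower level in $|\beta|$ and with a weight exponent no larger than $l-|\alpha|-|\beta_1|$, so that it is absorbable by the right‑hand side with the small constant $\eta$ rather than competing with the coercive main term. For the soft‑potential range $-1\le\gamma+2<0$ one must additionally verify that the degenerate $\sigma$‑weight $\<v\>^{(\gamma+2)/2}$ still controls the $\<v\>^{-1}$ gained from the commutators — which is why the hypothesis $\gamma+2\ge -1$ suffices and why the purely $L^2$ ball terms, carrying no $\sigma$‑weight, cannot be avoided.
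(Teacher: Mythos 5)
Your proposal is correct and follows essentially the same route as the paper, which simply decomposes $L_\pm$ into the diffusive part $A_\pm f=2\mu^{-1/2}Q(\mu,\mu^{1/2}f_\pm)$ plus the compact part $K_\pm$ and then cites \cite[Lemmas 3, 5, 6]{Guo2002a} for the three coercivity statements. Your sketch just fills in the standard details of those cited arguments (weighted integration by parts, commutator absorption, Leibniz on $v$-derivatives), so no discrepancy to report.
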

\begin{proof}
	To apply the previous work, we decompose $L_\pm f=A_\pm f+K_\pm f$ with 
	\begin{align*}
	A_\pm f &= 2\mu^{-1/2}Q(\mu,\mu^{1/2}f_\pm),\\
	K_\pm f &= \mu^{-1/2}Q(\mu^{1/2}(f_\pm+f_\mp),\mu).
	\end{align*}
	Then the proof of (i) is the same as \cite[Lemma 5]{Guo2002a}. The proof of (ii) is the same as \cite[Lemma 3 and Lemma 5]{Guo2002a} and the proof of (iii) is the same as \cite[Lemma 6]{Guo2002a}. 
\end{proof}

In order to obtain a suitable norm estimate of $\Gamma$ on $x$. We will use the following Sobolev inequalities and Gagliardo-Nirenberg interpolation inequality frequently throughout our analysis. 
\begin{Lem}
	For any $u,v\in H^2_x$, we have 
	\begin{align*}
		\|u\|_{L^\infty}&\lesssim \|\nabla_xu\|^{1/2}\|\nabla^2_xu\|^{1/2}\lesssim \|\nabla_xu\|_{H^1},\\
		\|u\|_{L^6}&\lesssim\|\nabla_xu\|_{L^2},\\
		\|u\|_{L^3}&\lesssim \|u\|_{H^1}.
	\end{align*}Consequently,
\begin{align}
		\label{13}
		\|uv\|_{L^2_x}&\lesssim \min\{\|\nabla_xu\|_{H^1_x}\|v\|_{L^2_x}, \|\nabla_xu\|_{L^2_x}\|v\|_{H^1_x}\}.
	\end{align}
\end{Lem}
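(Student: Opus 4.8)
The plan is to derive all four displayed estimates from the standard Gagliardo--Nirenberg--Sobolev inequalities on $\R^3$, first for $u,v\in\mathscr{S}(\R^3)$ and then extending to $H^2_x$ by density. First I would invoke the critical Sobolev embedding $\dot H^1(\R^3)\hookrightarrow L^6(\R^3)$, i.e.\ $\|u\|_{L^6}\lesssim\|\nabla_x u\|_{L^2}$, which is exactly the second line; applying the same inequality componentwise to $\nabla_x u$ also gives $\|\nabla_x u\|_{L^6}\lesssim\|\nabla^2_x u\|_{L^2}$, which I will reuse for the $L^\infty$ bound. The $L^3$ estimate then follows by interpolating $L^3$ between $L^2$ and $L^6$: $\|u\|_{L^3}\le\|u\|_{L^2}^{1/2}\|u\|_{L^6}^{1/2}\lesssim\|u\|_{L^2}^{1/2}\|\nabla_x u\|_{L^2}^{1/2}\le\|u\|_{H^1}$, the final step being Young's inequality.

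For the $L^\infty$ estimate I would use the scale-invariant Gagliardo--Nirenberg interpolation on $\R^3$, $\|u\|_{L^\infty}\lesssim\|u\|_{L^6}^{1/2}\|\nabla_x u\|_{L^6}^{1/2}$ (the exponents check out: taking $p=\infty$, $r=q=6$, $n=3$ in the Gagliardo--Nirenberg relation forces the interpolation parameter $\theta=1/2$), and then insert the two $L^6$ bounds from the first paragraph to reach $\|u\|_{L^\infty}\lesssim\|\nabla_x u\|_{L^2}^{1/2}\|\nabla^2_x u\|_{L^2}^{1/2}$; the remaining inequality $\|\nabla_x u\|_{L^2}^{1/2}\|\nabla^2_x u\|_{L^2}^{1/2}\lesssim\|\nabla_x u\|_{H^1}$ is just the arithmetic--geometric mean inequality. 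Since functions in $H^2(\R^3)$ are continuous and vanish at infinity, the homogeneous (scale-invariant) form of this $L^\infty$ bound is legitimate, so the density argument poses no difficulty.

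Finally, for \eqref{13} I would bound the product two ways by H\"older's inequality: $\|uv\|_{L^2_x}\le\|u\|_{L^\infty_x}\|v\|_{L^2_x}\lesssim\|\nabla_x u\|_{H^1_x}\|v\|_{L^2_x}$ using the first line, and $\|uv\|_{L^2_x}\le\|u\|_{L^6_x}\|v\|_{L^3_x}\lesssim\|\nabla_x u\|_{L^2_x}\|v\|_{H^1_x}$ using the second and third lines (note $\tfrac12=\tfrac16+\tfrac13$); taking the minimum of the two bounds gives the claim. The proof is short and there is no real obstacle; the only point requiring care is quoting the Gagliardo--Nirenberg exponents correctly and noting that $H^2(\R^3)\hookrightarrow C_0(\R^3)$ so that the homogeneous bounds make sense.
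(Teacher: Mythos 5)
Your proof is correct: the paper states this lemma without proof as a standard consequence of Sobolev embedding and Gagliardo--Nirenberg interpolation on $\R^3$, and your chain of estimates ($\dot H^1\hookrightarrow L^6$, interpolation of $L^3$ between $L^2$ and $L^6$, the $L^\infty$ bound via $\|u\|_{L^\infty}\lesssim\|u\|_{L^6}^{1/2}\|\nabla_x u\|_{L^6}^{1/2}$ with the exponent check $\theta=1/2$, and the two H\"older splittings $\tfrac12=0+\tfrac12$ and $\tfrac12=\tfrac16+\tfrac13$ for \eqref{13}) is exactly the intended standard derivation. All exponents and the density/extension argument are in order.
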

Inequality \eqref{13} shows that we can always give at least one $x$ derivative on $u$ and give the other one $x$ derivative to $u$ or $v$ by our wish. 

The next lemma concerns the estimates on the nonlinear collision operator $\Gamma_\pm$, which comes from \cite[Proposition 2.2 and Lemma 5.1]{Strain2013}.
\begin{Lem}\label{23a}
	For any $l\in\R$, we have the upper bound 
	\begin{align}\label{12}
		|(w^{2l}\partial^\alpha_\beta\Gamma_\pm(f,g),\partial^\alpha_\beta h)_{L^2_{v,x}}|\lesssim\sum_{\alpha_1\le\alpha,\bar{\beta}\le\beta_1\le\beta}|w^{-C}\partial^{\alpha_1}_{\bar{\beta}}f|_{L^2_v}|\partial^{\alpha-\alpha_1}_{\beta-\beta_1}g|_{\sigma,l}|\partial^\alpha_\beta h|_{\sigma,l},
	\end{align}
where $C>0$ can be arbitrarily large. Also, 
\begin{align}\label{12a}
	|w^l\tilde{\Gamma}(f,g)|_{L^2_v}\lesssim |f|_{H^1_v}|w^{l}\<v\>^{{2\gamma+4}}g|_{H^1_v}+|f|_{L^2_v}|w^{l}\<v\>^{{2\gamma+4}}\nabla^2_vg|_{L^2_v}+|f|_{H^2_v}|w^{l}\<v\>^{{2\gamma+4}}g|_{L^2_v}.
\end{align}
Define $l_3=1$ for $\gamma+2\ge0$ and $l_3=-\gamma$ for $-1\le\gamma+2<0$. Then, for $l_0\ge \max\{l+3+\frac{2\gamma+4}{l_3},3\}$, we have  
\begin{align}\label{12b}
	\|w^l{\Gamma}(f,f)\|_{L^2_{v}H^1_x}+\|w^l{\Gamma}(f,f)\|_{Z_1}\lesssim\E_{3,l_0}.
\end{align}
\end{Lem}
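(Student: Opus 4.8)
The plan is to handle the three bounds in sequence. Estimates \eqref{12} and \eqref{12a} are the pointwise estimates (in $x$, resp.\ in $v$) for the Landau bilinear operator from \cite{Strain2013}, adapted to the two-species operator $\Gamma_\pm$, which is merely the sum of two scalar Landau operators, $\Gamma_\pm(f,g)=\mu^{-1/2}\big(Q(\mu^{1/2}f_\pm,\mu^{1/2}g_\pm)+Q(\mu^{1/2}f_\mp,\mu^{1/2}g_\pm)\big)$; and \eqref{12b} is obtained by feeding \eqref{12a} into the Sobolev inequalities above and the definition of $\E_{3,l_0}$.

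For \eqref{12} I would first distribute derivatives by the Leibniz rule, reducing to bounding $(w^{2l}\Gamma(\partial^{\alpha_1}_{\beta_1}f,\partial^{\alpha-\alpha_1}_{\beta-\beta_1}g),\partial^\alpha_\beta h)_{L^2_{v,x}}$ for each $\alpha_1\le\alpha$, $\beta_1\le\beta$. Since $Q(\cdot,\cdot)$ has the divergence form $\nabla_v\cdot\{\cdots\}$, one integrates the outer $\nabla_v$ by parts onto $\partial^\alpha_\beta h$, producing the factor $|\partial^\alpha_\beta h|_{\sigma,l}$; the remaining convolution against $\Phi(v-v_*)$ is estimated for fixed $x$ as in \cite[Proposition~2.2]{Strain2013}, the Gaussian in $v_*$ absorbing any polynomial weight $w^{-C}$ on $f$ and the kernel $\Phi$ producing the $\sigma$-norm structure on $g$, while the enlarged range $\bar\beta\le\beta_1$ records the velocity derivatives that land on the kernel rather than on $f$; integrating in $x$ with Cauchy--Schwarz gives \eqref{12}. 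For \eqref{12a} I would use the explicit formula $\tilde\Gamma(f,g)=\mu^{-1/2}\nabla_v\cdot\int_{\Rd}\Phi(v-v_*)\{\cdots\}\,dv_*$ and distribute the two available velocity derivatives (the outer divergence and either the inner $\nabla_vg$ or, after integration by parts in $v_*$, the inner $\nabla_{v_*}f$); each occurrence of $\Phi$ or its derivative costs $\lesssim\<v\>^{\gamma+2}\<v_*\>^{\gamma+2}$ with the $v_*$-part killed by the Gaussian, so $g$ is weighted by $\<v\>^{2\gamma+4}$, and the three terms on the right of \eqref{12a} are the three allocations of the surviving derivatives onto $f$ and $g$ --- this is \cite[Lemma~5.1]{Strain2013}.

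For \eqref{12b}, starting from $\partial^\alpha(w^l\Gamma(f,f))=w^l\sum_{\alpha_1\le\alpha}\binom{\alpha}{\alpha_1}\Gamma(\partial^{\alpha_1}f,\partial^{\alpha-\alpha_1}f)$ with $|\alpha|\le 1$, I would apply \eqref{12a} pointwise in $x$ to each summand and then take the $L^2_x$ norm; for the $Z_1$ piece I would first use Minkowski's inequality $\|\cdot\|_{L^2_v(L^1_x)}\le\|\cdot\|_{L^1_x(L^2_v)}$ and then estimate the $L^1_x$ integral by Cauchy--Schwarz. Each summand is, pointwise in $x$, a product of two velocity norms of $f$; I would split the $x$-integrability by placing the factor carrying the fewer $x$- and $v$-derivatives into $L^\infty_x\hookrightarrow H^2_x$ (using \eqref{13} when an $x$-derivative is available) and the other into $L^2_x$. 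A short case check over $|\alpha|\le 1$ and $\alpha_1\in\{0,\alpha\}$ then shows that the total number of derivatives landing on either copy of $f$ never exceeds $3$, matching $K=3$; the $\P f$ part is a Gaussian times a polynomial and harmless for any weight, and for the $(\I-\P)f$ part the heaviest weighted factor is $w^l\<v\>^{2\gamma+4}$ applied to a derivative of total order at most $3$, hence dominated by the available weight $w^{l_0-3}$ precisely when $l_0\ge l+3+\frac{2\gamma+4}{l_3}$ (using $\<v\>^{2\gamma+4}\le w^{(2\gamma+4)/l_3}$), with the floor $l_0\ge 3$ absorbing the lowest-order terms.

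I expect the only genuine difficulty to be this last bookkeeping: one must keep the total derivative count at or below $3$ and, simultaneously, match the velocity weight $w^l\<v\>^{2\gamma+4}$ against the decreasing weight $w^{l_0-|\alpha|-|\beta|}$ of $\E_{3,l_0}$. The freedom to choose, in each of the finitely many cases, which of the two factors is sent to $L^\infty_x$ --- the mechanism behind \eqref{13} --- is exactly what makes both constraints hold at once.
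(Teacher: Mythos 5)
Your proposal is correct and follows essentially the same route as the paper: \eqref{12} is obtained by invoking the argument of \cite[Proposition 2.2]{Strain2013} (the paper simply cites it, noting it extends from $\gamma=-3$ to general $\gamma$), \eqref{12a} comes from the divergence-form expansion of $\tilde\Gamma$ in \cite[Lemma 1]{Guo2002a} together with the Cauchy--Schwarz bound $\Phi^{ij}*(\mu^{1/2}f)\lesssim\<v\>^{2\gamma+4}|\mu^{1/4}f|_{L^2_v}$, and \eqref{12b} follows by feeding \eqref{12a} into \eqref{13} (and Minkowski plus Cauchy--Schwarz in $x$ for the $Z_1$ norm) with exactly the weight identity $\<v\>^{2\gamma+4}=w^{(2\gamma+4)/l_3}$ and the derivative count $\le 3$ that you describe. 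No gaps.
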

\begin{proof}
	The proof of \eqref{12} is the same as \cite[Proposition 2.2]{Strain2013}. Although \cite[Proposition 2.2]{Strain2013} uses $\gamma=-3$, the proof is also applicable to any $\gamma\ge-3$; see also \cite[Theorem 3]{Guo2002a}. To prove \eqref{12a}, we will use a similar argument as in \cite[Lemma 5.1]{Strain2013}. Using \eqref{9a}, it suffices to estimate 
	\begin{align*}
		\tilde{\Gamma}(f,g):&=\partial_{v_i}\big((\Phi^{ij}*(\mu^{1/2}f))\partial_{v_j}g\big)-(\Phi^{ij}*(v_i\mu^{1/2}f))\partial_{v_j}g\\
		&\quad-\partial_{v_i}\big((\Phi^{ij}*(\mu^{1/2}\partial_{v_j}f))g\big)+(\Phi^{ij}*(v_i\mu^{1/2}\partial_{v_j}f))g.
	\end{align*}
This expansion can be found in \cite[Lemma 1]{Guo2002a}. Above and below we implicitly sum over indices $i$ and $j$ when they are repeated. Since $|\Phi^{ij}|\lesssim |v|^{\gamma+2}$, we have 
\begin{align*}
	\Phi^{ij}*(\mu^{\lambda}f)\lesssim ((\Phi^{ij})^2*\mu^\lambda)^{1/2}|\mu^{\lambda/2}f|_{L^2_v}\lesssim\<v\>^{2\gamma+4}|\mu^{\lambda/2}f|_{L^2_v}.
\end{align*}
Thus recalling \eqref{10w}, we have 
\begin{align*}
		|w^l\tilde{\Gamma}(f,g)|_{L^2_v}\lesssim |f|_{H^1_v}|w^{l}\<v\>^{{2\gamma+4}}g|_{H^1_v}+|f|_{L^2_v}|w^{l}\<v\>^{{2\gamma+4}}\nabla^2_vg|_{L^2_v}+|f|_{H^2_v}|w^{l}\<v\>^{{2\gamma+4}}g|_{L^2_v}.
\end{align*}
Now we let $l_0\ge \max\{l+3+\frac{2\gamma+4}{l_3},3\}$. Then by \eqref{13}, we have 
\begin{align*}
	\|w^l\tilde{\Gamma}(f,f)\|_{L^2_{v,x}}&\lesssim\|f\|_{H^1_vH^1_x}\|w^{l+\frac{2\gamma+4}{l_3}}f\|_{H^1_vH^1_x}+\|f\|_{L^2_vH^2_x}\|w^{l+\frac{2\gamma+4}{l_3}}f\|_{H^2_vL^2_x}+\|f\|_{H^2_vL^2_x}\|w^{l+\frac{2\gamma+4}{l_3}}f\|_{L^2_vH^2_x}\\
	&\lesssim \E_{3,l_0}.
\end{align*}
Similarly, we have 
\begin{align*}
	|w^l\nabla_x\tilde{\Gamma}(f,g)|_{L^2_v}
	&\lesssim |\nabla_xf|_{H^1_v}|w^{l+\frac{2\gamma+4}{l_3}}g|_{H^1_v}+|\nabla_xf|_{L^2_v}|w^{l+\frac{2\gamma+4}{l_3}}\nabla^2_vg|_{L^2_v}+|\nabla_xf|_{H^2_v}|w^{l+\frac{2\gamma+4}{l_3}}g|_{L^2_v}\\
	&\qquad+|f|_{H^1_v}|w^{l+\frac{2\gamma+4}{l_3}}\nabla_xg|_{H^1_v}+|f|_{L^2_v}|w^{l+\frac{2\gamma+4}{l_3}}\nabla_x\nabla^2_vg|_{L^2_v}+|f|_{H^2_v}|w^{l+\frac{2\gamma+4}{l_3}}\nabla_xg|_{L^2_v},
\end{align*}
and by \eqref{13} and $l_0\ge \max\{l+3+\frac{2\gamma+4}{l_3},3\}$, 
\begin{align*}
	&\quad\,\|w^l\nabla_x\tilde{\Gamma}(f,f)\|_{L^2_{v,x}}\\&\lesssim\|f\|_{H^1_vH^2_x}\|w^{l+\frac{2\gamma+4}{l_3}}f\|_{H^1_vH^2_x}+\|f\|_{L^2_vH^3_x}\|w^{l+\frac{2\gamma+4}{l_3}}f\|_{H^2_vH^1_x}+\|f\|_{H^2_vH^1_x}\|w^{l+\frac{2\gamma+4}{l_3}}g\|_{L^2_vH^3_x}\\
	&\lesssim \E_{3,l_0}.
\end{align*}
Finally, using \eqref{12a}, we estimate $\|\tilde{\Gamma}(f,f)\|_{Z_1}$ as follows:
\begin{align*}
	\|w^l\tilde{\Gamma}(f,f)\|_{Z_1}&\lesssim\|f\|_{H^1_vL^2_x}\|w^{l+\frac{2\gamma+4}{l_3}}f\|_{H^1_vL^2_x}+\|f\|_{L^2_vL^2_x}\|w^{l+\frac{2\gamma+4}{l_3}}f\|_{H^2_vL^2_x}+\|f\|_{H^2_vL^2_x}\|w^{l+\frac{2\gamma+4}{l_3}}f\|_{L^2_vL^2_x}\\
	&\lesssim \E_{3,l_0}.
\end{align*}
This completes the proof of Lemma \ref{23a}.

\end{proof}

The following Corollary gives the behavior of nonlinear terms in Vlasov-Poisson-Landau system. 
\begin{Coro}\label{Coro1} 
	Assume $l_0\ge 3$ satisfies that 
	\begin{equation}\label{31b}l_0\ge\left\{\begin{aligned}
			& \frac{3(\gamma+2)}{-4\gamma}+3,\  \text{ if }-1\le\gamma+2<0,\\
			& 2(\gamma+2)+3,\quad\text{ if }\gamma+2\ge0.
		\end{aligned}\right.
	\end{equation}
	Then, there exists $l_*>\frac{5(\gamma+2)}{4\gamma}$ when $-1\le\gamma+2<0$ and $l_*=0$ when $\gamma+2\ge0$ such that 
	\begin{align*}
	\|w^{l_*}g_\pm\|_{Z_1}+\|w^{l_*}g_\pm\|_{L^2_{v}H^1_x}\lesssim \E_{3,l_0},
	\end{align*}where $g_\pm = \pm\nabla_x\phi\cdot\nabla_vf_\pm\mp\frac{1}{2}\nabla_x\phi\cdot vf_\pm+\Gamma_\pm(f,f)$. 
\end{Coro}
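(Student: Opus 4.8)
The plan is to decompose $g_\pm$ into its three constituent pieces and estimate each against the energy functional $\E_{3,l_0}$ using the bilinear bounds of Lemma \ref{23a} together with the Sobolev inequalities of the preceding lemma. For the nonlinear collision term $\Gamma_\pm(f,f)$, the bound $\|w^{l_*}\Gamma_\pm(f,f)\|_{L^2_vH^1_x}+\|w^{l_*}\Gamma_\pm(f,f)\|_{Z_1}\lesssim \E_{3,l_0}$ follows directly from \eqref{12b} once we verify that $l_0\ge\max\{l_*+3+\tfrac{2\gamma+4}{l_3},3\}$ with $l_3=1$ for hard potential and $l_3=-\gamma$ for soft potential; this is exactly where the hypothesis \eqref{31b} is calibrated. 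Indeed, for $-1\le\gamma+2<0$ one has $\tfrac{2\gamma+4}{l_3}=\tfrac{2(\gamma+2)}{-\gamma}$ and a modest choice of $l_*$ slightly above $\tfrac{5(\gamma+2)}{4\gamma}$ keeps $l_*+3+\tfrac{2(\gamma+2)}{-\gamma}$ below $\tfrac{3(\gamma+2)}{-4\gamma}+3$; for $\gamma+2\ge0$ we take $l_*=0$ and $\tfrac{2\gamma+4}{l_3}=2(\gamma+2)$, matching \eqref{31b} directly.

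Next I would handle the field-times-derivative terms $\pm\nabla_x\phi\cdot\nabla_vf_\pm$ and $\mp\tfrac12\nabla_x\phi\cdot vf_\pm$. Since $E=-\nabla_x\phi$ is determined by $f$ through \eqref{8} and elliptic regularity, we have $\|\nabla_x\phi\|_{H^k_x}\lesssim \|E\|_{H^{k-1}_x}+\ldots$ controlled by the macroscopic part of $\E_{3,l_0}$; in particular $\nabla_x\phi$ and its first two $x$-derivatives are bounded in the relevant mixed-norm spaces. For the $L^2_vH^1_x$ estimate, apply \eqref{13}: distribute the (at most one) $x$-derivative onto whichever factor we choose, bounding $\|\nabla_x\phi\, \nabla_v f_\pm\|_{L^2_vH^1_x}$ by $\|\nabla_x\phi\|_{H^2_x}\|\nabla_v f_\pm\|_{L^2_vH^1_x}$ or by $\|\nabla_x\phi\|_{H^1_x}\|\nabla_v f_\pm\|_{L^2_vH^2_x}$, each of which is $\lesssim \E_{3,l_0}$ since the total number of derivatives is at most $3$. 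The weight $w^{l_*}$ costs nothing extra here because the polynomial growth of $v$ in $\tfrac12\nabla_x\phi\cdot v f_\pm$ is absorbed: $v\,w^{l_*}\lesssim w^{l_*+1}\lesssim \langle v\rangle^{l_0}$ under \eqref{31b}, so these contributions sit inside $\E_{3,l_0}$. For the $Z_1$ norm one argues similarly, putting $\nabla_x\phi$ in $L^\infty_x$ (via $\|\nabla_x\phi\|_{L^\infty_x}\lesssim\|\nabla_x\phi\|_{H^2_x}$) and $f_\pm$ in $L^2_vL^1_x$, or conversely, so that the product lands in $L^2_vL^1_x$ with the right weight.

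The main obstacle is bookkeeping the velocity weights consistently so that the weighted bilinear estimates close: one must check that in every product the weight $w^{l_*}$ on the output, combined with the at-most-quadratic velocity growth introduced by the $v f_\pm$ term and the $\langle v\rangle^{2\gamma+4}$ loss in \eqref{12a}, is still dominated by $\langle v\rangle^{l_0}$ acting on the high-order energy, which is precisely the content of the numeric inequality \eqref{31b}. Once the weight arithmetic is verified, summing the three pieces gives $\|w^{l_*}g_\pm\|_{Z_1}+\|w^{l_*}g_\pm\|_{L^2_vH^1_x}\lesssim \E_{3,l_0}$, which is the claim. No step requires more than the two preliminary lemmas and elliptic estimates for $\phi$, so the argument is short modulo this weight chase.
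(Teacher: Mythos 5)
Your overall route is the same as the paper's: split $g_\pm$ into the collision part and the two field terms, invoke \eqref{12b} for $\Gamma_\pm(f,f)$, use \eqref{13} to distribute the $x$-derivative in the $L^2_vH^1_x$ bounds, and close with a weight count against \eqref{31b}. The one step that would fail as written is your treatment of the $Z_1$ norm of the field terms. You propose pairing $\nabla_x\phi$ in $L^\infty_x$ against $f_\pm$ in $L^2_vL^1_x$ ``or conversely,'' but neither $\|f_\pm\|_{L^2_vL^1_x}$ nor $\|\nabla_x\phi\|_{L^1_x}$ is controlled by $\E_{3,l_0}$, which is built entirely from $L^2_x$-based norms. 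The correct pairing --- and the one the paper uses --- is Cauchy--Schwarz in $x$ alone, $\|uv\|_{L^1_x}\le\|u\|_{L^2_x}\|v\|_{L^2_x}$, giving for instance $\|w^{l_*}\nabla_x\phi\cdot\nabla_vf_\pm\|_{Z_1}\lesssim\|\nabla_x\phi\|_{L^2_x}\|w^{l_*}\nabla_vf\|_{L^2_{v,x}}\lesssim\E_{3,l_0}$, valid as soon as $l_0\ge l_*+1$ (the weight index in $\E_{3,l_0}$ drops by one per derivative, which is exactly the constraint you defer to the ``weight chase'').

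Two smaller points. In the soft-potential verification you assert that taking $l_*$ slightly above $\tfrac{5(\gamma+2)}{4\gamma}$ keeps $l_*+3+\tfrac{2(\gamma+2)}{-\gamma}$ \emph{below} $\tfrac{3(\gamma+2)}{-4\gamma}+3$; in fact one gets equality at $l_*=\tfrac{5(\gamma+2)}{4\gamma}$, so any strict excess lands slightly \emph{above} the threshold in \eqref{31b} (the paper's own ``such a choice exists'' has the same borderline character, so this is not a substantive divergence, but the direction of your inequality is reversed). Finally, for the $L^2_vH^1_x$ estimates the essential bookkeeping is that one or two of the three available derivatives go to $\phi$ and the remainder to $f_\pm$ while $w^{l_*}$ must sit under $w^{l_0-|\alpha|-|\beta|}$; you name this as the main obstacle but do not carry it out, whereas the paper records the explicit constraints $l_0\ge l_*+1$ and $l_0\ge l_*+2$ and checks their compatibility with $l_*>\tfrac{5(\gamma+2)}{4\gamma}$.
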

\begin{proof}
Let $l_3=1$ for $\gamma+2\ge0$ and $l_3=-\gamma$ for $-1\le\gamma+2<0$. The estimate on $\Gamma_\pm(f,f)$ has already been proved by \eqref{12b} and the restriction is $l_0\ge l_*+3+\frac{2\gamma+4}{l_3}$ and $l_0\ge 3$.
For the left terms, we have 
	\begin{align*}
		\|w^{l_*}\nabla_x\phi\cdot\nabla_vf_\pm\|_{Z_1}&\lesssim \|\nabla_x\phi\|_{L^2_x}\|w^{l_*}\nabla_vf\|_{L^2_{v,x}}\lesssim\E_{3,l_0},\\
		\|w^{l_*}v\cdot\nabla_x\phi f_\pm\|_{Z_1}&\lesssim\|\nabla_x\phi\|_{L^2_x}\|w^{l_*}vf_\pm\|_{L^2_{v,x}}\lesssim\E_{3,l_0},
	\end{align*}whenever $l_0\ge l_*+\max\{1/l_3,1\}=l_*+1$. 
	By \eqref{13}, 
	\begin{align*}
		\|w^{l_*}\nabla_x(\nabla_x\phi\cdot\nabla_vf_\pm)\|_{L^2_{v,x}}&\lesssim \|\nabla_x\phi\|_{H^2_x}\|w^{l_*}f_\pm\|_{H^1_{v}H^1_x}\lesssim\E_{3,l_0}\\
		\|w^{l_*}\nabla_x(v\cdot\nabla_x\phi f_\pm)\|_{L^2_{v,x}}&\lesssim \|\nabla_x\phi\|_{H^1_x}\|w^{l_*}vf_\pm\|_{L^2_{v}H^1_x}\lesssim\E_{3,l_0},
	\end{align*}whenever $l_0\ge l_*+1+\max\{1/l_3,1\}=l_*+2$. 
	Now we verify that such $l_*$ exists. From the restriction above, we need to choose $l_*$ such that when $-1\le\gamma+2<0$, 
	\begin{align*}
		\frac{-5(\gamma+2)}{4l_3}< l_* \le l_0-\frac{2\gamma+4}{l_3}-3,\quad l_* \le l_0-2,
	\end{align*}and when $\gamma+2\ge0$, 
\begin{align*}
	0=l_* \le l_0-\frac{2\gamma+4}{l_3}-3,\quad l_* \le l_0-2.
\end{align*}
Such choice exists, according to \eqref{31b}. 
\qe\end{proof}

With the help of the above lemmas, we can control the nonlinear term $(\partial^\alpha_\beta\Gamma_\pm(f,g),w^{2l-2|\alpha|-2|\beta|}\partial^\alpha_\beta h)_{L^2_{v,x}}$. 
\begin{Lem}\label{lemmat}
	Let $K\ge 2$. For any multi-indices $|\alpha|+|\beta|\le K$ and real number $l\ge K$, we have \begin{equation}\begin{aligned}\label{15}
		\Big|&(\psi_{2|\alpha|+2|\beta|-6}w^{2l-2|\alpha|-2|\beta|}\notag\partial^\alpha_\beta\Gamma_\pm(f,g),\partial^\alpha_\beta h)_{L^2_{v,x}}\Big|\\&\lesssim \bigg(\sum_{|\alpha|+|\beta|\le K}\|\psi_{|\alpha|+|\beta|-3}w^{-C}\partial^{\alpha}_\beta f\|_{L^2_{v,x}}\sum_{\substack{|\alpha|\ge 1\\ |\alpha|+|\beta|\le K}}\|\psi_{|\alpha|+|\beta|-3}\partial^{\alpha}_{\beta}g\|_{\sigma,l-|\alpha|-|\beta|}\\
		&\qquad+\sum_{\substack{|\alpha|\ge 1\\ |\alpha|+|\beta|\le K}}\|\psi_{|\alpha|+|\beta|-3}w^{-C}\partial^{\alpha}_\beta f\|_{L^2_{v,x}}
		\sum_{|\alpha|+|\beta|\le K}\|\psi_{|\alpha|+|\beta|-3}\partial^{\alpha}_{\beta}g\|_{\sigma,l-|\alpha|-|\beta|}\bigg)
		\\
		&\qquad\qquad\qquad\qquad\qquad\qquad\qquad\qquad\qquad\times\|\psi_{|\alpha|+|\beta|-3}\partial^\alpha_\beta h\|_{\sigma,l-|\alpha|-|\beta|},\end{aligned}
	\end{equation}
	where $C>0$ can be arbitrarily large and we restrict $t\in[0,1]$ when considering $\psi=t^N$ as in Theorem \ref{main2}.
\end{Lem}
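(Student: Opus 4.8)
The plan is to reduce the weighted mixed-norm estimate \eqref{15} to the pointwise-in-$x$ bilinear bound \eqref{12} of Lemma \ref{23a}, and then close the $x$-integration using the Sobolev/Gagliardo--Nirenberg inequalities \eqref{13} together with the fact that $\psi_k \le 1$ and that the $\psi$-weights are sub-multiplicative in the index $k$. First I would fix multi-indices $|\alpha|+|\beta|\le K$ and apply the Leibniz rule to $\partial^\alpha_\beta\Gamma_\pm(f,g)$, writing the left-hand side of \eqref{15} as a sum over $\alpha_1\le\alpha$, $\bar\beta\le\beta_1\le\beta$ of terms of the form $(\psi_{2|\alpha|+2|\beta|-6}\, w^{2l-2|\alpha|-2|\beta|}\,\Gamma_\pm^{(\alpha_1,\bar\beta,\beta_1)}(\partial^{\alpha_1}_{\bar\beta}f,\partial^{\alpha-\alpha_1}_{\beta-\beta_1}g),\partial^\alpha_\beta h)_{L^2_{v,x}}$; applying \eqref{12} pointwise in $x$ gives the integrand bound
\begin{equation*}
\sum_{\alpha_1\le\alpha,\,\bar\beta\le\beta_1\le\beta} \big|w^{-C}\partial^{\alpha_1}_{\bar\beta}f\big|_{L^2_v}\,\big|\partial^{\alpha-\alpha_1}_{\beta-\beta_1}g\big|_{\sigma,l-|\alpha|-|\beta|}\,\big|\partial^\alpha_\beta h\big|_{\sigma,l-|\alpha|-|\beta|},
\end{equation*}
where I have absorbed the weight shift $w^{2l-2|\alpha|-2|\beta|}$ into the two $\sigma$-norms at level $l-|\alpha|-|\beta|$ and into the arbitrarily large negative power $w^{-C}$ on $f$ (this uses $l\ge K\ge |\alpha|+|\beta|$ so the exponent on the middle factor is the correct one, matching the $\sigma$-norm in $\D_{K,l}$).

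The next step is the $x$-integration. I would split the sum according to whether $|\alpha_1|\ge 1$ or $\alpha_1=0$. When $\alpha_1=0$, the $f$-factor carries no $x$-derivative, so I estimate it in $L^2_v L^\infty_x$ via $\|\cdot\|_{H^1_vL^\infty_x}\lesssim\|\cdot\|_{H^1_vH^2_x}$ (this is exactly why the index $-3$ appears in the $\psi$-shifts, so that $\psi_{|\alpha|+|\beta|-3}\cdot\psi_{0}\cdot\psi_{0}$ on the three factors of total derivative-order $\le K+3$ is controlled — here the $f$-factor, having $x$-order $0$ but being measured in $H^2_x$, contributes a $\psi_0=1$ weight and falls into the $\|\cdot\|\le K$ sum with the $\psi_{\cdot-3}$ normalization), and the remaining two factors in $L^2_{v,x}$; the $g$ and $h$ factors then carry $x$-order $|\alpha|$ and are bounded by $\D_{K,l}$. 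Since $|\alpha|\ge 1$ is needed for $g$ to land in the dissipation-type sum, note $\alpha_1=0$ forces the $g$-factor to have order $|\alpha|$, and the term with $|\alpha|=0$ altogether contributes $f$ at order $0$ as well; this borderline contribution is where one uses that $\Gamma_\pm$ vanishes to second order, so at least one of $f,g$ is $(\I-\P)$-projected or — more simply — one just keeps it in the first sum on the right of \eqref{15} where the $g$-factor is required to have $|\alpha|\ge 1$ only after the derivative count, which I should double-check is consistent. When $|\alpha_1|\ge 1$, the $f$-factor carries an $x$-derivative; I use \eqref{13} to put $\partial^{\alpha_1}_{\bar\beta}f$ into $L^2_vH^1_x\hookrightarrow L^2_vL^3_x$ (or directly $L^2_v L^\infty_x$ if $|\alpha_1|$ is large enough) and Hölder in $x$ against $L^2_v L^6_x$ and $L^2_{v,x}$ for the other two, again distributing the at-most-$K$ derivatives so each factor stays within the admissible energy/dissipation sums; the $\psi$-bookkeeping works because $2|\alpha|+2|\beta|-6 = (|\alpha|+|\beta|-3) + (|\alpha|+|\beta|-3)$ and each half is further split among the three factors using $\psi_{a+b}\le\psi_a\psi_b$ for $a,b\ge 0$ (valid since $\psi\le 1$), with any leftover $\psi$ power bounded by $1$.

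The main obstacle, and the step deserving the most care, is the combinatorial bookkeeping of the derivative counts and the $\psi$-exponents simultaneously: one must check that in \emph{every} splitting $\alpha=\alpha_1+(\alpha-\alpha_1)$, $\beta=\bar\beta + (\beta_1-\bar\beta) + (\beta-\beta_1)$ arising from Leibniz, one can assign the $x$-derivatives among the three factors (after using \eqref{13} to move derivatives onto whichever factor we choose) so that each factor has total order $\le K$, at most one factor requires the $L^\infty_x$ (hence $H^2_x$, order $+2$, still $\le K+\text{spare}$ — here $K\ge 2$ is used) embedding, and the product of the three $\psi_{\cdot-3}$ weights dominates $\psi_{2|\alpha|+2|\beta|-6}$. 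Since the total order on the right-hand side factors in \eqref{15} is allowed to reach $K$ and the weight indices there are $\psi_{|\alpha|+|\beta|-3}$, I expect this to close provided $K\ge 2$ and $l\ge K$, exactly as hypothesized; the one genuinely delicate point is ensuring the $g$-factor can always be placed in a sum with $|\alpha|\ge 1$ when it should be (the first sum on the RHS) versus the unrestricted sum (the second) — this dichotomy tracks precisely whether the $x$-derivative was spent on $f$ or on $g$, and the Leibniz structure guarantees one of the two holds. I would restrict to $t\in[0,1]$ throughout when $\psi=t^N$, as stated, so that all $\psi$-powers are genuinely $\le 1$ and the sub-multiplicativity used above is legitimate.
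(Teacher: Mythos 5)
Your overall strategy is the same as the paper's: apply the pointwise-in-$x$ bilinear bound \eqref{12} with weight level $l-|\alpha|-|\beta|$, then close the $x$-integration with \eqref{13} and the sub-multiplicativity $\psi_{a+b-3}\le\psi_{a-3}\psi_{b-3}$. However, there is a genuine gap in your case analysis, precisely at the step you yourself flag as delicate. You split on $\alpha_1=0$ versus $|\alpha_1|\ge1$ and, in the first case, always place the $f$-factor $\partial^{\alpha_1}_{\bar\beta}f=\partial_{\bar\beta}f$ into $L^2_vL^\infty_x\hookleftarrow L^2_vH^2_x$. But when $\alpha_1=0$ and $|\bar\beta|$ is large (e.g.\ $|\alpha|=0$, $|\beta|=K$, $\bar\beta=\beta_1=\beta$, so $g$ carries no derivatives at all), this produces norms $\|w^{-C}\partial^{\alpha'}_{\bar\beta}f\|_{L^2_{v,x}}$ with $|\alpha'|\le2$, of total order up to $K+2$. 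These do not appear on the right-hand side of \eqref{15}, whose sums cap at $|\alpha|+|\beta|\le K$; your parenthetical ``still $\le K+\text{spare}$'' is not available, there is no spare. The $\psi$-bookkeeping also breaks in this case: one would need $\psi_{2K-6}\le\psi_{K-1}\psi_{K-3}=\psi_{2K-4}$, which fails since $\psi\le1$.

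The correct dichotomy — the one the paper uses — is on the \emph{total} derivative count $|\alpha_1|+|\beta_1|$ landing on $f$ under Leibniz, not on $|\alpha_1|$ alone. If $|\alpha_1|+|\beta_1|\ge2$, then the $g$-factor has order $|\alpha-\alpha_1|+|\beta-\beta_1|\le K-2$, so it is the one sent to $L^\infty_x$ via up to two extra $x$-derivatives (landing $g$ in the $|\alpha|\ge1$-restricted sum while $f$ stays in $L^2_{v,x}$ at order $\le K$); if $|\alpha_1|+|\beta_1|\le1$, then $f$ has order $\le1$ and can safely absorb the one or two extra $x$-derivatives (landing $f$ in the restricted sum while $g$, of order $\le K$, stays in the unrestricted one). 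This is exactly what produces the two products on the right-hand side of \eqref{15}. Your closing remark that the dichotomy ``tracks whether the $x$-derivative was spent on $f$ or on $g$'' identifies the right structure, but the rule you propose for deciding which factor receives the Sobolev embedding is wrong on a nontrivial set of Leibniz terms, so the argument as written does not close.
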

\begin{proof}
Using the estimate \eqref{12}, we have 
\begin{align*}
&\quad\,\big|(\psi_{2|\alpha|+2|\beta|-6}w^{2l-2|\alpha|-2|\beta|}\partial^\alpha_\beta\Gamma_\pm(f,g),\partial^\alpha_\beta h)_{L^2_{v,x}}\big| \\
&\lesssim \sum_{\alpha_1\le\alpha,\bar{\beta}\le\beta_1\le\beta}\Big\|\psi_{|\alpha|+|\beta|-3}|w^{-C}\partial^{\alpha_1}_{\bar{\beta}}f|_{L^2_v}|\partial^{\alpha-\alpha_1}_{\beta-\beta_1}g|_{\sigma,l-|\alpha|-|\beta|}\Big\|_{L^2_x}\|\psi_{|\alpha|+|\beta|-3}\partial^\alpha_\beta h\|_{\sigma,l-|\alpha|-|\beta|},
\end{align*}by choosing $l-|\alpha|-|\beta|$ to be the $l$ in \eqref{12}, where $C>0$ can be arbitrarily large. 
For brevity we denote the first term in the norm $\|\cdot\|_{L^2_x}$ inside the summation $\sum_{\alpha_1\le\alpha,\bar{\beta}\le\beta_1\le\beta}$ on the right hand side to be $I$ and discuss its value in several cases. 

If $2\le|\alpha_1|+|\beta_1|\le K$, then $|\alpha_2|+|\beta_2|\le |\alpha|+|\beta|-2$ and $l-|\alpha|-|\beta|\le l-|\alpha_2+\alpha'|-|\beta_2|$ for any $1\le|\alpha'|\le2$. Notice that in this case, $\psi_{|\alpha|+|\beta|-3}\le \psi_{|\alpha_1|+|\beta_1|-3}\psi_{|\alpha_2+\alpha'|+|\beta_2|-3}$. By using \eqref{13}, we have 
\begin{align}\notag
	I&\lesssim\psi_{|\alpha|+|\beta|-3}\|w^{-C}\partial^{\alpha_1}_{\beta_1}f\|_{L^2_{v,x}}\big\||\partial^{\alpha_2}_{\beta_2}g|_{\sigma,l-|\alpha|-|\beta|}\big\|_{L^\infty_x}\\
	&\notag\lesssim\|\psi_{|\alpha_1|+|\beta_1|-3}w^{-C}\partial^{\alpha_1}_{\beta_1}f\|_{L^2_{v,x}}\sum_{1\le|\alpha'|\le2}\|\psi_{|\alpha_2+\alpha'|+|\beta_2|-3}\partial^{\alpha_2+\alpha'}_{\beta_2}g\|_{\sigma,l-|\alpha_2+\alpha'|-|\beta_2|}\\
	&\lesssim\sum_{|\alpha|+|\beta|\le K}\|\psi_{|\alpha|+|\beta|-3}w^{-C}\partial^{\alpha}_\beta f\|_{L^2_{v,x}}\sum_{\substack{|\alpha|\ge 1\\ |\alpha|+|\beta|\le K}}\|\psi_{|\alpha|+|\beta|-3}\partial^{\alpha}_{\beta}g\|_{\sigma,l-|\alpha|-|\beta|}.\label{33aa}
\end{align}
Secondly, if $|\alpha_1|+|\beta_1|=1$, then $|\alpha_2|+|\beta_2|\le |\alpha|+|\beta|-1$. Notice that $\psi\le1$ and $\psi_{|\alpha|+|\beta|-3}\le \psi_{|\alpha_1+\alpha'_1|+|\beta_1|-3}\psi_{|\alpha_2+\alpha'_2|+|\beta_2|-3}$, for any $|\alpha'_1|= 1$, $|\alpha'_2|\le 1$.  Using \eqref{13} to give one $x$ derivative to $f$, we have 
\begin{align*}
	I&\lesssim \sum_{|\alpha'|= 1}\|\psi_{|\alpha_1+\alpha'|+|\beta_1|-3}w^{-C}\partial^{\alpha_1+\alpha'}_{\beta_1}f\|_{L^2_{v,x}}\sum_{|\alpha'|\le 1}\|\psi_{|\alpha_2+\alpha'|+|\beta_2|-3}\partial^{\alpha_2+\alpha'}_{\beta_2}g\|_{\sigma,l-|\alpha_2+\alpha'|-|\beta_2|}\\
	&\lesssim \sum_{\substack{|\alpha|\ge 1\\|\alpha|+|\beta|\le K}}\|\psi_{|\alpha|+|\beta|-3}w^{-C}\partial^{\alpha}_\beta f\|_{L^2_{v,x}}\sum_{|\alpha|+|\beta|\le K}\|\psi_{|\alpha|+|\beta|-3}\partial^{\alpha}_{\beta}g\|_{\sigma,l-|\alpha|-|\beta|}.
 \end{align*}
Thirdly, if $|\alpha_1|+|\beta_1|=0$, using \eqref{13} to give two $x$ derivatives to $f$, we have 
\begin{align}\label{33bb}\notag
	I&\lesssim\sum_{1\le|\alpha'|\le2}\|\psi_{|\alpha_1+\alpha'|+|\beta_1|-3}w^{-C}\partial^{\alpha_1+\alpha'}_{\beta_1}f\|_{L^2_{v,x}}\|\psi_{|\alpha_2|+|\beta_2|-3}\partial^{\alpha_2}_{\beta_2}g\|_{\sigma,l-|\alpha_2|-|\beta_2|}\\
	&\lesssim \sum_{\substack{|\alpha|\ge 1\\ |\alpha|+|\beta|\le K}}\|\psi_{|\alpha|+|\beta|-3}w^{-C}\partial^{\alpha}_\beta f\|_{L^2_{v,x}}
	\sum_{|\alpha|+|\beta|\le K}\|\psi_{|\alpha|+|\beta|-3}\partial^{\alpha}_{\beta}g\|_{\sigma,l-|\alpha|-|\beta|}.
\end{align}
Here we used $\psi_{|\alpha|+|\beta|-3}\le \psi_{|\alpha_1+\alpha'|+|\beta_1|-3}\psi_{|\alpha_2|+|\beta_2|-3}$, for any $|\alpha'|\le2$.  
Combining the above estimate, we have the desired result for $I$:
\begin{align*}
	I&\lesssim \sum_{|\alpha|+|\beta|\le K}\|\psi_{|\alpha|+|\beta|-3}w^{-C}\partial^{\alpha}_\beta f\|_{L^2_{v,x}}\sum_{\substack{|\alpha|\ge 1\\ |\alpha|+|\beta|\le K}}\|\psi_{|\alpha|+|\beta|-3}\partial^{\alpha}_{\beta}g\|_{\sigma,l-|\alpha|-|\beta|}\\
	&\qquad+\sum_{\substack{|\alpha|\ge 1\\ |\alpha|+|\beta|\le K}}\|\psi_{|\alpha|+|\beta|-3}w^{-C}\partial^{\alpha}_\beta f\|_{L^2_{v,x}}
	\sum_{|\alpha|+|\beta|\le K}\|\psi_{|\alpha|+|\beta|-3}\partial^{\alpha}_{\beta}g\|_{\sigma,l-|\alpha|-|\beta|}.
\end{align*}
Similar decomposition on $|\alpha_1|+|\beta_1|$ and $\psi_{|\alpha|+|\beta|-3}$ will be used frequently used later. 
\qe\end{proof}

A direct consequence of Lemma \ref{lemmat} is the following estimate.
\begin{Lem}\label{lemmag}
	Let $K\ge 2$, $|\alpha|+|\beta|\le K$, $l\ge K$. Then,
	\begin{equation*}
		|(\partial^\alpha\Gamma_\pm(f,f),\psi_{2|\alpha|-6}\partial^\alpha f_\pm)_{L^2_{v,x}}|\lesssim\E^{1/2}_{K,l}\D_{K,l}(t),
	\end{equation*}
\begin{equation*}
|(\partial^\alpha_\beta\Gamma_\pm(f,f),\psi_{2|\alpha|+2|\beta|-6}\partial^\alpha_\beta(\II-\PP) f)_{L^2_{v,x}}|\lesssim\E^{1/2}_{K,l}\D_{K,l}(t),
\end{equation*}
and
\begin{equation}\label{25}
	|(w^{2l-2|\alpha|-2|\beta|}\partial^\alpha_\beta\Gamma_\pm(f,f),\psi_{2|\alpha|+2|\beta|-6}\partial^\alpha_\beta f)_{L^2_{v,x}}|\lesssim \E^{1/2}_{K,l}\D_{K,l}(t)+\E_{K,l}\D^{1/2}_{K,l}(t).
\end{equation}
Also, for any smooth function $\zeta(v)$ satisfying $|\zeta(v)|\approx e^{-\lambda|v|^2}$ with some $\lambda>0$, we have 
\begin{align*}
	(\partial^\alpha\Gamma_\pm(f,f),\zeta(v))_{L_{v,x}}\lesssim \E^{1/2}_{K,l}\D^{1/2}_{K,l}(t).
\end{align*}
\end{Lem}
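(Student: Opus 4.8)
The plan is to derive all four estimates as corollaries of Lemma~\ref{lemmat} by choosing the arguments $(f,g,h)$ and the weight index appropriately, combined with the equivalence \eqref{13b} relating $|\cdot|_{\sigma,l}$ to the weighted $L^2_v$ norms, and the definitions \eqref{Defe}--\eqref{Defd} of $\E_{K,l}$, $\E^h_{K,l}$ and $\D_{K,l}$. The key observation is that each factor on the right-hand side of \eqref{15} is, after summing, controlled either by $\E^{1/2}_{K,l}$ (when $\alpha_1$ or $\beta_1$ can be taken nonzero, since then we pick up a genuine $x$- or $v$-derivative and the $w^{-C}$ weight absorbs any polynomial loss) or by $\D^{1/2}_{K,l}$ (the factors carrying $\|\cdot\|_{\sigma,\cdot}$ with at least one derivative). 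So the whole strategy is bookkeeping: match the summation ranges in \eqref{15} to the three summands in \eqref{Defe} and \eqref{Defd}.

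First I would prove the first two estimates. Apply Lemma~\ref{lemmat} with $g=f$ and $h=f_\pm$ (resp. $h=(\II-\PP)f$), and with $l$ large enough that $l\ge K$. In \eqref{15} the factor $\sum_{|\alpha|+|\beta|\le K}\|\psi_{|\alpha|+|\beta|-3}w^{-C}\partial^\alpha_\beta f\|_{L^2_{v,x}}$ is bounded by $\E^{1/2}_{K,l}$ because $w^{-C}$ dominates $w^{l-|\alpha|-|\beta|}$ for $C$ large, and both the $\P f$ and $(\I-\P)f$ pieces of $f$ are present in $\E_{K,l}$. The two factors carrying $\|\psi_{|\alpha|+|\beta|-3}\partial^\alpha_\beta g\|_{\sigma,l-|\alpha|-|\beta|}$ with the restriction $|\alpha|\ge 1$ (in one of the two terms of \eqref{15}) are bounded by $\D^{1/2}_{K,l}$: for the $\P f$ part one uses $|\P f|_{\sigma,l}\lesssim |\<v\>^{C}\P f|_{L^2_v}$ together with the fact that $\D_{K,l}$ contains $\|\partial^\alpha\P f\|_{L^2_{v,x}}$ for $1\le|\alpha|\le K$; for the $(\I-\P)f$ part it is directly the $\sigma$-norm term of $\D_{K,l}$. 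The factor $\|\psi_{|\alpha|+|\beta|-3}\partial^\alpha_\beta h\|_{\sigma,l-|\alpha|-|\beta|}$ is also $\lesssim\D^{1/2}_{K,l}$ by the same reasoning. Multiplying the three bounds gives $\E^{1/2}_{K,l}\D_{K,l}$. For \eqref{25} one takes $h=f$ itself (not just $(\I-\P)f$); the $(\I-\P)f$ part of $h$ contributes $\E^{1/2}_{K,l}\D_{K,l}$ as before, while the $\P f$ part of $h$, whose $\sigma$-norm with a zero-th order derivative need not sit in $\D_{K,l}$, instead contributes via $\E_{K,l}$, and pairing it with one $\D^{1/2}_{K,l}$ factor yields the second term $\E_{K,l}\D^{1/2}_{K,l}$; this is exactly why \eqref{25} has two terms while the first two estimates have one. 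The subtlety to watch is the lowest-order term $\alpha=\beta=0$ in the sum defining the left side: there $\psi_{-6}=1$, Lemma~\ref{lemmat} does not force a derivative on $f$, and one must route the $\P$-part of $h$ through $\E_{K,l}$ and keep $g$'s $\sigma$-norm as a $\D^{1/2}_{K,l}$ factor; this is the case that generates the $\E_{K,l}\D^{1/2}_{K,l}$ term and requires care that all remaining factors really do land in $\D_{K,l}$.

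For the last estimate, with $\zeta(v)$ satisfying $|\zeta|\approx e^{-\lambda|v|^2}$, I would not use \eqref{15} directly but rather \eqref{12} with $h=\zeta$, $l=0$, $|\beta|=0$: then $|\partial^\alpha\zeta|_{\sigma,0}\lesssim e^{-\lambda'|v|^2}$ in $L^2_v$ is a finite constant, and we are left with $\sum_{\alpha_1\le\alpha}\||w^{-C}\partial^{\alpha_1}f|_{L^2_v}|\partial^{\alpha-\alpha_1}f|_{\sigma,0}\|_{L^1_x}$. Using \eqref{13} to put one or two $x$-derivatives on whichever factor needs them (mirroring the case analysis in the proof of Lemma~\ref{lemmat}, but now landing in $L^1_x$ via Cauchy--Schwarz $\|uv\|_{L^1_x}\le\|u\|_{L^2_x}\|v\|_{L^2_x}$ plus Sobolev), one factor is bounded by $\E^{1/2}_{K,l}$ (the $w^{-C}\partial^{\alpha_1}f$ factor, using that $w^{-C}$ kills the weight loss) and the other by $\D^{1/2}_{K,l}$ (the $\sigma$-norm factor, which for both the $\P$ and $(\I-\P)$ parts of $f$ with $|\alpha-\alpha_1|\ge$ something, or via $\E^{1/2}_{K,l}$ if it is zero-th order—but since $\zeta$ is Maxwellian-weighted one can always integrate by parts in $v$ or simply absorb, so the cleanest route is to note both factors are at most order $K$ and at least one can be taken with a derivative). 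The main obstacle throughout is precisely this last kind of routing at the lowest order: ensuring that in every term of every summation at least one factor genuinely carries a derivative (so it lands in the dissipation $\D_{K,l}$ rather than only in $\E_{K,l}$), which is what makes the bounds $\E^{1/2}_{K,l}\D_{K,l}$ and $\E^{1/2}_{K,l}\D^{1/2}_{K,l}$ rather than merely $\E^{3/2}_{K,l}$; the $\psi_{|\alpha|+|\beta|-3}$ submultiplicativity used in Lemma~\ref{lemmat} is what makes this distribution of derivatives compatible with the time weights, so I would invoke it in exactly the same way here.
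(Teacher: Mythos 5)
Your proposal matches the paper's argument: the paper writes out only \eqref{25}, by splitting the test function $\partial^\alpha_\beta f_\pm$ into $\partial^\alpha_\beta(\II-\PP)f+\partial^\alpha_\beta\PP f$ and invoking Lemma \ref{lemmat}, with the extra $\E_{K,l}\D^{1/2}_{K,l}$ term arising exactly as you describe because $\D_{K,l}$ contains no zeroth-order $\PP f$ norm. The one wrinkle is that your claim that the $h$-factor for the first estimate (where $h=f_\pm$, hence contains $\PP f$ at order zero) lands in $\D^{1/2}_{K,l}$ sits in tension with your own correct observation for \eqref{25}; but the paper omits the proofs of the first two estimates entirely, so this reflects a looseness already present in the statement rather than a divergence from the paper's method.
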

\begin{proof}
	For brevity, we only give the proof of \eqref{25}. Notice that 
	\begin{align*}
		&\quad\,(w^{2l-2|\alpha|-2|\beta|}\partial^\alpha_\beta\Gamma_\pm(f,f),\partial^\alpha_\beta f_\pm)_{L^2_{v,x}} \\
		&= (w^{2l-2|\alpha|-2|\beta|}\partial^\alpha_\beta\Gamma_\pm(f,f),\partial^\alpha_\beta (\II-\PP) f)_{L^2_{v,x}}+(w^{2l-2|\alpha|-2|\beta|}\partial^\alpha_\beta\Gamma_\pm(f,f),\partial^\alpha_\beta \PP f)_{L^2_{v,x}}.
	\end{align*}
	The first term on the right hand, by directly using Lemma \ref{lemmat} and the definition of $\E_{K,l}$ and $\D_{K,l}$, is bounded above by $\E^{1/2}_{K,l}\D_{K,l}(t)$, since there's zero $x$ derivative on $(\I-\P)f$ in the definition of $\D_{K,l}$. But there's no such term for $\P f$ in $\D_{K,l}$. Hence, the second right-hand term can only be bounded above by $\E_{K,l}\D^{1/2}_{K,l}(t)$. 
\qe\end{proof}

For later use, we need the following estimate on $v\cdot\nabla_x\phi f_\pm$ and $\nabla_x\phi\cdot\nabla_vf_\pm$. Here we assume that $\|\phi\|_{L^\infty_x}\approx 1$ and hence, $|e^{\pm\phi}|\approx 1$. 
\begin{Lem}\label{Lem26}Let $l\ge K\ge 3$, $1\le|\alpha|\le K$ and $|\alpha|+|\beta|\le K$. Then, for $\alpha_1\le\alpha,\beta_1\le\beta$ with $|\alpha_1|\ge 1$, it holds that 
	\begin{equation}\label{333}
		|(v_i\partial^{\alpha_1+e_i}\phi\partial^{\alpha-\alpha_1}f_\pm,\psi_{2|\alpha|-6}e^{\pm\phi}w^{2l-2|\alpha|}\partial^\alpha f_\pm)_{L^2_{v,x}}|\lesssim \E^{1/2}_{K,l}\D_{K,l}, 
	\end{equation}
	\begin{equation}\label{33a}
		|(\partial_{\beta_1}v_i\partial^{\alpha_1+e_i}\phi\partial^{\alpha-\alpha_1}_{\beta-\beta_1}f_\pm,\psi_{2|\alpha|-6}e^{\pm\phi}w^{2l-2|\alpha|-2|\beta|}\partial^\alpha_\beta f_\pm)_{L^2_{v,x}}|\lesssim \E^{1/2}_{K,l}\D_{K,l}.
	\end{equation}
\begin{equation}\label{33b}
|(\partial_{\beta_1}v_i\partial^{\alpha_1+e_i}\phi\partial^{\alpha-\alpha_1}_{\beta-\beta_1}(\II-\PP)f,\psi_{2|\alpha|+2|\beta|-6}e^{\pm\phi}w^{2l-2|\alpha|-2|\beta|}\partial^\alpha_\beta (\II-\PP)f)_{L^2_{v,x}}|\lesssim \E^{1/2}_{K,l}\D_{K,l}.
\end{equation}
\end{Lem}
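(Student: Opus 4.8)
\textbf{Proof proposal for Lemma \ref{Lem26}.}

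The plan is to treat all three estimates \eqref{333}, \eqref{33a}, \eqref{33b} by the same mechanism: the factor $v_i$ (and any $\partial_{\beta_1}$ acting on it) is absorbed by the velocity weight, the electric-field factor $\partial^{\alpha_1+e_i}\phi$ is put into $L^\infty_x$ or $L^2_x$ via the Sobolev inequalities of the preceding lemma (and $E=-\nabla_x\phi$, so $\partial^{\alpha_1+e_i}\phi$ is one order of $\partial^{\alpha_1}E$), and the remaining two factors of $f$ (or $(\I-\P)f$) are matched against the energy functional $\E_{K,l}$ and the dissipation $\D_{K,l}$. Since $|e^{\pm\phi}|\approx 1$ and moreover $e^{\pm\phi}w^{2l-2|\alpha|-2|\beta|}$ is an admissible weight (bounded derivatives), it plays no essential role and can be dropped up to constants. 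The key bookkeeping observation, already used in Lemma \ref{lemmat}, is that because $|\alpha_1|\ge 1$ we always have at least one $x$-derivative to spare, and the weight drop is favorable: $\langle v\rangle w^{l-|\alpha|-|\beta|}\lesssim w^{l-|\alpha-\alpha_1|-|\beta|}$ whenever $|\alpha_1|\ge 1$ (using $|\alpha_1|\ge 1$ and $w\gtrsim\langle v\rangle$ in the hard case, $w=\langle v\rangle^{-\gamma}\gtrsim\langle v\rangle$ in the soft case), so the weight $v_i w^{l-|\alpha|-|\beta|}$ on $\partial^{\alpha-\alpha_1}f_\pm$ is controlled by $w^{l-|\alpha-\alpha_1|-|\beta|}$, which is exactly the weight appearing in $\E_{K,l}$ and $\D_{K,l}$ at order $|\alpha-\alpha_1|+|\beta|$.

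For \eqref{333}: split on the size of $|\alpha_1|$. If $|\alpha_1|\ge 2$ then $|\alpha-\alpha_1|\le K-2$, so by \eqref{13} we put $\partial^{\alpha_1+e_i}\phi=\partial^{\alpha_1}E$ in $L^2_x$ (it has $|\alpha_1|\le K$ derivatives, hence is part of $\E_{K,l}$ via the $E$-term of \eqref{Defe}) and $\langle v\rangle w^{l-|\alpha|}\partial^{\alpha-\alpha_1}f_\pm$ in $L^\infty_xL^2_v\lesssim H^2_xL^2_v$, which costs two $x$-derivatives and keeps the total order $\le K$; this factor lands in $\E^{1/2}_{K,l}$ (or $\D^{1/2}_{K,l}$ — since $|\alpha-\alpha_1|$ could be $0$ for the $\P f$ part, but then $|\alpha_1|=|\alpha|$ and $\partial^{\alpha_1}E$ is a pure field term sitting in $\D_{K,l}$, so after symmetrizing one always gets one energy factor and one dissipation factor). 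The remaining $\psi_{2|\alpha|-6}\partial^\alpha f_\pm$ splits as $\psi_{|\alpha|-3}\partial^\alpha\P f\cdot\psi_{|\alpha|-3}\partial^\alpha(\I-\P)f$-type factors, each bounded by $\D^{1/2}_{K,l}$ (for $|\alpha|\ge 1$ the $\P f$ part is in $\D_{K,l}$ and the $(\I-\P)f$ part is in $\|\cdot\|_{\sigma,l-|\alpha|}$ hence in $\D_{K,l}$). If $|\alpha_1|=1$, then $\partial^{\alpha_1}E$ has only one derivative, so put it in $L^\infty_x\lesssim H^2_x$ (still $\le K$ derivatives on $E$, part of $\E_{K,l}$), and $\partial^{\alpha-\alpha_1}f_\pm$ with up to $K-1$ $x$-derivatives in $L^2_{v,x}$ with the weight absorbed as above — one of these is an $\E^{1/2}_{K,l}$ factor and the $\partial^\alpha f_\pm$ factor is a $\D^{1/2}_{K,l}$ factor. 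In all subcases one obtains $\E^{1/2}_{K,l}\D_{K,l}$ after using $\psi_{2|\alpha|-6}\le\psi_{|\alpha_1|+|\alpha-\alpha_1|-3}\psi_{|\alpha|-3}\le\psi_{|\alpha_1|-3}\psi_{|\alpha-\alpha_1|-3}\psi_{|\alpha|-3}$ (legitimate since $\psi\le 1$), distributing the $\psi$-weights to match the functionals exactly as in Lemma \ref{lemmat}.

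Estimate \eqref{33a} is identical after noting that $\partial_{\beta_1}v_i$ is either $v_i$ (if $\beta_1=0$) or a constant times $\delta_{i}$-component (if $|\beta_1|=1$, since $v_i$ is linear) or $0$ (if $|\beta_1|\ge 2$); in the nonzero cases the $v$-weight bookkeeping is the same, the $v$-derivative count on $\partial^{\alpha-\alpha_1}_{\beta-\beta_1}f_\pm$ only decreases, and $\beta$ is carried along passively so the total order on every factor stays $\le K$ — again yielding $\E^{1/2}_{K,l}\D_{K,l}$. Estimate \eqref{33b} is the same computation restricted to the microscopic part: every $f$-factor is now $(\I-\P)f$, which is precisely what the dissipation norm $\|\cdot\|_{\sigma,l-|\alpha|-|\beta|}$ and the weighted $L^2$ part of $\E_{K,l}$ control, so there is no $\P f$ obstruction and one cleanly gets $\E^{1/2}_{K,l}\D_{K,l}$ (in fact the slightly stronger $\D_{K,l}$ in both slots when $|\alpha_1|$ uses up enough derivatives). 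I expect the only genuinely delicate point to be the edge case $|\alpha_1|=|\alpha|$ in \eqref{333}, where $\partial^{\alpha-\alpha_1}f_\pm=f_\pm$ carries zero $x$-derivatives: here the field factor $\partial^\alpha E$ has the maximal $K$ derivatives and must be assigned to $\D_{K,l}$ (it is in the $|\alpha|\le K-1$... — actually for $|\alpha|=K$ it is only in $\E_{K,l}$, not $\D_{K,l}$), so one is forced to use $\|\partial^\alpha E\|_{L^2_x}\lesssim\E^{1/2}_{K,l}$, $\|\langle v\rangle w^{l-|\alpha|}f_\pm\|_{L^\infty_xL^2_v}\lesssim\|w^{l}f_\pm\|_{H^2_xL^2_v}\lesssim\E^{1/2}_{K,l}$ (uses $l\ge K$), and $\|\psi_{|\alpha|-3}\partial^\alpha f_\pm\|_{\sigma,l-|\alpha|}\lesssim\D^{1/2}_{K,l}$, giving $\E_{K,l}\D^{1/2}_{K,l}$ rather than $\E^{1/2}_{K,l}\D_{K,l}$. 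This is why the hypothesis forces $|\alpha_1|\ge 1$ but one should double-check the statement only claims $\E^{1/2}_{K,l}\D_{K,l}$ — so in fact the $|\alpha_1|=|\alpha|$ case must be handled by instead keeping $\partial^{\alpha}\phi$ in $L^3_x$ via $\|\partial^\alpha\phi\|_{L^3_x}\lesssim\|\partial^{\alpha}E\|_{H^1_x}$... which needs $|\alpha|+1\le K$; when $|\alpha|=K$ one genuinely needs the term $\partial^\alpha f_\pm$ to supply the dissipation and $f_\pm$ (zero derivatives) together with $\partial^K E$ to supply $\E^{1/2}_{K,l}$ in a single combined factor, which works because $\|\partial^\alpha E\cdot f_\pm\|_{L^2_{v,x}}\lesssim\|\partial^\alpha E\|_{L^6_x}\|f_\pm\|_{L^3_xL^2_v}\lesssim\|\nabla\partial^\alpha E\|_{L^2_x}\|f_\pm\|_{H^1_xL^2_v}$ — but $\nabla\partial^\alpha E$ has $K+1$ derivatives. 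The clean resolution, and the one I would adopt, is exactly as in \cite{Guo2012,Guo2003a}: for $|\alpha_1|=|\alpha|$ integrate by parts in $x$ to move one derivative off $\partial^{\alpha_1}\phi=\partial^{\alpha}\phi$ onto either $f_\pm$ or the test function, reducing to $|\alpha_1|=|\alpha|-1\le K-1$ (so $\partial^{\alpha_1}E\in\D_{K,l}$) at the cost of commutators with $v_i$, $e^{\pm\phi}$ and $w$, all of which are lower order and harmless — this is the main obstacle and the rest is routine Hölder plus the Sobolev inequalities \eqref{13}.
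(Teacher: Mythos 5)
Your overall mechanism --- absorbing $v_i$ into the velocity weight via $|v_i|w^{l-|\alpha|}\le\<v\>^{\gamma+2}w^{l-|\alpha|+1}$, splitting on the size of $|\alpha_1|$, and distributing Sobolev derivatives via \eqref{13} --- is the same as the paper's, and you correctly isolate the one genuinely delicate point, the case $\alpha_1=\alpha$. But your resolution of that case fails. Integrating by parts in $x$ to lower $\partial^{\alpha+e_i}\phi$ to $\partial^{\alpha+e_i-e_j}\phi$ necessarily produces, besides the harmless $e^{\pm\phi}$-commutator, the term
\begin{equation*}
\big(v_i\partial^{\alpha+e_i-e_j}\phi\, f_\pm,\ \psi_{2|\alpha|-6}e^{\pm\phi}w^{2l-2|\alpha|}\partial^{\alpha+e_j}f_\pm\big)_{L^2_{v,x}},
\end{equation*}
in which $\partial^{\alpha+e_j}f_\pm$ carries $|\alpha|+1$ derivatives. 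When $|\alpha|=K$ this is of order $K+1$ and is controlled by neither $\E_{K,l}$ nor $\D_{K,l}$, and there is no cancellation to exploit since the two $f$-slots in the pairing are different ($f_\pm$ versus $\partial^\alpha f_\pm$). So the "main obstacle" you identify is not actually removed by your argument, and your only other proposed bound for this case, $\E_{K,l}\D^{1/2}_{K,l}$, is not the claimed $\E^{1/2}_{K,l}\D_{K,l}$.

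The paper's fix avoids integration by parts entirely: for $\alpha_1=\alpha$ it decomposes $f_\pm=\PP f+(\II-\PP)f$ and exploits the asymmetry of \eqref{13}. For the $\PP f$ part, one bounds the factor by $\|\partial^\alpha\nabla_x\phi\|_{L^2_x}\sum_{1\le|\alpha'|\le2}\|\partial^{\alpha'}\PP f\|_{L^2_{v,x}}$; this works because $\partial^{\alpha'}\PP f$ with $|\alpha'|\ge1$ lies in $\D_{K,l}$ while $\partial^\alpha E$ with $|\alpha|\le K$ lies in $\E_{K,l}$. For the $(\II-\PP)f$ part the key fact is that the zero-derivative quantity $\|\<v\>^{(\gamma+2)/2}w^{l}(\II-\PP)f\|_{L^2_{v,x}}$ already belongs to $\D^{1/2}_{K,l}$; the two Sobolev derivatives are then placed on $(\II-\PP)f$ when $|\alpha|\ge3$ (so that the weight $w^{l-|\alpha|+1}\le w^{l-|\alpha'|}$ matches the functional) and on $\phi$ when $|\alpha|\le2$ (so that $E$ still carries at most $K$ derivatives). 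This gives $\E^{1/2}_{K,l}\D^{1/2}_{K,l}$ for the first factor and hence $\E^{1/2}_{K,l}\D_{K,l}$ overall. Two secondary imprecisions in your write-up: for soft potentials $\|w^{l-|\alpha|}\partial^\alpha f\|_{L^2_v}$ is not bounded by $\|\partial^\alpha f\|_{\sigma,l-|\alpha|}$, so the factor $\<v\>^{(\gamma+2)/2}$ must be retained and split between the two $f$-factors as the paper does; and for $|\alpha_1|=2$ the weight $w^{l-|\alpha|+1}$ only tolerates one extra Sobolev derivative on $f$ (one needs $|\alpha'|\le|\alpha_1|-1$), so the $L^3\times L^6$ splitting, not $L^\infty\times L^2$, is the right one there.
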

\begin{proof}
We only give a detailed proof of \eqref{333}.
For $|\alpha_1|\ge 1$ with $\alpha_1\le\alpha$, by noticing \eqref{10w}, we have $|v_i|w^{l-|\alpha|}\le \<v\>^{\gamma+2}w^{l-|\alpha|+1}$ and hence, 
\begin{align}\notag
	&\quad\,|(v_i\partial^{\alpha_1+e_i}\phi\partial^{\alpha-\alpha_1}f_\pm,\psi_{2|\alpha|-6}e^{\pm\phi}w^{2l-2|\alpha|}\partial^\alpha f_\pm)_{L^2_{v,x}}|\\
	&\lesssim\label{26} \|\psi_{|\alpha|-3}\partial^{\alpha_1}\nabla_x\phi\<v\>^{\frac{\gamma+2}{2}}w^{l-|\alpha|+1}\partial^{\alpha-\alpha_1}f_\pm\|_{L^2_{v,x}}\|\psi_{|\alpha|-3}\<v\>^{\frac{\gamma+2}{2}}w^{l-|\alpha|}\partial^{\alpha}f_\pm\|_{L^2_{v,x}}. 
\end{align}
For the first term on the right hand of \eqref{26}, we discuss its value as the following. 
If $\alpha_1<\alpha$, then $1\le|\alpha_1|\le K-1$ and there's at least one derivative on $f_\pm$ with respect to $x$. Then by the same discussion on the value of $|\alpha_1|$ as \eqref{33aa}-\eqref{33bb}, one has 
\begin{align}\label{37bb}
	&\quad\,\|\psi_{|\alpha|-3}\partial^{\alpha_1}\nabla_x\phi\<v\>^{\frac{\gamma+2}{2}}w^{l-|\alpha|+1}\partial^{\alpha-\alpha_1}f_\pm\|_{L^2_{v,x}}
	\lesssim \E^{1/2}_{K,l}\D^{1/2}_{K,l}, 
\end{align}where we used $\|\<v\>^{(\gamma+2)/2}(\cdot)\|_{L^2_{v,x}}\lesssim \|\cdot\|_{\sigma,0}$. 
If $\alpha_1=\alpha$, then we decompose $f_\pm=\PP f+(\II-\PP)f$ and give one derivative to $\PP f$ with respect to $x$ by using \eqref{13}. 
That is, 
\begin{align*}
	&\quad\,\|\psi_{|\alpha|-3}\partial^{\alpha}\nabla_x\phi\<v\>^{\frac{\gamma+2}{2}}w^{l-|\alpha-\alpha_1|}\PP f\|_{L^2_{v,x}}\notag\\
	&\lesssim\|\psi_{|\alpha|-3}\partial^\alpha\nabla_x\phi\|_{L^2_x}\sum_{1\le|\alpha'|\le 2}\|\psi_{|\alpha'|-3}\partial^{\alpha'}\PP f\|_{L^2_{v,x}}\\
	&\lesssim \E^{1/2}_{K,l}\D_{K,l}^{1/2}.
\end{align*}
For the part $(\II-\PP)f$, we will use \eqref{13} to give two derivatives to $(\II-\PP)f$ when $|\alpha|\ge 3$, one derivative to $(\II-\PP)f$ when $|\alpha|=2$ and give nothing to $(\II-\PP)f$ when $|\alpha|=1$. That is, 
\begin{align}\notag
	&\quad\,\|\psi_{|\alpha|-3}\partial^{\alpha}\nabla_x\phi\<v\>^{\frac{\gamma+2}{2}}w^{l-|\alpha-\alpha_1|}(\II-\PP)f\|_{L^2_{v,x}}\notag\\
	&\lesssim\notag \sum_{3\le|\alpha|\le K}\|\psi_{|\alpha|-3}\partial^{\alpha}\nabla_x\phi\|_{L^2_x}\sum_{1\le|\alpha'|\le2}\|\psi_{|\alpha'|-3}\<v\>^{\frac{\gamma+2}{2}}w^{l-|\alpha|+1}\partial^{\alpha'}(\II-\PP)f\|_{L^2_{v,x}}\\\notag
	&\qquad\notag+\sum_{|\alpha|=2}\sum_{|\alpha'|\le1}\|\psi_{|\alpha+\alpha'|-3}\partial^{\alpha+\alpha'}\nabla_x\phi\|_{L^2_x}\sum_{|\alpha_1'|=1}\|\psi_{|\alpha_1'|-3}\<v\>^{\frac{\gamma+2}{2}}w^{l-|\alpha|+1}\partial^{\alpha_1'}(\II-\PP)f\|_{L^2_{v,x}}\\
	&\qquad\notag+\sum_{|\alpha|=1}\sum_{|\alpha'|\le2}\|\psi_{|\alpha+\alpha'|-3}\partial^{\alpha+\alpha'}\nabla_x\phi\|_{L^2_x}\|\<v\>^{\frac{\gamma+2}{2}}w^{l}(\II-\PP)f\|_{L^2_{v,x}}\\
	&\lesssim \E^{1/2}_{K,l}\D^{1/2}_{K,l},\label{29}
\end{align}where we used $-3$ in $\psi$ to assure that the decomposition on $\psi_{|\alpha|-3}$ above is valid.
Thus, when $\alpha_1=\alpha$, 
\begin{align}
	&\quad\,\|\psi_{|\alpha|-3}\partial^{\alpha_1}\nabla_x\phi\<v\>^{\frac{\gamma+2}{2}}w^{l-|\alpha|+1}\partial^{\alpha-\alpha_1}f_\pm\|_{L^2_{v,x}}
	\lesssim \E^{1/2}_{K,l}\D^{1/2}_{K,l}.\label{27c}
\end{align}
Plugging the above estimate into \eqref{26}, we have 
\begin{align*}
	|(v_i\partial^{\alpha_1+e_i}\phi\partial^{\alpha-\alpha_1}f_\pm,\psi_{2|\alpha|-6}e^{\pm\phi}w^{2l-2|\alpha|}\partial^\alpha f_\pm)_{L^2_{v,x}}|\lesssim \E^{1/2}_{K,l}\D_{K,l}. 
\end{align*}

Similarly, for $|\beta|\le K$ and $\beta_1\le \beta$, we have $|\partial_{\beta_1}v_i|\le \<v\>$ and hence, 
\begin{align}\label{28}
	&\quad\,|(\partial_{\beta_1}v_i\partial^{\alpha_1+e_i}\phi\partial^{\alpha-\alpha_1}_{\beta-\beta_1}f_\pm,\psi_{2|\alpha|+2|\beta|-6}e^{\pm\phi}w^{2l-2|\alpha|-2|\beta|}\partial^\alpha_\beta f_\pm)_{L^2_{v,x}}|\\
	&\lesssim \|\psi_{|\alpha|+|\beta|-3}\partial^{\alpha_1}\nabla_x\phi\<v\>^{\frac{\gamma+2}{2}}w^{l-|\alpha|+1-|\beta-\beta_1|}\partial^{\alpha-\alpha_1}_{\beta-\beta_1} f_\pm\|_{L^2_{v,x}}\|\psi_{|\alpha|+|\beta|-3}\<v\>^{\frac{\gamma+2}{2}}w^{l-|\alpha|-|\beta|}\partial^\alpha_\beta f_\pm\|_{L^2_{v,x}}.\notag
\end{align}
For the first term on the right hand of \eqref{28}, we use the same argument as in \eqref{26}-\eqref{27c} to find its upper bound $\E^{1/2}_{K,l}\D^{1/2}_{K,l}$. Hence, \eqref{33a} is bounded above by $\E^{1/2}_{K,l}\D_{K,l}$. The proof of \eqref{33b} is similar and we omit it for brevity.
\qe\end{proof}

\begin{Lem}\label{Lem27}
	Let $|\alpha|+|\beta|\le K$, $l\ge K\ge 3$. Then, for $\alpha_1\le\alpha,\beta_1\le\beta$, it holds that 
	\begin{equation}\label{30a}
		|(\partial^{\alpha_1+e_i}\phi\partial^{\alpha-\alpha_1}_{e_i}f_\pm,\psi_{2|\alpha|-6}e^{\pm\phi}w^{2l-2|\alpha|}\partial^\alpha f_\pm)_{L^2_{v,x}}|\le \E^{1/2}_{K,l}\D_{K,l}, 
	\end{equation}
\begin{equation}\label{30b}
	|(\partial^{\alpha_1+e_i}\phi\partial^{\alpha-\alpha_1}_{\beta+e_i}f_\pm,\psi_{2|\alpha|+2|\beta|-6}e^{\pm\phi}w^{2l-2|\alpha|-2|\beta|}\partial^\alpha_\beta f_\pm)_{L^2_{v,x}}|\le \E^{1/2}_{K,l}\D_{K,l}.
\end{equation}
\begin{equation}\label{30c}
|(\partial^{\alpha_1+e_i}\phi\partial^{\alpha-\alpha_1}_{\beta+e_i}(\II-\PP)f,\psi_{2|\alpha|+2|\beta|-6}e^{\pm\phi}w^{2l-2|\alpha|-2|\beta|}\partial^\alpha_\beta (\II-\PP)f)_{L^2_{v,x}}|\le \E^{1/2}_{K,l}\D_{K,l}.
\end{equation}
\end{Lem}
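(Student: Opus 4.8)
The plan is to obtain all three bounds by the same scheme used in the proof of Lemma~\ref{Lem26}, adapted to the extra velocity derivative carried by $\nabla_x\phi\cdot\nabla_vf_\pm$. I will describe it for \eqref{30b}; then \eqref{30a} is the special case $|\beta|=0$, and \eqref{30c} is easier, since there only the microscopic component $(\I-\P)f$ appears and no purely macroscopic term must be tracked. Writing $\partial^{\alpha_1+e_i}\phi=-\partial^{\alpha_1}E_i$ and using $|e^{\pm\phi}|\approx1$, the first step is to split according to whether $\alpha_1=0$ or $\alpha_1\neq0$, as in the discussion of the $\nabla_x\phi\cdot\nabla_vf$ term in the introduction.

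When $\alpha_1=0$ the left factor $\partial^{\alpha}_{\beta+e_i}f_\pm$ carries one more velocity derivative than the test factor, so it has order $|\alpha|+|\beta|+1$, possibly exceeding $K$, and cannot be absorbed directly; the remedy is an integration by parts in $v_i$. Since $\phi$ and $e^{\pm\phi}$ do not depend on $v$, writing $2\,\partial^{\alpha}_{\beta+e_i}f_\pm\,\partial^\alpha_\beta f_\pm=\partial_{v_i}|\partial^\alpha_\beta f_\pm|^2$ and integrating by parts makes $\partial_{v_i}$ fall only on the weight $w^{2l-2|\alpha|-2|\beta|}$, and by \eqref{10w} one has $|\partial_{v_i}(w^{2m})|\lesssim\<v\>^{-1}w^{2m}$ for each relevant $m=l-|\alpha|-|\beta|$. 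Hence the term is controlled by $\|E\|_{L^\infty_x}\,\|\psi_{|\alpha|+|\beta|-3}\<v\>^{-1/2}w^{l-|\alpha|-|\beta|}\partial^\alpha_\beta f_\pm\|^2_{L^2_{v,x}}$; I would bound $\|E\|_{L^\infty_x}\lesssim\|E\|_{H^2_x}\lesssim\E^{1/2}_{K,l}$ (using $K\ge3$), and for the remaining square norm decompose $f=\P f+(\I-\P)f$: on the microscopic part I would use $\<v\>^{-1/2}\le\<v\>^{(\gamma+2)/2}$—which is exactly where the assumption $\gamma+2\ge-1$ enters—followed by \eqref{13b}, so that it is absorbed into $\|\partial^\alpha_\beta(\I-\P)f\|^2_{\sigma,l-|\alpha|-|\beta|}$, while on the macroscopic part the Gaussian swallows all polynomial weights, leaving $\|\psi_{|\alpha|-3}\partial^\alpha\P f\|^2_{L^2_{v,x}}$; both lie in $\D_{K,l}$, which yields the product $\E^{1/2}_{K,l}\D_{K,l}$.

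When $\alpha_1\neq0$ the order of $\partial^{\alpha-\alpha_1}_{\beta+e_i}f_\pm$ is $|\alpha-\alpha_1|+|\beta|+1\le|\alpha|+|\beta|\le K$, so no integration by parts is needed and this factor already fits the functionals. Here I would argue exactly as in \eqref{33aa}--\eqref{33bb} and \eqref{29}: apply \eqref{13} to distribute the $x$-derivatives between $\partial^{\alpha_1}E$ and $\partial^{\alpha-\alpha_1}_{\beta+e_i}f_\pm$, splitting on whether $|\alpha_1|$ is large (keep $E$ in $L^2_x$ and put $f$ into $L^\infty_x$ at the cost of two $x$-derivatives) or small (put $E$ into $L^\infty_x$), distribute the time weight $\psi_{|\alpha|+|\beta|-3}$ over the two resulting factors as in the proof of Lemma~\ref{lemmat}, and in the extreme subcase $\alpha_1=\alpha$ first write $f_\pm=\P f+(\I-\P)f$ and hand one $x$-derivative to $\P f$ through \eqref{13}. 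Because $|\alpha_1|\ge1$ guarantees at least one $x$-derivative on $E$, in each subcase the three factors assemble into a product of one $\E^{1/2}_{K,l}$-factor and a $\D_{K,l}$-factor, giving $\E^{1/2}_{K,l}\D_{K,l}$ again; the estimates of Lemma~\ref{Lem26} are the exact template.

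The analysis itself is routine; the point I expect to spend real effort on is the bookkeeping, namely keeping the velocity weight $w^{l-|\alpha|-|\beta|}$ and the time weight $\psi_{|\alpha|+|\beta|-3}$ correctly balanced as the additional $\partial_{v_i}$ and the $x$-derivatives are moved around. As in \eqref{29}, this forces the case split on whether $|\alpha|$ exceeds $3$, the buffer $-3$ being what allows $\psi_{|\alpha|+|\beta|-3}$ to be distributed over the factors (here $t\in[0,1]$ is used), and, in the soft-potential range, the sharp threshold $\gamma+2\ge-1$ is needed to turn the $\<v\>^{-1}$ gained from differentiating the weight into the dissipation $\sigma$-norm.
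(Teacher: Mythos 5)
Your overall architecture (integrate by parts in $v$ when $\alpha_1=0$, absorb the extra derivative into the functionals when $\alpha_1\neq0$) matches the paper's, but there are two places where the execution as you describe it does not close. First, in the $\alpha_1=0$ case you bound the term by $\|\nabla_x\phi\|_{L^\infty_x}$ times the square of $\|\psi_{|\alpha|+|\beta|-3}\<v\>^{-1/2}w^{l-|\alpha|-|\beta|}\partial^\alpha_\beta f_\pm\|_{L^2_{v,x}}$ and assert that after splitting $f=\P f+(\I-\P)f$ "both lie in $\D_{K,l}$". The macroscopic piece is $\|\psi_{|\alpha|-3}\partial^\alpha\P f\|^2_{L^2_{v,x}}$, and $\D_{K,l}$ contains $\partial^\alpha\P f$ only for $1\le|\alpha|\le K$ (the zero-mode of $\P f$ is precisely what the linearized operator does not dissipate). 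Since the lemma is stated and used for $|\alpha|=0$ as well (where $\alpha_1=0$ is forced), your bound degenerates to $\E^{1/2}_{K,l}\E_{K,l}$ there, which is not $\lesssim\E^{1/2}_{K,l}\D_{K,l}$. The paper avoids this by never putting $\nabla_x\phi$ in $L^\infty_x$: after the integration by parts it Cauchy–Schwarzes into a product of two weighted norms of $\partial^\alpha f_\pm$ and then applies \eqref{13} to the pair $(\nabla_x\phi,\text{first }f\text{-factor})$ so as to hand that factor at least one $x$-derivative; then that factor (including its $\P$-component) sits in $\D^{1/2}_{K,l}$, $\|\nabla_x\phi\|_{H^2_x}\lesssim\D^{1/2}_{K,l}$ (using $K\ge3$), and the remaining factor is $\E^{1/2}_{K,l}$.

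Second, for $\alpha_1\neq0$ the claim that Lemma \ref{Lem26} is "the exact template" overlooks that the troublesome object there is $v_i\,f$, whereas here it is $\partial_{v_i}(\I-\P)f$. For soft potentials the $\sigma$-norm only controls $\<v\>^{\gamma/2}\nabla_v(\I-\P)f$, so the plain weighted $L^2$ norm of $\partial^{\alpha-\alpha_1}_{\beta+e_i}(\I-\P)f$ is neither in $\D^{1/2}_{K,l}$ nor does putting it in $\E^{1/2}_{K,l}$ leave the two remaining factors jointly of size $\D_{K,l}$ (the test factor $\|w^{l-|\alpha|-|\beta|}\partial^\alpha_\beta f\|_{L^2}$ is not $\lesssim\D^{1/2}_{K,l}$ for $\gamma+2<0$ without a spare $\<v\>^{(\gamma+2)/2}$). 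The paper handles this with a dedicated device: it writes $J=J_1+J_2$ via a $w^{-1/2}$-splitting and controls $J_1$ by distributing half a $v$-derivative to each $f$-factor through $|\<D_v\>^{1/2}\<v\>^{\gamma/2}\cdot|_{L^2_v}\le|\cdot|_{\sigma,0}$ (a separate argument is given for hard potentials). Some version of this interpolation is needed; the Lemma \ref{Lem26} bookkeeping alone does not supply it.
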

\begin{proof}
We firstly prove \eqref{30a}.
When $\alpha_1=0$, by integration by parts and $\partial_{e_i}w^{2l-2|\alpha|}\lesssim w^{2l-2|\alpha|}\<v\>^{\gamma+2}$, we have 
\begin{align}
	&\quad\,|(\partial^{e_i}\phi\partial^{\alpha}_{e_i}f_\pm,\psi_{2|\alpha|-6}e^{\pm\phi}w^{2l-2|\alpha|}\partial^\alpha f_\pm)_{L^2_{v,x}}|\notag\\
	&\lesssim |(\partial^{e_i}\phi\partial^{\alpha}f_\pm,\psi_{2|\alpha|-6}e^{\pm\phi}(\partial_{e_i}w^{2l-2|\alpha|})\partial^\alpha f_\pm)_{L^2_{v,x}}|\notag\\
	&\lesssim \|\psi_{|\alpha|-3}\nabla_x\phi \<v\>^{\frac{\gamma+2}{2}}w^{l-|\alpha|}\partial^{\alpha}f_\pm\|_{L^2_{v,x}}\|\psi_{|\alpha|-3}w^{l-|\alpha|}\partial^\alpha f_\pm\|_{L^2_{v,x}}\notag\\
	&\lesssim \sum_{|\alpha'|\le 2}\|\psi_{|\alpha'|-3}\partial^{\alpha'}\nabla_x\phi\|_{L^2_x}\sum_{1\le|\alpha|\le K}\|\psi_{|\alpha|-3}\<v\>^{\frac{\gamma+2}{2}}w^{l-|\alpha|}\partial^\alpha f_\pm\|_{L^2_{v,x}}\sum_{|\alpha|\le K}\|\psi_{|\alpha|-3}w^{l-|\alpha|}\partial^\alpha f_\pm\|_{L^2_{v,x}}\notag\\
	&\lesssim \E^{1/2}_{K,l}\D_{K,l}, \label{31a}
\end{align}where we use \eqref{13} to assure that there's always at least one derivative on the first $f_\pm$. 
When $|\alpha_1|\ge 1$, we have $|\alpha|\ge 1$. Then we decompose $f_\pm=\PP f+(\II-\PP)f$ to obtain 
\begin{align}
	&\quad\,(\partial^{\alpha_1+e_i}\phi\partial^{\alpha-\alpha_1}_{e_i}f_\pm,\psi_{2|\alpha|-6}e^{\pm\phi}w^{2l-2|\alpha|}\partial^\alpha f_\pm)_{L^2_{v,x}}=I+J,
\end{align}with 
\begin{align*}
	I &= (\partial^{\alpha_1+e_i}\phi\partial^{\alpha-\alpha_1}_{e_i}\PP f,\psi_{2|\alpha|-6}e^{\pm\phi}w^{2l-2|\alpha|}\partial^\alpha f_\pm)_{L^2_{v,x}},\\
	J&= (\partial^{\alpha_1+e_i}\phi\partial^{\alpha-\alpha_1}_{e_i}(\II-\PP)f,\psi_{2|\alpha|-6}e^{\pm\phi}w^{2l-2|\alpha|}\partial^\alpha f_\pm)_{L^2_{v,x}}.
\end{align*}
Now we estimate $I$ and $J$ as the followings. For $I$, noticing there's exponential decay in $v$, we have 
\begin{align*}
	|I|&\lesssim\|\psi_{|\alpha|-3}\partial^{\alpha_1+e_i}\phi\partial^{\alpha-\alpha_1}\PP f\|_{L^2_{v,x}}\|\psi_{|\alpha|-3}\<v\>^{\frac{\gamma+2}{2}}w^{l-|\alpha|}\partial^\alpha f_\pm\|_{L^2_{v,x}}\\
	&\lesssim \sum_{|\alpha_1|\le K}\|\psi_{|\alpha_1|-3}\partial^{\alpha_1}\nabla_x\phi\|_{L^2_x}\sum_{1\le|\alpha|\le K}\|\psi_{|\alpha|-3}\partial^\alpha\PP f\|_{L^2_{v,x}}\|\psi_{|\alpha|-3}\partial^\alpha f_\pm\|_{\sigma,l-|\alpha|}\\
	&\lesssim \E^{1/2}_{K,l}\D_{K,l},
\end{align*}where we used same discussion on the value of $|\alpha_1|$ as \eqref{33aa}-\eqref{33bb} and give at least one derivative to $\PP f$. 
For $J$, in the case of soft potential $-1\le\gamma+2< 0$, we use the trick in \cite{Guo2012}. 
\begin{align*}
	J &= \big(\partial^{\alpha_1+e_i}\phi \partial_{e_i}\big(w^{-\frac{1}{2}}w^{l-|\alpha|+1}\partial^{\alpha-\alpha_1}(\II-\PP)f\big),\psi_{2|\alpha|-6}w^{-\frac{1}{2}}e^{\pm\phi}w^{l-|\alpha|}\partial^\alpha f_\pm\big)_{L^2_{v,x}}\\
	&\qquad - \big(\partial^{\alpha_1+e_i}\phi \partial_{e_i}\big(w^{-\frac{1}{2}}w^{l-|\alpha|+1}\big)\partial^{\alpha-\alpha_1}(\II-\PP)f,\psi_{2|\alpha|-6}w^{-\frac{1}{2}}e^{\pm\phi}w^{l-|\alpha|}\partial^\alpha f_\pm\big)_{L^2_{v,x}}\\
	&=: J_1+J_2.
\end{align*}
For the term $J_2$, we will use the trick of \eqref{37bb}-\eqref{29}. That is, by \eqref{13}, if $\alpha_1< \alpha$, we use discussion on the value of $|\alpha_1|$ as \eqref{33aa}-\eqref{33bb}. If $\alpha_1=\alpha$, we use the same argument as \eqref{29}. Then
\begin{align*}
	|J_2|
	&\lesssim \E^{1/2}_{K,l}\D_{K,l}. 
\end{align*}where we used $\|\<v\>^{\gamma/2}(\cdot)\|_{L^2_{v}}\lesssim\|\cdot\|_{\sigma,0}$. 
For $J_1$, since $|\<D_v\>^{\frac{1}{2}}\<v\>^{\frac{\gamma}{2}}(\cdot)|_{L^2_v}\le|\cdot|_{\sigma,0}$. By the discussion as \eqref{33aa}-\eqref{33bb}, one has 
\begin{align*}
	|J_1|&\lesssim \big|\big(\partial^{\alpha_1+e_i}\phi \mathbf{i}\eta_i\F_v\big(w^{-\frac{1}{2}}w^{l-|\alpha|+1}\partial^{\alpha-\alpha_1}(\II-\PP)f\big),\psi_{2|\alpha|-6}\F_v\big(w^{-\frac{|\alpha_1|}{2}}e^{\pm\phi}w^{l-|\alpha|}\partial^\alpha f_\pm\big)\big)_{L^2_{v,x}}\big|\\
	&\lesssim \int_{\R^3}\psi_{2|\alpha|-6}|\partial^{\alpha_1}\nabla_x\phi|\|\<D_v\>^{\frac{1}{2}}\<v\>^{\frac{\gamma}{2}}w^{l-|\alpha-\alpha_1|}\partial^{\alpha-\alpha_1}(\II-\PP)f\|_{L^2_{v}}\|\<D_v\>^{\frac{1}{2}}\<v\>^{\frac{\gamma}{2}}w^{l-|\alpha|}\partial^\alpha f_\pm\|_{L^2_v}\,dx\\
	&\lesssim \sum_{|\alpha_1|\le K}\|\psi_{|\alpha_1|-3}\partial^{\alpha_1}\nabla_x\phi\|_{L^2_x}\sum_{|\alpha|\le K}\|\psi_{|\alpha|-3}\partial^{\alpha}(\II-\PP)f\|_{\sigma,l-|\alpha|}
	\sum_{1\le|\alpha|\le K}\|\psi_{|\alpha|-3}\partial^\alpha f_\pm\|_{\sigma,l-|\alpha|}\\
	&\lesssim \E^{1/2}_{K,l}\D_{K,l}, 
\end{align*}
where $\F_v$ is the Fourier transform with respect to $v$, $\mathbf{i}$ is the pure imaginary unit, $\eta_i$ is the variable after Fourier transform $\F_v$. 

For the term $J$ in the case of hard potential $\gamma+2\ge 0$, we use the following argument.
\begin{align*}
	|J|&\le \|\psi_{|\alpha|-3}\partial^{\alpha_1+e_i}\phi w^{l-|\alpha|+1-|e_i|}\partial^{\alpha-\alpha_1}_{e_i}(\II-\PP)f\|_{L^2_{v,x}}\|\psi_{|\alpha|-3}w^{l-|\alpha|}\partial^{\alpha}f_\pm\|_{L^2_{v,x}}.
\end{align*}
As in \eqref{33aa}-\eqref{33bb}, the first term is estimate by 
\begin{align*}
	&\quad\,\sum_{|\alpha_1|=1}\sum_{|\alpha'_1|\le 2}\|\psi_{|\alpha_1+\alpha'_1|-3}\partial^{\alpha_1+\alpha_1'+e_i}\phi\|_{L^2_{x}}\|\psi_{|\alpha-\alpha_1|+|e_i|-3}w^{l-|\alpha-\alpha_1|-|e_i|}\partial^{\alpha-\alpha_1}_{e_i}(\II-\PP)f\|_{L^2_{v,x}}\\
	&+\sum_{|\alpha_1|=2}\sum_{|\alpha'_1|\le 1}\|\psi_{|\alpha_1+\alpha'_1|-3}\partial^{\alpha_1+\alpha_1'+e_i}\phi\|_{L^2_{x}}\sum_{|\alpha'|\le1}\|\psi_{|\alpha+\alpha'-\alpha_1|+|e_i|-3}w^{l-|\alpha+\alpha'-\alpha_1|-|e_i|}\partial^{\alpha+\alpha'-\alpha_1}_{e_i}(\II-\PP)f\|_{L^2_{v,x}}\\
	&+\sum_{3\le|\alpha_1|\le K}\|\psi_{|\alpha_1|-3}\partial^{\alpha_1+e_i}\phi\|_{L^2_{x}}\sum_{|\alpha'|\le2}\|\psi_{|\alpha+\alpha'-\alpha_1|+|e_i|-3}w^{l-|\alpha+\alpha'-\alpha_1|-|e_i|}\partial^{\alpha+\alpha'-\alpha_1}_{e_i}(\II-\PP)f\|_{L^2_{v,x}}\\
	&\lesssim \E^{1/2}_{K,l}\D^{1/2}_{K,l}. 
\end{align*}
Thus $|J|$ is bounded above by $\E^{1/2}_{K,l}\D_{K,l}$.

Collecting all the above estimates, we obtain \eqref{30a}. 
The proof of \eqref{30b} and \eqref{30c} are the same as \eqref{30a}, and the details are omitted for brevity.

\qe\end{proof}

\section{Macroscopic Estimate}
In this section, we assume $\psi=1$. We will analyze the macroscopic dissipation by taking the projection $\P$ on the equation \eqref{7}. Since we are dealing with Vlasov-Poisson-Landau system, the idea here is similar to the Boltzmann equation case \cite{Gressman2011}. Similar macroscopic estimate can be found in \cite{Strain2013,Deng2020c}. Notice that the calculation in this section is valid for both hard potential $\gamma+2\ge 0$ and soft potential $\gamma+2<0$. 

Consider the homogeneous linearized system 
\begin{equation}\label{39}
	\left\{\begin{aligned}
		&\partial_tf_\pm +v\cdot\nabla_x f_\pm\pm \mu^{1/2}v\cdot\nabla_x\phi + Lf_\pm = 0,\\
		&-\Delta_x\phi = \int_{\R^3}(f_+-f_-)\mu^{1/2}\,dv,\\
		&f_\pm|_{t=0}=f_{0,\pm},
	\end{aligned} \right.
\end{equation}which is the homogeneous system of \eqref{7}-\eqref{9}. We write the formal solution to Cauchy problem \eqref{39} to be 
\begin{equation}\label{42b}
	f = e^{tB}f_0,
\end{equation}where $e^{tB}$ denotes the solution operator. For later use, we will analyze the large time behavior of system \eqref{39}. The idea here follows from \cite{Strain2012}.
\begin{Thm}\label{homogen}
	Let $f= e^{tB}f_0$ be the solution to \eqref{39}, $m\ge 0$ be an integer and time decay rate index to be 
	\begin{equation*}
		\sigma_m = \frac{3}{4}+\frac{m}{2}.
	\end{equation*}
Let $l\ge 0$, $t\ge 0$, $l_*>\sigma_m\frac{\gamma+2}{\gamma}$. Then for the case of hard potential $\gamma+2\ge 0$, we have
	\begin{equation}
		\|w^l\nabla^m_xf(t)\|_{L^2_{v,x}}+\|\nabla_x^mE(t)\|_{L^2_x} \lesssim (1+t)^{-\sigma_m}\big(\|w^lf_0\|_{Z_1}+\|E_0\|_{L_1}+\|w^l\nabla_x^mf_0\|_{L^2_{v,x}}\big),
	\end{equation}while for the case of soft potential $-1\le\gamma+2< 0$, we have 
\begin{align}
	\|\nabla^m_xw^lf\|_{L^2_{v,x}}+\|\nabla^m_xE\|_{L^2_x}\lesssim (1+t)^{-\sigma_m}\big(\|w^{l+l_*}f_0\|_{Z_1}+\|E_0\|_{L_1}+\|w^{l+l_*}\nabla^m_xf_0\|_{L^2_{v,x}}).
\end{align}
\end{Thm}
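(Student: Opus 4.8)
The plan is to pass to the Fourier side in $x$, reduce \eqref{39} to a family of linear evolution equations indexed by the frequency $k\in\R^3$, obtain frequency–pointwise decay by a Kawashima–type energy method with a frequency–weighted interactive functional, and recombine by a low/high frequency split, as in \cite{Strain2012,Strain2013,Deng2020c}. Writing $\widehat f_\pm(t,k,v)$ and $\widehat E=-ik\widehat\phi$ with $|k|^2\widehat\phi=\int_{\R^3}(\widehat f_+-\widehat f_-)\mu^{1/2}\,dv$ (cf.\ \eqref{8}), the Fourier transform of \eqref{39} gives, for each fixed $k$, a linear equation for $\widehat f_\pm$ in $L^2_v$ whose transport part $i(v\cdot k)$ is skew–adjoint. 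Pairing it in $L^2_v$ against $w^{2l}\widehat f_\pm$, summing over $\pm$ and taking real parts: the transport term drops; Lemma~\ref{lemmaL}(ii) gives $-\Re\sum_\pm(w^{2l}L\widehat f_\pm,\widehat f_\pm)_{L^2_v}\gtrsim|\widehat f|^2_{\sigma,l}-C|\widehat f|^2_{L^2(B_C)}$; and, because $w$ is even so $\int v\,w^{2l}\mu\,dv=0$ kills the hydrodynamic contribution, the field term — after using the projected continuity relation $\partial_t(\widehat a_+-\widehat a_-)+ik\cdot\big[(v\mu^{1/2},(\I-\P)\widehat f_+)_{L^2_v}-(v\mu^{1/2},(\I-\P)\widehat f_-)_{L^2_v}\big]=0$ and $|k|^2\widehat\phi=\widehat a_+-\widehat a_-$ — equals, up to sign, the perfect derivative $\tfrac12\partial_t|\widehat E|^2$ when $l=0$, and is $\lesssim|\widehat E|\,|(\I-\P)\widehat f|_{\sigma,0}$ when $l>0$. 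Hence for $l=0$ one gets $\tfrac12\tfrac{d}{dt}\big(|\widehat f(t,k)|^2_{L^2_v}+|\widehat E(t,k)|^2\big)+\lambda|(\I-\P)\widehat f|^2_{\sigma,0}\le 0$. Next, following the macroscopic analysis of \cite{Strain2013,Deng2020c} one projects \eqref{39} onto $\ker L$ and builds an interactive functional $\mathcal{E}^{\mathrm{int}}_k$, bilinear in $(\widehat a_\pm,\widehat b,\widehat c,\widehat E)$, with $|\mathcal{E}^{\mathrm{int}}_k|\lesssim|\widehat f|^2_{L^2_v}+|\widehat E|^2$ and $\tfrac{d}{dt}\mathcal{E}^{\mathrm{int}}_k+\lambda\,\rho(|k|)\big(|\widehat a_\pm|^2+|\widehat b|^2+|\widehat c|^2+|\widehat E|^2\big)\lesssim|(\I-\P)\widehat f|^2_{\sigma,0}$, where $\rho(r)=r^2/(1+r^2)$; keeping $\widehat E$ rather than $\widehat\phi$ in the functional avoids the singularity $|\widehat\phi|\sim|k|^{-1}|\widehat a_+-\widehat a_-|$ at low frequency. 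Taking $\mathcal{E}_{k,l}:=|w^l\widehat f|^2_{L^2_v}+|\widehat E|^2+\kappa\,\mathcal{E}^{\mathrm{int}}_k$ with $\kappa>0$ small, using $|(\I-\P)\widehat f|^2_{\sigma,0}\ge\rho(|k|)|(\I-\P)\widehat f|^2_{\sigma,0}$ and the Gaussian decay of $\P\widehat f$, and handling the field term for $l>0$ by a bootstrap on the weight using the $l=0$ estimate, one arrives at
\begin{equation*}
\frac{d}{dt}\mathcal{E}_{k,l}(t)+\lambda\,\rho(|k|)\,\mathcal{D}_{k,l}(t)\le 0,\qquad \mathcal{E}_{k,l}\approx|w^l\widehat f|^2_{L^2_v}+|\widehat E|^2,
\end{equation*}
where, by \eqref{13b}, $\mathcal{D}_{k,l}\approx|\langle v\rangle^{(\gamma+2)/2}w^l\widehat f|^2_{L^2_v}+|\langle v\rangle^{\gamma/2}\nabla_v(w^l\widehat f)|^2_{L^2_v}+|\widehat E|^2$.

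In the hard potential case $\gamma+2\ge 0$ one has $\langle v\rangle^{(\gamma+2)/2}\ge 1$, so $\mathcal{D}_{k,l}\gtrsim\mathcal{E}_{k,l}$ and Gronwall gives $\mathcal{E}_{k,l}(t)\lesssim e^{-\lambda\rho(|k|)t}\mathcal{E}_{k,l}(0)$. By Plancherel,
\begin{equation*}
\|w^l\nabla_x^mf(t)\|_{L^2_{v,x}}^2+\|\nabla_x^mE(t)\|_{L^2_x}^2\lesssim\int_{\R^3}|k|^{2m}e^{-\lambda\rho(|k|)t}\big(|w^l\widehat f_0(k)|^2_{L^2_v}+|\widehat E_0(k)|^2\big)\,dk .
\end{equation*}
Splitting the integral at $|k|=1$: on $|k|\le 1$ one uses $\rho(|k|)\approx|k|^2$, $\sup_k|w^l\widehat f_0(k)|_{L^2_v}\le\|w^lf_0\|_{Z_1}$, $\sup_k|\widehat E_0(k)|\le\|E_0\|_{L^1_x}$, and $\int_{|k|\le1}|k|^{2m}e^{-\lambda|k|^2t}\,dk\lesssim(1+t)^{-(3/2+m)}$; on $|k|\ge 1$, $\rho(|k|)\approx 1$ yields an exponentially small factor times $\|w^l\nabla_x^mf_0\|^2_{L^2_{v,x}}+\|\nabla_x^mE_0\|^2_{L^2_x}$. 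Adding and taking square roots gives the hard–potential estimate with $\sigma_m=\tfrac34+\tfrac m2$.

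In the soft potential case $-1\le\gamma+2<0$ the factor $\langle v\rangle^{(\gamma+2)/2}<1$ degenerates as $|v|\to\infty$, so $\mathcal{D}_{k,l}\not\gtrsim\mathcal{E}_{k,l}$ and there is no spectral gap; this is the main obstacle. I would run the time–velocity–weighted scheme of \cite{Strain2012,Strain2013}: the same differential inequality at weight $l+l_*$ shows $\mathcal{E}_{k,l+l_*}(t)\le\mathcal{E}_{k,l+l_*}(0)$, and the Hölder interpolation
\begin{equation*}
|w^l\widehat f|_{L^2_v}\le|\langle v\rangle^{(\gamma+2)/2}w^l\widehat f|_{L^2_v}^{\theta}\,|w^{l+l_*}\widehat f|_{L^2_v}^{1-\theta},\qquad l_*=\frac{\theta}{2(1-\theta)}\cdot\frac{\gamma+2}{\gamma},
\end{equation*}
together with $\mathcal{D}_{k,l}\gtrsim|\langle v\rangle^{(\gamma+2)/2}w^l\widehat f|^2_{L^2_v}+|\widehat E|^2$, turns the master inequality into $\tfrac{d}{dt}\mathcal{E}_{k,l}+\lambda\rho(|k|)\,\mathcal{E}_{k,l+l_*}(0)^{-\frac{1-\theta}{\theta}}\mathcal{E}_{k,l}^{1/\theta}\le 0$, hence $\mathcal{E}_{k,l}(t)\lesssim(1+\rho(|k|)t)^{-p}\,\mathcal{E}_{k,l+l_*}(0)$ with $p=\theta/(1-\theta)$. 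Now both the low–frequency integral $\int_{|k|\le1}|k|^{2m}(1+|k|^2t)^{-p}\,dk$ and the high–frequency part $\int_{|k|\ge1}|k|^{2m}(1+t)^{-p}\,dk$ must decay at least as $(1+t)^{-2\sigma_m}$ (the squared rate), which forces $p\ge 2\sigma_m$ and, through $l_*=\tfrac{p(\gamma+2)}{2\gamma}$, yields exactly the stated threshold $l_*>\sigma_m\frac{\gamma+2}{\gamma}$; repeating the low/high split with $\sup_k|w^{l+l_*}\widehat f_0(k)|_{L^2_v}\le\|w^{l+l_*}f_0\|_{Z_1}$ gives the soft–potential estimate.

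The part demanding the most care is the construction of $\mathcal{E}^{\mathrm{int}}_k$ and the proof of the uniform–in–$k$ macroscopic dissipation inequality: this is the Kawashima/Guo–type computation in which the Poisson coupling $|k|^2\widehat\phi=\widehat a_+-\widehat a_-$ and the precise two–species structure of $\ker L$ (note $\widehat b,\widehat c$ are common to both species while the field only sees $\widehat a_+-\widehat a_-$) must be exploited to produce the $\rho(|k|)$–weighted control of $(\widehat a_\pm,\widehat b,\widehat c,\widehat E)$ without ever inverting $|k|^2$. In the soft case there is, in addition, the delicate (but routine) bookkeeping of the interpolation exponent so that the polynomial decay rate crosses the needed threshold exactly when $l_*>\sigma_m\frac{\gamma+2}{\gamma}$. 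Both steps follow the templates in \cite{Strain2012,Strain2013,Deng2020c}.
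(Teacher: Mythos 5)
Your proposal is correct and follows essentially the same route as the paper: a frequency-pointwise Lyapunov functional $\E^{(2)}_l(t,y)\approx|w^l\widehat f|^2_{L^2_v}+|\widehat E|^2$ satisfying $\partial_t\E^{(2)}_l+\lambda\frac{|y|^2}{1+|y|^2}\big(|\widehat f|^2_{\sigma,l}+|\widehat E|^2\big)\le 0$ (the paper imports this from the cited macroscopic-estimate lemma rather than rederiving the interactive functional), then exponential decay plus a low/high frequency split with $Z_1$ and $L^1$ control for hard potentials, and the Strain-type velocity-weight interpolation $\E_l^{(j+1)/j}\lesssim\E_{l-\frac{\gamma+2}{2\gamma}}\E_{l+j\frac{\gamma+2}{2\gamma}}^{1/j}(0)$ yielding algebraic decay $(1+\rho(|y|)t)^{-j}$ with $j>2\sigma_m$ for soft potentials, which is exactly your $\theta$/$p$ bookkeeping. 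The rates, the threshold $l_*>\sigma_m\frac{\gamma+2}{\gamma}$, and all intermediate steps agree with the paper's argument.
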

In order to prove this Theorem, we need the following preparation.

Recall the projection notation in \eqref{10}. By multiplying the equation \eqref{7} with $\mu^{1/2}, v_j\mu^{1/2}(j=1,2,3)$, $\frac{1}{6}(|v|^2-3)\mu^{1/2}$, $(v_iv_j-1)\mu^{1/2}$, and $\frac{1}{10}(|v|^2-5)v_j\mu^{1/2}$, and then integrating them over the $\R^3_v$, we have 
\begin{equation}\label{17}\left\{\begin{aligned}
&\partial_ta_\pm + \nabla\cdot b + \nabla_x\cdot(v\mu^{1/2},(\II-\PP)f)_{L^2_v} =0,\\
&\partial_t\big(b_j+(v_j\mu^{1/2},(\II-\PP)f)_{L^2_v}\big)+\partial_j(a_\pm+2c)\mp E_j+(v_j\mu^{1/2},v\cdot\nabla_x(\II-\PP)f)_{L^2_v}\\&\qquad\qquad\qquad\qquad\qquad\qquad\qquad\qquad\qquad = (L_\pm f+g_\pm,v_j\mu^{1/2})_{L^2_v},\\
&\partial_t\Big(c+\frac{1}{6}((|v|^2-3)\mu^{1/2},(\II-\PP)f)_{L^2_v}\Big)+\frac{1}{3}\nabla_x\cdot b + \frac{1}{6}((|v|^2-3)\mu^{1/2},v\cdot\nabla(\II-\PP)f)_{L^2_v} \\&\qquad\qquad\qquad\qquad\qquad\qquad\qquad\qquad\qquad= \frac{1}{6}(L_\pm f+g_\pm,(|v|^2-3)\mu^{1/2})_{L^2_v},\\
&\partial_t\big(\Theta_{jj}((\II-\PP)f)+2c\big) + 2\partial_jb_j = \Theta_{jj}(g_\pm+h_\pm),\\
&\partial_t\Theta_{jk}((\II-\PP)f)+\partial_jb_k+\partial_kb_j + \nabla_x\cdot(v\mu^{1/2},(\II-\PP)f)_{L^2_v} \\&\qquad\qquad\qquad\qquad\qquad\qquad\qquad\qquad\qquad= \Theta_{jk}(g_\pm+h_\pm)+(\mu^{1/2},g_\pm)_{L^2_v},\ j\neq k,\\
&\partial_t\Lambda_j((\II-\PP)f)+\partial_jc = \Lambda_j(g_\pm+h_\pm),
\end{aligned}\right.
\end{equation}
where for brevity, we denote $I=(I_+,I_-)$ with $I_\pm f=f_\pm$ and 
\begin{align}\label{22a}
g_\pm &= \pm\nabla_x\phi\cdot\nabla_vf_\pm\mp\frac{1}{2}\nabla_x\phi\cdot vf_\pm+\Gamma_\pm(f,f),\\
h_\pm &= -v\cdot\nabla_x(\II-\PP)f+L_\pm f. \notag
\end{align}
For high-order moments, we define for $1\le j,k\le 3$ that 
\begin{align*}
	\Theta_{jk}(f_\pm) = ((v_jv_k-1)\mu^{1/2},f_\pm)_{L^2_v},\ \ \Lambda_j(f_\pm) =\frac{1}{10}((|v|^2-5)v_j\mu^{1/2},f_\pm)_{L^2_v}. 
\end{align*}
Notice that $(\P_\pm f,v\mu^{1/2})_{L^2_v}$ and $(\P_\pm f,(|v|^2-3)\mu^{1/2})_{L^2_v}$ is not $0$ in general and similar for $\Gamma_\pm$. Also, we have used
\begin{align*}
(\pm\nabla_x\phi\cdot\nabla_vf_\pm\mp\frac{1}{2}\nabla_x\phi\cdot vf_\pm,\mu^{1/2})_{L^2_v}=0,
\end{align*}which is obtained by integration by parts on $v$.
By taking the mean value of every two equations with sign $\pm$ in \eqref{17}, we have 
\begin{equation}\label{19}\left\{
\begin{aligned}
&\partial_t\Big(\frac{a_++a_-}{2}\Big)+\nabla_x\cdot b = 0,\\
&\partial_tb_j+\partial_j\Big(\Big(\frac{a_++a_-}{2}\Big)+2c\Big)+\frac{1}{2}\sum_{k=1}^3\partial_k\Theta_{jk}((\I-\P)f\cdot(1,1))
= \frac{1}{2}(g_++g_-,v_j\mu^{1/2})_{L^2_v},\\
&\partial_tc+\frac{1}{3}\nabla_x\cdot b + \frac{5}{6}\sum^3_{j=1}\partial_j\Lambda_j((\I-\P)f\cdot(1,1)) = \frac{1}{12}(g_++g_-,(|v|^2-3)\mu^{1/2})_{L^2_v},\\
&\partial_t\Big(\frac{1}{2}\Theta_{jk}((\II-\PP)f\cdot(1,1))+2c\delta_{jk}\Big) + \partial_jb_k+\partial_kb_j = \frac{1}{2}\Theta_{jk}(g_++g_-+h_++h_-),\\
&\frac{1}{2}\partial_t\Lambda_j((\II-\PP)f\cdot(1,1))+\partial_jc = \frac{1}{2}\Lambda_j(g_++g_-+h_++h_-),
\end{aligned}\right.
\end{equation}for $1\le j\le3$. 
where $\delta_{jk}$ is the Kronecker delta. Moreover, for obtaining the dissipation of the electric field $E$, we take the difference with sign $\pm$ in the first two equations in \eqref{17}, we have 
\begin{equation}\label{21}\left\{
\begin{aligned}
&\partial_t(a_+-a_-)+\nabla_x\cdot G=0,\\
&\partial_tG + \nabla_x(a_+-a_-)-2E+\nabla_x\cdot\Theta((\I-\P)f\cdot(1,-1))=((g+Lf)\cdot(1,-1),v\mu^{1/2})_{L^2_v},
\end{aligned}\right.
\end{equation}
where 
\begin{align}\label{27aa}
G = (v\mu^{1/2},(\I-\P)f\cdot(1,-1))_{L^2_v}.
\end{align}
Recall that $E=-\nabla_x\phi$. Then by equation \eqref{8}, we have 
\begin{align}\label{16}
\nabla_x\cdot E = a_+-a_-. 
\end{align}

\begin{Lem}\label{Lemma31}
	Let $(f,E)$ be the solution to the Cauchy problem \eqref{7}-\eqref{9}. For any $K\ge 3$, there exists a functional $\E^{(1)}_{K}(t),\E^{(1)}_{K,h}(t)$ such that
	\begin{align}\label{27a}
	\E^{(1)}_{K}&\lesssim \sum_{|\alpha|\le K}\|\partial^\alpha f\|^2_{L^2_{x,v}}+\sum_{|\alpha|\le K-1}\|\partial^\alpha E\|^2_{L^2_x},\\
	\label{27b}\E^{(1)}_{K,h}&\lesssim \sum_{1\le|\alpha|\le K}\|\partial^\alpha\P f\|^2_{L^2_{x,v}}+\sum_{|\alpha|\le K}\|\partial^\alpha(\I-\P) f\|^2_{L^2_{x,v}}+\sum_{|\alpha|\le K-1}\|\partial^\alpha E\|^2_{L^2_x},
	\end{align}
	and for any $t\ge 0$, 
	\begin{equation}\label{24}\begin{aligned}
	\partial_t&\E^{(1)}_{K} + \lambda\sum_{|\alpha|\le K-1} \|\partial^\alpha\nabla_x(a_\pm,b,c)\|^2_{L^2_x}+\|a_+-a_-\|^2_{L^2_x}+\sum_{|\alpha|\le K-1}\|\partial^\alpha E\|^2_{L^2_x}\\
	&\qquad\lesssim\sum_{|\alpha|\le K} \|(\I-\P)\partial^{\alpha}{f}\|_{\sigma,0}^2+\E_{K,l}(t)\D_{K,l}(t).
	\end{aligned}
	\end{equation}
	\begin{equation}\label{24a}\begin{aligned}
	\partial_t&\E^{(1)}_{K,h} + \lambda\sum_{1\le|\alpha|\le K-1} \|\partial^\alpha\nabla_x(a_\pm,b,c)\|^2_{L^2_x}+\|\nabla_x(a_+-a_-)\|^2_{L^2_x}+\sum_{|\alpha|\le K-1}\|\partial^\alpha E\|^2_{L^2_x}\\
	&\qquad\lesssim\sum_{|\alpha|\le K} \|(\I-\P)\partial^{\alpha}{f}\|_{\sigma,0}^2+\E^h_{K,l}(t)\D_{K,l}(t).
	\end{aligned}
	\end{equation}
\end{Lem}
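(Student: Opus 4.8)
# Proof Proposal for Lemma 3.3

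The plan is to construct the macroscopic energy functionals $\E^{(1)}_K$ and $\E^{(1)}_{K,h}$ as suitable linear combinations of space-derivative $L^2$ inner products of the macroscopic quantities $a_\pm, b, c$ against each other (and against moments of $(\I-\P)f$), together with a large multiple of the standard energy $\sum_{|\alpha|\le K}\|\partial^\alpha f\|^2_{L^2_{x,v}}$ to guarantee positivity. The guiding principle is the classical interaction-energy method of Guo (see \cite{Guo2012,Strain2013,Gressman2011}): from the conservation-law-type system \eqref{17}--\eqref{21}, each of the quantities $\nabla_x a_\pm, \nabla_x b, \nabla_x c$ and the field $E$ can be recovered, up to controllable error, by pairing one equation in the system with a time-derivative of another, so that after an integration by parts in $x$ and in $t$ the elliptic gain $\|\partial^\alpha\nabla_x(a_\pm,b,c)\|^2_{L^2_x}$ appears with a good sign.

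The key steps, in order, would be: (1) apply $\partial^\alpha$ for $|\alpha|\le K-1$ to the macroscopic system \eqref{17}, \eqref{19}, \eqref{21}; (2) derive the dissipation for $c$: pair the $c$-equation with $\nabla_x\cdot b$-type terms, or more precisely use the evolution equation for $\Lambda_j((\I-\P)f)$ together with $\partial_j c$ to extract $\|\partial^\alpha\nabla_x c\|^2_{L^2_x}$; (3) derive the dissipation for $b$: use the $\Theta_{jk}$-equations paired with $\partial_j b_k + \partial_k b_j$, combined with Korn-type / elliptic arguments in $x$, to get $\|\partial^\alpha\nabla_x b\|^2_{L^2_x}$; (4) derive the dissipation for $a_\pm$: use the $b_j$-equation, pairing $\partial_t b_j + \partial_j(a_\pm + 2c) \mp E_j + \cdots$ with $\partial_j a_\pm$, to recover $\|\partial^\alpha\nabla_x a_\pm\|^2_{L^2_x}$ modulo the already-controlled $\nabla_x c$ and $E$; (5) derive the dissipation for $E$: pair the $G$-equation in \eqref{21} with $E$ itself, using $\nabla_x\cdot E = a_+ - a_-$ from \eqref{16}, to obtain $\|\partial^\alpha E\|^2_{L^2_x}$ and, at $|\alpha|=0$, the term $\|a_+-a_-\|^2_{L^2_x}$ (respectively $\|\nabla_x(a_+-a_-)\|^2_{L^2_x}$ for the high-order version); (6) add up these identities with small relative weights, absorb all the cross terms involving $\|(\I-\P)\partial^\alpha f\|_{\sigma,0}$ and the time-derivative boundary terms $\partial_t(\cdots)$ into $\partial_t\E^{(1)}_K$, and verify the equivalences \eqref{27a}, \eqref{27b} by Cauchy--Schwarz and the smallness of the relative weights.

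The remaining task is to bound the right-hand-side contributions coming from the nonlinear and collision terms $g_\pm$ and $L_\pm f$ (appearing through $h_\pm$). For the $L_\pm f$ contributions one uses Lemma \ref{lemmaL} together with $\|(\I-\P)\partial^\alpha f\|_{\sigma,0}$-smallness; these land in the first term on the right of \eqref{24}/\eqref{24a}. For the $g_\pm$ contributions — which contain $\nabla_x\phi\cdot\nabla_v f$, $\nabla_x\phi\cdot v f$, and $\Gamma_\pm(f,f)$ — one pairs against low-velocity-weight Gaussian moments, so that the velocity weights are harmless, and applies Lemma \ref{lemmag} (the last estimate there, with the Gaussian $\zeta(v)$) together with the Sobolev inequality \eqref{13} to produce the bound $\E_{K,l}(t)\D_{K,l}(t)$ in \eqref{24} and $\E^h_{K,l}(t)\D_{K,l}(t)$ in \eqref{24a}. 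The difference between the two versions is that in \eqref{24a} the zeroth-order quantities $a_\pm, b, c$ are excluded from $\E^{(1)}_{K,h}$, which is why only $\nabla_x(a_+-a_-)$ (not $a_+-a_-$ itself) is recovered, and why the nonlinear terms can be charged to the high-order functional $\E^h_{K,l}$; one must be careful that every nonlinear term produced in the high-order estimate genuinely carries at least one $x$-derivative, which is exactly what \eqref{13} and the structure of $g_\pm$ guarantee.

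I expect the main obstacle to be step (3)--(4): extracting the full gradient $\nabla_x(a_\pm,b,c)$ rather than just $\nabla_x\cdot b$ or $\Delta_x$-type quantities. This requires the standard but delicate interplay between the $\Theta_{jk}$ and $\Lambda_j$ high-order moment equations and a Korn-type coercivity in $x$; one has to carefully track which combination of the five equations in \eqref{19} yields each component, while ensuring the error terms (time-derivative "interaction" terms and microscopic contributions) are all either total time derivatives that can be moved into $\partial_t\E^{(1)}_K$ or are bounded by $\eta\|\partial^\alpha\nabla_x(a_\pm,b,c)\|^2 + C_\eta\|(\I-\P)\partial^\alpha f\|^2_{\sigma,0}$ and absorbed. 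The bookkeeping of which terms go where, and choosing the relative constants so that positivity of $\E^{(1)}_K$ and the sign of the dissipation both hold, is the real work; the nonlinear estimates are comparatively routine given the lemmas already established.
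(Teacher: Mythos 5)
Your proposal follows essentially the same route as the paper, which simply invokes the standard macroscopic interaction-functional argument of \cite[Lemma 3.1]{Deng2020c} and \cite[Lemma 5.1]{Duan2013} (with the $\sigma$-norm replacing the pseudo-differential norm there); your steps (1)--(6) and the treatment of $g_\pm$ via the Gaussian-moment estimate of Lemma \ref{lemmag} are exactly that scheme. One caution: do not add a large multiple of $\sum_{|\alpha|\le K}\|\partial^\alpha f\|^2_{L^2_{x,v}}$ to force positivity of $\E^{(1)}_{K}$ --- the lemma only requires the upper bound \eqref{27a} (the indefinite functional is later added with a small coefficient $\kappa$ to a genuinely coercive energy), and folding in the full $L^2$ energy would inject its own time derivative, hence terms like $\|\partial_t\phi\|_{L^\infty}\E_{K,l}$ and $\E^{1/2}_{K,l}\D_{K,l}$, into \eqref{24}, which are not of the stated form.
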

\begin{proof}
	The proof is the same as \cite[Lemma 3.1]{Deng2020c} and \cite[Lemma 5.1]{Duan2013}, so we only illustrate the difference; see also \cite{Duan2013,Strain2013}. 
	Let $\zeta(v)$ be a function satisfying $
	|\zeta(v)|\approx e^{-\lambda|v|^2},$ for some $\lambda>0$. Then we will use  
\begin{align*}
	(\zeta,(\I-\P)f)_{L^2_{v,x}}\lesssim \|(\I-\P)f\|_{\sigma,0},
\end{align*}
and 
\begin{align*}
	|(L f,f)_{L^2_{v,x}}|\lesssim\|(\I-\P)f\|^2_{\sigma,0},
\end{align*}instead of using the norm $\|(\tilde{a}^{1/2})^w(\cdot)\|_{L^2_{v,x}}$ in \cite[Lemma 3.1]{Deng2020c}. Using the same argument as \cite[Lemma 3.1]{Deng2020c} or \cite[Lemma 5.1]{Duan2013}, we complete the proof. 
\end{proof}

\begin{Lem}
	Let $f$ be the solution to \eqref{39}. Then the followings are valid. 
	
	(1) There exists a time-frequency interactive functional $\E^{(2)}$ such that 
	\begin{equation*}
	\E^{(2)} \approx |\widehat{f}|_{L^2_v}^2+|\widehat{E}|^2,
	\end{equation*}and for $t\ge 0$, $y\in\R^3$, 
	\begin{equation}\label{44a}
	\partial_t\E^{(2)}(t,y)+\frac{\lambda|y|^2}{1+|y|^2}(|\widehat{f}|^2_{\sigma,0}+|E|^2)\le 0. 
	\end{equation}
	
	(2)
	There exists a time-frequency interactive functional $\E^{(2)}_l$ such that 
	\begin{align}\label{444}
	\E^{(2)}_l \approx |w^l\widehat{f}|_{L^2_v}^2+|\widehat{E}|^2,
	\end{align}and for $t\ge 0$, $y\in\R^3$, 
	\begin{align}\label{45aa}
	\partial_t\E^{(2)}_l(t,y)+\frac{\lambda|y|^2}{1+|y|^2}(|\widehat{f}|^2_{\sigma,l}+|\widehat{E}|^2)\le 0. 
	\end{align}
\end{Lem}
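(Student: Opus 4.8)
The plan is to work entirely on the Fourier side in $x$, by a Kawashima-type time-frequency Lyapunov functional in the spirit of \cite{Strain2012,Duan2013}. Taking $\widehat{\cdot}$ in $x$ turns \eqref{39} into a family, parametrized by $y\in\R^3$, of ODEs in $t$ for $\widehat f(t,y,\cdot)\in L^2_v$, with $v\cdot\nabla_x$ replaced by $i(v\cdot y)$, the Poisson relation becoming $|y|^2\widehat\phi=\widehat a_+-\widehat a_-$ and $\widehat E=-iy\widehat\phi$ (so $\widehat E\parallel y$); the macroscopic balance laws \eqref{17}, \eqref{19}, \eqref{21} likewise become pointwise-in-$y$ identities for $\widehat a_\pm,\widehat b,\widehat c$ and for the microscopic moments $\widehat G,\widehat\Theta,\widehat\Lambda$ of $(\I-\P)\widehat f$, each $\lesssim|(\I-\P)\widehat f|_{\sigma,0}$ in $L^2_v$. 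All constants below are uniform in $y$.

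For (1), I would first take $\Re(\cdot,\widehat f_\pm)_{L^2_v}$ of the equation and sum over $\pm$: the transport term is purely imaginary and drops, the collision term contributes $\gtrsim|(\I-\P)\widehat f|^2_{\sigma,0}$ by Lemma \ref{lemmaL}(i), and — using that $(v\mu^{1/2},\widehat f_+-\widehat f_-)_{L^2_v}=\widehat G$ is a moment of $(\I-\P)\widehat f$, the continuity identity $\partial_t(\widehat a_+-\widehat a_-)+iy\cdot\widehat G=0$, and $\widehat E\parallel y$ — the electric coupling is exactly $\tfrac12\partial_t|\widehat E|^2$, giving $\partial_t(|\widehat f|^2_{L^2_v}+|\widehat E|^2)+\lambda|(\I-\P)\widehat f|^2_{\sigma,0}\le0$. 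This sees only the microscopic dissipation, so I add correction terms built from \eqref{19} and \eqref{21}: a hierarchy $\mathcal I_a,\mathcal I_b,\mathcal I_c$ of the form $(1+|y|^2)^{-1}\Re\big(iy\,q_1\cdot\overline{q_2}\big)$ with $q_1,q_2$ among $\{\widehat a_\pm,\widehat b,\widehat c,\widehat\Theta,\widehat\Lambda\}$, each $\lesssim|\widehat f|^2_{L^2_v}$, whose $t$-derivative via the balance laws produces $-\tfrac{\lambda|y|^2}{1+|y|^2}|(\widehat a_\pm,\widehat b,\widehat c)|^2$ up to terms that are a small multiple of an already-controlled dissipation or moment errors absorbed by $|(\I-\P)\widehat f|^2_{\sigma,0}$ — the $(1+|y|^2)^{-1}$ normalization being essential, so that every term produced carries a harmless factor $\tfrac{|y|^j}{1+|y|^2}\le1$, $j\le2$; and $\mathcal I_E=-\Re(\widehat G\cdot\overline{\widehat E})$ (no normalization needed, as $\widehat G$ is already microscopic), whose $t$-derivative via the $\widehat G$-equation in \eqref{21} produces $-(|y|^2+2)|\widehat E|^2$ plus errors $\lesssim|y||\widehat\Theta||\widehat E|+|(\I-\P)\widehat f|_{\sigma,0}|\widehat E|$ that split by Young into pieces absorbed respectively by $(|y|^2+2)|\widehat E|^2$ and by $|(\I-\P)\widehat f|^2_{\sigma,0}$. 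Then $\E^{(2)}:=|\widehat f|^2_{L^2_v}+|\widehat E|^2+\kappa\mathcal I_E+\kappa_a\mathcal I_a+\kappa_b\mathcal I_b+\kappa_c\mathcal I_c$ with $0<\kappa_c\ll\kappa_b\ll\kappa_a\ll\kappa\ll1$ satisfies $\E^{(2)}\approx|\widehat f|^2_{L^2_v}+|\widehat E|^2$ and, collecting everything, $\partial_t\E^{(2)}+\lambda|(\I-\P)\widehat f|^2_{\sigma,0}+\lambda|\widehat E|^2+\tfrac{\lambda|y|^2}{1+|y|^2}|(\widehat a_\pm,\widehat b,\widehat c)|^2\le0$, from which \eqref{44a} follows since $\tfrac{|y|^2}{1+|y|^2}\le1$ and $|\widehat f|^2_{\sigma,0}\approx|(\widehat a_\pm,\widehat b,\widehat c)|^2+|(\I-\P)\widehat f|^2_{\sigma,0}$.

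For (2), I would proceed by a low/high frequency split. For $|y|\gtrsim1$, where $\tfrac{|y|^2}{1+|y|^2}\gtrsim1$ and all dissipations in the previous inequality are effectively full-strength, I run the weighted estimate with multiplier $w^{2l}\widehat f_\pm$ directly on \eqref{39}: the transport term still drops (purely imaginary), Lemma \ref{lemmaL}(ii) gives $-\lambda|\widehat f|^2_{\sigma,l}+C|\widehat f|^2_{L^2(B_C)}$ with $|\widehat f|^2_{L^2(B_C)}\lesssim|(\widehat a_\pm,\widehat b,\widehat c)|^2+|(\I-\P)\widehat f|^2_{\sigma,0}$, and the evenness of $w^{2l}$ in $v$ again kills the $\P$-part of the electric coupling. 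For $|y|\lesssim1$ I instead run the estimate with multiplier $w^{2l}(\I-\P)\widehat f_\pm$ on the equation for $(\I-\P)\widehat f$ obtained by applying $\I-\P$ to \eqref{39}: Lemma \ref{lemmaL}(ii) then leaves only the microscopic remainder $|(\I-\P)\widehat f|^2_{\sigma,0}$, and the surviving couplings are of the form $|y||(\widehat a_\pm,\widehat b,\widehat c)||(\I-\P)\widehat f|_{L^2_v}+|\widehat E||(\I-\P)\widehat f|_{\sigma,0}$, which (as $|y|\lesssim1$) split by Young into pieces absorbed by $\tfrac{|y|^2}{1+|y|^2}|(\widehat a_\pm,\widehat b,\widehat c)|^2$, $|(\I-\P)\widehat f|^2_{\sigma,0}$ and $|\widehat E|^2$. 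In either regime, taking $\E^{(2)}_l:=M\E^{(2)}+|w^l\widehat f|^2_{L^2_v}$ (resp. $M\E^{(2)}+|w^l(\I-\P)\widehat f|^2_{L^2_v}$ at low frequency) with $M$ large absorbs all remainders into $M$ times the dissipation of $\E^{(2)}$, and $\E^{(2)}_l\approx|w^l\widehat f|^2_{L^2_v}+|\widehat E|^2$ uniformly in $y$ because $l\ge0$ and $\P\widehat f$ has Gaussian decay; this gives \eqref{45aa}.

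The basic energy estimates are routine; the step I expect to be the main obstacle is the bookkeeping of the interactive-functional hierarchy and of the weighted estimate — specifically, checking that each error term pairing a macroscopic quantity against a microscopic moment, which unavoidably carries a factor $|y|$, splits into a piece absorbed by the $\tfrac{|y|^2}{1+|y|^2}$-degenerate macroscopic dissipation (which works precisely thanks to the $(1+|y|^2)^{-1}$ normalization and, in the weighted case, to the low/high frequency split) and a piece absorbed by the full-strength microscopic and electric dissipation, and that the ordering $\kappa_c\ll\kappa_b\ll\kappa_a\ll\kappa\ll1$ keeps the scheme consistent. This bookkeeping I would carry out exactly as in \cite{Strain2012,Duan2013}.
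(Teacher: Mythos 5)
Your proposal is correct and follows essentially the same route as the paper, which for this lemma simply defers to \cite{Deng2020c} (Lemma 3.3 there) with the Landau dissipation norm from Lemma \ref{lemmaL} substituted in: that reference carries out exactly the Kawashima-type time-frequency Lyapunov construction you describe — basic $L^2_v$ estimate giving microscopic dissipation, $(1+|y|^2)^{-1}$-normalized interactive functionals from the macroscopic balance laws \eqref{19}, \eqref{21} plus $-\Re(\widehat G\cdot\overline{\widehat E})$ for the field, and a weighted estimate (with the $(\I-\P)$ modification at low frequency) combined with a large multiple of $\E^{(2)}$ for part (2). The details you flag as bookkeeping are indeed the content of the cited lemma, and your treatment of the degenerate macroscopic dissipation and of the low/high frequency split is consistent with it.
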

\begin{proof}
	The proof is the same as \cite[Lemma 3.3]{Deng2020c}, where we use dissipation rate from Lemma \ref{lemmaL} instead. Then we can obtain the desired result. 
\end{proof}

Now we are in a position to prove the large time behavior of the homogeneous system \eqref{39}. 
\begin{proof}[Proof of Theorem \ref{homogen}]
	{\bf Step 1}. We firstly prove the hard potential case. 
	By noticing $\|w^l(\cdot)\|_{L^2_v}\lesssim \|\cdot\|_{\sigma,l}$ for hand potential, \eqref{45aa} gives that 
	\begin{align*}
	\partial_t\E^{(2)}_l(t,y)+\frac{\lambda|y|^2}{1+|y|^2}\E^{(2)}_l(t,y)\le 0. 
	\end{align*}
	Then by solving this ODE, we have 
	\begin{align*}
	\E^{(2)}_l(t,y)\lesssim e^{-\frac{\lambda|y|^2t}{1+|y|^2}}\E^{(2)}_l(0,y).
	\end{align*}
	By using \eqref{444}, 
	\begin{align}\label{51a}
	\|\nabla^m_xw^lf\|^2_{L^2_{v,x}}&+\|\nabla^m_xE\|^2_{L^2_x}\approx \int_{\R^3}|y|^{2m}\E^{(2)}_l(t,y)\,dy\\
	&\lesssim \int_{|y|\le 1}|y|^{2m}e^{-\lambda|y|^2t}\E^{(2)}_l(0,y)\,dy+e^{-\lambda t}\int_{|y|\ge 1}|y|^{2m}\E^{(2)}_l(0,y)\,dy.\notag
	\end{align}
	By H\"{o}lder's inequality and scaling on $y$, one has 
	\begin{align*}
	\int_{|y|\le 1}|y|^{2m}e^{-\lambda|y|^2t}\E^{(2)}_l(0,y)\,dy
	&\lesssim \min\{1,t^{-3/2-m}\}\|\E^{(2)}_l(0,y)\|_{L^\infty_y}.
	\end{align*}
	For the case $|y|\ge 1$, noticing \eqref{8}, we have 
	\begin{align*}
	\||y|^m\widehat{E_0}\|_{L^2_y(|y|\ge 1)}\lesssim \||y|^{m-1}\widehat{f_0}\|_{L^2_y(|y|\ge 1)}\lesssim \||y|^{m}\widehat{f_0}\|_{L^2_y(|y|\ge 1)},
	\end{align*}
	which yields that 
	\begin{align*}
	e^{-\lambda t}\int_{|y|\ge 1}|y|^{2m}\E^{(2)}_l(0,y)\,dy\lesssim e^{-\lambda t}\|w^l\nabla^m_xf_0\|^2_{L^2_{v,x}}.
	\end{align*}
	Thus, \eqref{51a} becomes 
	\begin{align*}
	\|\nabla^m_xw^lf\|^2_{L^2_{v,x}}+\|\nabla^m_xE\|^2_{L^2_x}\lesssim (1+t)^{-3/2-m}\big(\|w^lf_0\|^2_{Z_1}+\|E_0\|^2_{L_1}+\|w^l\nabla^m_xf_0\|^2_{L^2_{v,x}})
	\end{align*}
	This completes the case of hard potential. 

{\bf Step 2}. For soft potential $-1\le\gamma+2<0$, we denote $\E_l=\E^{(2)}_l$ for brevity. By using \eqref{45aa} and \eqref{10w}, for $l\ge 0$, 
	\begin{align}\label{46a}
		\partial_t\E_l(t,y)+\frac{\lambda|y|^2}{1+|y|^2}(\|w^{l-\frac{\gamma+2}{2\gamma}}\widehat{f}\|^2_{L^2_v}+|\widehat{E}|^2)\le 0. 
	\end{align}
	Since $\gamma+2< 0$, we need to use the trick in \cite{Strain2012}. In fact, for $j>0$, using H\"{o}lder's inequality, we have 
	\begin{align*}
		\E_l(t,y)\lesssim \E^{\frac{j}{j+1}}_{l-\frac{\gamma+2}{2\gamma}}(t,y)\E^{\frac{1}{j+1}}_{l+j\frac{\gamma+2}{2\gamma}}(t,y).
	\end{align*}
	Thus,
	\begin{align*}
		\E^{\frac{j+1}{j}}_l(t,y)\lesssim \E_{l-\frac{\gamma+2}{2\gamma}}(t,y)\E^{\frac{1}{j}}_{l+j\frac{\gamma+2}{2\gamma}}(t,y)\lesssim \E_{l-\frac{\gamma+2}{2\gamma}}(t,y)\E^{\frac{1}{j}}_{l+j\frac{\gamma+2}{2\gamma}}(0,y),
	\end{align*}where the second inequality follows from $\partial_t\E_{l+j\frac{\gamma+2}{2\gamma}}(t,y)\le 0$. 
	Now \eqref{46a} becomes 
	\begin{align*}
		\partial_t\E_l(t,y)+\frac{\lambda|y|^2}{1+|y|^2}\E^{\frac{j+1}{j}}_l(t,y)\E^{-\frac{1}{j}}_{l+j\frac{\gamma+2}{2\gamma}}(0,y)\le 0. 
	\end{align*}
	Then by solving this ODE, we have 
	\begin{align*}
		-j\E^{-\frac{1}{j}}_l(t,y)+j\E^{-\frac{1}{j}}_l(0,y)\le -\frac{t\lambda|y|^2}{1+|y|^2}\E^{-\frac{1}{j}}_{l+j\frac{\gamma+2}{2\gamma}}(0,y),\\
		\E_l(t,y)\le \Big(1+\frac{t\lambda|y|^2}{j(1+|y|^2)}\Big)^{-j}\E_{l+j\frac{\gamma+2}{2\gamma}}(0,y).
	\end{align*}
	Together with \eqref{444}, 
	\begin{align}\label{51}
		\|\nabla^m_x&w^lf\|^2_{L^2_{v,x}}+\|\nabla^m_xE\|^2_{L^2_x}\approx \int_{\R^3}|y|^{2m}\E_l(t,y)\,dy\\
		&\lesssim \int_{|y|\ge 1}|y|^{2m}(1+\frac{t\lambda}{2j})^{-j}\E_{l+j\frac{\gamma+2}{2\gamma}}(0,y)\,dy+e^{-\lambda t}\int_{|y|\le 1}|y|^{2m}(1+\frac{t\lambda|y|^2}{j(1+|y|^2)})^{-j}\E_{l+j\frac{\gamma+2}{2\gamma}}(0,y)\,dy.\notag
	\end{align}
	For the case $|y|\ge 1$, noticing \eqref{8}, we have 
	\begin{align*}
		\||y|^m\widehat{E_0}\|_{L^2_y(|y|\ge 1)}\lesssim \||y|^{m-1}\widehat{f_0}\|_{L^2_{v,y}(|y|\ge 1)}\lesssim \||y|^{m}\widehat{f_0}\|_{L^2_{v,y}(|y|\ge 1)},
	\end{align*}
	which yields that 
	\begin{align*}
		\int_{|y|\ge 1}|y|^{2m}(1+\frac{t\lambda}{2j})^{-j}\E_{l+j\frac{\gamma+2}{2\gamma}}(0,y)\,dy\lesssim (1+t)^{-j}\|w^{l+j\frac{\gamma+2}{2\gamma}}\nabla^m_xf_0\|^2_{L^2_{v,x}}.
	\end{align*}
	On the other hand, by H\"{o}lder's inequality and scaling on $y$, one has 
	\begin{align*}
		\int_{|y|\le 1}|y|^{2m}(1+\frac{t\lambda|y|^2}{j(1+|y|^2)})^{-j}\E_{l+j\frac{\gamma+2}{2\gamma}}(0,y)\,dy
		&\lesssim \int_{|y|\le 1}|y|^{2m}(1+\frac{t\lambda|y|^2}{j(1+|y|^2)})^{-j}\,dy\|\E_{l+j\frac{\gamma+2}{2\gamma}}(0,y)\|_{L^\infty_y}\\
		&\lesssim(1+t)^{-2\sigma_{m}}\|\E_{l+j\frac{\gamma+2}{2\gamma}}(0,y)\|_{L^\infty_y},
	\end{align*}for any $j>2\sigma_m$. 
	Thus, \eqref{51} becomes 
	\begin{align*}
		\|\nabla^m_xw^lf\|^2_{L^2_{v,x}}+\|\nabla^m_xE\|^2_{L^2_x}\lesssim (1+t)^{-2\sigma_m}\big(\|w^{l+j\frac{\gamma+2}{2\gamma}}f_0\|^2_{Z_1}+\|E_0\|^2_{L_1}+\|w^{l+j\frac{\gamma+2}{2\gamma}}\nabla^m_xf_0\|^2_{L^2_{v,x}}).
	\end{align*}
	This completes the proof. 
	\qe\end{proof}

\section{Global Existence}\label{sec4}
Assume $T>0$, $K\ge 3$ and $l\ge K$. In this section, we are going to prove the Main Theorem \ref{main1}, the global existence of the solution to the following system. 
\begin{equation}\label{16b}
\left\{\begin{aligned}
&\partial_tf_\pm + v_i\partial^{e_i}f_\pm \pm \frac{1}{2}\partial^{e_i}\phi v_if_\pm  \mp\partial^{e_i}\phi\partial_{e_i}f_\pm \pm \partial^{e_i}\phi  v_i\mu^{1/2} - L_\pm f = \Gamma_{\pm}(f,f),\\
&-\Delta_x \phi = \int_{\Rd}(f_+-f_-)\mu^{1/2}\,dv,\\ 
&f_\pm|_{t=0} = f_{0,\pm}. 
\end{aligned}\right.
\end{equation}The index appearing in both superscript and subscript means the summation. Our goal is to obtain the $a$ $priori$ from this equation. 
For this, we suppose that the Cauchy problem \eqref{16b} admits a smooth solution $f(t,x,v)$ over $0\le t\le T$ for $0<T\le\infty$, and the solution $f(t,x,v)$ satisfies 
\begin{align}\label{74a}
\sup_{0\le t\le T}X(t)\le \delta_0,
\end{align} where $X(t)$ is defined by \eqref{88} and $\delta_0$ is a suitably small constant. 
Under this assumption, we can derive a simple fact that 
\begin{align*}
\|\phi\|_{L^\infty}\lesssim\|\phi\|_{H^2_x}\le \delta_0, \quad \|e^{\pm\phi}\|_{L^\infty}\approx 1.
\end{align*}
Also, by equation \eqref{21}$_1$, we have 
\begin{equation}\label{34a}
\partial_t\phi = -\Delta_x^{-1}\partial_t(a_+-a_-)=\Delta_x^{-1}\nabla\cdot G,
\end{equation}
\begin{equation}\label{34}
\|\partial_t\phi\|_{L^\infty}\lesssim  \|\nabla_x\partial_t\phi\|^{1/2}_{L^2_x}\|\nabla^2_x\partial_t\phi\|^{1/2}_{L^2_x}\lesssim \|\nabla_x G\|_{H^1}\lesssim \|(\I-\P)f\|_{L^2_vH^2_x} \lesssim (\E^h_{K,l})^{1/2}(t). 
\end{equation}

\begin{Thm}\label{thm411}Assume $\psi=1$. For any $l\ge K\ge 3$, there is $\E_{K,l}$ satisfying \eqref{Defe} such that for $0\le t\le T$,
	\begin{align}\label{42a}
	\partial_t\E_{K,l}(t)+\lambda D_{K,l}(t)\lesssim \|\partial_t\phi\|_{L^\infty_x}\E_{K,l}(t), 
	\end{align}where $D_{K,l}$ is defined by \eqref{Defd}. 
\end{Thm}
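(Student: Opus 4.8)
The plan is a weighted energy estimate: apply $\partial^\alpha_\beta$ to the perturbed system \eqref{16b}, pair the result against suitably weighted copies of $\partial^\alpha_\beta f_\pm$ carrying the factor $e^{\pm\phi}$ and (for $|\beta|\ge1$, and in the weighted norm) the velocity weight $w^{2l-2|\alpha|-2|\beta|}$, sum over $|\alpha|+|\beta|\le K$ with a hierarchy of small constants, and then add a small multiple of the macroscopic interactive functional $\E^{(1)}_K$ from Lemma \ref{Lemma31} (via \eqref{24}) to recover the full dissipation $\D_{K,l}$ of $\eqref{Defd}$. The functional $\E_{K,l}$ is then this linear combination; since $|e^{\pm\phi}|\approx1$ and all constants are small, \eqref{27a} guarantees the equivalence \eqref{Defe}.

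First, for pure $x$-derivatives $\partial^\alpha$ with $|\alpha|\le K$, pair $\partial^\alpha$ of \eqref{16b} with $e^{\pm\phi}\partial^\alpha f_\pm$ and with $e^{\pm\phi}w^{2l-2|\alpha|}\partial^\alpha f_\pm$. Because $v\cdot\nabla_x$ commutes with $\partial^\alpha$, the top-order streaming term integrates by parts in $x$ to $\mp\tfrac12(e^{\pm\phi}(v\cdot\nabla_x\phi)\partial^\alpha f_\pm,\partial^\alpha f_\pm)_{L^2_{v,x}}$, which cancels exactly the $\alpha_1=0$ part of $\pm\tfrac12\partial^{e_i}\phi\,v_i\partial^\alpha f_\pm$; this is precisely why $e^{\pm\phi}$ is inserted. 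The remaining force terms (the $\alpha_1\neq0$ parts of $\pm\tfrac12 v\cdot\nabla_x\phi f_\pm$ and all of $\mp\nabla_x\phi\cdot\nabla_v f_\pm$) are bounded by $\E^{1/2}_{K,l}\D_{K,l}$ through Lemmas \ref{Lem26} and \ref{Lem27}, and $\Gamma_\pm(f,f)$ by Lemma \ref{lemmag}. The linear source $\pm\partial^{e_i}\phi\,v_i\mu^{1/2}$, after taking the $\pm$-difference and using $(v\mu^{1/2},(\I-\P)(f_+-f_-))_{L^2_v}=G$ together with the first identity in \eqref{21} and \eqref{16}, contributes a multiple of $\tfrac{d}{dt}\|\partial^\alpha E\|^2_{L^2_x}$, i.e.\ the electric-field part of the energy. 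The time derivative landing on $e^{\pm\phi}$ produces $\mp\tfrac12(e^{\pm\phi}(\partial_t\phi)\partial^\alpha f_\pm,\partial^\alpha f_\pm)_{L^2_{v,x}}$, bounded by $\|\partial_t\phi\|_{L^\infty_x}\|\partial^\alpha f_\pm\|^2_{L^2_{v,x}}\lesssim\|\partial_t\phi\|_{L^\infty_x}\E_{K,l}$; this is the single term that is genuinely not absorbable and is carried to the right-hand side of \eqref{42a}. Finally $-(\partial^\alpha L_\pm f,e^{\pm\phi}\partial^\alpha f_\pm)_{L^2_{v,x}}$ is bounded below, via Lemma \ref{lemmaL}(i)--(ii) and $|e^{\pm\phi}|\approx1$, by a positive multiple of $\|(\I-\P)\partial^\alpha f\|^2_{\sigma,0}$, respectively $\|\partial^\alpha f\|^2_{\sigma,l-|\alpha|}$ in the weighted estimate up to a compactly supported remainder.

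Next, for mixed derivatives with $|\beta|\ge1$, apply $\partial^\alpha_\beta$ to \eqref{16b} and pair with $e^{\pm\phi}w^{2l-2|\alpha|-2|\beta|}\partial^\alpha_\beta(\I-\P)f$, using that $\partial^\alpha_\beta\P f$ with $|\beta|\ge1$ is a lower-order macroscopic quantity already controlled above. The new feature is the commutator $[\partial_\beta,v\cdot\nabla_x]$, which trades a $v$-derivative for an $x$-derivative; since the total order stays $\le K$ and $|v|w^{l-|\alpha|-|\beta|}\lesssim\<v\>^{\gamma+2}w^{l-|\alpha|-|\beta|+1}$ by \eqref{10w}, these terms are absorbed into $\D_{K,l}$. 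The coercivity of $-L$ now comes from Lemma \ref{lemmaL}(iii): the loss $\eta\sum_{|\beta_1|\le|\beta|}|\partial^\alpha_{\beta_1}g|^2_{\sigma,l-|\alpha|-|\beta_1|}$ is closed by an induction on $|\beta|$ (starting from the pure-$x$ estimate and choosing the constant at each $|\beta|$-level much smaller than at the previous level), while $C_\eta|\partial^\alpha g|^2_{L^2(B_{C_\eta})}$ is dominated by the already-available $\|(\I-\P)\partial^\alpha f\|^2_{\sigma,0}$. The force and collision terms are again bounded by $\E^{1/2}_{K,l}\D_{K,l}+\E_{K,l}\D^{1/2}_{K,l}$ via Lemmas \ref{lemmag}, \ref{Lem26} (\eqref{33a}--\eqref{33b}), and \ref{Lem27} (\eqref{30b}--\eqref{30c}), the $\alpha_1=0$ part of $\nabla_x\phi\cdot\nabla_v f$ being integrated by parts onto the weight, where it only generates lower-order $\<v\>^{\gamma+2}$ factors.

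Assembling: the microscopic estimates give $\partial_t(\text{energy})+\lambda\sum_{|\alpha|+|\beta|\le K}\|(\I-\P)\partial^\alpha_\beta f\|^2_{\sigma,l-|\alpha|-|\beta|}\lesssim\|\partial_t\phi\|_{L^\infty_x}\E_{K,l}+\E^{1/2}_{K,l}\D_{K,l}+\E_{K,l}\D^{1/2}_{K,l}$; adding a small $\kappa\,\E^{(1)}_K$ and invoking \eqref{24} supplies the missing dissipation of $\nabla_x(a_\pm,b,c)$, of $a_+-a_-$ (hence, with \eqref{16}, of $E$), at the cost of an extra $\E_{K,l}\D_{K,l}$. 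Under the a priori smallness \eqref{74a} one has $\E^{1/2}_{K,l}\lesssim\delta_0^{1/2}$ on $[0,T]$, so all cubic terms are $\le\tfrac\lambda2\D_{K,l}$ and absorbed, leaving \eqref{42a}. The main obstacle is the simultaneous bookkeeping of constants: one must choose the hierarchy so that (i) the $e^{\pm\phi}$-weighted quadratic form remains equivalent to \eqref{Defe}, (ii) the inductive closure of the $\eta$-losses and the $L^2(B_{C_\eta})$ remainders from Lemma \ref{lemmaL}(iii) does not consume the dissipation, and (iii) $\E^{(1)}_K$ enters with a weight small enough to control its error terms yet large enough to deliver the full $\D_{K,l}$. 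This is the structure already carried out in \cite{Deng2020c,Duan2013,Strain2013}, and the proof follows those references with the Landau-specific inputs replaced by Lemmas \ref{lemmaL}--\ref{Lem27}.
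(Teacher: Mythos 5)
Your proposal is correct and follows essentially the same route as the paper: the same $e^{\pm\phi}$-weighted energy pairings (unweighted and $w^{2l-2|\alpha|-2|\beta|}$-weighted, for pure $x$ and mixed derivatives), the same cancellation of the $\alpha_1=0$ force term against the streaming term, the same recovery of $\tfrac{d}{dt}\|\partial^\alpha E\|^2_{L^2_x}$ from the linear source via \eqref{21} and \eqref{16}, the same addition of $\kappa\,\E^{(1)}_K$ through \eqref{24} to supply the macroscopic dissipation, and the same absorption of the cubic terms under \eqref{74a}. The only difference is bookkeeping (the paper separates the weighted pure-$x$ estimate into its own step), which is immaterial.
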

\begin{proof}
	For any $K\ge 3$ being the total derivative of $v,x$, we let $|\alpha|+|\beta|\le K$.
	On one hand, we apply $\partial^\alpha$ to equation \eqref{16b}$_1$ to get 
	\begin{equation}\begin{aligned}\label{35a}
	&\quad\,\partial_t\partial^{\alpha} f_\pm + v_i\partial^{e_i+\alpha} f_\pm \pm \frac{1}{2}\sum_{\substack{\alpha_1\le\alpha}}\partial^{e_i+\alpha_1}\phi v_i\partial^{\alpha-\alpha_1}f_\pm \\ &\qquad\mp\sum_{\substack{\alpha_1\le\alpha}}\partial^{e_i+\alpha_1}\phi\partial^{\alpha-\alpha_1}_{e_i} f_\pm \pm \partial^{e_i+\alpha}\phi v_i\mu^{1/2} - \partial^{\alpha} L_\pm  f =
	\partial^{\alpha} \Gamma_{\pm}(f,f).\end{aligned}
	\end{equation}
	On the other hand, we apply $\partial^{\alpha}_\beta$ to equation \eqref{16b}$_1$ and decompose $f_\pm=\PP f+(\II-\PP)f$. Then, 
	\begin{align}\label{36a}
	&\quad\,\partial_t\partial^{\alpha}_\beta (\II-\PP)f + \sum_{\beta_1\le \beta}C^{\beta_1}_{\beta}\partial_{\beta_1}v_i\partial^{e_i+\alpha}_{\beta-\beta_1}(\II-\PP)f \notag\\
	&\notag\qquad\pm \frac{1}{2}\sum_{\substack{\alpha_1\le\alpha}}\sum_{\beta_1\le\beta}\partial^{e_i+\alpha_1}\phi \partial_{\beta_1}v_i\partial^{\alpha-\alpha_1}_{\beta-\beta_1}(\II-\PP)f \\ &\qquad\mp\sum_{\substack{\alpha_1\le\alpha}}\partial^{e_i+\alpha_1}\phi\partial^{\alpha-\alpha_1}_{\beta+e_i}(\II-\PP)f \pm \partial^{e_i+\alpha}\phi \partial_\beta(v_i\mu^{1/2}) - \partial^{\alpha}_\beta L_\pm (\I-\P)f \\
	&\notag= -\partial_t\partial^{\alpha}_\beta \PP f + \sum_{\beta_1\le \beta}C^{\beta_1}_{\beta}\partial_{\beta_1}v_i\partial^{e_i+\alpha}_{\beta-\beta_1}\PP f \mp \frac{1}{2}\sum_{\substack{\alpha_1\le\alpha}}\sum_{\beta_1\le\beta}\partial^{e_i+\alpha_1}\phi \partial_{\beta_1}v_i\partial^{\alpha-\alpha_1}_{\beta-\beta_1}\PP f \\ &\notag\qquad\mp\sum_{\substack{\alpha_1\le\alpha}}\partial^{e_i+\alpha_1}\phi\partial^{\alpha-\alpha_1}_{\beta+e_i}\PP f
	+
	\partial^{\alpha}_\beta \Gamma_{\pm}(f,f).
	\end{align}

	{\bf Step 1. Estimate without weight.}
	For the estimate without weight, we take the case $|\alpha|\le K$ and $\beta=0$. This case is for obtaining the term $\|\partial^\alpha\nabla_x\phi\|^2_{L^2_x}$ on the left hand side in the energy inequality. Taking inner product of equation \eqref{35a} with $e^{\pm\phi}\partial^{\alpha} f_\pm$ over $\R^3_v\times\R^3_x$, we have   
	\begin{align}\label{45a}
	&\notag\quad\,\Big(\partial_t\partial^{\alpha} f_\pm,e^{\pm\phi}\partial^{\alpha} f_\pm\Big)_{L^2_{v,x}}
	+ \Big(v_i\partial^{e_i+\alpha}f_\pm,e^{\pm\phi}\partial^{\alpha} f_\pm\Big)_{L^2_{v,x}}\\ 
	&\notag\pm \Big(\frac{1}{2}\sum_{\substack{\alpha_1\le\alpha}}C^{\alpha_1}_{\alpha}\partial^{e_i+\alpha_1}\phi v_i\partial^{\alpha-\alpha_1}f_\pm,e^{\pm\phi}\partial^{\alpha} f_\pm\Big)_{L^2_{v,x}} \\ 
	&\mp
	\Big(\sum_{\substack{\alpha_1\le\alpha}}C^{\alpha_1}_{\alpha}\partial^{e_i+\alpha_1}\phi\partial^{\alpha-\alpha_1}_{e_i}f_\pm,e^{\pm\phi}\partial^{\alpha} f_\pm\Big) _{L^2_{v,x}}\\
	&\notag\pm \Big(\partial^{e_i+\alpha}\phi v_i\mu^{1/2},e^{\pm\phi}\partial^{\alpha} f_\pm\Big)_{L^2_{v,x}} 
	- \Big(\partial^{\alpha} L_\pm f,e^{\pm\phi}\partial^{\alpha} f_\pm\Big)_{L^2_{v,x}}\\ 
	&\notag= \Big(\partial^{\alpha} \Gamma_{\pm}(f,f),e^{\pm\phi}\partial^{\alpha} f_\pm\Big)_{L^2_{v,x}}.
	\end{align}
	Now we take the real part and summation $\sum_{\pm}$. Denote these them by $I_1$ to $I_7$ and we will estimate them term by term. 
	
	For the first term $I_1$,
	\begin{align}\label{37aa}
	I_1 &= \frac{1}{2}\partial_t\sum_{\pm}\|e^{\frac{\pm\phi}{2}}\partial^{\alpha} f_\pm\|^2_{L^2_{v,x}} \mp \Re\sum_{\pm}\frac{1}{2}(\partial_t\phi e^{\pm\phi}\partial^{\alpha} f_\pm, \partial^{\alpha} f_\pm)_{L^2_{v,x}}. 
	\end{align}The second term on the right hand side of \eqref{37aa} is estimated as 
	\begin{align}\label{78}
	\Big|\frac{1}{2}(\partial_t\phi e^{\pm\phi}\partial^{\alpha} f_\pm, \partial^{\alpha} f_\pm)_{L^2_{v,x}}\Big|\lesssim \|\partial_t\phi\|_{L^\infty}\|\partial^\alpha f_\pm\|^2_{L^2_{v,x}}\lesssim \|\partial_t\phi\|_{L^\infty}\E_{K,l}(t)
	\end{align}

	For the second term $I_2$, we will combine it with $I_3$ with $\alpha_1=0$ in $I_3$. It turns out that the sum is zero. This is what $e^{\pm\phi}$ designed for as in \cite{Guo2012}. By taking integration by parts on $x$, one has  
	\begin{align}\label{48aaa}
	&\quad\,\Big(v_i\partial^{e_i+\alpha}f_\pm,e^{\pm\phi}\partial^{\alpha} f_\pm\Big)_{L^2_{v,x}}
	\pm \Big(\frac{1}{2}\partial^{e_i}\phi v_i\partial^{\alpha}f_\pm,e^{\pm\phi}\partial^{\alpha} f_\pm\Big)_{L^2_{v,x}}=0.
	\end{align}
	For the left terms in $I_3$, the weight will be used. In this case, $\alpha_1$ is not zero and by Lemma \ref{Lem26}, it's bounded above by $\E^{1/2}_{K,l}\D_{K,l}$. 
	Using Lemma \ref{Lem27}, the term $I_4$ is also bounded above by $\E^{1/2}_{K,l}\D_{K,l}$.

	For the term $I_5$, we will divide $e^{\pm\phi}$ into $(e^{\pm\phi}-1)$ and $1$. Recall equation \eqref{16} and \eqref{21}. For the part of $1$, 
	\begin{align}\label{43a}\notag
	\sum_{\pm}\pm\Big(\partial^{e_i+\alpha}\phi v_i\mu^{1/2},\partial^{\alpha} f_\pm\Big)_{L^2_{v,x}} 
	&=\notag -\Big(\partial^{\alpha}\phi,\partial^{\alpha} \nabla_x\cdot G\Big)_{L^2_{x}}\\
	&=\notag \Big(\partial^{\alpha}\phi,\partial^{\alpha} \partial_t(a_+-a_-)\Big)_{L^2_{x}}\\
	&= \frac{1}{2}\partial_t\|\partial^{\alpha}\nabla_x\phi\|_{L^2_x}^2.
	\end{align}
	For the part of $(e^{\pm\phi}-1)$, notice that 
	\begin{align*}
	|e^{\pm\phi}-1|\lesssim \|\phi\|_{L^\infty}\lesssim \|\nabla_x\phi\|_{H^1_x}.
	\end{align*}Then, 
	\begin{align}
	\Big|&\sum_{\pm}\pm\Big(\partial^{e_i+\alpha}\phi v_i\mu^{1/2},(e^{\pm\phi}-1)\partial^{\alpha} f_\pm\Big)_{L^2_{v,x}}\Big|\notag\\
	&\lesssim \|\nabla_x\phi\|_{H^1_x}\sum_{|\alpha|\le K}\|\partial^\alpha\nabla_x\phi\|_{L^2_{v,x}}\sum_{|\alpha|\le K}\|\partial^\alpha(\II-\PP)f\|_{L^2_{v,x}}\\
	&\lesssim \E^{1/2}_{K,l}(t)\D_{K,l}(t).\notag
	\end{align}
	
	For the term $I_6$, since $L_\pm$ commutes with $\partial^{\alpha}$ and $e^{\pm\phi}$, by Lemma \ref{lemmaL}, we have 
	\begin{align}
	I_6 = - \sum_{\pm}\Big(\partial^{\alpha} L_\pm f,e^{\pm\phi}\partial^{\alpha} f_\pm\Big)_{L^2_{v,x}}\ge \lambda \sum_{\pm}\|e^{\frac{\pm\phi}{2}}\partial^{\alpha}(\II-\PP) f\|_{\sigma,0}^2. 
	\end{align}
	
	For the term $I_7$, by Lemma \ref{lemmag}, we have 
	\begin{align}
	|I_7|&= \Big|\sum_{\pm}\Big(\partial^{\alpha} \Gamma_{\pm}(f,f),e^{\pm\phi}\partial^{\alpha} f_\pm\Big)_{L^2_{v,x}}\Big|\lesssim\E^{1/2}_{K,l}(t)\D_{K,l}(t).
	\end{align}
	
	Therefore, combining all the estimate above and take the summation on $\pm$, $|\alpha|\le K$, noticing that $|e^{\frac{\pm\phi}{2}}|\approx 1$, we conclude that, 
	\begin{equation}\label{47b}
	\begin{aligned}
	&\quad\,\frac{1}{2}\partial_t\sum_{\pm}\sum_{|\alpha|\le K}\Big(\|e^{\frac{\pm\phi}{2}}\partial^{\alpha} f_\pm\|_{L^2_{v,x}} +
	\|\partial^{\alpha}\nabla_x\phi\|_{L^2_x}^2\Big) + \lambda \sum_{\pm}\sum_{|\alpha|\le K}\|e^{\frac{\pm\phi}{2}}\partial^{\alpha} (\II-\PP)f\|_{\sigma,0}^2\\
	&\lesssim \|\partial_t\phi\|_{L^\infty}\E_{K,l}(t)+\E^{1/2}_{K,l}(t)\D_{K,l}(t).
	\end{aligned}
	\end{equation}
	Taking the combination $\eqref{47b}+\kappa\times\eqref{24}$ with $0<\kappa<<1$, we have that when $\psi=1$, 
	\begin{align}\label{48a}\notag
	&\quad\,\frac{1}{2}\partial_t\sum_{\pm}\sum_{|\alpha|\le K}\Big(\|e^{\frac{\pm\phi}{2}}\partial^{\alpha} f_\pm\|_{L^2_{v,x}} +
	\|\partial^{\alpha}\nabla_x\phi\|_{L^2_x}^2+\kappa\E^{(1)}_{K}\Big) + \lambda \sum_{\pm}\sum_{|\alpha|\le K}\|e^{\frac{\pm\phi}{2}}\partial^{\alpha} (\II-\PP)f\|_{\sigma,0}^2\\
	&\notag\qquad + \lambda\sum_{|\alpha|\le K-1} \|\partial^\alpha\nabla_x(a_\pm,b,c)\|^2_{L^2_x}+\lambda\|a_+-a_-\|^2_{L^2_x}+\lambda\sum_{|\alpha|\le K-1}\|\partial^\alpha E\|^2_{L^2_x}\\
	&\lesssim \|\partial_t\phi\|_{L^\infty}\E_{K,l}(t)+ (\E^{1/2}_{K,l}(t)+\E_{K,l}(t))\D_{K,l}(t)
	\end{align}
	The term $\|(\I-\P)\partial^{\alpha}\widehat{f}\|_{\sigma,0}^2$ in \eqref{24} is eliminated. 
	
	{\bf Step 2. Estimate with weight on $x$ derivatives}
	This case is particularly for $|\alpha|=K$. Let $1\le |\alpha|\le K$ and take inner product of \eqref{35a} with $e^{\pm\phi}w^{2l-2|\alpha|}\partial^\alpha f_\pm$ over $\R^3_v\times\R^3_x$.
	\begin{align}\label{77}
	&\notag\quad\,\Big(\partial_t\partial^{\alpha} f_\pm,w^{2l-2|\alpha|}e^{\pm\phi}\partial^{\alpha} f_\pm\Big)_{L^2_{v,x}}
	+ \Big(v_i\partial^{e_i+\alpha}f_\pm,w^{2l-2|\alpha|}e^{\pm\phi}\partial^{\alpha} f_\pm\Big)_{L^2_{v,x}}\\ 
	&\notag\pm \Big(\frac{1}{2}\sum_{\substack{\alpha_1\le\alpha}}C^{\alpha_1}_{\alpha}\partial^{e_i+\alpha_1}\phi v_i\partial^{\alpha-\alpha_1}f_\pm,w^{2l-2|\alpha|}e^{\pm\phi}\partial^{\alpha} f_\pm\Big)_{L^2_{v,x}} \\ 
	&\mp
	\Big(\sum_{\substack{\alpha_1\le\alpha}}C^{\alpha_1}_{\alpha}\partial^{e_i+\alpha_1}\phi\partial^{\alpha-\alpha_1}_{e_i}f_\pm,w^{2l-2|\alpha|}e^{\pm\phi}\partial^{\alpha} f_\pm\Big) _{L^2_{v,x}}\\
	&\notag\pm \Big(\partial^{e_i+\alpha}\phi v_i\mu^{1/2},w^{2l-2|\alpha|}e^{\pm\phi}\partial^{\alpha} f_\pm\Big)_{L^2_{v,x}} 
	- \Big(\partial^{\alpha} L_\pm f,w^{2l-2|\alpha|}e^{\pm\phi}\partial^{\alpha} f_\pm\Big)_{L^2_{v,x}}\\ 
	&\notag= \Big(\partial^{\alpha} \Gamma_{\pm}(f,f),w^{2l-2|\alpha|}e^{\pm\phi}\partial^{\alpha} f_\pm\Big)_{L^2_{v,x}}.
	\end{align}
	As in the Step 1, taking summation on $\pm$ and real part, we estimate it term by term. The proof is similar to $I_1$ to $I_7$. The first term on the left hand bounded below by 
	\begin{align*} &\quad\,\frac{1}{2}\partial_t\sum_{\pm}\|e^{\frac{\pm\phi}{2}}w^{l-|\alpha|}\partial^{\alpha} f_\pm\|_{L^2_{v,x}} - C\|\partial_t\phi\|_{L^2_{v,x}}\E_{K,l}(t). 
	\end{align*}
	The second term and the third term with $\alpha_1=0$ are canceled by using integration by parts. The left case $\alpha_1\neq 0$ in the third term and the fourth term are bounded above by $\E^{1/2}_{K,l}(t)\D_{K,l}(t)$ by using Lemma \ref{Lem26} and Lemma \ref{Lem27}.
	For the fifth term, we write a upper bound: for any $\eta>0$,
	\begin{align*}
	\Big|\Big(\partial^{e_i+\alpha}\phi v_i\mu^{1/2},w^{2l-2|\alpha|}e^{\pm\phi}\partial^{\alpha} f_\pm\Big)_{L^2_{v,x}}\Big| &\lesssim \eta\|\partial^{\alpha} f_\pm\|^2_{\sigma,l-|\alpha|}+C_\eta\|\partial^\alpha\nabla_x\phi\|^2_{L^2_x}.
	\end{align*}
	For the sixth term, since $L_\pm$ commutes with $\partial^{\alpha}$ and $e^{\pm\phi}$, by Lemma \ref{lemmaL}, we have 
	\begin{align*}
	&- \sum_{\pm}\Big(\partial^{\alpha}w^{2l-2|\alpha|} L_\pm f,e^{\pm\phi}\partial^{\alpha} f_\pm\Big)_{L^2_{v,x}}\ge \lambda \|e^{\frac{\pm\phi}{2}}\partial^{\alpha} f\|_{\sigma,l-|\alpha|}^2-C\|\partial^\alpha f\|^2_{\sigma,0}. 
	\end{align*}
	By using Lemma \ref{lemmag}, the first term on the right hand of \eqref{77} is bounded above by $\E^{1/2}_{K,l}(t)\D_{K,l}(t)$.
	Taking $\psi=1$, combining the above estimate, taking summation on $1\le |\alpha|\le K$ and letting $\eta$ suitably small, we have 
	\begin{align}\label{74}
	&\quad\,\notag\frac{1}{2}\partial_t\sum_{\pm}\sum_{1\le|\alpha|\le K}\|e^{\frac{\pm\phi}{2}}w^{l-|\alpha|}\partial^{\alpha} f_\pm\|_{L^2_{v,x}} + \lambda\sum_{\pm}\sum_{1\le|\alpha|\le K}\|e^{\frac{\pm\phi}{2}}\partial^{\alpha} f_\pm\|_{\sigma,l-|\alpha|}^2\\
	&\lesssim \|\partial_t\phi\|_{L^2_{v,x}}\E_{K,l}(t)+\sum_{|\alpha|\le K}\|\partial^\alpha\nabla_x\phi\|^2_{L^2_x}+\sum_{1\le|\alpha|\le K}\|\partial^\alpha f_\pm\|^2_{\sigma,0} + \E^{1/2}_{K,l}(t)\D_{K,l}(t).
	\end{align}

	{\bf Step 3. Estimate with weight on the mixed derivatives.}
	Let $K\ge 3$, $|\alpha|\le K-1$ and $|\alpha|+|\beta|\le K$. Taking inner product of equation \eqref{36a} with  $e^{\pm\phi}w^{2l-2|\alpha|-2|\beta|}\partial^{\alpha}_\beta (\II-\PP)f$ over $\R^3_v\times\R^3_x$, one has 
	\begin{align*}
	&\quad\,\Big(\partial_t\partial^{\alpha}_\beta (\II-\PP)f,e^{\pm\phi}w^{2l-2|\alpha|-2|\beta|}\partial^{\alpha}_\beta (\II-\PP)f\Big)_{L^2_{v,x}}\\
	&\qquad + \Big(\sum_{\beta_1\le \beta}C^{\beta_1}_{\beta}\partial_{\beta_1}v_i\partial^{e_i+\alpha}_{\beta-\beta_1}(\II-\PP)f,e^{\pm\phi}w^{2l-2|\alpha|-2|\beta|}\partial^{\alpha}_\beta (\II-\PP)f\Big)_{L^2_{v,x}} \\
	&\qquad\pm \Big(\frac{1}{2}\sum_{\substack{\alpha_1\le\alpha\\\beta_1\le\beta}}\partial^{e_i+\alpha_1}\phi \partial_{\beta_1}v_i\partial^{\alpha-\alpha_1}_{\beta-\beta_1}(\II-\PP)f,e^{\pm\phi}w^{2l-2|\alpha|-2|\beta|}\partial^{\alpha}_\beta (\II-\PP)f\Big)_{L^2_{v,x}} \\ &\qquad\mp\Big(\sum_{\substack{\alpha_1\le\alpha}}\partial^{e_i+\alpha_1}\phi\partial^{\alpha-\alpha_1}_{\beta+e_i}(\II-\PP)f,e^{\pm\phi}w^{2l-2|\alpha|-2|\beta|}\partial^{\alpha}_\beta (\II-\PP)f\Big)_{L^2_{v,x}}\\
	&\qquad \pm \Big(\partial^{e_i+\alpha}\phi \partial_\beta(v_i\mu^{1/2}),e^{\pm\phi}w^{2l-2|\alpha|-2|\beta|}\partial^{\alpha}_\beta (\II-\PP)f\Big)_{L^2_{v,x}}\\
	&\qquad - \Big(\partial^{\alpha}_\beta L_\pm (\I-\P)f,e^{\pm\phi}w^{2l-2|\alpha|-2|\beta|}\partial^{\alpha}_\beta (\II-\PP)f\Big)_{L^2_{v,x}} \\
	&= -\Big(\partial_t\partial^{\alpha}_\beta \PP f,e^{\pm\phi}w^{2l-2|\alpha|-2|\beta|}\partial^{\alpha}_\beta (\II-\PP)f\Big)_{L^2_{v,x}}\\
	&\qquad + \Big(\sum_{\beta_1\le \beta}C^{\beta_1}_{\beta}\partial_{\beta_1}v_i\partial^{e_i+\alpha}_{\beta-\beta_1}\PP f,e^{\pm\phi}w^{2l-2|\alpha|-2|\beta|}\partial^{\alpha}_\beta (\II-\PP)f\Big)_{L^2_{v,x}} \\
	&\qquad\mp \Big(\frac{1}{2}\sum_{\substack{\alpha_1\le\alpha}}\sum_{\beta_1\le\beta}\partial^{e_i+\alpha_1}\phi \partial_{\beta_1}v_i\partial^{\alpha-\alpha_1}_{\beta-\beta_1}\PP f,e^{\pm\phi}w^{2l-2|\alpha|-2|\beta|}\partial^{\alpha}_\beta (\II-\PP)f\Big)_{L^2_{v,x}} \\ &\qquad\mp\Big(\sum_{\substack{\alpha_1\le\alpha}}\partial^{e_i+\alpha_1}\phi\partial^{\alpha-\alpha_1}_{\beta+e_i}\PP f,e^{\pm\phi}w^{2l-2|\alpha|-2|\beta|}\partial^{\alpha}_\beta (\II-\PP)f\Big)_{L^2_{v,x}}\\&\qquad
	+
	\Big(\partial^{\alpha}_\beta \Gamma_{\pm}(f,f),e^{\pm\phi}w^{2l-2|\alpha|-2|\beta|}\partial^{\alpha}_\beta (\II-\PP)f\Big)_{L^2_{v,x}}.
	\end{align*}
	Now we denote these terms with summation $\sum_{\pm}$ and real part by $J_1$ to $J_{11}$ and estimate them term by term. 
	The estimate of $J_1$ to $J_3$ are similar to $I_1$ to $I_3$. 
	That is 
	\begin{align*}
	J_1 &\ge \partial_t\sum_{\pm}\|e^{\frac{\pm\phi}{2}}w^{l-|\alpha|-|\beta|}\partial^{\alpha}_\beta (\II-\PP)f\|_{L^2_{v,x}} - C\|\partial_t\phi\|_{L^\infty}\E^h_{K,l}(t).
	\end{align*}
By using Lemma \ref{Lem26} and Lemma \ref{Lem27}, we have 
	\begin{equation*}
	|J_2 + J_3+J_4|\lesssim  \E^{1/2}_{K,l}(t)\D_{K,l}(t),
	\end{equation*}where $J_2$ and $J_3$ with $\alpha_1=0$ are canceled by integration by parts on $\partial^{e_i}$.

	For the term $J_5$, we only need to have a upper bound.
	\begin{align*}\notag
	|J_5| &= 
	\Big|\sum_{\pm}\pm \Big(\partial^{e_i+\alpha}\phi \partial_\beta(v_i\mu^{1/2}),e^{\pm\phi}w^{2l-2|\alpha|-2|\beta|}\partial^{\alpha}_\beta (\II-\PP)f\Big)_{L^2_{v,x}}\Big|\\
	&\lesssim\sum_{|\alpha|\le K} \|\partial^\alpha\nabla_x\phi\|_{L^2_{v,x}}\sum_{\substack{|\alpha|+|\beta|\le K}}\|\partial^{\alpha}_{\beta}(\I-\P)f\|_{\sigma,l-|\alpha|-|\beta|}\\
	&\lesssim \eta\sum_{\substack{|\alpha|+|\beta|\le K}}\|\partial^{\alpha}_{\beta}(\I-\P)f\|^2_{\sigma,l-|\alpha|-|\beta|}+C_\eta\sum_{|\alpha|\le K} \|\partial^\alpha\nabla_x\phi\|_{L^2_{v,x}}^2.
	\end{align*}

	For the term $J_6$, since $L_\pm$ commutes with $e^{\pm\phi}$, by Lemma \ref{lemmaL}, we have 
	\begin{align*}
	J_6 &= - \sum_{\pm}\Big(\partial^{\alpha}_\beta L_\pm(\I-\P) f,e^{\pm\phi}w^{2l-2|\alpha|-2|\beta|}\partial^{\alpha}_\beta (\II-\PP)f\Big)_{L^2_{v,x}}\\&\ge \lambda\sum_{\pm} \|e^{\frac{\pm\phi}{2}}\partial^{\alpha}_\beta (\II-\PP)f\|_{\sigma,l-|\alpha|-|\beta|}^2-C_\eta\sum_{\pm}\|\partial^\alpha(\II-\PP)f\|^2_{\sigma,0}\\
	&\qquad -\eta\sum_{\pm}\sum_{|\beta_1|\le|\beta|}\|e^{\frac{\pm\phi}{2}}\partial^{\alpha}_{\beta_1}(\II-\PP)f\|^2_{\sigma,l-|\alpha|-|\beta_1|},
	\end{align*}for any $\eta>0$. 
	Here we use the fact that $\|w^{l-|\alpha|-|\beta|}(\cdot)\|_{L^2(B_{C_\eta})}\lesssim \|\cdot\|_{\sigma,0}$. 
	For $J_7$, $J_8$, using the exponential decay in $v$ and the conservation laws \eqref{17}, we have 
	\begin{align*}
	J_7 +J_8&\lesssim \eta\sum_{\pm}\|\partial^\alpha_\beta(\II-\PP)f\|_{\sigma,l-|\alpha|-|\beta|}+C_\eta\Big(\sum_{|\alpha|\le K}\|\partial^\alpha\nabla_x\phi\|^2_{L^2_x}\\&\quad+\sum_{|\alpha|\le K-1}\|\partial^\alpha\nabla_x(a_\pm,b,c)\|^2_{L^2_x}+\sum_{|\alpha|\le K}\|\partial^{\alpha}(\I-\P)f\|^2_{\sigma,0}+\E_{K,l}(t)\D_{K,l}(t)\Big).
	\end{align*}
	Notice that here we used $|\alpha|\le K-1$. 
	$J_9$ and $J_{10}$ can be controlled by using a similar argument to Lemma \ref{Lem26} and Lemma \ref{Lem27}, since there's exponential decay in $v$. Then one can derive 
	\begin{align*}
	|J_9+J_{10}|\lesssim \E^{1/2}_{K,l}(t)\D_{K,l}(t)
	\end{align*} 
	For the term $J_{11}$, by Lemma \ref{lemmag}, we have 
	\begin{align*}
	|J_{11}|&= \Big|\sum_{\pm}\Big(\partial^{\alpha}_\beta \Gamma_{\pm}(f,f),w^{2l-2|\alpha|-2|\beta|}e^{\pm\phi}\partial^{\alpha}_\beta (\II-\PP)f\Big)_{L^2_{v,x}}\Big|\lesssim\E^{1/2}_{K,l}\D_{K,l}.
	\end{align*}

	Therefore, combining all the estimate above and take the summation on $|\alpha|\le K-1$, $|\alpha|+|\beta|\le K$, noticing that $|e^{\frac{\pm\phi}{2}}|\approx 1$, and letting $\eta$ sufficiently small, we conclude that, when $\psi=1$, 
	\begin{align}\label{48}\notag
	&\quad\,\partial_t\sum_{\pm}\sum_{\substack{|\alpha|\le K-1\\|\alpha|+|\beta|\le K}}\|e^{\frac{\pm\phi}{2}}w^{l-|\alpha|-|\beta|}\partial^{\alpha}_\beta(\II-\PP)f\|^2_{L^2_{v,x}}\\
	&\qquad
	+\lambda \sum_{\pm}\sum_{\substack{|\alpha|\le K-1\\|\alpha|+|\beta|\le K}}\|e^{\frac{\pm\phi}{2}}\partial^{\alpha}_\beta (\I-\P)f\|_{\sigma,l-|\alpha|-|\beta|}^2\notag\\
	&\lesssim \sum_{|\alpha|\le K-1} \|\partial^\alpha\nabla_x\phi\|_{L^2_{x}}^2 +(\E^{1/2}_{K,l}(t)+\E_{K,l}(t))\D_{K,l}(t)+\|\partial_t\phi\|_{L^\infty}\E^h_{K,l}(t)\\
	&\qquad+\sum_{|\alpha|\le K-1}\|\partial^\alpha\nabla_x(a_\pm,b,c)\|^2_{L^2_x}+\sum_{|\alpha|\le K}\|\partial^{\alpha}(\I-\P)f\|^2_{\sigma,0}.\notag
	\end{align}
	The redundant terms on the right hand side will be eliminated by using \eqref{48a}.

	\paragraph{Step 4.}
	
	We are able to prove this theorem by taking the proper linear combination of those estimates obtained in the above steps. Taking combination $C_1\times\eqref{48a}+\eqref{74}+\eqref{48}$ with sufficiently large $C_1>0$, we have 
	\begin{align}\label{58}
	&\notag\quad\,\partial_t\E_{K,l}(t)
	\notag + C_1\lambda \sum_{\pm}\sum_{|\alpha|\le K}\|e^{\frac{\pm\phi}{2}}\partial^{\alpha} f_\pm\|_{\sigma,0}^2 + C_1\lambda\sum_{|\alpha|\le K-1} \|\partial^\alpha\nabla_x(a_\pm,b,c)\|^2_{L^2_x}\\
	&\notag\qquad+C_1\lambda\|a_+-a_-\|^2_{L^2_x}+C_1\lambda\sum_{|\alpha|\le K-1}\|\partial^\alpha E\|^2_{L^2_x}
	+\lambda \sum_{\pm}\sum_{\substack{|\alpha|+|\beta|\le K}}\|e^{\frac{\pm\phi}{2}}\partial^{\alpha}_\beta (\II-\PP)f\|_{\sigma,l-|\alpha|-|\beta|}^2\notag\\
	&\lesssim \|\partial_t\phi\|_{L^\infty}\E_{K,l}(t)+ (\E^{1/2}_{K,l}(t)+\E_{K,l}(t))\D_{K,l}(t). 
	\end{align}
	where 
	\begin{align}
	\E_{K,l}(t)&=\notag \frac{C_1}{2}\sum_{\pm}\sum_{|\alpha|\le K}\|e^{\frac{\pm\phi}{2}}\partial^{\alpha} f_\pm\|_{L^2_{v,x}} +\sum_{\pm}\sum_{\substack{|\alpha|+|\beta|\le K}}\|e^{\frac{\pm\phi}{2}}w^{l-|\alpha|-|\beta|}\partial^{\alpha}_\beta(\II-\PP)f\|^2_{L^2_{v,x}}\\&\qquad+
	C_1\sum_{|\alpha|\le K}\|\partial^{\alpha}\nabla_x\phi\|_{L^2_x}^2+\kappa\E^{(1)}_{K}.
	\end{align}The second to fourth terms on the left hand side of \eqref{58} is larger than $D_{K,l}$.
	Notice that here for the term $\sum_{|\alpha|= K}\|\partial^\alpha\nabla_x\phi\|$, we use the fact that by \eqref{8},  
		\begin{align}\label{47a}
		\|\nabla^{K}_x\nabla_x\phi\|_{L^2_x}\lesssim \|\nabla^{K}_x\nabla_x\Delta^{-1}_x(a_+-a_-)\|_{L^2_x}\lesssim \sum_{\pm}\|\nabla^{K-1}_xa_\pm\|_{L^2_{x}}\lesssim \D^{1/2}_{K,l},
		\end{align}
	 and, hence $\sum_{|\alpha|= K}\|\partial^\alpha\nabla_x\phi\|$ can be eliminated by using $C_1\times\eqref{48a}$. 
	Noticing \eqref{27a} and $\kappa<<1$, it's direct to see that 
	\begin{align*}
	\E_{K,l}(t)\notag &\approx \sum_{|\alpha|\le K}\|\partial^\alpha E(t)\|^2_{L^2_x}+\sum_{|\alpha|\le K}\|\partial^\alpha\P f\|^2_{L^2_{v,x}}+\sum_{\substack{|\alpha|+|\beta|\le K}}\|w^{l-|\alpha|-|\beta|}\partial^\alpha_\beta(\I-\P) f\|^2_{L^2_{v,x}}
	\end{align*}
	Recalling the $a$ $priori$ assumption \eqref{74a}, the desired estimate \eqref{42a} follows directly from \eqref{58}. 
\end{proof}

For the higher order instant energy, we have the following theorem. 
\begin{Thm}\label{thm42}For any $l\ge K$, there is $\E^h_{K,l}(t)$ satisfying \eqref{Defeh} such that for any $0\le t\le T$,
	\begin{align}\label{91}
	\partial_t\E^h_{K,l}+\lambda \D_{K,l}(t)\lesssim \|\partial_t\phi\|_{L^\infty}\E^h_{K,l}(t)+\|\nabla_x(a_\pm,b,c)\|^2_{L^2_x}, 
	\end{align}where $\D_{K,l}$ is defined by \eqref{Defd}. 
\end{Thm}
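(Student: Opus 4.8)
The plan is to reprise the four-step energy scheme used for Theorem~\ref{thm411}, but now discarding the zeroth-order macroscopic component: in the pure $x$-derivative estimates one sums only over $1\le|\alpha|\le K$, and one invokes the high-order macroscopic inequality \eqref{24a} of Lemma~\ref{Lemma31} in place of \eqref{24}. The price for dropping the $|\alpha|=0$ piece of $\P f$ is that the first-order hydrodynamic norm $\|\nabla_x(a_\pm,b,c)\|^2_{L^2_x}$ is no longer controlled by $\D_{K,l}$, and it is precisely this term that has to survive on the right-hand side of \eqref{91}.

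First I would carry out the estimate without velocity weight for $1\le|\alpha|\le K$: take the $L^2_{v,x}$ inner product of \eqref{35a} with $e^{\pm\phi}\partial^\alpha f_\pm$, take the real part, and sum over $\pm$. The seven resulting terms are treated exactly as the terms $I_1$--$I_7$ in the proof of Theorem~\ref{thm411}: the transport term cancels against the $\alpha_1=0$ part of the Vlasov term $\pm\frac12\partial^{e_i}\phi\,v_if_\pm$ after integration by parts in $x$ (this is the role of $e^{\pm\phi}$); the remaining Vlasov contributions are bounded by $\E^{1/2}_{K,l}\D_{K,l}$ via Lemma~\ref{Lem26} and Lemma~\ref{Lem27}; the forcing term $\pm\partial^{e_i+\alpha}\phi\,v_i\mu^{1/2}$ produces $\frac12\partial_t\|\partial^\alpha\nabla_x\phi\|^2_{L^2_x}$ plus an $\E^{1/2}_{K,l}\D_{K,l}$ remainder using \eqref{16} and \eqref{21}; Lemma~\ref{lemmaL}(i) gives the dissipation $\lambda\|\partial^\alpha(\I-\P)f\|^2_{\sigma,0}$; and Lemma~\ref{lemmag} handles $\Gamma_\pm$. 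Since $|\alpha|\ge 1$, the remainder coming from $\partial_t(e^{\pm\phi})$ is bounded by $\|\partial_t\phi\|_{L^\infty}\E^h_{K,l}$. Forming the combination of this with $\kappa\times\eqref{24a}$ for $0<\kappa\ll 1$ absorbs the $\|\partial^\alpha(\I-\P)f\|_{\sigma,0}$ terms on the right of \eqref{24a} and yields dissipation of $\sum_{1\le|\alpha|\le K-1}\|\partial^\alpha\nabla_x(a_\pm,b,c)\|^2$, $\|\nabla_x(a_+-a_-)\|^2$ and $\sum_{|\alpha|\le K-1}\|\partial^\alpha E\|^2$, leaving only $\|\nabla_x(a_\pm,b,c)\|^2_{L^2_x}$ on the right-hand side. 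The weighted estimate on $x$-derivatives ($1\le|\alpha|\le K$, pairing \eqref{35a} with $e^{\pm\phi}w^{2l-2|\alpha|}\partial^\alpha f_\pm$) reproduces \eqref{74} verbatim, using Lemma~\ref{lemmaL}(ii) and the weighted versions of Lemmas~\ref{Lem26}, \ref{Lem27}, \ref{lemmag}; and the weighted estimate on the mixed derivatives ($|\alpha|\le K-1$, $|\alpha|+|\beta|\le K$, pairing \eqref{36a} with $e^{\pm\phi}w^{2l-2|\alpha|-2|\beta|}\partial^\alpha_\beta(\I-\P)f$) reproduces \eqref{48}, the only change being that the $\partial_t(e^{\pm\phi})$ remainder in the term $J_1$ is now controlled by $\|\partial_t\phi\|_{L^\infty}\E^h_{K,l}$.

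Finally I would take $C_1\times(\text{Step 1})+(\text{Step 2})+(\text{Step 3})$ with $C_1$ large, define $\E^h_{K,l}$ to be the resulting energy functional --- the sum of $\|e^{\pm\phi/2}\partial^\alpha f_\pm\|_{L^2_{v,x}}^2$ over $1\le|\alpha|\le K$, the weighted mixed-derivative $(\I-\P)f$ terms, $C_1\sum_{1\le|\alpha|\le K}\|\partial^\alpha\nabla_x\phi\|^2_{L^2_x}$, and $\kappa\,\E^{(1)}_{K,h}$ --- check by \eqref{27b} that it satisfies \eqref{Defeh}, use \eqref{47a} (whose right-hand side $\sum_\pm\|\nabla_x^{K-1}a_\pm\|_{L^2_x}\lesssim\D^{1/2}_{K,l}$ now belongs to the high-order $\D_{K,l}$) to absorb the top-order term $\|\nabla_x^K\nabla_x\phi\|$, and use the $a$ $priori$ bound \eqref{74a} to absorb the cubic terms $(\E^{1/2}_{K,l}+\E_{K,l})\D_{K,l}$ into $\lambda\D_{K,l}$. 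I expect the only genuine subtlety to be the bookkeeping of the hydrodynamic and electric-field terms: because $(a_\pm,b,c)$ with no derivative is absent from both $\E^h_{K,l}$ and $\D_{K,l}$, the norm $\|\nabla_x(a_\pm,b,c)\|^2_{L^2_x}$ entering through \eqref{24a} and through the forcing/Vlasov remainders cannot be closed and must remain on the right of \eqref{91} --- which is acceptable since that term will later be shown to be time-integrable through the zeroth-order estimate, enabling the Gronwall argument in the proof of Theorem~\ref{main1}. A secondary point is the ordering of constants: the coefficient of $\|\partial^\alpha(\I-\P)f\|_{\sigma,0}$ supplied by $L_\pm$ must dominate the one appearing with the opposite sign on the right of \eqref{24a}, which forces $\kappa$ to be chosen small before $C_1$ is chosen large.
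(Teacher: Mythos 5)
Your overall scheme is the right one and most of the individual estimates are exactly those of the paper, but there is one genuine gap: you have no mechanism to absorb the \emph{zeroth-order} microscopic term $\|(\I-\P)f\|^2_{\sigma,0}$ that appears on the right-hand side of the mixed-derivative estimate \eqref{48} (the sum $\sum_{|\alpha|\le K}\|\partial^{\alpha}(\I-\P)f\|^2_{\sigma,0}$ there includes $\alpha=0$, coming from the compact part $C_\eta\|\partial^\alpha(\II-\PP)f\|^2_{L^2(B_{C_\eta})}$ in the weighted coercivity of $L$). In the proof of Theorem~\ref{thm411} this term is killed by $C_1\times\eqref{48a}$, whose dissipation runs over the full range $|\alpha|\le K$; but in the high-order setting your Step~1 sums only over $1\le|\alpha|\le K$, so its dissipation no longer contains $\|(\I-\P)f\|^2_{\sigma,0}$. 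That term carries a large constant $C_\eta$, so it cannot be hidden in $\lambda\D_{K,l}$ on the left, and it is not bounded by $\|\partial_t\phi\|_{L^\infty}\E^h_{K,l}+\|\nabla_x(a_\pm,b,c)\|^2_{L^2_x}$ either (the $\sigma$-norm contains a $v$-derivative, so it is not controlled by the $L^2$ quantities in $\E^h_{K,l}$).

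The paper closes this by an additional energy estimate you omit: pair the $\alpha=\beta=0$ microscopic equation \eqref{36a} with $e^{\pm\phi}(\II-\PP)f$ to produce a separate inequality (\eqref{64}) whose left side carries the dissipation $\lambda\|(\I-\P)f\|^2_{\sigma,0}$ and whose right side involves only $\|\nabla_x(a_\pm,b,c)\|^2_{L^2_x}$, $\|\nabla_x\phi\|^2_{L^2_x}$, $\|\nabla_x(\I-\P)f\|^2_{\sigma,0}$ (a first-order quantity absorbable by Step~1) and cubic remainders. The final combination is then nested, $C_2\times\bigl(C_1\times(\text{Step 1}+\kappa\times\eqref{24a})+\eqref{64}\bigr)+\eqref{74}+\eqref{48}$ with $\kappa\ll1$, $C_1\gg 1/\kappa$ and then $C_2$ large, so that the zeroth-order dissipation supplied by \eqref{64} dominates the $C_\eta\|(\I-\P)f\|^2_{\sigma,0}$ coming from \eqref{74} and \eqref{48}, while the $\|\nabla_x(\I-\P)f\|^2_{\sigma,0}$ produced by \eqref{64} is in turn dominated by $C_1C_2$ times the Step~1 dissipation. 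Without this extra estimate and the attendant ordering of constants, the inequality \eqref{91} does not close with the full $\D_{K,l}$ on the left.
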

\begin{proof}
	By letting $|\alpha|\ge1$ in \eqref{45a}, repeating the calculations from \eqref{45a} to \eqref{47b}, we can instead obtain 
	\begin{align}\label{63}
	&\notag\quad\,\frac{1}{2}\partial_t\sum_{\pm}\sum_{1\le|\alpha|\le K}\Big(\|e^{\frac{\pm\phi}{2}}\partial^{\alpha} f_\pm\|_{L^2_{v,x}} +
	\|\partial^{\alpha}\nabla_x\phi\|_{L^2_x}^2\Big) + \lambda \sum_{\pm}\sum_{1\le|\alpha|\le K}\|e^{\frac{\pm\phi}{2}}\partial^{\alpha} (\II-\PP)f\|_{\sigma,0}^2
	\\&\lesssim \|\partial_t\phi\|_{L^\infty}\E^h_{K,l}(t)+\E^{1/2}_{K,l}(t)\D_{K,l}(t),
	\end{align}
	Notice that here the first right-hand term contains $\E^h_{K,l}$ since there's at least one derivative on $x$ in the estimate of \eqref{78}. 
	We will combine this with \eqref{74} and \eqref{48}. In order to eliminate the term $\|(\I-\P)f\|^2_{\sigma,0}$ in \eqref{74} and \eqref{48}, we shall take the inner product of \eqref{36a} with $e^{\pm\phi}(\II-\PP)f$ over $\R^3_v\times\R^3_x$ and $\alpha=\beta=0$. 
	\begin{align*}
	&\quad\,\big(\partial_t(\II-\PP)f,e^{\pm\phi}(\II-\PP)f\big)_{L^2_{v,x}} + \big(v_i\partial^{e_i}(\II-\PP)f,e^{\pm\phi}(\II-\PP)f\big)_{L^2_{v,x}} \\
	&\qquad\pm \big(\frac{1}{2}\partial^{e_i}\phi v_i(\II-\PP)f,e^{\pm\phi}(\II-\PP)f\big)_{L^2_{v,x}} 
	\mp\big(\partial^{e_i}\phi\partial_{e_i}(\II-\PP)f,e^{\pm\phi}(\II-\PP)f\big)_{L^2_{v,x}}\\&\qquad \pm \big(\partial^{e_i}\phi v_i\mu^{1/2},e^{\pm\phi}(\II-\PP)f\big)_{L^2_{v,x}} -  \big(L_\pm (\I-\P)f,e^{\pm\phi}(\II-\PP)f\big)_{L^2_{v,x}} \\
	&= -\big(\partial_t\PP f,e^{\pm\phi}(\II-\PP)f\big)_{L^2_{v,x}} + \big(v_i\partial^{e_i}\PP f,e^{\pm\phi}(\II-\PP)f\big)_{L^2_{v,x}} \\&\qquad\mp \big(\frac{1}{2}\partial^{e_i}\phi v_i\PP f,e^{\pm\phi}(\II-\PP)f\big)_{L^2_{v,x}} \mp\big(\partial^{e_i}\phi\partial^{}_{e_i}\PP f,e^{\pm\phi}(\II-\PP)f\big)_{L^2_{v,x}}\\ &\qquad
	+
	\big(\partial^{\alpha}_\beta \Gamma_{\pm}(f,f),e^{\pm\phi}(\II-\PP)f\big)_{L^2_{v,x}}.
	\end{align*}
	As before, we denote these terms with taking summation over $\pm$ and real part by $L_1,\dots,L_{11}$, and estimate them term by term. 
	\begin{equation*}
	L_1\ge \frac{1}{2}\partial_t\sum_{\pm}\|e^{\frac{\pm\phi}{2}}(\II-\PP)f\|^2_{L^2_{v,x}}-C\|\partial_t\phi\|_{L^\infty_x}\E^h_{K,l}(t). 
	\end{equation*}
	The same as \eqref{48aaa}, by integration by parts on $x_i$, $L_2+L_3=0$. By integration by parts on $v_i$, $L_4=0$. 
	Similar to \eqref{43a}, 
	\begin{align*}
	L_5 = \frac{1}{2}\partial_t\|\nabla_x\phi\|^2_{L^2_x}. 
	\end{align*}
	By Lemma \ref{lemmaL}, 
	\begin{align*}
	L_6 \ge \lambda\|(\I-\P)f\|^2_{\sigma,0}. 
	\end{align*}
	Recalling the conservation laws \eqref{19}, one has 
	\begin{align*}
	L_7 \le \frac{\lambda}{4}\|(\I-\P)f\|^2_{\sigma,0}+C\big(\|\nabla_x(a_\pm,b,c)\|^2_{L^2_x}+\|\nabla_x\phi\|^2_{L^2_x}+\|\nabla_x(\I-\P)f\|^2_{\sigma,0}+\E_{K,l}\D_{K,l}\big).
	\end{align*}
	By Cauchy-Schwarz inequality, 
	\begin{align*}
	L_8\le \frac{\lambda}{4}\|(\I-\P)f\|^2_{\sigma,0}+C\|\nabla_x(a_\pm,b,c)\|^2_{L^2_{v,x}}. 
	\end{align*}
	Similar to the calculation on $J_9,J_{10},J_{11}$, we have that $L_9,L_{10},L_{11}$ are bounded above by $\E^{1/2}_{K,l}\D_{K,l}$. 
	Combining the above estimate, we have 
	\begin{align}\label{64}
	&\quad\,\notag\frac{1}{2}\partial_t\sum_{\pm}\|e^{\frac{\pm\phi}{2}}(\II-\PP)f\|^2_{L^2_{v,x}}+\frac{1}{2}\partial_t\|\nabla_x\phi\|^2_{L^2_x}+\lambda\|(\I-\P)f\|^2_{\sigma,0}\\
	&\lesssim \|\partial_t\phi\|_{L^\infty_x}\E^h_{K,l}(t)+\|\nabla_x(a_\pm,b,c)\|^2_{L^2_{v,x}}+\|\nabla_x(\I-\P)f\|^2_{\sigma,0}+(\E^{1/2}_{K,l}+\E_{K,l})\D_{K,l}
	\end{align}
	
	Now we use combination $C_2\times(C_1\times(\eqref{63}+\kappa\times\eqref{24a})+\eqref{64})+\eqref{74}+\eqref{48}$ with $\kappa<<1$. Taking $C_1>>\frac{1}{\kappa}$ sufficiently large then taking $C_2$ sufficiently large, we obtain that when $\psi=1$, 
	\begin{align}
	\notag&\partial_t\E^h_{K,l}(t) + C_1C_2\kappa\lambda\sum_{1\le|\alpha|\le K-1} \|\partial^\alpha\nabla_x(a_\pm,b,c)\|^2_{L^2_x}+C_1C_2\kappa\|\nabla_x(a_+-a_-)\|^2_{L^2_x}\\
	\notag&\qquad +C_1C_2\sum_{|\alpha|\le K-1}\|\partial^\alpha E\|^2_{L^2_x}+ C_1C_2\lambda \sum_{\pm}\sum_{1\le|\alpha|\le K}\|e^{\frac{\pm\phi}{2}}\partial^{\alpha} f_\pm\|_{\sigma,0}^2\\
	\notag&\qquad+C_2\lambda\|(\I-\P)f\|^2_{\sigma,0}
	+\lambda \sum_{\pm}\sum_{\substack{|\alpha|+|\beta|\le K}}\|e^{\frac{\pm\phi}{2}}\partial^{\alpha}_\beta (\I-\P)f\|_{\sigma,l-|\alpha|-|\beta|}^2\notag\\
	&\lesssim\|\partial_t\phi\|_{L^\infty}\E^h_{K,l}(t)+(\E^{1/2}_{K,l}(t)+\E_{K,l}(t))\D_{K,l}(t)+\|\nabla_x(a_\pm,b,c)\|^2_{L^2_x},\label{94}
	\end{align}
	where left-hand terms except the first one adding $\|\nabla_x(a_\pm,b,c)\|^2_{L^2_x}$ is larger than $\D_{K,l}$ and 
	\begin{align*}
	\E^h_{K,l}(t) &= C_1C_2\kappa\E^{(1)}_{K,h}+\frac{C_1C_2}{2}\sum_{\pm}\sum_{1\le|\alpha|\le K}\Big(\|e^{\frac{\pm\phi}{2}}\partial^{\alpha} f_\pm\|_{L^2_{v,x}} +
	C_1C_2\|\partial^{\alpha}\nabla_x\phi\|_{L^2_x}^2\Big)\\
	&\qquad+\frac{C_2}{2}\sum_{\pm}\|e^{\frac{\pm\phi}{2}}(\II-\PP)f\|^2_{L^2_{v,x}}+\frac{C_2}{2}\|\nabla_x\phi\|^2_{L^2_x}\\
	&\qquad+\sum_{\pm}\sum_{\substack{|\alpha|+|\beta|\le K}}\|e^{\frac{\pm\phi}{2}}w^{l-|\alpha|-|\beta|}\partial^{\alpha}_\beta(\II-\PP)f\|^2_{L^2_{v,x}}.
	\end{align*}
	Noticing $\kappa<<1$ is sufficiently small, it's direct to verify \eqref{Defeh}. 
	At last, by using the $a$ $priori$ assumption \eqref{74a}, we obtain the desired estimate \eqref{91} from \eqref{94}. 
\end{proof}

\begin{Thm}Let $T\in(0,\infty]$, $p\in(1/2,1)$ and $K\ge 3$. 
	Consider the solution $f$ to Cauchy problem \eqref{7}-\eqref{9}.
Assume $l_0\ge K$ satisfies that 
	\begin{equation}l_0\ge\left\{\begin{aligned}
			& \frac{3(\gamma+2)}{-4\gamma}+3,\  \text{ if }-1\le\gamma+2<0,\\
			& 2(\gamma+2)+3,\quad\text{ if }\gamma+2\ge0.
		\end{aligned}\right.
	\end{equation} Define 
	\begin{equation*}
	X(t) = \left\{\begin{aligned}
		&\sup_{0\le\tau\le t}(1+\tau)^{3/2}\E_{K,l_0}(\tau)+\sup_{0\le\tau\le t}(1+\tau)^{5/2}\E^h_{K,l_0}(\tau),\text{ if }\gamma+2\ge 0,\\
		\sup_{0\le\tau\le t}\E_{K,l_0+l_1}&(\tau)+\sup_{0\le\tau\le t}(1+\tau)^{3/2}\E_{K,l_0}(\tau)+\sup_{0\le\tau\le t}(1+\tau)^{3/2+p}\E^h_{K,l_0}(\tau),\text{ if }-1\le\gamma+2< 0,
	\end{aligned}\right.
	\end{equation*}and
	\begin{equation*}
	\epsilon_0 = (\E_{K,l_0+l_1}(0))^{1/2}+\|w^{l_2}f_0\|_{Z_1}+\|E_0\|_{L^1},
	\end{equation*}
where $l_1=\frac{5(\gamma+2)}{4(1-p)\gamma}$, $l_2=\frac{5(\gamma+2)}{4\gamma}$ for soft potential $-1\le\gamma+2< 0$ and $l_1=l_2=0$ for hard potential $\gamma+2\ge 0$. 
	Then under the $a$ $priori$ assumption \eqref{74a} for $\delta_0>0$ sufficiently small, we have 
	\begin{equation}\label{80}
	X(t)\lesssim \epsilon^2_0+X^{3/2}(t)+X^2(t),
	\end{equation}
	for any $0\le t\le T$. 
\end{Thm}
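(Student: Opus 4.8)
The plan is to run a time-weighted energy argument on top of the two energy inequalities \eqref{42a} and \eqref{91}, feed algebraic decay into the lowest-order norms through a Duhamel formula and the linear estimates of Theorem~\ref{homogen}, and close the resulting Gronwall loop using the integrability in time of $\|\partial_t\phi\|_{L^\infty_x}$. Throughout, the a priori assumption \eqref{74a} makes all the ``small factor'' quantities small. The first ingredient is a budget for $\partial_t\phi$: by \eqref{34}, $\|\partial_t\phi(\tau)\|_{L^\infty_x}\lesssim(\E^h_{K,l_0}(\tau))^{1/2}$, and since $\E^h_{K,l_0}(\tau)\lesssim(1+\tau)^{-5/2}X(t)$ for hard potential and $\lesssim(1+\tau)^{-3/2-p}X(t)$ with $p>1/2$ for soft potential, in both cases $\int_0^t\|\partial_t\phi(\tau)\|_{L^\infty_x}\,d\tau\lesssim X^{1/2}(t)\lesssim\delta_0^{1/2}$. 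Consequently every Gronwall factor $\exp\big(C\int_0^t\|\partial_t\phi\|_{L^\infty_x}\big)$ stays bounded.

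First I estimate the non-decaying (highest weight) energy. Applying \eqref{42a} with $l=l_0+l_1$ (taking $l_1=0$ in the hard case), keeping on the right the nonlinear remainders $(\E^{1/2}_{K,l}+\E_{K,l})\D_{K,l}$ from \eqref{58}, integrating in $\tau$, absorbing a fraction of those remainders into the dissipation via \eqref{74a}, and using the $\partial_t\phi$ budget for the exponential factor, I obtain
\[
\E_{K,l_0+l_1}(t)+\lambda\int_0^t\D_{K,l_0+l_1}(\tau)\,d\tau\lesssim\epsilon_0^2+X^{3/2}(t)+X^2(t).
\]
For soft potential this is the first term of $X$; for hard potential it already bounds $\sup_\tau\E_{K,l_0}(\tau)$.

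The core step is the $(1+t)^{-3/2}$ decay of $\E_{K,l_0}$. The obstruction is that $\D_{K,l_0}$ does not dominate $\E_{K,l_0}$: the zeroth-order $\P f$ and $E$ modes are absent from $\D_{K,l_0}$, and for soft potential the $\sigma$-norm loses velocity weight relative to plain $L^2_{v,x}$. For the macroscopic part I use the Duhamel formula $f(t)=e^{tB}f_0+\int_0^t e^{(t-s)B}(g_+(s),g_-(s))\,ds$ for \eqref{39}, with $g_\pm$ as in \eqref{22a} (having zero $\mu^{1/2}$-moment, hence inducing no field), and invoke Theorem~\ref{homogen} with $m=0,1$ together with Corollary~\ref{Coro1} ($\|w^{l_*}g_\pm\|_{Z_1}+\|w^{l_*}g_\pm\|_{L^2_vH^1_x}\lesssim\E_{3,l_0}$) to get, for $m=0,1$,
\[
\|\nabla_x^m\P f(t)\|_{L^2_{v,x}}+\|\nabla_x^m E(t)\|_{L^2_x}\lesssim(1+t)^{-\sigma_m}\epsilon_0+\int_0^t(1+t-s)^{-\sigma_m}\E_{3,l_0}(s)\,ds .
\]
For the velocity-weight loss I interpolate $\E_{K,l_0}\lesssim\D_{K,l_0}^{1-\theta}\E_{K,l_0+l_1}^{\theta}$ with $\theta=\theta(p,\gamma)\in(0,1)$ small, the weight $l_1=\tfrac{5(\gamma+2)}{4(1-p)\gamma}$ being chosen so that the H\"older exponents match; since $\E_{K,l_0+l_1}\lesssim\epsilon_0^2+X$ is bounded by the previous step this gives $\D_{K,l_0}\gtrsim(\epsilon_0^2+X)^{-\theta/(1-\theta)}\E_{K,l_0}^{1/(1-\theta)}$ (for hard potential the interpolation is unnecessary since $\|w^l\cdot\|_{L^2_{v,x}}\lesssim\|\cdot\|_{\sigma,l}$, so $\E_{K,l_0}\lesssim\D_{K,l_0}$). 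Substituting these facts into \eqref{42a} produces a differential inequality of the form $\partial_t\E_{K,l_0}\le-\lambda\,C(\epsilon_0,X)\,\E_{K,l_0}^{1/(1-\theta)}+C\|\partial_t\phi\|_{L^\infty_x}\E_{K,l_0}+(\text{the low-order terms above})$, to which the comparison argument of \cite{Strain2012} --- already used in proving Theorem~\ref{homogen} --- applies, giving $\sup_{0\le\tau\le t}(1+\tau)^{3/2}\E_{K,l_0}(\tau)\lesssim\epsilon_0^2+X^{3/2}(t)+X^2(t)$.

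Finally, the $(1+\tau)^{-3/2}$ decay of $\E_{3,l_0}$ just obtained, used in the time integral of the $m=1$ Duhamel bound, yields $\|\nabla_x(a_\pm,b,c)\|^2_{L^2_x}\lesssim(1+t)^{-5/2}(\epsilon_0^2+X(t))$; inserting this into \eqref{91} and running the comparison argument once more upgrades the decay of $\E^h_{K,l_0}$ to $(1+t)^{-5/2}$ for hard potential, while the same interpolation loss caps it at $(1+t)^{-3/2-p}$ for soft potential --- exactly the last term of $X$. Collecting the three pieces and using \eqref{74a} to absorb the powers $X^{1/2},X$ multiplying dissipation gives \eqref{80}. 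I expect the genuine difficulty to be the non-dissipative term $\|\partial_t\phi\|_{L^\infty_x}\E_{K,l}$ and the circularity it induces --- decay of $\E^h$ is needed to bound $\int\|\partial_t\phi\|_{L^\infty_x}$, decay of $\E$ is needed to bound $\E^h$, and the weighted bound on $\E_{K,l_0+l_1}$ is needed to run the interpolation producing decay of $\E$ --- which is untangled all at once by working under \eqref{74a}, so that every bootstrap input is quantitatively small and the loop closes into the self-improving inequality \eqref{80}.
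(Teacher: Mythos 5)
Your overall architecture matches the paper's: Gronwall on the top-weight energy $\E_{K,l_0+l_1}$ using the integrability of $\|\partial_t\phi\|_{L^\infty_x}$, Duhamel plus Theorem~\ref{homogen} and Corollary~\ref{Coro1} for the macroscopic decay, a weight-interpolation device to compensate the degenerate dissipation for soft potentials, and a comparison argument to extract the rates. Your H\"older interpolation $\E\lesssim\D^{1-\theta}\E_{+l_1}^{\theta}$ is an acceptable substitute for the paper's time-dependent velocity-region splitting $\mathbf{E}(t)=\{\<v\>^{-\gamma-2}\le t^{1-p}\}$; with $l_1=\tfrac{5(\gamma+2)}{4(1-p)\gamma}$ the admissible exponent satisfies $j=\tfrac{1-\theta}{\theta}\le\tfrac{5}{2(1-p)}$ and one checks $\tfrac{5j}{2(j+1)}=\tfrac{25}{2(7-2p)}\ge\tfrac32+p$, so the $\E^h_{K,l_0}$ rate does come out right.

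The gap is in the middle step, where you claim $\sup_\tau(1+\tau)^{3/2}\E_{K,l_0}(\tau)\lesssim\epsilon_0^2+X^{3/2}+X^2$ by running the comparison argument on a differential inequality for $\E_{K,l_0}$ itself, with the zeroth-order macroscopic modes appearing as forcing. Those modes are controlled only by the $m=0$ Duhamel bound, i.e.\ $\|\P f(t)\|^2_{L^2_{v,x}}\lesssim(1+t)^{-3/2}(\epsilon_0^2+X^2)$. For soft potentials the interpolated dissipation is of power type, $\partial_t y\le -c\,M^{-1/j}y^{1+1/j}+B(1+t)^{-3/2}$ with $j\le\tfrac{5}{2(1-p)}$ fixed, and balancing dissipation against forcing yields only $y\lesssim(1+t)^{-\frac{3}{2}\cdot\frac{j}{j+1}}$, strictly short of $(1+t)^{-3/2}$; the supersolution test with $A(1+t)^{-3/2}$ fails because the forcing term dominates the left-hand side for large $t$. (The same obstruction appears if you use the $t^{p-1}\E$ dissipation of the splitting method: $\int_0^te^{-\lambda(t^p-s^p)}(1+s)^{-3/2}ds\lesssim(1+t)^{-1/2-p}$, not $(1+t)^{-3/2}$.) The paper avoids this by never writing an ODE for the full $\E_{K,l_0}$: the differential inequality \eqref{91} is for $\E^h_{K,l_0}$, which excludes the zero mode by definition, so its forcing is $\|\nabla_x(a_\pm,b,c)\|^2\lesssim(1+t)^{-5/2}$ (the extra derivative buys $\sigma_1=5/4$ instead of $\sigma_0=3/4$), and then $\E_{K,l_0}\approx\|\P f\|^2_{L^2_{v,x}}+\E^h_{K,l_0}$ is reassembled with the zero mode taken directly from the $m=0$ Duhamel estimate. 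You have all the ingredients — indeed your final paragraph performs the $\E^h$ step — so the fix is a reorganization: delete the ODE for $\E_{K,l_0}$, run it only for $\E^h_{K,l_0}$ with the $m=1$ forcing, and obtain the $(1+t)^{-3/2}$ rate of $\E_{K,l_0}$ by adding the $m=0$ bound to the $\E^h$ bound at the end.
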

\begin{proof}
	{\bf Step 1. } From \eqref{34} and \eqref{42a}, we have that 
	\begin{align}\label{81}
		\|\partial_t\phi\|_{L^\infty_x}&\lesssim (\E^h_{K,l_0}(t))^{1/2}\lesssim(1+t)^{-3/4-p/2}X^{1/2}(t)\lesssim\delta^{1/2}_0(1+t)^{-3/4-p/2},\\
		&\partial_t\E_{K,l_0+l_1}(t)+D_{K,l_0+l_1}(t)\lesssim \|\partial_t\phi\|_{L^\infty_x}\E_{K,l_0+l_1}(t).\notag
	\end{align}
	By using Gronwall's inequality, for $0\le t\le T$, 
	\begin{align}\label{81a}
		\E_{K,l_0+l_1}(t)\lesssim \E_{K,l_0+l_1}(0)e^{C\int^t_0\|\partial_t\phi(\tau)\|_{L^\infty_x}}\,d\tau\lesssim \E_{K,l_0+l_1}(0)\lesssim \epsilon_0^2.
	\end{align}
	
	{\bf Step 2}. To prove the decay of $\E^h_{K,l_0}$, we use Theorem \ref{thm42} to get 
	\begin{align}\label{99a}
	\partial_t\E^h_{K,l_0}+\lambda \D_{K,l_0}(t)\lesssim \|\partial_t\phi\|_{L^\infty}\E^h_{K,l_0}(t)+\|\nabla_x(a_\pm,b,c)\|^2, 
	\end{align}
	We will use the trick in \cite{Strain2012}.
	Firstly, we analyze the hard potential case $\gamma+2\ge0$. Noticing $\|\cdot\|_{L^2_v}\lesssim\|\cdot\|_{\sigma,0}$ for hard potential, we have 
	\begin{align*}
	\partial_t\E^h_{K,l_0}+\lambda \E^h_{K,l_0}(t)\lesssim \|\partial_t\phi\|_{L^\infty}\E^h_{K,l_0}(t)+\|\nabla_x(a_\pm,b,c)\|^2.
	\end{align*}
	By Gronwall's inequality, 
	\begin{align}\label{82a}
	\E^h_{K,l_0}(t)\lesssim e^{-\lambda t}\E^h_{K,l_0}(0)+\int^t_0\,d\tau e^{-\lambda (t-\tau)}\big(\|\partial_t\phi(\tau)\|_{L^\infty}\E^h_{K,l_0}(\tau)+\|\nabla_x(a_\pm,b,c)(\tau)\|^2\big).
	\end{align}
	We will need to deal with the terms inside the time integral. By \eqref{81}, for $0\le \tau\le t$, 
	\begin{align*}
	\|\partial_t\phi(\tau)\|_{L^\infty}\E^h_{K,l_0}(\tau)\lesssim (\E^h_{K,l_0}(t))^{3/2}\lesssim (1+t)^{-15/4}X^{3/2}(t).
	\end{align*}
	We claim that for $0\le t\le T$, 
	\begin{align*}
	\|\nabla_x(a_\pm,b,c)(t)\|^2\lesssim (\epsilon_0+X(t))(1+t)^{-5/4}
	\end{align*}
	Recalling \eqref{42b}, by Duhamel's principle, we can write the solution to \eqref{7} as 
	\begin{align*}
	f(t) = e^{tB}f_0+\int^t_0e^{(t-\tau)B}g(\tau)\,d\tau,
	\end{align*}where $g=(g_+,g_-)$ is defined by \eqref{22a}. Applying Theorem \ref{homogen} with $m=1$, $l=0$, $\sigma_1= \frac{5}{4}$ therein, we have 
	\begin{align*}
	\|\nabla_x\P f(t)\|_{L^2_{v,x}} \lesssim\|\nabla_xf(t)\|_{L^2_{v,x}} \lesssim \epsilon_0(1+t)^{-5/4}+\int^t_0(1+t-\tau)^{-5/4}\big(\|g(\tau)\|_{Z_1}+\|\nabla_xg(\tau)\|_{L^2_{v,x}}\big)\,d\tau,
	\end{align*}where we use the fact that $(g_\pm,\mu^{1/2})=0$.
	By using Corollary \ref{Coro1}, 
	and $\E_{K,l_0}(t)\le (1+t)^{-3/2}X(t)$, we have 
	\begin{align}\label{84a}
	\|\nabla_xf(t)\|_{L^2_{v,x}} &\lesssim\notag \epsilon_0(1+t)^{-5/4}+X(t)\int^t_0(1+t-\tau)^{-5/4}(1+\tau)^{-3/2}\,d\tau\\
	&\lesssim (\epsilon_0+X(t))(1+t)^{-5/4}.
	\end{align}This proves the claim. 
	Now \eqref{82a} gives that for $0\le t\le T$, 
	\begin{align}\label{85a}
	\E^h_{K,l_0}(t)\notag&\lesssim e^{-\lambda t}\epsilon_0+\int^t_0\,d\tau e^{-\lambda (t-\tau)}\big((1+\tau)^{-15/4}X^{3/2}(t)+(\epsilon_0^2+X^2(t))(1+\tau)^{-5/2}\big)\\
	&\lesssim (\epsilon_0^2+X^{3/2}(t)+X^2(t))(1+t)^{-5/2}. 
	\end{align}
By using the same way proving \eqref{84a} and applying $m=0$, $\sigma_0=\frac{3}{4}$ instead, we can obtain 
	\begin{align}\label{86a}
	\|f\|_{L^2_{v,x}}\lesssim (\epsilon_0+X(t))(1+t)^{-3/4}.
	\end{align}
	Since $\E_{K,l_0}(t)\approx\|\P f\|^2_{L^2_x}+\E^h_{K,l_0}(t)$, we have from \eqref{85a} and \eqref{86a} that 
	\begin{align}\label{87a}
	\E_{K,l_0}(t)\approx (\epsilon_0^2+X^{3/2}(t)+X^2(t))(1+t)^{-3/2}. 
	\end{align}
	Now the desired estimate \eqref{80} follows from \eqref{81a}, \eqref{85a} and \eqref{87a}.

	{\bf Step 3.} In this step, we analyze the soft potential case $-1\le\gamma+2<0$.  
To prove the decay of $\E^h_{K,l_0}$, 
We will use the trick in \cite{Strain2012} to split the velocity space for any time $t>0$:
\begin{align*}
	\mathbf{E}(t)= \{\<v\>^{-\gamma-2}\le t^{1-p}\},\quad \mathbf{E}^c(t)= \{\<v\>^{-\gamma-2}> t^{1-p}\}.
\end{align*}
Then we define $\E^{h,low}_{K,l_0}$ and $\E^{h,high}_{K,l_0}$ to be the restriction of $\E^h_{K,l_0}$ on $\mathbf{E}(t)$ and $\mathbf{E}^c(t)$ respectively. For the term involving only $x$ like $\|\partial^\alpha E\|_{L^2_x}^2$ in the definition of $\E^{h}_{K,l}$, we can add a term of $v$ as $C\|\mu\<v\>^{-\gamma-2}\|^2_{L^2_v}$ to generate the time-velocity interactive control. 
Notice that the term $\|\partial^K E\|_{L^2_x}^2$ inside $\E^{h}_{K,l}$ is bounded above by 
\begin{align*}
	\|\nabla^{K+1}_x\phi\|^2_{L^2_x}=\|\nabla_x^{K+1}\Delta^{-1}_x(a_+-a_-)\|^2_{L^2_x}
	\le \|\nabla_x^{K-1}(a_\pm,b,c)\|^2_{L^2_x},
\end{align*}
which is a term in $\D_{K,l_0}$. 
Then we have 
\begin{align*}
	t^{p-1}\E^h_{K,l_0} &= t^{p-1}\E^{h,low}_{K,l_0}+t^{p-1}\E^{h,high}_{K,l_0}\\
	&\le \D_{K,l_0} + t^{p-1}\E^{h,high}_{K,l_0}.
\end{align*}
Thus by \eqref{99a}, 
\begin{align*}
	\partial_t\E^h_{K,l_0}+\lambda t^{p-1}\E^h_{K,l_0}\lesssim \|\partial_t\phi\|_{L^\infty}\E^h_{K,l_0}(t)+\|\nabla_x\P f\|^2_{L^2_{v,x}} + t^{p-1}\E^{h,high}_{K,l_0}.
\end{align*}
Now we estimate the right-hand terms one by one. 
Firstly, by \eqref{81},
\begin{align*}
	\|\partial_t\phi\|_{L^\infty}\E^h_{K,l_0}(t)\lesssim (\E^h_{K,l_0}(t))^{3/2}\lesssim (1+t)^{-(\frac{3}{2}+p)\frac{3}{2}}X^{3/2}(t).
\end{align*}
Secondly, we claim that  
\begin{align*}
	\E^{h,high}_{K,l_0}\lesssim \epsilon_0^2(1+t)^{-\frac{5}{2}}.
\end{align*}
Indeed, one one hand, 
\begin{align*}
	\E^{h,high}_{K,l_0}\lesssim \E^{h}_{K,l}\lesssim \E_{K,l_0+l_1}\lesssim \epsilon_0^2.
\end{align*}
On the other hand, since $w^{-l_1}=\<v\>^{(\gamma+2)\frac{2\gamma l_1}{\gamma+2}}<t^{-\frac{5}{4}}$ on $\mathbf{E}^c(t)$, by \eqref{81a}, 
\begin{align*}
	\E^{h,high}_{K,l_0}\lesssim \epsilon^2_0t^{-\frac{5}{2}}.
\end{align*}This completes the claim. 
Thirdly, we claim that for $0\le t\le T$, 
\begin{align*}
	\|\nabla_x \P f\|\lesssim (\epsilon_0+X(t))(1+t)^{-5/4}.
\end{align*}
The same as in Step 2, applying Theorem \ref{homogen} with $m=1$, $l=0$, $\sigma_1= \frac{5}{4}$ therein, we have that for $l_*>\frac{5(\gamma+2)}{4\gamma}$, 
\begin{align}\label{83}
	\|\nabla_x\P f(t)\|_{L^2_{v,x}}  \lesssim \epsilon_0(1+t)^{-5/4}+\int^t_0(1+t-\tau)^{-5/4}\big(\|w^{l_*}g(\tau)\|_{Z_1}+\|w^{l_*}\nabla_xg(\tau)\|_{L^2_{v,x}}\big)\,d\tau,
\end{align}where we use the fact that $(g_\pm,\mu^{1/2})=0$.
By using Corollary \ref{Coro1} 
and $\E_{K,l_0}(t)\le (1+t)^{-3/2}X(t)$, we have 
\begin{align}\label{84}
	\|\nabla_xf(t)\|_{L^2_{v,x}} &\lesssim\notag \epsilon_0(1+t)^{-5/4}+X(t)\int^t_0(1+t-\tau)^{-5/4}(1+\tau)^{-3/2}\,d\tau\\
	&\lesssim (\epsilon_0+X(t))(1+t)^{-5/4}.
\end{align}This proves the claim. 
Combining the above estimate, we have 
\begin{align}\label{73}
	\partial_t\E^h_{K,l_0}+\lambda t^{p-1}\E^h_{K,l_0}\lesssim
	(1+t)^{-(\frac{3}{2}+p)\frac{3}{2}}X^{3/2}(t)+\epsilon_0^2t^{p-1}(1+t)^{-\frac{5}{2}}+(\epsilon^2_0+X^2(t))(1+t)^{-5/2}.
\end{align}
Notice that one has the following inequalities.
\begin{align*}
	\int^t_0e^{\lambda(t^p-\tau^p)}(1+\tau)^{-(\frac{3}{2}+p)\frac{3}{2}}\,d\tau\lesssim (1+t)^{-(\frac{5}{4}+\frac{5p}{2})},\\
	\int^t_0e^{\lambda(t^p-\tau^p)}\tau^{p-1}(1+\tau)^{-\frac{5}{2}}\,d\tau\lesssim (1+t)^{-\frac{5}{2}},\\
	\int^t_0e^{\lambda(t^p-\tau^p)}(1+\tau)^{-\frac{5}{2}}\,d\tau\lesssim (1+t)^{-(\frac{3}{2}+p)}.
\end{align*}
Applying Gronwall's inequality to \eqref{73}, we have  
\begin{align}\label{85}
	\E^h_{K,l_0}(t)&\lesssim e^{-\lambda t^p}\E^h_{K,l_0}(0)+\int^t_0\,d\tau e^{-\lambda (t^p-\tau^p)}\big((1+\tau)^{-(\frac{3}{2}+p)\frac{3}{2}}X^{3/2}(\tau)+\epsilon_0^2t^{p-1}(1+\tau)^{-\frac{5}{2}}\notag\\&\qquad\qquad\qquad\qquad\qquad\qquad\qquad\qquad\qquad+(\epsilon_0+X(\tau))(1+\tau)^{-5/4}\big)\notag\\
	&\lesssim (\epsilon^2_0+X^{3/2}(t)+X^2(t))(1+t)^{-\frac{3}{2}-p},
\end{align}
for $0\le t\le T$. 
By using the same way proving \eqref{84} and applying $m=0$, $\sigma_0=\frac{3}{4}$ instead, we can obtain 
\begin{align}\label{86}
	\|f\|_{L^2_{v,x}}\lesssim (\epsilon_0+X(t))(1+t)^{-3/4}.
\end{align}
Since $\E_{K,l_0}(t)\approx\|\P f\|^2_{L^2_x}+\E^h_{K,l_0}(t)$, we have from \eqref{85} and \eqref{86} that 
\begin{align}\label{87}
	\E_{K,l_0}(t)\approx (\epsilon_0^2+X^{3/2}(t)+X^2(t))(1+t)^{-3/2}. 
\end{align}
Now the desired estimate \eqref{80} follows from \eqref{81a}, \eqref{85} and \eqref{87}. This completes the proof. 

\qe\end{proof}

\begin{proof}[Proof of Theorem \ref{main1}]
	It follows immediate from the $a$ $priori$ estimate \eqref{80} that $X(t)\lesssim \epsilon^2_0$ holds true for any $t\ge 0$, whenever $\epsilon_0$ is sufficiently small. The rest is to prove the local existence and uniqueness of solutions in terms of the energy norm $\E_{K,l_0}$ and the non-negativity of $\F_\pm=\mu+\mu^{1/2}f$. One can use the iteration on system 
	\begin{equation}
	\left\{\begin{aligned}
	&\partial_tf^{n+1}_\pm+v\cdot\nabla_xf^{n+1}_\pm\mp\nabla_x\phi^n\cdot\nabla_xf^{n+1}_\pm\pm\frac{1}{2}\nabla_x\phi^n\cdot vf^{n+1}_\pm\pm v\mu^{1/2}\cdot\nabla_x\phi^n-L_\pm f=\Gamma_\pm(f^n,f^{n+1}),\\
	&-\Delta_x\phi^{n+1}=\int_{\R^3}\mu^{1/2}(f^{n+1}_+-f^{n+1}_-)\,dv,\\
	&f^{n+1}|_{t=0}=f_0,
	\end{aligned}\right.
	\end{equation}
	and the details of proof are omitted for brevity; see \cite{Guo2012, Strain2013} and \cite{Gressman2011}.
	Therefore, the unique global-in-time solution to \eqref{7}-\eqref{9} exists by using continuity argument. The estimate \eqref{15a} follows directly. 
\end{proof}

\section{Regularity}\label{sec5}
In this section, we will prove the smoothing effect for solutions to Vlasov-Poisson-Landau system with lower order initial data. 
The Vlasov-Poisson-Landau system is 
\begin{equation}\label{16c}
\left\{\begin{aligned}
	&\partial_tf_\pm + v_i\partial^{e_i}f_\pm \pm \frac{1}{2}\partial^{e_i}\phi v_if_\pm  \mp\partial^{e_i}\phi\partial_{e_i}f_\pm \pm \partial^{e_i}\phi  v_i\mu^{1/2} - L_\pm f = \Gamma_{\pm}(f,f),\\
	&-\Delta_x \phi = \int_{\Rd}(f_+-f_-)\mu^{1/2}\,dv,\\ 
	&f_\pm|_{t=0} = f_{0,\pm}. 
\end{aligned}\right.
\end{equation}The index appearing in both superscript and subscript means the summation. Our goal is to obtain the $a$ $priori$ estimate from these equations. 
In order to extract the smoothing estimate, we let $N=N(\alpha,\beta)>0$ be a large number chosen later. Assume $T\in(0,1]$, $t\in[0,T]$ and 
\begin{equation}\label{93}
	\psi=t^{N},\quad\psi_k=\left\{\begin{aligned}
		1, \text{  if $k\le 0$},\\
		\psi^k, \text{ if $k>0$}. 
	\end{aligned}\right.
\end{equation}
is this section. Then $|\partial_t\psi_k|\lesssim \psi_{k-1/N}$. Let $f$ be the smooth solution to \eqref{7}-\eqref{9} over $0\le t\le T$ and assume the $a$ $priori$ assumption 
\begin{align}\label{priori1}
	\sup_{0\le t\le T}\E_{K,l}(t)\le \delta_0,
\end{align}where $\delta_0\in(0,1)$ is a suitably small constant.
Under this assumption, we can derive a simple fact that 
\begin{align*}
	\|\phi\|_{L^\infty}\lesssim\|\phi\|_{H^2_x}\le \delta_0, \quad \|e^{\pm\phi}\|_{L^\infty}\approx 1.
\end{align*}

\begin{Thm}\label{lem51}Assume $\gamma+2\ge -1$, $l\ge K\ge 3$.
	Let $f$ be the solution to \eqref{7}-\eqref{9} satisfying that
	\begin{align}\label{100}
		\epsilon^2_1 = \E_{3,l}(0)<\infty, 
		\end{align}
	If $\gamma+2>0$, then there exists $t_0\in(0,1)$ such that  
	\begin{align}
		\sup_{0\le t\le t_0}\E_{K,l}(t)\lesssim \epsilon^2_1.
	\end{align}
If $-1\le\gamma+2\le 0$, we assume additionally 
	\begin{align*}
		\sup_{0\le t\le T}\|\<v\>^{C_{K,l}}f(t)\|^2_{L^2_{v,x}}<\infty.
	\end{align*}for some large constant $C_{K,l}>0$ depending on $K,l$. Then there exists $t_0\in(0,1)$ such that  
\begin{align}
\sup_{0\le t\le t_0}\E_{K,l}(t)\lesssim \epsilon^2_1.
\end{align}
	
\end{Thm}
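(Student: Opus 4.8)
The plan is to run a time-weighted energy estimate for the functional $\E_{K,l}$ with the weight $\psi=t^N$, mimicking the structure of Theorems \ref{thm411}--\ref{thm42} but keeping track of the extra commutator term produced by $\partial_t\psi_k$. First I would revisit the three-step scheme of Theorem \ref{thm411}: apply $\partial^\alpha$ (Step 1, no weight), $\partial^\alpha$ with the velocity weight $w^{2l-2|\alpha|}$ (Step 2), and $\partial^\alpha_\beta$ with $w^{2l-2|\alpha|-2|\beta|}$ on $(\I-\P)f$ (Step 3), in each case pairing against $\psi_{2|\alpha|+2|\beta|-6}e^{\pm\phi}w^{\cdots}\partial^\alpha_\beta(\cdot)$. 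Most terms are handled verbatim by the lemmas already proved: the transport/Poisson terms $v\cdot\nabla_x\phi f$ and $\nabla_x\phi\cdot\nabla_v f$ by Lemma \ref{Lem26} and Lemma \ref{Lem27}; the collision commutator $L$ by Lemma \ref{lemmaL}(iii); the nonlinear term $\Gamma$ by Lemma \ref{lemmag} and Lemma \ref{lemmat}; the field term $\nabla_x\phi\cdot v\mu^{1/2}$ and the macroscopic coupling as in Lemma \ref{Lemma31}. Since $\psi\le 1$ and $0\le t\le 1$, and since $\psi_{|\alpha|+|\beta|-3}\le\psi_{|\alpha_1|+\cdots}\psi_{|\alpha_2|+\cdots}$ whenever the indices split, the interpolation bookkeeping carries through exactly as in the proof of Lemma \ref{lemmat}. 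The upshot is the inequality
\begin{align*}
\partial_t\E_{K,l}(t)+\lambda\D_{K,l}(t)\lesssim \|\partial_t\phi\|_{L^\infty_x}\E_{K,l}(t)+\big(\E^{1/2}_{K,l}+\E_{K,l}\big)\D_{K,l}(t)+R(t),
\end{align*}
where $R(t)$ collects exactly the terms of the form \eqref{22}, namely $\big(\partial_t(\psi_{2|\alpha|+2|\beta|-6})\partial^\alpha_\beta f,e^{\pm\phi}w^{2l-2|\alpha|-2|\beta|}\partial^\alpha_\beta f\big)_{L^2_{v,x}}$ arising when $\partial_t$ hits the time weight.

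The heart of the argument is controlling $R(t)$, and here I would use the interpolation device sketched in the introduction. Writing $|\partial_t\psi_k|\lesssim\psi_{k-1/N}$, the bad factor is $\psi_{|\alpha|+|\beta|-3-\frac1{2N}}$ against $\psi_{|\alpha|+|\beta|-3}$. Splitting the lost power $\psi_{-1/(2N)}$ by Young's inequality in the form $\psi_{|\alpha|+|\beta|-3-\frac1{2N}}\lesssim \delta\,\psi_{|\alpha|+|\beta|-3}\tilde b^{1/2}+C_{0,\delta}\psi_{|\alpha|+|\beta|-3}\<v\>^{-l_0|\alpha|/\delta_1}|y|^{-|\alpha|}$ on the Fourier side, together with the analogous $\tilde a$-interpolation in $v$, lets me trade $R(t)$ for $\delta^2\|\psi_{|\alpha|+|\beta|-3}\tilde b^{1/2}w^{l-|\alpha|-|\beta|}(\partial^\alpha_\beta f)^\wedge\|^2_{L^2_{v,y}}+\delta^2\D_{K,l}$ plus a remainder. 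When $\gamma+2>0$ the remainder is $C_\delta\E_{K,l}$; when $-1\le\gamma+2\le 0$ it is $C_\delta\|\<v\>^{C_{K,l}}f\|^2_{L^2_{v,x}}$, which is why the extra hypothesis $\sup_t\|\<v\>^{C_{K,l}}f\|_{L^2_{v,x}}<\infty$ is imposed only in the soft case. The high-order term involving $\tilde b^{1/2}$ is then absorbed by testing the equation with $\theta$ defined by \eqref{107} and exploiting the Poisson bracket $\{v\cdot y,\theta\}$ to convert $x$-regularity into dissipation, i.e.\ Lemma \ref{Lem53}; choosing $\delta_1=\delta_1(\alpha,\beta)$ and $N=N(\alpha,\beta)$ appropriately makes this term $\lesssim\delta^2\big(\E_{K,l}+\D_{K,l}\big)$.

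Putting these together and absorbing the $\delta^2\D_{K,l}$ and $(\E^{1/2}_{K,l}+\E_{K,l})\D_{K,l}$ terms into $\lambda\D_{K,l}$ using the a priori smallness \eqref{priori1}, I obtain, in the hard case,
\begin{align*}
\partial_t\E_{K,l}(t)+\tfrac{\lambda}{2}\D_{K,l}(t)\lesssim \big(\|\partial_t\phi\|_{L^\infty_x}+C\big)\E_{K,l}(t),
\end{align*}
and in the soft case the same with an added $C_\delta\|\<v\>^{C_{K,l}}f(t)\|^2_{L^2_{v,x}}$ on the right, which is bounded by hypothesis. Crucially, because $\psi_{|\alpha|+|\beta|-3}=t^{N(|\alpha|+|\beta|-3)}$ vanishes at $t=0$ for $|\alpha|+|\beta|>3$, the weighted energy at time $0$ collapses: $\E_{K,l}(0)=\E_{3,l}(0)=\epsilon_1^2$. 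A Gronwall argument on $[0,t_0]$ for $t_0\in(0,1)$ small then yields $\sup_{0\le t\le t_0}\E_{K,l}(t)\lesssim\epsilon_1^2$ (hard case) respectively $\lesssim\epsilon_1^2+\sup_t\|\<v\>^{C_{K,l}}f\|^2_{L^2_{v,x}}\cdot t_0$, which can be made $\lesssim\epsilon_1^2$ by shrinking $t_0$; in either case the a priori assumption \eqref{priori1} is recovered by continuity for $\epsilon_1$ small, closing the estimate. The main obstacle is precisely the $\tilde b^{1/2}$ high-order term in $R(t)$: one must verify that the Poisson-bracket gain from $\{v\cdot y,\theta\}$ really produces a full $x$-derivative's worth of smoothing with a constant compatible with the choice of $\delta_1$ and $N$, uniformly over all $|\alpha|+|\beta|\le K$ — this is the content of Lemma \ref{Lem53} and is where the pseudo-differential calculus of the appendix does the real work.
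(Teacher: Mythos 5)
Your proposal follows essentially the same route as the paper: the $\psi=t^N$-weighted energy estimate producing the $\partial_t\psi$ commutator (Lemma \ref{thm41}), the $\tilde b$/$\tilde a$ interpolation plus the Poisson-bracket argument with $\theta$ (Lemma \ref{Lem53}), and the final Gronwall step exploiting $\E_{K,l}(0)=\E_{3,l}(0)$ and a small $t_0$. The only imprecision is that the $\tilde b^{1/2}$ high-order term is not pointwise $\lesssim\delta^2(\E_{K,l}+\D_{K,l})$ but is bounded by $\delta^2$ times a total time derivative of the cross term $\big(-\psi_{2|\alpha|+2|\beta|-6}w^{l-|\alpha|-|\beta|}\partial^\alpha_\beta f_\pm e^{\pm\phi/2},(\theta^w(\cdots)^\wedge)^\vee\big)_{L^2_{v,x}}$, which is integrated in time and controlled by $\E_{K,l}$ at the endpoints since $\theta\in S(1)$ --- a harmless difference that your Gronwall step still accommodates.
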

	The reason of choosing $\psi_{|\alpha|+|\beta|-3}$ in $\eqref{Defe}$ is that whenever $l\ge K\ge 3$, the initial value $\E_{K,l}(0)=\E_{3,l}(0)$, since $\psi_{|\alpha|+|\beta|-3}|_{t=0}=0$ whenever $|\alpha|+|\beta|\ge 4$. 

\begin{Lem}\label{thm41}For any $l\ge 0$, there is $\E_{K,l}$ satisfying \eqref{Defe} such that for $0\le t\le T$,
\begin{align}\label{72}
	\partial_t\E_{K,l}(t)+\lambda\D_{K,l}(t) \lesssim \|\partial_t\phi\|_{L^\infty_x}\E_{K,l}(t)+\E_{K,l} + \sum_{|\alpha|+|\beta|\le K}\|\psi_{|\alpha|+|\beta|-3-\frac{1}{2N}}w^{l-|\alpha|-|\beta|}\partial^\alpha_\beta f\|^2_{L^2_{v,x}}.
\end{align}where $D_{K,l}$ is defined by \eqref{Defd}. 
\end{Lem}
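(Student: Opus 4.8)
The plan is to re-run the energy scheme of Theorems~\ref{thm411}--\ref{thm42}, now carrying the time weight $\psi=t^N$ through every step, and to isolate the one genuinely new contribution — the commutator produced when $\partial_t$ falls on $\psi_{|\alpha|+|\beta|-3}$ — as the last term on the right of \eqref{72}, leaving it untouched (it is disposed of only later, via $\tilde b$ and the Poisson bracket $\{v\cdot y,\theta\}$). Concretely I would first differentiate \eqref{16c}$_1$: apply $\partial^\alpha$ to get \eqref{35a}, and apply $\partial^\alpha_\beta$ and split $f_\pm=\PP f+(\II-\PP)f$ to get \eqref{36a}. Then I would run the same three families of weighted $L^2$ estimates as in Steps~1--3 of Theorem~\ref{thm411}: (i) $|\alpha|\le K$, $\beta=0$, tested against $\psi_{2|\alpha|-6}e^{\pm\phi}\partial^\alpha f_\pm$ (this produces $\|\psi_{|\alpha|-3}\partial^\alpha E\|^2_{L^2_x}$ after using \eqref{43a} and \eqref{16}); (ii) $1\le|\alpha|\le K$, $\beta=0$, tested against $\psi_{2|\alpha|-6}e^{\pm\phi}w^{2l-2|\alpha|}\partial^\alpha f_\pm$; (iii) $|\alpha|\le K-1$, $|\alpha|+|\beta|\le K$, tested against $\psi_{2|\alpha|+2|\beta|-6}e^{\pm\phi}w^{2l-2|\alpha|-2|\beta|}\partial^\alpha_\beta(\II-\PP)f$, taking real parts and summing over $\pm$ and the multi-indices.

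The only structural change is in the time-derivative term. Since $\psi_{2k}=\psi_k^2$, the inner product $\big(\partial_t\partial^\alpha_\beta f_\pm,\psi_{2|\alpha|+2|\beta|-6}e^{\pm\phi}w^{2l-2|\alpha|-2|\beta|}\partial^\alpha_\beta f_\pm\big)$ equals $\tfrac12\partial_t\|\psi_{|\alpha|+|\beta|-3}e^{\pm\phi/2}w^{l-|\alpha|-|\beta|}\partial^\alpha_\beta f_\pm\|^2$ minus $\psi_{|\alpha|+|\beta|-3}(\partial_t\psi_{|\alpha|+|\beta|-3})\|e^{\pm\phi/2}w^{l-|\alpha|-|\beta|}\partial^\alpha_\beta f_\pm\|^2$ minus $\tfrac12(\partial_t\phi)(\cdots)$. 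Using $\psi_k=\psi^k$, $|\partial_t\psi_k|\lesssim\psi_{k-1/N}$ and $\psi_k\psi_{k-1/N}=\psi_{2k-1/N}=\psi_{k-1/(2N)}^2$, the middle term is precisely $\lesssim\|\psi_{|\alpha|+|\beta|-3-1/(2N)}w^{l-|\alpha|-|\beta|}\partial^\alpha_\beta f\|^2_{L^2_{v,x}}$ — the last sum in \eqref{72} — after replacing $(\II-\PP)f$ by $f$ at the cost of the exponentially decaying $\PP f$-part, which (for $|\beta|\ge1$ one has $\psi_{|\alpha|+|\beta|-3-1/(2N)}\le\psi_{|\alpha|-3}$) is $\lesssim\E_{K,l}$; the $\partial_t\phi$ piece gives $\|\partial_t\phi\|_{L^\infty_x}\E_{K,l}$ as in \eqref{78}. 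Every remaining term is then treated exactly as in Theorems~\ref{thm411}--\ref{thm42}: the $\alpha_1=0$ pieces of $v\cdot\nabla_x\phi f$ and $\nabla_x\phi\cdot\nabla_vf$ cancel the transport term via \eqref{48aaa} and integration by parts in $v$; the remaining Vlasov pieces together with $\Gamma_\pm$ are bounded by $\E^{1/2}_{K,l}\D_{K,l}+\E_{K,l}\D^{1/2}_{K,l}$ using Lemmas~\ref{lemmat}, \ref{lemmag}, \ref{Lem26}, \ref{Lem27} (already stated with $\psi$); the source term $\pm\partial^{e_i+\alpha}\phi v_i\mu^{1/2}$ splits as in \eqref{43a} into $\tfrac12\partial_t\|\psi_{|\alpha|-3}\partial^\alpha\nabla_x\phi\|^2$ plus $\E^{1/2}_{K,l}\D_{K,l}$ (and an $\eta$-absorbable $\sigma$-norm in family (ii)); and $-L_\pm$ yields the dissipation via Lemma~\ref{lemmaL}, with the small-$\eta$ lower-$|\beta|$ $\sigma$-norms and the compact remainder absorbed into the microscopic dissipation.

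The step requiring care is the \emph{$\psi$-weighted macroscopic estimate}: I would re-run Lemma~\ref{Lemma31} assigning the order-$|\alpha|$ piece of the interactive functional $\E^{(1)}_{K}$ the weight $\psi_{|\alpha|-2}^2$ — one level above its own derivative order, matching the dissipation it generates — so that it produces the correctly weighted $\psi_{|\alpha|-2}^2\|\partial^{|\alpha|+1}(a_\pm,b,c)\|^2=\psi_{2((|\alpha|+1)-3)}\|\partial^{|\alpha|+1}(a_\pm,b,c)\|^2$ present in $\D_{K,l}$, while the commutator $\psi_{|\alpha|-2}|\partial_t\psi_{|\alpha|-2}|\,\E^{(1)}_{K,|\alpha|}\lesssim\psi_{2(|\alpha|-2)-1/N}\big(\|\partial^\alpha(\II-\PP)f\|^2+\|\partial^\alpha\PP f\|^2\big)\le\psi_{2(|\alpha|-3)}(\cdots)\lesssim\E_{K,l}$, using $2(|\alpha|-2)-1/N\ge2(|\alpha|-3)$. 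Finally I would take a linear combination with large constants as in \eqref{58} and \eqref{94}, use the elliptic bound \eqref{47a} for the top-order $\|\partial^K\nabla_x\phi\|$, and absorb $\E^{1/2}_{K,l}\D_{K,l}+\E_{K,l}\D^{1/2}_{K,l}$ into $\lambda\D_{K,l}$ by the a priori smallness \eqref{priori1}, then check that the resulting functional satisfies the equivalence \eqref{Defe}.

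The main obstacle is the bookkeeping of the $\psi$-exponents rather than any new analytic ingredient: one must choose the macroscopic weights so that every $\partial_t\psi$ contribution lands in $\E_{K,l}$ (not in an uncontrollable $t^{-1}$-type quantity), verify at each Leibniz split that $\psi_{|\alpha|+|\beta|-3}\le\psi_{|\alpha_1|+|\beta_1|-3}\psi_{|\alpha_2|+|\beta_2|-3}$ — which uses $\psi\le1$, i.e. $t\in[0,1]$, and exactly the $-3$ shift that soaks up the Sobolev-embedding index as in Lemma~\ref{lemmat} — and make sure the surviving commutator is delivered with precisely the exponent $|\alpha|+|\beta|-3-\tfrac1{2N}$ claimed in \eqref{72}.
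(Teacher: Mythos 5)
Your identification of the genuinely new term --- the $\partial_t\psi_{|\alpha|+|\beta|-3}$ commutator, rewritten via $\psi_k\psi_{k-1/N}=\psi^2_{k-1/(2N)}$ as the last sum in \eqref{72} --- is correct and matches the paper. But your overall architecture is imported wholesale from Section 4, and that is where it breaks. The paper's proof of this lemma deliberately does \emph{not} replicate Theorems \ref{thm411}--\ref{thm42}: it tests the \emph{un-split} equation \eqref{36} against $\psi_{2|\alpha|+2|\beta|-6}e^{\pm\phi}w^{2l-2|\alpha|-2|\beta|}\partial^\alpha_\beta f_\pm$ (the full $f_\pm$, not $(\II-\PP)f$), which produces dissipation $\|\psi_{|\alpha|+|\beta|-3}\partial^\alpha_\beta f_\pm\|^2_{\sigma,l-|\alpha|-|\beta|}$ for all $|\alpha|+|\beta|\le K$ and hence the $\P f$-part of $\D_{K,l}$ for free; and it never invokes the macroscopic estimate, because the $E$-part of $\D_{K,l}$ is contained in the $\E_{K,l}$ that is already permitted on the right-hand side of \eqref{72} (the paper simply ``puts $\|\psi_{|\alpha|-3}\partial^\alpha E\|^2$ on the left''). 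Your scheme instead keeps the macro--micro split equation \eqref{36a} tested against $(\II-\PP)f$ and a $\psi$-weighted version of Lemma \ref{Lemma31}, and both of these steps have a weight mismatch that you do not resolve.

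Concretely: in $\D_{K,l}$ the quantity $\|\partial^\alpha E\|^2$ ($|\alpha|\le K-1$) carries weight $\psi^2_{|\alpha|-3}$, but every term in the $\psi$-weighted analogue of \eqref{21} that must be absorbed to produce it --- e.g.\ $\psi^2_{|\alpha|-3}\bigl(\partial^\alpha\nabla_x\cdot\Theta((\I-\P)f\cdot(1,-1)),\partial^\alpha E\bigr)$ --- involves $\psi_{|\alpha|-3}\|\partial^{\alpha+1}(\I-\P)f\|_{\sigma,0}$, whereas $\D_{K,l}$ only provides this with the strictly smaller weight $\psi_{(|\alpha|+1)-3}=\psi_{|\alpha|-2}$; since $\psi=t^N\le1$, the ratio $\psi_{|\alpha|-3}/\psi_{|\alpha|-2}=t^{-N}$ blows up as $t\to0$ for $|\alpha|\ge3$, so this term cannot be absorbed. (Your weight choice $\psi^2_{|\alpha|-2}$ fixes the $(a_\pm,b,c)$-block but then produces $\psi^2_{|\alpha|-2}\|\partial^\alpha E\|^2$, which is weaker than what $\D_{K,l}$ demands.) The same $t^{-N}$ mismatch afflicts the terms $-(\partial_t\partial^\alpha_\beta\PP f,\psi_{2|\alpha|+2|\beta|-6}w^{\cdots}\partial^\alpha_\beta(\II-\PP)f)$ coming from \eqref{36a} when $\beta=0$ and $|\alpha|\ge3$: the conservation laws convert $\partial_t\PP f$ into one extra $x$-derivative, which again arrives with weight $\psi_{|\alpha|-3}$ against an available dissipation weight $\psi_{|\alpha|-2}$. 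So ``treat every remaining term exactly as in Theorems \ref{thm411}--\ref{thm42}'' fails precisely at $J_7$--$J_8$ and at the macroscopic step. The repair is the paper's: drop the macro--micro splitting and the macroscopic lemma altogether, take the weighted estimate on the full $f_\pm$, and harvest the $E$- and $\P f$-parts of $\D_{K,l}$ from $\E_{K,l}$ and from the full-$f$ $\sigma$-dissipation respectively.
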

\begin{proof}
	For any $K\ge 3$ being the total derivative of $v,x$, we let $|\alpha|+|\beta|\le K$.
	On one hand, we apply $\partial^\alpha$ to equation \eqref{16c} to get 
	\begin{equation}\begin{aligned}\label{35}
		&\quad\,\partial_t\partial^{\alpha} f_\pm + v_i\partial^{e_i+\alpha} f_\pm \pm \frac{1}{2}\sum_{\substack{\alpha_1\le\alpha}}C^{\alpha_1}_{\alpha}\partial^{e_i+\alpha_1}\phi v_i\partial^{\alpha-\alpha_1}f_\pm \\ &\qquad\mp\sum_{\substack{\alpha_1\le\alpha}}C^{\alpha_1}_{\alpha}\partial^{e_i+\alpha_1}\phi\partial^{\alpha-\alpha_1}_{e_i} f_\pm \pm \partial^{e_i+\alpha}\phi v_i\mu^{1/2} - \partial^{\alpha} L_\pm  f =
		\partial^{\alpha} \Gamma_{\pm}(f,f).\end{aligned}
\end{equation}
On the other hand, we apply $\partial^{\alpha}_\beta$ to equation \eqref{16c}. Then,  
	\begin{align}\label{36}
		&\notag\quad\,\partial_t\partial^{\alpha}_\beta f_\pm + \sum_{\beta_1\le \beta}C^{\beta_1}_{\beta}\partial_{\beta_1}v_i\partial^{e_i+\alpha}_{\beta-\beta_1}f_\pm \pm \frac{1}{2}\sum_{\substack{\alpha_1\le\alpha}}\sum_{\beta_1\le\beta}C^{\alpha_1,\beta_1}_{\alpha,\beta}\partial^{e_i+\alpha_1}\phi \partial_{\beta_1}v_i\partial^{\alpha-\alpha_1}_{\beta-\beta_1}f_\pm \\ &\qquad\mp\sum_{\substack{\alpha_1\le\alpha}}C^{\alpha_1}_{\alpha}\partial^{e_i+\alpha_1}\phi\partial^{\alpha-\alpha_1}_{\beta+e_i}f_\pm \pm \partial^{e_i+\alpha}\phi \partial_\beta(v_i\mu^{1/2}) - \partial^{\alpha}_\beta L_\pm f =
		\partial^{\alpha}_\beta \Gamma_{\pm}(f,f).
	\end{align}	

{\bf Step 1. Estimate without weight.}
For the estimate without weight, we take the case $|\alpha|\le K$ and $\beta=0$. This case is for obtaining the term $\|\partial^\alpha\nabla_x\phi\|^2_{L^2_x}$ on the left hand side of the energy inequality. Taking inner product of equation \eqref{35} with $\psi_{2|\alpha|-6}e^{\pm\phi}\partial^{\alpha} f_\pm$ over $\R^3_v\times\R^3_x$, we have   
\begin{align}\label{45}
	&\notag\quad\,\Big(\partial_t\partial^{\alpha} f_\pm,\psi_{2|\alpha|-6}e^{\pm\phi}\partial^{\alpha} f_\pm\Big)_{L^2_{v,x}}
	+ \Big(v_i\partial^{e_i+\alpha}f_\pm,\psi_{2|\alpha|-6}e^{\pm\phi}\partial^{\alpha} f_\pm\Big)_{L^2_{v,x}}\\ 
	&\notag\pm \Big(\frac{1}{2}\sum_{\substack{\alpha_1\le\alpha}}C^{\alpha_1}_{\alpha}\partial^{e_i+\alpha_1}\phi v_i\partial^{\alpha-\alpha_1}f_\pm,\psi_{2|\alpha|-6}e^{\pm\phi}\partial^{\alpha} f_\pm\Big)_{L^2_{v,x}} \\ 
	&\mp
	\Big(\sum_{\substack{\alpha_1\le\alpha}}C^{\alpha_1}_{\alpha}\partial^{e_i+\alpha_1}\phi\partial^{\alpha-\alpha_1}_{e_i}f_\pm,\psi_{2|\alpha|-6}e^{\pm\phi}\partial^{\alpha} f_\pm\Big) _{L^2_{v,x}}\\
	&\notag\pm \Big(\partial^{e_i+\alpha}\phi v_i\mu^{1/2},\psi_{2|\alpha|-6}e^{\pm\phi}\partial^{\alpha} f_\pm\Big)_{L^2_{v,x}} 
	- \Big(\partial^{\alpha} L_\pm f,\psi_{2|\alpha|-6}e^{\pm\phi}\partial^{\alpha} f_\pm\Big)_{L^2_{v,x}}\\ 
	&\notag= \Big(\partial^{\alpha} \Gamma_{\pm}(f,f),\psi_{2|\alpha|-6}e^{\pm\phi}\partial^{\alpha} f_\pm\Big)_{L^2_{v,x}}.
\end{align}
Now we take the summation on $\pm$ and real part, and denote these resulting terms by $I_1$ to $I_7$. The estimate here is similar to Theorem \ref{thm411}. In the following we estimate them term by term. 
For the term $I_1$, 
\begin{align}\label{37}
	I_1 &=\notag \frac{1}{2}\partial_t\sum_{\pm}\|e^{\frac{\pm\phi}{2}}\psi_{|\alpha|-3}\partial^{\alpha} f_\pm\|^2_{L^2_{v,x}} \mp \Re\sum_{\pm}\frac{1}{2}(\partial_t\phi e^{\pm\phi}\partial^{\alpha} f_\pm, \psi_{2|\alpha|-6}\partial^{\alpha} f_\pm)_{L^2_{v,x}}\\&\qquad-\Re\sum_{\pm}(\partial_t(\psi_{|\alpha|-3})\partial^{\alpha} f_\pm,\psi_{|\alpha|-3} e^{\pm\phi}\partial^{\alpha} f_\pm)_{L^2_{v,x}}. 
\end{align}
The second term on the right hand side of \eqref{37} is estimated as 
\begin{align}\label{46}
	\Big|\frac{1}{2}(\partial_t\phi \psi_{2|\alpha|-6}e^{\pm\phi}\partial^{\alpha} f_\pm, \partial^{\alpha} f_\pm)_{L^2_{v,x}}\Big|\lesssim \|\partial_t\phi\|_{L^\infty}\|\psi_{|\alpha|-3}\partial^\alpha f_\pm\|^2_{L^2_{v,x}}\lesssim \|\partial_t\phi\|_{L^\infty}\E_{K,l}(t)
\end{align}
The third right-hand term of \eqref{37} is estimated as 
	\begin{align}
	|(\partial_t(\psi_{|\alpha|-3})\partial^{\alpha} f_\pm,\psi_{|\alpha|-3} e^{\pm\phi}\partial^{\alpha} f_\pm)_{L^2_{v,x}}|&\lesssim \|\psi_{|\alpha|-3-\frac{1}{2N}}\partial^\alpha f\|^2_{L^2_{v,x}}.\notag
\end{align}

For the second term $I_2$, we will combine it with $I_3$ and $\alpha_1=0$. Taking integration by parts on $x$, one has  
\begin{align}\label{48aa}
	&\quad\,\Big(v_i\partial^{e_i+\alpha}f_\pm,\psi_{2|\alpha|-6}e^{\pm\phi}\partial^{\alpha} f_\pm\Big)_{L^2_{v,x}}
	\pm \Big(\frac{1}{2}\partial^{e_i}\phi v_i\partial^{\alpha}f_\pm,\psi_{2|\alpha|-6}e^{\pm\phi}\partial^{\alpha} f_\pm\Big)_{L^2_{v,x}}=0.
\end{align}

For the left terms in $I_3$, the weight will be used. In this case, $\alpha_1$ is not zero and by Lemma \ref{Lem26}, it's bounded above by $\E^{1/2}_{K,l}\D_{K,l}$. 
Using Lemma \ref{Lem27}, the term $I_4$ is also bounded above by $\E^{1/2}_{K,l}\D_{K,l}$.

For the term $I_5$, we will divide $e^{\pm\phi}$ into $(e^{\pm\phi}-1)$ and $1$. Recall equation \eqref{16} and \eqref{21}. For the part of $1$, 
\begin{align}\label{43}\notag
	\sum_{\pm}\pm\Re\big(\partial^{e_i+\alpha}\phi v_i\mu^{1/2},\psi_{2|\alpha|-6}\partial^{\alpha} f_\pm\big)_{L^2_{v,x}} 
    &= \frac{1}{2}\partial_t\|\psi_{|\alpha|-3}\partial^{\alpha}\nabla_x\phi\|_{L^2_x}^2.
\end{align}
For the part of $(e^{\pm\phi}-1)$, noticing $|e^{\pm\phi}-1|\lesssim \|\phi\|_{L^\infty}\lesssim \|\nabla_x\phi\|_{H^1_x}$, we have 
\begin{align}
	&\quad\,\Big|\sum_{\pm}\pm\Re\Big(\partial^{e_i+\alpha}\phi v_i\mu^{1/2},(e^{\pm\phi}-1)\psi_{2|\alpha|-6}\partial^{\alpha} f_\pm\Big)_{L^2_{v,x}}\Big|\notag\\
	&\lesssim \|\nabla_x\phi\|_{H^1_x}\sum_{|\alpha|\le K}\|\partial^\alpha\nabla_x\phi\|_{L^2_{v,x}}\sum_{|\alpha|\le K}\|\partial^\alpha(\I-\P)f\|_{L^2_{v,x}}\\
	&\lesssim \E^{1/2}_{K,l}(t)\D_{K,l}(t).\notag
\end{align}

For the term $I_6$, since $L_\pm$ commutes with $\partial^{\alpha}$ and $e^{\pm\phi}$, by Lemma \ref{lemmaL}, we have 
\begin{align}
	I_6 = - \sum_{\pm}\Big(\partial^{\alpha} L_\pm f,\psi_{2|\alpha|-6}e^{\pm\phi}\partial^{\alpha} f_\pm\Big)_{L^2_{v,x}}\ge \lambda \sum_{\pm}\|\psi_{|\alpha|-3}\partial^{\alpha}(\II-\PP) f\|_{\sigma,0}^2. 
\end{align}

For the term $I_7$, by Lemma \ref{lemmag}, we have 
\begin{align}
	|I_7|&= \Big|\sum_{\pm}\Big(\partial^{\alpha} \Gamma_{\pm}(f,f),\psi_{2|\alpha|-6}e^{\pm\phi}\partial^{\alpha} f_\pm\Big)_{L^2_{v,x}}\Big|\lesssim\E^{1/2}_{K,l}(t)\D_{K,l}(t).
\end{align}

Therefore, combining all the estimate above and take the summation on $|\alpha|\le K$, we conclude that, 
\begin{equation}\label{47}
	\begin{aligned}
		&\quad\,\frac{1}{2}\partial_t\sum_{\pm}\sum_{|\alpha|\le K}\Big(\|\psi_{|\alpha|-3}e^{\frac{\pm\phi}{2}}\partial^{\alpha} f_\pm\|_{L^2_{v,x}} +
		\|\psi_{|\alpha|-3}\partial^{\alpha}\nabla_x\phi\|_{L^2_x}^2\Big)\\&\qquad + \lambda \sum_{\pm}\sum_{|\alpha|\le K}\|\psi_{|\alpha|-3}\partial^{\alpha} (\II-\PP)f\|_{\sigma,0}^2\\
 &\lesssim \|\partial_t\phi\|_{L^\infty}\E_{K,l}(t)+\E^{1/2}_{K,l}(t)\D_{K,l}(t)+\sum_{|\alpha|\le K}\|\psi_{|\alpha|-3-\frac{1}{2N}}w^{l-|\alpha|}\partial^\alpha f\|^2_{L^2_{v,x}}.
	\end{aligned}
\end{equation}

{\bf Step 2. Estimate with weight on the mixed derivatives.}
Let $K\ge 3$, $|\alpha|+|\beta|\le K$. Taking inner product of equation \eqref{36} with  $\psi_{2|\alpha|+2|\beta|-6}e^{\pm\phi}w^{2l-2|\alpha|-2|\beta|}\partial^{\alpha}_\beta f_\pm$ over $\R^3_v\times\R^3_x$, one has
\begin{align*}
	&\quad\,\Big(\partial_t\partial^{\alpha}_\beta  f,e^{\pm\phi}\psi_{2|\alpha|+2|\beta|-6}w^{2l-2|\alpha|-2|\beta|}\partial^{\alpha}_\beta  f\Big)_{L^2_{v,x}}\\
	&\qquad + \Big(\sum_{\beta_1\le \beta}C^{\beta_1}_{\beta}\partial_{\beta_1}v_i\partial^{e_i+\alpha}_{\beta-\beta_1} f,e^{\pm\phi}\psi_{2|\alpha|+2|\beta|-6}w^{2l-2|\alpha|-2|\beta|}\partial^{\alpha}_\beta  f\Big)_{L^2_{v,x}} \\
	&\qquad\pm \Big(\frac{1}{2}\sum_{\substack{\alpha_1\le\alpha\\\beta_1\le\beta}}C^{\alpha_1,\beta_1}_{\alpha,\beta}\partial^{e_i+\alpha_1}\phi \partial_{\beta_1}v_i\partial^{\alpha-\alpha_1}_{\beta-\beta_1} f,e^{\pm\phi}\psi_{2|\alpha|+2|\beta|-6}w^{2l-2|\alpha|-2|\beta|}\partial^{\alpha}_\beta  f\Big)_{L^2_{v,x}} \\ &\qquad\mp\Big(\sum_{\substack{\alpha_1\le\alpha}}C^{\alpha_1}_{\alpha}\partial^{e_i+\alpha_1}\phi\partial^{\alpha-\alpha_1}_{\beta+e_i} f,e^{\pm\phi}\psi_{2|\alpha|+2|\beta|-6}w^{2l-2|\alpha|-2|\beta|}\partial^{\alpha}_\beta  f\Big)_{L^2_{v,x}}\\
	&\qquad \pm \Big(\partial^{e_i+\alpha}\phi \partial_\beta(v_i\mu^{1/2}),e^{\pm\phi}\psi_{2|\alpha|+2|\beta|-6}w^{2l-2|\alpha|-2|\beta|}\partial^{\alpha}_\beta  f\Big)_{L^2_{v,x}}\\
	&\qquad - \Big(\partial^{\alpha}_\beta L_\pm f,e^{\pm\phi}\psi_{2|\alpha|+2|\beta|-6}w^{2l-2|\alpha|-2|\beta|}\partial^{\alpha}_\beta  f\Big)_{L^2_{v,x}} \\
	&= \Big(\partial^{\alpha}_\beta \Gamma_{\pm}(f,f),e^{\pm\phi}\psi_{2|\alpha|+2|\beta|-6}w^{2l-2|\alpha|-2|\beta|}\partial^{\alpha}_\beta  f\Big)_{L^2_{v,x}}.
\end{align*}
Now we denote these terms with summation $\sum_{\pm}$ by $J_1$ to $J_{7}$ and estimate them term by term. 
The estimate of $J_1$ to $J_4$ are similar to $I_1$ to $I_4$. 
For $J_1$, 
\begin{align*}
	J_1 &\ge \partial_t\sum_{\pm}\|e^{\frac{\pm\phi}{2}}\psi_{|\alpha|+|\beta|-3}w^{l-|\alpha|-|\beta|}\partial^{\alpha}_\beta f_\pm\|_{L^2_{v,x}} - C\|\partial_t\phi\|_{L^\infty}\E_{K,l}(t)\\&\qquad -\sum_{\pm}\|\psi_{|\alpha|+|\beta|-3-\frac{1}{2N}}w^{l-|\alpha|-|\beta|}\partial^\alpha_\beta f_\pm\|^2_{L^2_{v,x}} ,
\end{align*}
Similar to \eqref{48aa}, $J_2$ and $J_3$ with $\alpha_1=0$ are canceled by using integration by parts. Using Lemma \ref{Lem26} and Lemma \ref{Lem27}, the left case $\alpha_1\neq 0$ in $J_3$ and $J_4$ are bounded above by $\E^{1/2}_{K,l}(t)\D_{K,l}(t)$.
For the term $J_5$, we only need a upper bound: for any $\eta>0$, 
\begin{align*}\notag
	|J_5| &= 
	\Big|\sum_{\pm}\pm \Big(\partial^{e_i+\alpha}\phi \partial_\beta(v_i\mu^{1/2}),\psi_{2|\alpha|+2|\beta|-6}e^{\pm\phi}w^{2l-2|\alpha|-2|\beta|}\partial^{\alpha}_\beta f_\pm\Big)_{L^2_{v,x}}\Big|\\
	&\lesssim \eta\sum_\pm\|\psi_{|\alpha|+|\beta|-3}\partial^{\alpha}_{\beta}f_\pm\|^2_{\sigma,l-|\alpha|-|\beta|}+C_\eta\|\psi_{|\alpha|-3}\partial^\alpha\nabla_x\phi\|_{L^2_{v,x}}^2.
\end{align*}
Notice that $\|\psi_{|\alpha|-3}\partial^\alpha\nabla_x\phi\|_{L^2_{v,x}}^2$ is bounded above by $\E_{K,l}$. For the term $J_6$, since $L_\pm$ commutes with $e^{\pm\phi}$, by Lemma \ref{lemmaL}, we have 
\begin{align*}
	J_6 &= - \sum_{\pm}\Big(\partial^{\alpha}_\beta L_\pm f,\psi_{2|\alpha|+2|\beta|-6}e^{\pm\phi}w^{2l-2|\alpha|-2|\beta|}\partial^{\alpha}_\beta f_\pm\Big)_{L^2_{v,x}}\\&\ge \lambda\sum_{\pm} \|\psi_{|\alpha|+|\beta|-3}e^{\frac{\pm\phi}{2}}\partial^{\alpha}_\beta f_\pm\|_{\sigma,l-|\alpha|-|\beta|}^2-C_\eta\sum_{\pm}\|\partial^\alpha f_\pm\|^2_{\sigma,0}\\
	&\qquad -\eta\sum_{\pm}\sum_{|\beta_1|\le|\beta|}\|\psi_{|\alpha|+|\beta|-3}e^{\frac{\pm\phi}{2}}\partial^{\alpha}_{\beta_1} f_\pm\|^2_{\sigma,l-|\alpha|-|\beta_1|},
\end{align*}for any $\eta>0$. 
Here we use the fact that $\|w^{l-|\alpha|-|\beta|}(\cdot)\|_{L^2(B_{C_\eta})}\lesssim \|\cdot\|_{\sigma,0}$. 
The term $J_{7}$, by Lemma \ref{lemmag}, is bounded above by  $\E^{1/2}_{K,l}\D_{K,l}+\E_{K,l}\D^{1/2}_{K,l}\lesssim (\E^{1/2}_{K,l}+\E_{K,l})\D_{K,l}+\E_{K,l}$.

Combining all the above estimate, taking summation on $|\alpha|+|\beta|\le K$ and letting $\eta$ sufficiently small, we have 
\begin{align}\label{111ab}
	&\quad\,\frac{1}{2}\partial_t\sum_\pm\sum_{|\alpha|+|\beta|\le K}\|e^{\frac{\pm\phi}{2}}\psi_{|\alpha|+|\beta|-3}w^{l-|\alpha|-|\beta|}\partial^\alpha_\beta f_\pm\|^2_{L^2_{v,x}}+\lambda \sum_\pm\sum_{|\alpha|+|\beta|\le K}\|\psi_{|\alpha|+|\beta|-3}\partial^\alpha_\beta f_\pm\|^2_{\sigma,l-|\alpha|-|\beta|} \notag\\&\lesssim \|\partial_t\phi\|_{L^\infty_x}\E_{K,l}(t) + \sum_{|\alpha|+|\beta|\le K}\|\psi_{|\alpha|+|\beta|-3-\frac{1}{2N}}w^{l-|\alpha|-|\beta|}\partial^\alpha_\beta f\|^2_{L^2_{v,x}}+(\E^{1/2}_{K,l}+\E_{K,l})\D_{K,l}+\E_{K,l}.
\end{align}
Together with \eqref{priori1}, choosing $\delta_0>0$ sufficiently small and taking combination $\eqref{47}+\eqref{111ab}$, we have 
\begin{align}\label{65}
	\partial_t\E_{K,l}(t)+\lambda\D_{K,l}(t) \lesssim \|\partial_t\phi\|_{L^\infty_x}\E_{K,l}(t)+\E_{K,l} + \sum_{|\alpha|+|\beta|\le K}\|\psi_{|\alpha|+|\beta|-3-\frac{1}{2N}}w^{l-|\alpha|-|\beta|}\partial^\alpha_\beta f\|^2_{L^2_{v,x}},
\end{align}
where we let 
\begin{align*}
\E_{K,l}(t) = \sum_\pm\sum_{|\alpha|+|\beta|\le K}\|e^{\frac{\pm\phi}{2}}\psi_{|\alpha|+|\beta|-3}w^{l-|\alpha|-|\beta|}\partial^\alpha_\beta f_\pm\|^2_{L^2_{v,x}} + \sum_{|\alpha|\le K}
\|\psi_{|\alpha|-3}\partial^{\alpha}\nabla_x\phi\|_{L^2_x}^2.
\end{align*}
It's straightforward to show that $\E_{K,l}$ satisfies \eqref{Defe}.  
Notice that there's $\|\psi_{|\alpha|-3}\partial^\alpha E(t)\|^2_{L^2_x}$  in $\E_{K,l}$ on the right hand side of \eqref{65}, and hence we can put $\|\psi_{|\alpha|-3}\partial^\alpha E(t)\|^2_{L^2_x}$, which is in $\D_{K,l}$, on the left hand side of \eqref{65}. 
\qe\end{proof}

 Therefore, now it suffices to control the last term in \eqref{72}. 
\begin{Lem}\label{Lem53}	Let $f$ to be the solution to \eqref{7}-\eqref{9} and assume the same assumption as in Lemma \ref{thm41}. Let $0<\delta<1$ and multi-indices $|\alpha|+|\beta|\le K$. Then for $-1\le\gamma+2\le 0$, we have  
		\begin{align}\label{112}
		&\notag\quad\,\|\psi_{|\alpha|+|\beta|-3-\frac{1}{2N}}w^{l-|\alpha|-|\beta|}\partial^\alpha_\beta f\|^2_{L^2_{v,x}}\\
		&\lesssim \delta^2\partial_t\big(-\psi_{2|\alpha|+2|\beta|-6}w^{l-|\alpha|-|\beta|}\partial^\alpha_\beta f_\pm e^{\frac{\pm\phi}{2}},(\theta^ww^{l-|\alpha|-|\beta|}{(\partial^\alpha_\beta f_\pm e^{\frac{\pm\phi}{2}})^\wedge})^\vee\big)_{L^2_{v,x}}\\
		&\notag\qquad+\delta^2\big(\D_{K,l}+(\E^{1/2}_{K,l}+\E_{K,l})\D_{K,l}+\|\partial_t\phi\|_{L^\infty_x}\E_{K,l}(t)+\E_{K,l}\big)+C_\delta\|\<v\>^{C_{K,l}}f\|_{L^2_{v,x}}^2,
	\end{align}while for $\gamma+2>0$, we have 
\begin{align}
	\label{137}&\notag\quad\,\|\psi_{|\alpha|+|\beta|-3-\frac{1}{2N}}w^{l-|\alpha|-|\beta|}\partial^\alpha_\beta f\|^2_{L^2_{v,x}}\\
	&\lesssim \delta^2\partial_t\big(-\psi_{2|\alpha|+2|\beta|-6}w^{l-|\alpha|-|\beta|}\partial^\alpha_\beta f_\pm e^{\frac{\pm\phi}{2}},(\theta^ww^{l-|\alpha|-|\beta|}{(\partial^\alpha_\beta f_\pm e^{\frac{\pm\phi}{2}})^\wedge})^\vee\big)_{L^2_{v,x}}\\
	&\notag\qquad+\delta^2\big(\D_{K,l}+(\E^{1/2}_{K,l}+\E_{K,l})\D_{K,l}+\|\partial_t\phi\|_{L^\infty_x}\E_{K,l}(t)\big)+C_\delta\E_{K,l},
\end{align}
	where $\theta^w=\theta^w(v,D_v)$ and $\theta\in S(1)$ is defined by \eqref{107}. 
\end{Lem}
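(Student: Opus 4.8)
The key idea is to extract one missing power of $\psi$ by using the equation to trade the quantity $\psi_{|\alpha|+|\beta|-3-\frac{1}{2N}}$ against a genuine dissipative (or lower–order) quantity. I would start from the interpolation announced in the introduction,
\[
\psi_{|\alpha|+|\beta|-3-\frac{1}{2N}}\ \lesssim\ \delta\,\tilde b^{1/2}\ +\ C_{\delta}\,\<v\>^{-l_0|\alpha|/\delta_1}\,|y|^{-|\alpha|},
\]
applied on the Fourier side in $x$ (with $\tilde b=\<v\>^{l_0}|y|^{\delta_1}$ as in \eqref{11}), and a parallel interpolation on $v$ using $\tilde a$ from \eqref{11a}, so that on the support where both pseudo‑differential weights are small the factor $\psi_{|\alpha|+|\beta|-3-\frac{1}{2N}}w^{l-|\alpha|-|\beta|}\partial^\alpha_\beta f$ is bounded by $\delta(\text{term with }\tilde a^{1/2},\tilde b^{1/2})$ plus a purely algebraically weighted remainder $C_\delta\|\<v\>^{C_{K,l}}f\|_{L^2_{v,x}}$ (respectively $C_\delta\E_{K,l}$ in the hard potential case, where the extra $v$‑weight is harmless). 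The $\delta^2\D_{K,l}$ term then absorbs the part of the $\tilde a^{1/2}$/$\tilde b^{1/2}$ contribution that is already morally a dissipation.

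**The Poisson bracket step.** The remaining high‑order piece, roughly
$\|\psi_{|\alpha|+|\beta|-3}\,\tilde b^{1/2}\,w^{l-|\alpha|-|\beta|}(\partial^\alpha_\beta f)^\wedge(v,y)\|^2_{L^2_{v,y}}$,
is what I would convert, via the equation \eqref{7}–\eqref{9}, into the time derivative appearing on the right of \eqref{112}/\eqref{137}. Concretely, one introduces the symbol $\theta\in S(1)$ defined by \eqref{107} (built from $\tilde a,\tilde b$ so that $\{v\cdot y,\theta\}$ is elliptic of the right weight), pairs the equation for $\partial^\alpha_\beta f_\pm$ (dressed with $e^{\pm\phi/2}$ and the weight $\psi_{2|\alpha|+2|\beta|-6}w^{2l-2|\alpha|-2|\beta|}$) against $(\theta^w w^{l-|\alpha|-|\beta|}(\partial^\alpha_\beta f_\pm e^{\pm\phi/2})^\wedge)^\vee$, and moves $\partial_t$ outside. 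The transport commutator $[\,v\cdot\nabla_x,\theta^w\,]$ produces, by the sharp Gårding / symbolic calculus in the appendix and Lemma~2.1–2.2 of \cite{Deng2020a}, precisely the positive quantity $\|\psi\,\tilde b^{1/2}w\cdots\|^2$ that we wanted to dominate, up to lower order. All the other terms of the equation — the Vlasov terms $\pm\tfrac12\nabla_x\phi\cdot v f$, $\mp\nabla_x\phi\cdot\nabla_v f$, the source $\pm\nabla_x\phi\cdot v\mu^{1/2}$, the linear $Lf$, and $\Gamma_\pm(f,f)$ — are estimated exactly as in Lemmas \ref{lemmaL}, \ref{lemmag}, \ref{Lem26}, \ref{Lem27} of the preceding sections, since $\theta^w$ is bounded on $L^2$ (order $0$) and commutes essentially with the velocity weights up to acceptable errors, yielding contributions $\lesssim \delta^2\big(\D_{K,l}+(\E^{1/2}_{K,l}+\E_{K,l})\D_{K,l}+\|\partial_t\phi\|_{L^\infty_x}\E_{K,l}+\E_{K,l}\big)$. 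The $\partial_t(\psi_{|\alpha|+|\beta|-3})$ term hitting $\theta^w$ is again controlled by the same interpolation bootstrapped once more, or simply absorbed because $\theta^w$ costs nothing in regularity.

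**The main obstacle.** The delicate point is the choice of the exponent $\delta_1$ in $\tilde b$ and of $N=N(\alpha,\beta)$ so that the Poisson bracket $\{v\cdot y,\theta\}$ generates *exactly* $\tilde b$ (not a worse power), while simultaneously the error terms from the symbolic calculus — which involve $\partial$‑derivatives of $\theta$ in the metric $\Gamma=|dv|^2+|d\eta|^2$ and hence gain in $\<v\>,\<\eta\>$ only up to order $1$ — remain lower order than the gain. Equivalently, one needs $\delta_1<1$ small enough that $|y|^{\delta_1}$ is sub‑transport‑gain, and $N$ large enough that losing $\tfrac{1}{2N}$ of $\psi$ is negligible; pinning these down is the content of the "$\delta_1$ in $\tilde b$ should be chosen properly'' remark and is where the soft‑potential restriction forces the appearance of $\|\<v\>^{C_{K,l}}f\|_{L^2_{v,x}}$ (because the velocity weight $w^{-\gamma}$ is not strong enough, unlike $\<v\>$ in the hard case, to reabsorb the $\<v\>^{-l_0|\alpha|/\delta_1}|y|^{-|\alpha|}$ remainder into $\E_{K,l}$). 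I expect the bookkeeping of these weights across the $x$‑Fourier transform, the $v$‑weight $w^{l-|\alpha|-|\beta|}$, and the conjugation by $e^{\pm\phi/2}$ to be the most technically demanding part, but no new ideas beyond the pseudo‑differential lemmas already invoked are needed.
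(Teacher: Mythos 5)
Your proposal follows essentially the same route as the paper's proof: the Young-inequality interpolation $\psi_{|\alpha|+|\beta|-3-\frac{1}{2N}}\lesssim\delta\,\tilde b^{1/2}\psi_{|\alpha|+|\beta|-3}+C_\delta\<v\>^{-l_0|\alpha|/\delta_1}|y|^{-|\alpha|}$ combined with the parallel $\tilde a^{1/2}$ interpolation in $v$ (producing $C_\delta\|\<v\>^{C_{K,l}}f\|_{L^2_{v,x}}$ in the soft case and $C_\delta\E_{K,l}$ in the hard case), followed by the Poisson-bracket identity $\{\theta,v\cdot y\}=\tilde b+R_1+R_2$ with $R_1,R_2\in S(\tilde a)$ and substitution of the equation to convert $v\cdot\nabla_x$ into $-\partial_t$ plus terms handled by Lemmas \ref{lemmaL}, \ref{lemmag}, \ref{Lem26}, \ref{Lem27}. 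This is exactly the paper's two-step argument, including the correct identification of where the choice of $\delta_1$, $N$ and the soft-potential weight loss enter.
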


\begin{proof}
{\bf Step 1.}
Choose constants 
\begin{align}\label{106b}\notag
	\delta_1=\delta_1(\alpha,\beta)&\in\Big(0,\frac{1}{2}\Big],\\ 
	\delta_2= 1-\delta_1&\in\Big[\frac{1}{2},1\Big),\\ l_0=\gamma\delta_2<0\notag
\end{align} to be determined later. Let $\chi_0$ to be a smooth cutoff function such that $\chi_0(z)$ equal to $1$ when $|z|<\frac{1}{2}$ and equal to $0$ when $|z|\ge 1$. Define 
\begin{align}\label{98}
	\tilde{b}(v,y) &= \<v\>^{l_0}|y|^{\delta_1},\\
	\chi(v,\eta) &= \chi_0\bigg(\frac{\<\eta\>\<v\>^{l_0}}{|y|^{\delta_2}}\bigg),\notag
\end{align}and
\begin{align}\label{107}
	\theta(v,\eta) = \<v\>^{l_0}|y|^{-1-\delta_2}y\cdot\eta\,\chi(v,\eta).
\end{align}
Notice that for any multi-indices $\alpha,\beta$, 
\begin{align*}
	\psi_{|\alpha|+|\beta|-3-\frac{1}{2N}} = \psi_{|\alpha|-3-\frac{1}{2N}}\psi_{|\beta|-3-\frac{1}{2N}}.
\end{align*}
 If $|\alpha|>3$, we choose $N=N(\alpha)$ such that 
\begin{align}\label{106a}
	-\frac{2N(|\alpha|-3)-1}{2} = -\frac{|\alpha|}{\delta_1}.
\end{align}
Then by the definition \eqref{98} of $\tilde{b}$ and Young's inequality,
\begin{align}\notag
	\psi_{|\alpha|-3-\frac{1}{2N}}&\lesssim \delta \big((\tilde{b}^{1/2})^{\frac{|\alpha|-3-\frac{1}{2N}}{|\alpha|-3}}\psi_{|\alpha|-3-\frac{1}{2N}}\big)^{\frac{|\alpha|-3}{|\alpha|-3-\frac{1}{2N}}}+C_{0,\delta}\big((\tilde{b}^{-1/2})^{\frac{|\alpha|-3-\frac{1}{2N}}{|\alpha|-3}}\big)^{2N(|\alpha|-3)}\\
	&\lesssim \delta\, \tilde{b}^{1/2}\psi_{|\alpha|-3} + C_{0,\delta}(\<v\>^{-\frac{l_0|\alpha|}{\delta_1}}|y|^{-|\alpha|}),\label{100a}
\end{align}where $C_{0,\delta}$ is a large constant depending on $\delta>0$ and $|\alpha|$. 
If $|\alpha|\le 3$, we choose $\eta\in[0,1)$ such that $-\frac{\eta}{2(1-\eta)}=\frac{-|\alpha|}{\delta_1}$. Then 
\begin{align*}
	\psi_{|\alpha|-3-\frac{1}{2N}}=1&\lesssim (\delta^\eta\,\tilde{b}^{\eta/2})^{\frac{1}{\eta}} + (\delta^{-\eta}\tilde{b}^{-\eta/2})^{\frac{1}{1-\eta}}\\
	&\lesssim \delta\,\tilde{b}^{1/2}+C_{0,\delta}\<v\>^{\frac{-l_0|\alpha|}{\delta_1}}|y|^{-|\alpha|}.
\end{align*} 
Thus, taking the Fourier transform $(\cdot)^\wedge$ with respect to $x$, we have 
\begin{align}\label{101}
	&\quad\,\|\psi_{|\alpha|+|\beta|-3-\frac{1}{2N}}w^{l-|\alpha|-|\beta|}\partial^\alpha_\beta f\|_{L^2_{v,x}}\notag\\\notag&= \|\psi_{|\alpha|+|\beta|-3-\frac{1}{2N}}w^{l-|\alpha|-|\beta|}(\partial^\alpha_\beta f)^\wedge(v,y)\|_{L^2_{v,y}}\\
	&\lesssim \delta\|\psi_{|\alpha|+|\beta|-3}\tilde{b}^{1/2}w^{l-|\alpha|-|\beta|}(\partial^\alpha_\beta f)^\wedge(v,y)\|_{L^2_{v,y}} + C_{0,\delta} \|\psi_{|\beta|-3-\frac{1}{2N}}w^{l-|\alpha|-|\beta|}\<v\>^{\frac{-l_0|\alpha|}{\delta_1}}\partial_\beta f\|_{L^2_{v,x}}.
\end{align}
To deal with the second right-hand term of \eqref{101}, we use a similar interpolation on $\tilde{a}^{1/2}$. In fact, if $|\beta|>3$, we have 
\begin{align}\label{107b}\notag
	\psi_{|\beta|-3-\frac{1}{2N}}\<v\>^{\frac{-l_0|\alpha|}{\delta_1}}&\lesssim \Big(\psi_{|\beta|-3-\frac{1}{2N}}\big(\frac{\delta}{C_{0,\delta}}\tilde{a}^{1/2}\big)^{\frac{|\beta|-3-\frac{1}{2N}}{|\beta|-3}}\Big)^{\frac{|\beta|-3}{|\beta|-3-\frac{1}{2N}}}\\
	&\qquad\notag+((C_{0,\delta}\delta^{-1}\tilde{a}^{-1/2})^{\frac{|\beta|-3-\frac{1}{2N}}{|\beta|-3}}\<v\>^{\frac{-l_0|\alpha|}{\delta_1}})^{2N|\beta|-3}\\
	&\lesssim \frac{\delta}{C_{0,\delta}}\psi_{|\beta|-3}\tilde{a}^{1/2}+C_{\delta}\tilde{a}^{-\frac{1}{2}(2N(|\beta|-3)-1)}\<v\>^{C_K},
\end{align}where $C_{0,\delta}$ comes from \eqref{100a}.
When $|\alpha|>3$, recalling the definition \eqref{11a} of $\tilde{a}$, \eqref{106a} gives that 
\begin{align}\label{109}
	\tilde{a}^{-\frac{1}{2}(2N(|\beta|-3)-1)}\lesssim (\<v\>^{-\gamma}\<\eta\>^{-2})^{\frac{|\beta|-3}{|\alpha|-3}\big(\frac{2|\alpha|}{\delta_1}+1\big)-1}.
\end{align}
Now we choose $\delta_1=\delta_1(\alpha,\beta)>0$ sufficiently small such that 
\begin{align}\label{110}
	-2\Big(\frac{|\beta|-3}{|\alpha|-3}\Big(\frac{2|\alpha|}{\delta_1}+1\Big)-1\Big)\le -|\beta|.
\end{align}
Then, 
\begin{align*}
\tilde{a}^{-\frac{1}{2}(2N(|\beta|-3)-1)}\lesssim \<v\>^{C_{K}}\<\eta\>^{-|\beta|}.
\end{align*}
When $|\alpha|\le 3$, $N$ can be arbitrary large. Then we choose $N$ sufficiently large that 
\begin{align*}
	\tilde{a}^{-\frac{1}{2}(2N(|\beta|-3)-1)}\lesssim \<v\>^{C_{K}}\<\eta\>^{-|\beta|}. 
\end{align*}
Thus, \eqref{107b} becomes
\begin{align*}
	\psi_{|\beta|-3-\frac{1}{2N}}\<v\>^{\frac{-l_0|\alpha|}{\delta_1}}\lesssim \frac{\delta}{C_{0,\delta}}\psi_{|\beta|-3}\tilde{a}^{1/2}+C_{\delta}\<v\>^{C_{K}}\<\eta\>^{-|\beta|}.
\end{align*}
If $|\beta|\le 3$, we choose $\eta\in(0,1)$ such that $\frac{-\eta}{2(1-\eta)}=\frac{-|\beta|}{2}$. Then 
\begin{align}\label{142}\notag
	\psi_{|\beta|-3-\frac{1}{2N}}\<v\>^{\frac{-l_0|\alpha|}{\delta_1}}=\<v\>^{\frac{-l_0|\alpha|}{\delta_1}}&\lesssim \frac{\delta}{C_{0,\delta}}\,\tilde{a}^{1/2}+C_\delta(\tilde{a}^{-1/2}\<v\>^{\frac{-l_0|\alpha|}{\delta_1}})^{\frac{\eta}{(1-\eta)}}\\
	&\lesssim \frac{\delta}{C_{0,\delta}}\,\tilde{a}^{1/2}+C_\delta\<v\>^{C_K}\<\eta\>^{-|\beta|}.
\end{align}
Thus, whenever $|\beta|\le 3$ or $|\beta|>3$, we have $\psi_{|\beta|-3-\frac{1}{2N}}\<v\>^{\frac{-l_0|\alpha|}{\delta_1}}\in S(\frac{\delta}{C_{0,\delta}}\,\tilde{a}^{1/2}+C_\delta\<v\>^{C_K}\<\eta\>^{-|\beta|})$ uniformly in $\delta$, as a symbol in $(v,\eta)$. 
Then using Lemma \ref{bound_varepsilon} with respect to $v$, we have 
\begin{align*}
	\|\psi_{|\beta|-3-\frac{1}{2N}}w^{l-|\alpha|-|\beta|}\<v\>^{\frac{-l_0|\alpha|}{\delta_1}}\partial_\beta f\|_{L^2_{v,x}}&\lesssim \frac{\delta}{C_{0,\delta}}\|\psi_{|\beta|-3}(\tilde{a}^{1/2})^w\partial_\beta f\|_{L^2_{v,x}}+C_\delta\|\<v\>^{C_{K,l}}f\|_{L^2_{v,x}}\\
	&\lesssim \frac{\delta}{C_{0,\delta}}\D^{1/2}_{K,l}+C_\delta\|\<v\>^{C_{K,l}}f\|_{L^2_{v,x}},
\end{align*}by using \eqref{144}. 
Plugging this into \eqref{101}, we have 
\begin{align}\label{101a}\notag
	&\quad\,\|\psi_{|\alpha|+|\beta|-3-\frac{1}{2N}}w^{l-|\alpha|-|\beta|}\partial^\alpha_\beta f\|^2_{L^2_{v,x}}\\&\lesssim \delta^2\|\psi_{|\alpha|+|\beta|-3}\tilde{b}^{1/2}w^{l-|\alpha|-|\beta|}(\partial^\alpha_\beta f)^\wedge(v,y)\|^2_{L^2_{v,y}}+\delta^2\D_{K,l}+C_\delta\|\<v\>^{C_{K,l}}f\|_{L^2_{v,x}}^2.
\end{align}
Now it suffices to eliminate the first right-hand term of \eqref{101a}. 

In particular, for $\gamma+2> 0$, there exists $\lambda>0$ such that 
\begin{align*}
	\tilde{a}^{-1}\lesssim \<v\>^{-\lambda(\gamma+2)}\<\eta\>^{-\lambda}. 
\end{align*}
Thus in \eqref{109} and \eqref{110}, we instead let $\delta_1=\delta_1(\alpha,\beta)>0$ sufficiently small such that 
\begin{align*}
	\tilde{a}^{-\frac{1}{2}(2N(|\beta|-3)-1)}\lesssim \<v\>^{-C}\<\eta\>^{-|\beta|},
\end{align*}where $C>0$ can be chosen arbitrary large. Continuing the estimate from \eqref{107b} to \eqref{142}, we have that, whenever $|\beta|\le 3$ or $|\beta|>3$, 
\begin{align*}
	\psi_{|\beta|-3-\frac{1}{2N}}\<v\>^{\frac{-l_0|\alpha|}{\delta_1}}\in S(\frac{\delta}{C_{0,\delta}}\,\tilde{a}^{1/2}+C_\delta\<v\>^{-C}\<\eta\>^{-|\beta|}).
\end{align*}
Then using Lemma \ref{inverse_bounded_lemma} with respect to $v$, we have 
\begin{align*}
	\|\psi_{|\beta|-3-\frac{1}{2N}}w^{l-|\alpha|-|\beta|}\<v\>^{\frac{-l_0|\alpha|}{\delta_1}}\partial_\beta f\|_{L^2_{v,x}}&\lesssim \frac{\delta}{C_{0,\delta}}\|\psi_{|\beta|-3}\partial_\beta f\|_{\sigma,l-|\beta|}+C_\delta\|f\|_{L^2_{v,x}}\\
	&\lesssim \frac{\delta}{C_{0,\delta}}\D^{1/2}_{K,l}+C_\delta\E_{K,l}.
\end{align*}
Plugging this into \eqref{101}, we have 
\begin{align}\label{101b}\notag
	&\quad\,\|\psi_{|\alpha|+|\beta|-3-\frac{1}{2N}}w^{l-|\alpha|-|\beta|}\partial^\alpha_\beta f\|^2_{L^2_{v,x}}\\&\lesssim \delta^2\|\psi_{|\alpha|+|\beta|-3}\tilde{b}^{1/2}w^{l-|\alpha|-|\beta|}(\partial^\alpha_\beta f)^\wedge(v,y)\|^2_{L^2_{v,y}}+\delta^2\D_{K,l}+C_\delta\E_{K,l}.
\end{align}

{\bf Step 2.} 
Recalling \eqref{107}, we regard $\theta$ as a symbol in $(v,\eta)$ with parameter $y$. Then, 
\begin{align*}
	|\theta(v,\eta)| = \<v\>^{l_0}|y|^{-1-\delta_2}|y\cdot\eta|\,\chi(v,\eta)
	&\lesssim 1.
\end{align*}
Direct calculation gives that $\partial^\alpha_v\partial^\beta_\eta\theta\lesssim 1$ and hence $\theta\in S(1)$ as a symbol on $(v,\eta)$. 
On the other hand, regarding the Poisson bracket on $(v,\eta)$ we have 
\begin{align*}
	\{\theta,v\cdot y\}&= \<v\>^{l_0}|y|^{1-\delta_2}+\<v\>^{l_0}|y|^{1-\delta_2}(\chi(v,\eta)-1)+\<v\>^{l_0}|y|^{-1-\delta_2}y\cdot\eta\,\partial_\eta\chi\cdot y\\
	&=: \tilde{b} + R_1 + R_2. 
\end{align*}
Now we claim that $R_1,R_2\in S(\tilde{a})$. Indeed, noticing the support of $\chi-1$, we have 
\begin{align*}
	|R_1|\le \<v\>^{l_0}\<\eta\>^{\frac{1-\delta_2}{\delta_2}}\<v\>^{l_0\frac{1-\delta_2}{\delta_2}}
	&\le \<v\>^{\gamma}\<\eta\>^{2}\le \tilde{a}, 
\end{align*}by \eqref{106b}. For $R_2$, since $1-3\delta_2\le 0$, we have 
\begin{align*}
	|R_2|\le\<v\>^{2l_0}|y|^{1-2\delta_2}|\eta|\1_{\<\eta\>\<v\>^{l_0}\approx|y|^{\delta_2}}\le \<v\>^{\frac{l_0}{\delta_2}}\<\eta\>^{\frac{1-\delta_2}{\delta_2}}\le \tilde{a}.
\end{align*}Higher derivative estimate can be calculated by Leibniz's formula and hence, $R_1,R_2\in S(\tilde{a})$. Thus, by Lemma \ref{innerproduct} and \eqref{compostion}, we have 
\begin{align}\label{102}\notag
	\|\tilde{b}^{1/2}\widehat{g}(v,y)\|^2_{L^2_{v,y}} &=\big(\tilde{b}(v,y)\widehat{g},\widehat{g}\big)_{L^2_{v,y}} \\\notag
	&= \Re\big(\{\theta,v\cdot y\}^w(v,D_v)\widehat{g},\widehat{g}\big)_{L^2_{v,y}} + \Re ((R_1+R_2)^w(v,D_v)\widehat{g},\widehat{g})_{L^2_{v,y}}\\\notag
	&\lesssim \Re\big(\mathbf{i}v\cdot y\widehat{g},\theta^w(v,D_v)\widehat{g}\big)_{L^2_{v,y}}+\|(\tilde{a}^{1/2})^wg\|^2_{L^2_{v,x}}\\
	&\lesssim \Re\big(v\cdot \nabla_x{g},(\theta^w\widehat{g})^\vee\big)_{L^2_{v,x}}+\|g\|^2_{\sigma,0},
\end{align}
for any $g$ in a suitable smooth space. Here and after, we write  $\theta^w=\theta^w(v,D_v)$.

Now we let $g = \psi_{|\alpha|+|\beta|-3}w^{l-|\alpha|-|\beta|}\partial^\alpha_\beta f_\pm e^{\frac{\pm\phi}{2}}$ in \eqref{102}, then 
\begin{align}\label{104}
	&\notag\quad\,\|\tilde{b}^{1/2}\psi_{|\alpha|+|\beta|-3}w^{l-|\alpha|-|\beta|}(\partial^\alpha_\beta f_\pm)^\wedge(v,y) e^{\frac{\pm\phi}{2}}\|_{L^2_{v,x}}\\
	&\lesssim\Re\big(v\cdot \nabla_x{\psi_{|\alpha|+|\beta|-3}w^{l-|\alpha|-|\beta|}\partial^\alpha_\beta f}e^{\frac{\pm\phi}{2}},(\theta^w\psi_{|\alpha|+|\beta|-3}w^{l-|\alpha|-|\beta|}{(\partial^\alpha_\beta f_\pm e^{\frac{\pm\phi}{2}})^\wedge})^\vee\big)_{L^2_{v,x}}+\D_{K,l}\\
	&=: K_0 + \D_{K,l}.\notag
\end{align}
Here, by equation \eqref{7}, we have 
\begin{align*}
	&\quad\,v\cdot\nabla_x(\partial^\alpha_\beta f_\pm e^{\frac{\pm\phi}{2}})\\
	&= v_i\partial^{\alpha+e_i}_\beta f_\pm e^{\frac{\pm\phi}{2}} \pm \frac{1}{2}v_i\partial^{e_i}\phi e^{\frac{\pm\phi}{2}}\partial^\alpha_\beta f_\pm\\
	&= \partial_\beta\big(v_i\partial^{\alpha+e_i}f_\pm e^{\frac{\pm\phi}{2}}\big)-\sum_{0\neq \beta_1\le \beta}C^{\beta_1}_{\beta}\partial_{\beta_1}v_i\partial^{\alpha+e_i}_{\beta-\beta_1}f_\pm e^{\frac{\pm\phi}{2}}\pm \frac{1}{2}v_i\partial^{e_i}\phi e^{\frac{\pm\phi}{2}}\partial^\alpha_\beta f_\pm\\
	&=-\partial_t\partial^\alpha_\beta f_\pm e^{\frac{\pm\phi}{2}} \mp \frac{1}{2}\sum_{\alpha_1\le\alpha}\sum_{\beta_1\le\beta}C^{\alpha_1,\beta_1}_{\alpha,\beta}\partial^{e_i+\alpha_1}\phi\partial_{\beta_1}v_i\partial^{\alpha-\alpha_1}_{\beta-\beta_1}f_\pm e^{\frac{\pm\phi}{2}}\\
	&\qquad\pm \sum_{\alpha_1\le\alpha}C^{\alpha_1}_\alpha\partial^{e_i+\alpha_1}\phi\partial^{\alpha-\alpha_1}_{\beta+e_i}f_\pm e^{\frac{\pm\phi}{2}} 
	 \mp \partial^{e_i+\alpha}\phi\partial_\beta(v_i\mu^{1/2})e^{\frac{\pm\phi}{2}}+\partial^\alpha_\beta L_\pm fe^{\frac{\pm\phi}{2}}\\
	 &\qquad+\partial^\alpha_\beta\Gamma_\pm(f,f)e^{\frac{\pm\phi}{2}} -\sum_{0\neq \beta_1\le \beta}\partial_{\beta_1}C^{\beta_1}_{\beta}v_i\partial^{\alpha+e_i}_{\beta-\beta_1}f_\pm e^{\frac{\pm\phi}{2}}\pm \frac{1}{2}v_i\partial^{e_i}\phi e^{\frac{\pm\phi}{2}}\partial^\alpha_\beta f_\pm
\end{align*}
Thus, 
\begin{align*}
 &K_0 =
 \Re\big(-\psi_{2|\alpha|+2|\beta|-6}w^{l-|\alpha|-|\beta|}\partial_t\partial^\alpha_\beta f_\pm e^{\frac{\pm\phi}{2}},(\theta^ww^{l-|\alpha|-|\beta|}{(\partial^\alpha_\beta f_\pm e^{\frac{\pm\phi}{2}})^\wedge})^\vee\big)_{L^2_{v,x}} \\
&\mp \Re\big(\psi_{2|\alpha|+2|\beta|-6}w^{l-|\alpha|-|\beta|}\frac{1}{2}\sum_{\alpha_1\le\alpha}\sum_{\beta_1\le\beta}C^{\alpha_1,\beta_1}_{\alpha,\beta}\partial^{e_i+\alpha_1}\phi\partial_{\beta_1}v_i\partial^{\alpha-\alpha_1}_{\beta-\beta_1}f_\pm e^{\frac{\pm\phi}{2}},(\theta^ww^{l-|\alpha|-|\beta|}{(\partial^\alpha_\beta f_\pm e^{\frac{\pm\phi}{2}})^\wedge})^\vee\big)_{L^2_{v,x}} \\
& \pm \Re\big(\psi_{2|\alpha|+2|\beta|-6}w^{l-|\alpha|-|\beta|}\sum_{\alpha_1\le\alpha}C^{\alpha_1}_\alpha\partial^{e_i+\alpha_1}\phi\partial^{\alpha-\alpha_1}_{\beta+e_i}f_\pm e^{\frac{\pm\phi}{2}},(\theta^ww^{l-|\alpha|-|\beta|}{(\partial^\alpha_\beta f_\pm e^{\frac{\pm\phi}{2}})^\wedge})^\vee\big)_{L^2_{v,x}} \\
 &\mp \Re\big(\psi_{2|\alpha|+2|\beta|-6}w^{l-|\alpha|-|\beta|}\partial^{e_i+\alpha}\phi\partial_\beta(v_i\mu^{1/2})e^{\frac{\pm\phi}{2}},(\theta^ww^{l-|\alpha|-|\beta|}{(\partial^\alpha_\beta f_\pm e^{\frac{\pm\phi}{2}})^\wedge})^\vee\big)_{L^2_{v,x}}
 \\&+\Re\big(\psi_{2|\alpha|+2|\beta|-6}w^{l-|\alpha|-|\beta|}\partial^\alpha_\beta L_\pm f e^{\frac{\pm\phi}{2}}, (\theta^ww^{l-|\alpha|-|\beta|}{(\partial^\alpha_\beta f_\pm e^{\frac{\pm\phi}{2}})^\wedge})^\vee\big)_{L^2_{v,x}}  \\ &+\Re\big(\psi_{2|\alpha|+2|\beta|-6}w^{l-|\alpha|-|\beta|}\partial^\alpha_\beta\Gamma_\pm(f,f)e^{\frac{\pm\phi}{2}},(\theta^ww^{l-|\alpha|-|\beta|}{(\partial^\alpha_\beta f_\pm e^{\frac{\pm\phi}{2}})^\wedge})^\vee\big)_{L^2_{v,x}}  \\
 &-\Re\big(\psi_{2|\alpha|+2|\beta|-6}w^{l-|\alpha|-|\beta|}\sum_{0\neq \beta_1\le \beta}C^{\beta_1}_{\beta}\partial_{\beta_1}v_i\partial^{\alpha+e_i}_{\beta-\beta_1}f_\pm e^{\frac{\pm\phi}{2}},(\theta^ww^{l-|\alpha|-|\beta|}{(\partial^\alpha_\beta f_\pm e^{\frac{\pm\phi}{2}})^\wedge})^\vee\big)_{L^2_{v,x}}\\
 &\pm \Re\big(\psi_{2|\alpha|+2|\beta|-6}w^{l-|\alpha|-|\beta|}\frac{1}{2}v_i\partial^{e_i}\phi e^{\frac{\pm\phi}{2}}\partial^\alpha_\beta f_\pm,(\theta^ww^{l-|\alpha|-|\beta|}{(\partial^\alpha_\beta f_\pm e^{\frac{\pm\phi}{2}})^\wedge})^\vee\big)_{L^2_{v,x}}.
\end{align*}
Denote these terms by $K_1$ to $K_8$. Noticing that there's coefficient $\delta$ in \eqref{101a}, we only need to have a upper bound for these terms. 
For $K_1$, noticing that $\theta^w$ is self-adjoint, 
\begin{align*}
	K_1&\le \frac{1}{2}\partial_t\big(-\psi_{2|\alpha|+2|\beta|-6}w^{l-|\alpha|-|\beta|}\partial^\alpha_\beta f_\pm e^{\frac{\pm\phi}{2}},(\theta^ww^{l-|\alpha|-|\beta|}{(\partial^\alpha_\beta f_\pm e^{\frac{\pm\phi}{2}})^\wedge})^\vee\big)_{L^2_{v,x}}\\
	&\qquad+C\big|\big(-\psi_{2|\alpha|+2|\beta|-3-\frac{1}{N}}w^{l-|\alpha|-|\beta|}\partial^\alpha_\beta f_\pm e^{\frac{\pm\phi}{2}},(\theta^ww^{l-|\alpha|-|\beta|}{(\partial^\alpha_\beta f_\pm e^{\frac{\pm\phi}{2}})^\wedge})^\vee\big)_{L^2_{v,x}}\big|\\
	&\qquad+C\big|\big(\partial_t\phi\psi_{2|\alpha|+2|\beta|-6}w^{l-|\alpha|-|\beta|}\partial^\alpha_\beta f_\pm e^{\frac{\pm\phi}{2}},(\theta^ww^{l-|\alpha|-|\beta|}{(\partial^\alpha_\beta f_\pm e^{\frac{\pm\phi}{2}})^\wedge})^\vee\big)_{L^2_{v,x}}\big|.
\end{align*}
We denote the second and third term on the right hand side by $K_{1,1}$ and $K_{1,2}$. Since $\theta\in S(1)$, $\theta^w$ is a bounded operator on $L^2_{v,y}$. Using the trick from \eqref{100a}-\eqref{101b} to the first $f_\pm$ in $K_{1,1}$, we have 
\begin{align*}
	K_{1,1}\lesssim \delta^2\|\psi_{|\alpha|+|\beta|-3}\tilde{b}^{1/2}w^{l-|\alpha|-|\beta|}(\partial^\alpha_\beta f)^\wedge\|_{L^2_{v,y}}^2+\delta^2\D_{K,l}+C_\delta\|\<v\>^{C_{K,l}}f\|_{L^2_{v,x}}^2+\E_{K,l},
\end{align*}
when $-1\le\gamma+2\le 0$ and 
\begin{align*}
	K_{1,1}\lesssim \delta^2\|\psi_{|\alpha|+|\beta|-3}\tilde{b}^{1/2}w^{l-|\alpha|-|\beta|}(\partial^\alpha_\beta f)^\wedge\|_{L^2_{v,y}}^2+\delta^2\D_{K,l}+C_\delta\E_{K,l},
\end{align*}when $\gamma+2>0$. 
The boundedness of $\theta^w$ will be frequently used in the following without further mentioned. The term $K_{1,2}$ is similar to the case $I_1$, i.e.
\begin{align*}
	K_{1,2} \lesssim \|\partial_t\phi\|_{L^\infty_x}\|\psi_{|\alpha|+|\beta|-3}w^{l-|\alpha|-|\beta|}\partial^\alpha_\beta f\|_{L^2_{v,x}}^2\lesssim \|\partial_t\phi\|_{L^\infty_x}\E_{K,l}(t). 
\end{align*}
For the term $K_2$ with $\alpha_1=\beta_1=0$, a nice observation is that it's the same as $K_8$ except the sign and hence, they are eliminated. For $K_2$ with $\alpha_1+\beta_1\neq 0$, the order of derivatives for the first $f_\pm$ is less or equal to $K-1$ and hence, the weight can be controlled as $w^{l-|\alpha|-|\beta|}\partial_{\beta_1}v_i\lesssim \<v\>^{\gamma}w^{l-|\alpha-\alpha_1|-|\beta-\beta_1|}$. Then similar to Lemma \ref{Lem26}, by noticing $\theta\in S(1)$, 
\begin{align*}
	|K_2+K_8|\lesssim \E^{1/2}_{K,l}\D_{K,l}.
\end{align*}
For $K_3$, when $\alpha_1=0$, noticing $\theta^w$ is self-adjoint, we use integration by parts over $v$ to obtain 
\begin{align*}
|K_3| &= \big|\big(\psi_{2|\alpha|+2|\beta|-6}w^{l-|\alpha|-|\beta|}\partial^{e_i}\phi\partial^{\alpha}_{\beta+e_i}f_\pm e^{\frac{\pm\phi}{2}},(\theta^ww^{l-|\alpha|-|\beta|}{(\partial^\alpha_\beta f_\pm e^{\frac{\pm\phi}{2}})^\wedge})^\vee\big)_{L^2_{v,x}}\big|\\
&\lesssim \big|\big(\psi_{2|\alpha|+2|\beta|-6}\partial_{e_i}(w^{l-|\alpha|-|\beta|})\partial^{e_i}\phi\partial^{\alpha}_{\beta}f_\pm e^{\frac{\pm\phi}{2}},(\theta^ww^{l-|\alpha|-|\beta|}{(\partial^\alpha_\beta f_\pm e^{\frac{\pm\phi}{2}})^\wedge})^\vee\big)_{L^2_{v,x}}\big|\\
&\quad+\big|\big(\psi_{2|\alpha|+2|\beta|-6}w^{l-|\alpha|-|\beta|}\partial^{e_i}\phi\partial^{\alpha}_{\beta}f_\pm e^{\frac{\pm\phi}{2}},(\underbrace{[\partial_{e_i},\theta^w]}_{\in S(1)}w^{l-|\alpha|-|\beta|}{(\partial^\alpha_\beta f_\pm e^{\frac{\pm\phi}{2}})^\wedge})^\vee\big)_{L^2_{v,x}}\big|\\
&\quad+\big|\big(\psi_{2|\alpha|+2|\beta|-6}w^{l-|\alpha|-|\beta|}\partial^{e_i}\phi\partial^{\alpha}_{\beta}f_\pm e^{\frac{\pm\phi}{2}},(\theta^w\partial_{e_i}(w^{l-|\alpha|-|\beta|}){(\partial^\alpha_\beta f_\pm e^{\frac{\pm\phi}{2}})^\wedge})^\vee\big)_{L^2_{v,x}}\big|\\
&\lesssim \|\partial^{e_i}\phi\|_{H^2_x}\|\psi_{|\alpha|+|\beta|-3}w^{l-|\alpha|-|\beta|}\partial^{\alpha}_{\beta}f_\pm\|_{L^2_{v,x}}^2\\
&\lesssim \delta_0\E_{K,l}(t),
\end{align*}by \eqref{13} and $\theta\in S(1)$. 
If $\alpha_1\neq 0$, then $\alpha\neq 0$, the total number of derivatives on the first $f_\pm$ is less or equal to $K$ and there's at least one derivative on the second $f_\pm$ with respect to $x$. Thus, using the discussion on $|\alpha_1|$ as \eqref{33aa}-\eqref{33bb}, we have 
\begin{align*}
	|K_3|\lesssim \E^{1/2}_{K,l}\D_{K,l}.
\end{align*}
For $K_4$, there's exponential decay in $v$ and hence 
	$|K_4|\lesssim \E_{K,l}. $
For $K_5$, recalling that we only need upper bound, using Lemma \ref{lemmat}, we have $|K_5|\lesssim\E_{K,l}.$
For $K_6$, we use Lemma \ref{lemmag} to obtain
\begin{align*}
	|K_6|\lesssim\E^{1/2}_{K,l}\D_{K,l}+\E_{K,l}\D^{1/2}_{K,l}\lesssim (\E^{1/2}_{K,l}+\E_{K,l})\D_{K,l}+\E_{K,l}.
\end{align*}
For $K_7$, since $\beta_1\neq 0$, $|\partial_{\beta_1}v_i|\lesssim 1$ and the total number of derivatives on the first $f_\pm$ is less or equal to $K$. This yields that $|K_7|\lesssim \E_{K,l}$. Combining the above estimate with \eqref{104} and choosing $\delta_0>0$ sufficiently small, we have 
	\begin{align*}
		&\notag\quad\,\|\psi_{|\alpha|+|\beta|-3}\tilde{b}^{1/2}w^{l-|\alpha|-|\beta|}(\partial^\alpha_\beta f)^\wedge(v,y)\|^2_{L^2_{v,y}}\\
		&\lesssim \frac{1}{2}\partial_t\big(-\psi_{2|\alpha|+2|\beta|-6}w^{l-|\alpha|-|\beta|}\partial^\alpha_\beta f_\pm e^{\frac{\pm\phi}{2}},(\theta^ww^{l-|\alpha|-|\beta|}{(\partial^\alpha_\beta f_\pm e^{\frac{\pm\phi}{2}})^\wedge})^\vee\big)_{L^2_{v,x}}\\
		&\notag\qquad+(\E^{1/2}_{K,l}+\E_{K,l})\D_{K,l}+\|\<v\>^{C_{K,l}}f\|_{L^2_{v,x}}^2+\|\partial_t\phi\|_{L^\infty_x}\E_{K,l}(t)+\D_{K,l}+\E_{K,l},
	\end{align*}when $-1\le\gamma+2\le 0$, and 
\begin{align*}
	&\notag\quad\,\|\psi_{|\alpha|+|\beta|-3}\tilde{b}^{1/2}w^{l-|\alpha|-|\beta|}(\partial^\alpha_\beta f)^\wedge(v,y)\|^2_{L^2_{v,y}}\\
	&\lesssim \frac{1}{2}\partial_t\big(-\psi_{2|\alpha|+2|\beta|-6}w^{l-|\alpha|-|\beta|}\partial^\alpha_\beta f_\pm e^{\frac{\pm\phi}{2}},(\theta^ww^{l-|\alpha|-|\beta|}{(\partial^\alpha_\beta f_\pm e^{\frac{\pm\phi}{2}})^\wedge})^\vee\big)_{L^2_{v,x}}\\
	&\notag\qquad+(\E^{1/2}_{K,l}+\E_{K,l})\D_{K,l}+\|\partial_t\phi\|_{L^\infty_x}\E_{K,l}(t)+\D_{K,l}+\E_{K,l},
\end{align*}when $\gamma+2>0$. 
Substituting these into \eqref{101a} and \eqref{101b} respectively, we have the desired estimate. 

\qe\end{proof}

\begin{proof}[Proof of Theorem \ref{lem51}]
	We only prove the case of $-1\le\gamma+2\le 0$. The case $\gamma+2>0$ is similar and the only difference in this case is that there's no $\|\<v\>^{C_{K,l}}f\|_{L^2_{v,x}}$ in the estimate \eqref{137}. 
Substituting \eqref{112} into \eqref{72}, we have that for $0<\delta<1$, 
\begin{align*}
	\partial_t\E_{K,l}(t)+\lambda D_{K,l}(t)&\lesssim \delta^2\sum_{|\alpha|+|\beta|\le K}\partial_t\big(-\psi_{2|\alpha|+2|\beta|-6}w^{l-|\alpha|-|\beta|}\partial^\alpha_\beta f_\pm e^{\frac{\pm\phi}{2}},(\theta^ww^{l-|\alpha|-|\beta|}{(\partial^\alpha_\beta f_\pm e^{\frac{\pm\phi}{2}})^\wedge})^\vee\big)_{L^2_{v,x}}\\
	&\notag\qquad+ \|\partial_t\phi\|_{L^\infty_x}\E_{K,l}(t) +\delta^2\big(\D_{K,l}+(\E^{1/2}_{K,l}+\E_{K,l})\D_{K,l}+\E_{K,l}\big)+C_\delta\|\<v\>^{C_{K,l}}f\|_{L^2_{v,x}}^2,
\end{align*}
Notice that $\|\partial_t\phi\|_{L^\infty_x}\lesssim \E^{1/2}_{K,l}\lesssim \delta^{1/2}_0$ by \eqref{34} and \eqref{priori1}.
 Using the $a$ $priori$ assumption \eqref{priori1} and choosing $\delta,\delta_0>0$ sufficiently small, we have 
\begin{align*}
	\partial_t\E_{K,l}(t)+\lambda \D_{K,l}(t)\lesssim\delta^2\sum_{|\alpha|+|\beta|\le K}\partial_t\big(-\psi_{2|\alpha|+2|\beta|-6}w^{l-|\alpha|-|\beta|}\partial^\alpha_\beta f_\pm e^{\frac{\pm\phi}{2}},(\theta^ww^{l-|\alpha|-|\beta|}{(\partial^\alpha_\beta f_\pm e^{\frac{\pm\phi}{2}})^\wedge})^\vee\big)_{L^2_{v,x}}\\
	+\E_{K,l}(t)+\|\<v\>^{C_{K,l}}f\|_{L^2_{v,x}}^2.\qquad
\end{align*}
By solving this ODE with neglecting $\lambda\D_{K,l}(t)$ and noticing 
\begin{align*}
	\big|\big(-\psi_{2|\alpha|+2|\beta|-6}w^{l-|\alpha|-|\beta|}\partial^\alpha_\beta f_\pm e^{\frac{\pm\phi}{2}},(\theta^ww^{l-|\alpha|-|\beta|}{(\partial^\alpha_\beta f_\pm e^{\frac{\pm\phi}{2}})^\wedge})^\vee\big)_{L^2_{v,x}}\big|
	\lesssim \E_{K,l}(t),
\end{align*}
 we have that for $0\le t\le t_0$,
\begin{align}\notag
	\E_{K,l}(t) &\lesssim \E_{K,l}(0)+\delta^2\E_{K,l}(t)+\delta^2\E_{K,l}(0)+\int^t_0(\E_{K,l}+\|\<v\>^{C_{K,l}}f\|_{L^2_{v,x}})\,d\tau,\\
	\E_{K,l}(t)&\lesssim \epsilon^2_1,\label{76}
\end{align}by choosing $\delta>0$ and $t_0=t_0(\epsilon_1,\|\<v\>^{C_{K,l}}f\|_{L^2_{v,x}})>0$ sufficiently small. 
Here we used $\E_{K,l}(0)\le \E_{3,l}(0)$.

\qe\end{proof}

\begin{proof}
	[Proof of Theorem \ref{main2}]
It follows immediately from the $a$ $priori$ estimate \eqref{priori1} and Theorem \ref{lem51} that $\sup_{0\le t\le t_0}\E_{K,l}\lesssim \epsilon^2_1$ holds true for some small $t_0>0$, as long as $\epsilon_1$ is sufficiently small. 
The rest is to prove the local existence and uniqueness of solutions in terms of the energy norm $\E_{K,l}$. The details of proof is the same as Theorem \ref{main1} and is omitted for brevity; see \cite{Guo2012, Strain2013} and \cite{Gressman2011}. 
		
		Notice that the constants in Lemma \ref{lem51} are independent of time $t$ and hence, we can apply Theorem \ref{lem51} to any time interval with length less than $t_0$ to obtain that for $0<\tau<T$, 
		\begin{align}\label{106}
			\sup_{\tau\le t\le T}\E_{K,l}(t)\lesssim \epsilon_1^2C_{T}. 
		\end{align}
	Recalling Definition \eqref{Defe} of $\E_{K,l}$ and the choice \eqref{93} of $\psi$, we have that for any $0<\tau<T$ and $l\ge K\ge 0$,
	\begin{align}\label{79}
	\sup_{\tau\le t\le T}\sum_{|\alpha|+|\beta|\le K}\|w^{l-|\alpha|-|\beta|}\partial^\alpha_\beta f\|^2_{L^2_{v,x}}+\sup_{\tau\le t\le T}\sum_{|\alpha|\le K}\|\partial^\alpha\nabla_x\phi\|_{L^2_x}^2\le  C_{\tau,T}<\infty.
\end{align}	Notice that $\psi_{|\alpha|+|\beta|-3}^{-1}$ is singular near $t=0$ when $|\alpha|+|\beta|>3$, so the constant is necessarily depending on $\tau$. 
This proves \eqref{19a}.

If additionally $\sup_{l^*}\E_{4,l^*}(0)$ is sufficiently small. Then for $l_0\ge K\ge 3$, by \eqref{19a}, we have 
\begin{align*}
\sup_{\tau\le t\le T}\sum_{|\alpha|+|\beta|\le K}\|w^{l_0-|\alpha|-|\beta|}\partial^\alpha_\beta f\|^2_{L^2_{v,x}}\le C_{\tau,T}.
\end{align*}
For the regularity on $t$, the technique above is not applicable and we only make a rough estimate. For any $t>0$, applying $w^l\partial^k_t\partial^\alpha_\beta$ with $k,l\ge 0$, $|\alpha|+|\beta|\le K$ to equation \eqref{7} and taking $L^2_{v,x}$ norms, we have   
\begin{align}\label{105}
\|w^l\partial^{k+1}_t\partial^\alpha_\beta f_\pm\|^2_{L^2_{v,x}}
&\notag\lesssim \|w^lv\cdot\nabla_x\partial^k_t\partial^\alpha_\beta f_\pm\|^2_{L^2_{v,x}}+\|w^l\sum_{k_1\le k}\partial^{\alpha}_\beta\big(\partial^{k_1}_t\nabla_x\phi\cdot v\partial^{k-k_1}_tf_\pm\big)\|^2_{L^2_{v,x}}
\\
&\qquad+\|w^l\sum_{k_1\le k}\partial^\alpha\big(\partial^{k_1}_t\nabla_x\phi\cdot\nabla_v\partial^{k-k_1}_t\partial_\beta f_\pm\big)\|_{L^2_{v,x}}^2+\|w^l\partial^k_t\partial^\alpha\nabla_x\phi\cdot \partial_\beta(v\mu^{1/2})\|^2_{L^2_{v,x}}\\
&\qquad+\|w^l\partial^\alpha_\beta L_\pm \partial^k_tf_\pm\|^2_{L^2_{v,x}} + \|w^l\sum_{k_1\le k}\partial^\alpha_\beta\Gamma_\pm(\partial^{k_1}_tf,\partial^{k-k_1}_tf)\|_{L^2_{v,x}}^2. \notag
\end{align}
Let $l_3=1$ for $\gamma+2\ge0$ and $l_3=-\gamma$ for $-1\le\gamma+2<0$. 
Denoting $\E_{K,l,k}=\sum_{|\alpha|+|\beta|\le K,k_1\le k}\|w^l\partial^\alpha_\beta\partial^{k_1}_tf\|_{L^2_{v,x}}$, we estimate the right-hand terms one by one. The first term on the right hand is bounded above by $\E_{K+1,l+\frac{1}{l_3},k}$.
For terms involving both $\phi$ and $f_\pm$, we use \eqref{13} to generate one more $x$ derivative on $\phi$. Applying the trick in Lemma \ref{Lem26}, the second term is bounded above by 
\begin{align*}
\sum_{|\alpha|+|\beta|\le K+1,\,k_1\le k}\|\partial^{k_1}_t\partial^\alpha_\beta\nabla_x\phi\|^2_{L^2_x}\sum_{|\alpha|+|\beta|\le K+1,\,k_1\le k}\|w^{l+\frac{1}{l_3}}\partial^{k_1}_t\partial^\alpha_\beta f_\pm\|^2_{L^2_{v,x}}\lesssim \E_{K+1,l+\frac{1}{l_3},k}^2.
\end{align*}
Similarly, applying the trick in Lemma \ref{Lem27}, the third term is bounded above by $\E_{K+1,l+\frac{1}{l_3},k}^2.$ For the fourth term, when $k=0$, it's bounded above by $\E_{K,l,0}$. When $k\ge 1$, by using \eqref{34a}, it's bounded above by $\E_{K,l,k-1}$. For the fifth term, noticing $L_\pm\in S(\tilde{a})\subset S(\<v\>^{\gamma+2}\<\eta\>^{2})$ and $s\in(0,1)$, we have 
\begin{align*}
\|w^l\partial^\alpha_\beta L_\pm \partial^k_tf_\pm\|^2_{L^2_{v,x}}\lesssim \|w^{l-\frac{\gamma+2}{\gamma}}\<D_v\>^2\<(D_x,D_v)\>^K \partial^k_tf_\pm\|^2_{L^2_{v,x}}\lesssim \E_{K+2,l-\frac{\gamma+2}{\gamma},k}.
\end{align*}
For the last term, using \eqref{12a} and trick \eqref{12b}, it's bounded above by 
\begin{align*}
\sum_{|\alpha|+|\beta|\le K+2,\,k_1\le k}\|w^{l+\frac{\gamma+2}{2l_3}}\partial^\alpha_\beta\partial^{k_1}_tf\|^2_{L^2_{v,x}}\lesssim \E_{K+2,l+\frac{\gamma+2}{2l_3},k}^2. 
\end{align*} 
Combining the above estimate and taking summation $|\alpha|+|\beta|\le K$, $k\le k_0$ for any $k_0\ge 0$, we have 
\begin{align*}
\E_{K,l,k_0+1}(t)\lesssim \E_{K,l,0} +\E_{K,l,k_0-1}+\E_{K+1,l+\frac{1}{l_3},k}+\E_{K+1,l+\frac{1}{l_3},k}^2+\E_{K+2,l+\frac{\gamma+2}{l_3},k}+ \E_{K+2,l+\frac{\gamma+2}{2l_3},k_0}^2.
\end{align*} 
The $t$ derivative on the right hand is less than the left hand. Then by induction, noticing \eqref{106}, we have
\begin{align*}
\sup_{\tau\le t\le T}\E_{K,l_0,k_0}(t)\le C_{\tau,T,l_0,k_0},
\end{align*}for any $T>\tau>0$.
The same standard argument for obtaining the local solution gives the result of Theorem \ref{main2}(1) and the details are omitted for brevity; see \cite{Guo2012,Gressman2011}. Consequently, by Sobolev embedding, $f\in C^\infty(\R^+_t;C^\infty(\R^3_x;\mathscr{S}(\R^3_v)))$.

Now we additionally assume \eqref{19aa}.
Noticing $\psi=1$ in Theorem \ref{main1}, \eqref{15a} shows that for any $\tau_0\ge \tau$, 
\begin{align*}
\sum_{|\alpha|\le 3}\|\partial^\alpha E(\tau_0)\|^2_{L^2_x}+\sum_{|\alpha|\le 3}\|\partial^\alpha\P f(\tau_0)\|^2_{L^2_{v,x}}+\sum_{\substack{|\alpha|+|\beta|\le 3}}\|w^{l-|\alpha|-|\beta|}\partial^\alpha_\beta(\I-\P) f(\tau_0)\|^2_{L^2_{v,x}}
\lesssim \epsilon_0^2.
\end{align*}
Using this as the initial data instead of \eqref{15aa}, we can apply the above calculation on any time interval $[\tau_0,\tau_0+t_0]$ to obtain the same estimate with constants independent of $T$. In this case, we use 
\begin{align*}
\sup_{\tau_0\le t\le \tau_0+t_0}\E_{K,l}(t)\lesssim \epsilon_1^2C_{\tau}
\end{align*}instead of \eqref{106}, where the constant $C_\tau$ is independent of $\tau_0$ and $T$. Since the choice $t_0>0$ in \eqref{76} is uniform in $t$, we can obtain a uniform estimate independent of time $T$ and this completes the proof of Theorem \ref{main2}(2). Notice that the estimate of \eqref{19a} and \eqref{19b} are necessarily depending on $\tau$ since $\psi_{|\alpha|+|\beta|-3}^{-1}$ is singular near $t=0$ when $|\alpha|+|\beta|>3$.  
 
\qe\end{proof}

\section{Appendix}

\paragraph{Pseudo-differential calculus}

We recall some notation and theorem of pseudo differential calculus. For details, one may refer to Chapter 2 in the book \cite{Lerner2010} for details. Set $\Gamma=|dv|^2+|d\eta|^2$, but also note that the following are also valid for general admissible metric.
Let $M$ be an $\Gamma$-admissible weight function. That is, $M:\R^{2d}\to (0,+\infty)$ satisfies the following conditions:\\
(a). (slowly varying) there exists $\delta>0$ such that for any $X,Y\in\R^{2d}$, $|X-Y|\le \delta$ implies
\begin{align*}
	M(X)\approx M(Y);
\end{align*}
(b) (temperance) there exists $C>0$, $N\in\R$, such that for $X,Y\in \R^{2d}$,
\begin{align*}
	\frac{M(X)}{M(Y)}\le C\<X-Y\>^N.
\end{align*}
Consider symbols $a(v,\eta,\xi)$ as a function of $(v,\eta)$ with parameters $\xi$. We say that
$a\in S(M)=S(M,\Gamma)$ uniformly in $\xi$, if for $\alpha,\beta\in \N^d$, $v,\eta\in\Rd$,
\begin{align*}
	|\partial^\alpha_v\partial^\beta_\eta a(v,\eta,\xi)|\le C_{\alpha,\beta}M,
\end{align*}with $C_{\alpha,\beta}$ a constant depending only on $\alpha$ and $\beta$, but independent of $\xi$. The space $S(M,\Gamma)$ endowed with the seminorms
\begin{align*}
	\|a\|_{k;S(M,\Gamma)} = \max_{0\le|\alpha|+|\beta|\le k}\sup_{(v,\eta)\in\R^{2d}}
	|M(v,\eta)^{-1}\partial^\alpha_v\partial^\beta_\eta a(v,\eta,\xi)|,
\end{align*}becomes a Fr\'{e}chet space.
Sometimes we write $\partial_\eta a\in S(M,\Gamma)$ to mean that $\partial_{\eta_j} a\in S(M,\Gamma)$ $(1\le j\le d)$ equipped with the same seminorms.
We formally define the pseudo-differential operator by
\begin{align*}
	(op_ta)u(x)=\int_\Rd\int_\Rd e^{2\pi i (x-y)\cdot\xi}a((1-t)x+ty,\xi)u(y)\,dyd\xi,
\end{align*}for $t\in\R$, $f\in\S$.
In particular, denote $a(v,D_v)=op_0a$ to be the standard pseudo-differential operator and
$a^w(v,D_v)=op_{1/2}a$ to be the Weyl quantization of symbol $a$. We write $A\in Op(M,\Gamma)$ to represent that $A$ is a Weyl quantization with symbol belongs to class $S(M,\Gamma)$. One important property for Weyl quantization of a real-valued symbol is the self-adjoint on $L^2$ with domain $\S$. 

For composition of pseudodifferential operator we have $a^wb^w= (a\#b)^w$ with 
\begin{align}\label{compostion}
	a\#b = ab + \frac{1}{4\pi i}\{a,b\} + \sum_{2\le k\le \nu}2^{-k}\sum_{|\alpha|+|\beta|=k}\frac{(-1)^{|\beta|}}{\alpha!\beta!}D^\alpha_\eta\partial^\beta_xaD^{\beta}_\eta\partial^\alpha_xb+r_\nu(a,b),
\end{align}where $X=(v,\eta)$,
\begin{align*}
	r_\nu(a,b)(X) & = R_\nu(a(X)\otimes b(Y))|_{X=Y},\\
	R_\nu &= \int^1_0\frac{(1-\theta)^{\nu-1}}{(\nu-1)!}\exp\Big(\frac{\theta}{4\pi i}\<\sigma\partial_X,\partial_Y\>\Big)\,d\theta\Big(\frac{1}{4\pi i}\<\sigma\partial_X,\partial_Y\>\Big)^\nu.
\end{align*}
Thus if $\partial_{\eta}a_1,\partial_{\eta}a_2\in S(M'_1,\Gamma)$ and $\partial_{v}a_1,\partial_{v}a_2\in S(M'_2,\Gamma)$, then $[a_1,a_2]\in S(M'_1M'_2,\Gamma)$, where $[\cdot,\cdot]$ is the commutator defined by $[A,B]:=AB-BA$.

We can define a Hilbert space $H(M,\Gamma):=\{u\in\S':\|u\|_{H(M,\Gamma)}<\infty\}$, where
\begin{align}\label{sobolev_space}
	\|u\|_{H(M,\Gamma)}:=\int M(Y)^2\|\varphi^w_Yu\|^2_{L^2}|g_Y|^{1/2}\,dY<\infty,
\end{align}and $(\varphi_Y)_{Y\in\R^{2d}}$ is any uniformly confined family of symbols which is a partition of unity. If $a\in S(M)$ is a isomorphism from $H(M')$ to $H(M'M^{-1})$, then $(a^wu,a^wv)$ is an equivalent Hilbertian structure on $H(M)$. Moreover, the space $\S(\Rd)$ is dense in $H(M)$ and $H(1)=L^2$.


The following Lemmas come from \cite{Deng2020a}. 
\begin{Lem}\label{inverse_bounded_lemma}Let $m,c$ be $\Gamma$-admissible weight and $a\in S(m)$.
	Assume $a^w:H(mc)\to H(c)$ is invertible.
	If $b\in S(m)$, then there exists $C>0$, depending only on the seminorms of symbols to $(a^w)^{-1}$ and $b^w$, such that for $f\in H(mc)$,
	\begin{align*}
		\|b(v,D_v)f\|_{H(c)}+\|b^w(v,D_v)f\|_{H(c)}\le C\|a^w(v,D_v)f\|_{H(c)}.
	\end{align*}
	Consequently, if $a^w:H(m_1)\to L^2\in Op(m_1)$, $b^w:H(m_2)\to L^2\in Op(m_2)$ are invertible, then for $f\in\S$, 
	\begin{align*}
		\|b^wa^wf\|_{L^2}\lesssim \|a^wb^wf\|_{L^2},
	\end{align*}where the constant depends only on seminorms of symbols to $a^w,b^w,(a^w)^{-1},(b^w)^{-1}$.
\end{Lem}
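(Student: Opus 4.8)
The plan is to deduce both assertions from two structural facts of the Weyl calculus on the admissible metric $\Gamma$, both contained in the calculus recalled above (Lemma 2.1 and Lemma 2.2 of \cite{Deng2020a}, cf. \cite{Lerner2010}): (i) if $b\in S(M)$, then $b^w(v,D_v)$, and likewise $b(v,D_v)=op_0 b$, map $H(MM')$ continuously into $H(M')$ for every $\Gamma$-admissible weight $M'$, with operator norm controlled by finitely many seminorms of $b$ in $S(M)$; and (ii) if $a\in S(M)$ and $a^w(v,D_v)\colon H(Mc)\to H(c)$ is invertible, then the inverse is continuous $H(c)\to H(Mc)$, and moreover $a^w(v,D_v)$ is an isomorphism $H(MM')\to H(M')$ for every admissible $M'$, with the norm of the inverse controlled by the seminorms of the symbol of $(a^w)^{-1}$. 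For the $op_0$ versus $op_{1/2}$ discrepancy one uses the change-of-quantization formula, which for the Euclidean metric $\Gamma=|dv|^2+|d\eta|^2$ preserves $S(M)$ with seminorms bounded in terms of the original ones, so (i) applies to both quantizations.

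First I would prove the main inequality. Given $f\in H(mc)$, set $g:=a^w(v,D_v)f\in H(c)$; by (ii), applied with $M=m$ and $M'=c$, one has $f=(a^w)^{-1}g$ with $\|f\|_{H(mc)}\le C_{a^{-1}}\|g\|_{H(c)}$, where $C_{a^{-1}}$ depends only on the seminorms of the symbol of $(a^w)^{-1}$. Since $b\in S(m)$, fact (i) (with $M=m$, $M'=c$) gives $\|b^w(v,D_v)f\|_{H(c)}+\|b(v,D_v)f\|_{H(c)}\le C_b\|f\|_{H(mc)}$ with $C_b$ depending only on the seminorms of $b$. Combining the two estimates yields $\|b(v,D_v)f\|_{H(c)}+\|b^w(v,D_v)f\|_{H(c)}\le C_bC_{a^{-1}}\|a^w(v,D_v)f\|_{H(c)}$, which is the assertion.

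For the ``Consequently'' part I would reduce to the case just proved by passing to composed symbols. Writing $a^wb^w=(a\#b)^w$ and $b^wa^w=(b\#a)^w$, the composition formula \eqref{compostion} shows $a\#b,\ b\#a\in S(m_1m_2)$, with seminorms controlled by those of $a$ and $b$. Since $a^w\colon H(m_1)\to L^2$ and $b^w\colon H(m_2)\to L^2$ are invertible, the transfer property (ii) gives that $b^w\colon H(m_1m_2)\to H(m_1)$ and $a^w\colon H(m_1)\to L^2$ are isomorphisms; hence $a^wb^w=(a\#b)^w\colon H(m_1m_2)\to L^2$ is invertible, with inverse $(b^w)^{-1}(a^w)^{-1}$ whose symbol seminorms depend only on those of $(a^w)^{-1}$ and $(b^w)^{-1}$. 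Applying the main inequality with $m:=m_1m_2$, $c:=1$, the invertible symbol being $a\#b$ and the comparison symbol being $b\#a\in S(m_1m_2)$, we obtain for $f\in\mathscr S$
$$\|b^wa^wf\|_{L^2}=\|(b\#a)^w f\|_{L^2}\lesssim \|(a\#b)^w f\|_{L^2}=\|a^wb^wf\|_{L^2},$$
with the implied constant depending only on the seminorms of $a^w$, $b^w$, $(a^w)^{-1}$, $(b^w)^{-1}$.

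There is no essential obstacle: the content is bookkeeping with the boundedness and invertibility properties of Weyl quantizations on the scale $\{H(M)\}$. The only two points requiring mild care are the change of quantization (so that $op_0 b$ and $op_{1/2}b$ differ by an operator of the same class) and the transfer of invertibility of $a^w$ and $b^w$ from $L^2$ to the weighted spaces $H(m_1m_2)$, $H(m_1)$; both are standard and already available from the calculus cited above.
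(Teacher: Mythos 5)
The paper never proves this lemma: it is stated in the appendix with the remark that it ``comes from \cite{Deng2020a},'' so there is no in-paper argument to compare against. Your proof is correct and is the natural one: for the first claim you factor $b^w f = b^w (a^w)^{-1}(a^w f)$, using that $(a^w)^{-1}\colon H(c)\to H(mc)$ is bounded with norm controlled by the seminorms of its symbol and that $b^w$ (and $op_0 b$, after the change-of-quantization formula, which for the flat metric $\Gamma=|dv|^2+|d\eta|^2$ maps $S(m)$ to $S(m)$ with controlled seminorms) is bounded $H(mc)\to H(c)$; for the second claim you pass to $a\#b,\ b\#a\in S(m_1m_2)$ and apply the first claim with $c=1$.

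One point deserves to be made explicit rather than absorbed into ``bookkeeping'': the transfer of invertibility of $a^w$ and $b^w$ from the pairs $(H(m_i),L^2)$ to the pairs $(H(m_1m_2),H(m_1))$, $(H(m_1),L^2)$ is not a consequence of operator-theoretic invertibility on a single pair of spaces. It holds because $(a^w)^{-1}$ and $(b^w)^{-1}$ are themselves Weyl quantizations with symbols in $S(m_1^{-1})$, $S(m_2^{-1})$ (so that both $b^w$ and its inverse are bounded between $H(m_1m_2)$ and $H(m_1)$, and the identity $(b^w)^{-1}b^w=\mathrm{Id}$ extends from the Schwartz class by density). This is exactly what the lemma's reference to ``seminorms of symbols to $(a^w)^{-1}$, $(b^w)^{-1}$'' presupposes and what Lemmas 2.1--2.2 of \cite{Deng2020a} supply; you should state it as an input of the calculus rather than as something that follows from invertibility alone. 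With that hypothesis made explicit, your argument is complete.
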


\begin{Lem}\label{bound_varepsilon}
	Denote $a_{K,l}:=a+Kl$, $m_{K,l}:=m+Kl$ for $K>1$, where $m,l$ are $\Gamma$-admissible weights. Assume $a\in S(m)$, $\partial_\eta (a_{K,l})\in S(K^{-\kappa}m_{K,l})$ uniformly in $K$ and
	$a_{K,l}\gtrsim m_{K,l}$.
	Let $\rho>0$ and $b\in S(\varepsilon m_{K,l}+\varepsilon^{-\rho}l)$, uniformly in $\varepsilon\in(0,1)$. Then there exists $K_0>0$, such that for $f\in H(mc)$, $\varepsilon\in(0,1)$,
	\begin{align}
		\|b(v,D_v)f\|_{H(c)}+\|b^w(v,D_v)f\|_{H(c)}\le C_{K,l}\left(\varepsilon\|a^w(v,D_v)f\|_{H(c)}+\varepsilon^{-\rho}\|l^wf\|_{H(c)}\right).
	\end{align}
\end{Lem}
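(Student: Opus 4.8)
The plan is to reduce the estimate to Lemma \ref{inverse_bounded_lemma} after two preparations: first showing that $a_{K,l}^w$ is an isomorphism $H(m_{K,l}c)\to H(c)$ once $K$ is large, and then splitting $b$ according to the two weight scales $\varepsilon m_{K,l}$ and $\varepsilon^{-\rho}l$ that appear in its symbol class.

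\emph{Invertibility of $a_{K,l}^w$.} From $a\in S(m)\subset S(m_{K,l})$ and $Kl\in S(m_{K,l})$ we get $a_{K,l}\in S(m_{K,l})$, and since $a_{K,l}\gtrsim m_{K,l}>0$, differentiating the quotient gives $1/a_{K,l}\in S(1/m_{K,l})$. The decisive structural fact is the hypothesis $\partial_\eta a_{K,l}\in S(K^{-\kappa}m_{K,l})$: through the chain rule it forces every $\eta$-derivative landing on $a_{K,l}$ or on $1/a_{K,l}$ to produce a genuine small factor $K^{-\kappa}$, uniformly in all parameters. Applying the composition formula \eqref{compostion} to $a_{K,l}\#(1/a_{K,l})$, the leading term is $1$, while the Poisson-bracket term $\frac{1}{4\pi i}\{a_{K,l},1/a_{K,l}\}$, every term of the finite sum, and the remainder $r_\nu$ each carry at least one such $\eta$-derivative and therefore belong to $S(K^{-\kappa})$ with seminorms of every order bounded by $C_k K^{-\kappa}$; this is precisely where the trivial symbolic gain of the flat metric $\Gamma=|dv|^2+|d\eta|^2$ is replaced by a gain in the large parameter. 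Thus $a_{K,l}\#(1/a_{K,l})=1+r_K$ with $r_K\in S(1)$ and $\|r_K\|_{k;S(1)}\le C_kK^{-\kappa}$ for every $k$. Choosing $K_0$ so that the $L^2$ operator norm of $r_K^w$ is less than $1/2$ for $K\ge K_0$ (this norm being controlled by finitely many seminorms of $r_K$), the operator $1+r_K^w$ is invertible on $L^2$, hence on $H(c)$, with inverse again a Weyl operator of class $S(1)$; the same argument on the other side shows that $a_{K,l}^w:H(m_{K,l}c)\to H(c)$ is an isomorphism whose inverse has seminorms depending on $K$ but not on $\varepsilon$. The analogous isomorphism $l^w:H(lc)\to H(c)$ is a standard fact of the calculus (after harmlessly regularizing $l$ inside $S(l)$).

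\emph{Splitting and assembling.} Set $M:=\varepsilon m_{K,l}+\varepsilon^{-\rho}l$ and $b_1:=b\,\varepsilon m_{K,l}/M$, $b_2:=b\,\varepsilon^{-\rho}l/M$, so that $b=b_1+b_2$. Since the cut-off factors $\varepsilon m_{K,l}/M$ and $\varepsilon^{-\rho}l/M$ are admissible and, on differentiating, have seminorms bounded uniformly in $\varepsilon$, one checks that $\varepsilon^{-1}b_1\in S(m_{K,l})$ and $\varepsilon^{\rho}b_2\in S(l)$ with seminorms uniform in $\varepsilon$. Lemma \ref{inverse_bounded_lemma}, applied with the isomorphism $a_{K,l}^w$, the weight $m_{K,l}$ and the symbol $\varepsilon^{-1}b_1$, gives $\|b_1(v,D_v)f\|_{H(c)}+\|b_1^wf\|_{H(c)}\le C_K\,\varepsilon\,\|a_{K,l}^wf\|_{H(c)}$, and applied with $l^w$, the weight $l$ and the symbol $\varepsilon^{\rho}b_2$, it gives $\|b_2(v,D_v)f\|_{H(c)}+\|b_2^wf\|_{H(c)}\le C_l\,\varepsilon^{-\rho}\,\|l^wf\|_{H(c)}$; both constants are independent of $\varepsilon$ because they depend only on the uniform seminorms of the symbols and of the two inverses. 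Finally, $a_{K,l}^wf=a^wf+Kl^wf$ and $\varepsilon<1\le\varepsilon^{-\rho}$ give $\varepsilon\|a_{K,l}^wf\|_{H(c)}\le\varepsilon\|a^wf\|_{H(c)}+K\varepsilon^{-\rho}\|l^wf\|_{H(c)}$, so adding the two bounds yields $\|b(v,D_v)f\|_{H(c)}+\|b^wf\|_{H(c)}\le C_{K,l}(\varepsilon\|a^wf\|_{H(c)}+\varepsilon^{-\rho}\|l^wf\|_{H(c)})$, with $C_{K,l}$ depending on $K$ (fixed once $\ge K_0$) and on $l$ but not on $\varepsilon$ or $f$.

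The main obstacle is the invertibility step: because the flat metric provides no symbolic gain, one must verify that the large parameter $K$ — entering only through $\partial_\eta a_{K,l}\in S(K^{-\kappa}m_{K,l})$ — really makes every off-diagonal contribution to $a_{K,l}\#(1/a_{K,l})$ uniformly $O(K^{-\kappa})$, and in particular that the remainder $r_\nu$ in \eqref{compostion}, with $\nu$ chosen large enough that its quantization is $L^2$-bounded, still carries the $K^{-\kappa}$ gain. Everything after that is routine symbolic calculus together with the bookkeeping of the two weight scales.
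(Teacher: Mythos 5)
The paper itself offers no proof of this lemma: it is imported verbatim from \cite{Deng2020a} (with the remark that the hypothesis $l\le m$ there can be dropped), so there is no internal argument to compare yours against. Your reconstruction is essentially correct and follows the strategy one would expect from that reference: (i) invertibility of $a_{K,l}^w:H(m_{K,l}c)\to H(c)$ for $K\ge K_0$, obtained by expanding $a_{K,l}\#(1/a_{K,l})=1+r_K$ via \eqref{compostion} and observing that every non-leading term, including each monomial of the remainder $r_\nu$, carries at least one $\eta$-derivative on $a_{K,l}$ or on $1/a_{K,l}$ and hence a factor $K^{-\kappa}$ (using $a_{K,l}\gtrsim m_{K,l}$ to control $1/a_{K,l}$ and its derivatives); (ii) the splitting $b=b_1+b_2$ with $\varepsilon^{-1}b_1\in S(m_{K,l})$ and $\varepsilon^{\rho}b_2\in S(l)$ uniformly in $\varepsilon$, fed into Lemma \ref{inverse_bounded_lemma}; (iii) $a_{K,l}^w=a^w+Kl^w$ and $\varepsilon<1<\varepsilon^{-\rho}$ to land on the stated right-hand side. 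Two points deserve the care you partly give them: the uniformity in $\varepsilon$ of the cutoffs $\varepsilon m_{K,l}/M$, $\varepsilon^{-\rho}l/M$ requires first replacing $m$ and $l$ by equivalent smooth representatives of their classes (you only mention regularizing $l$); and the estimate is really proved for $f$ in $H(m_{K,l}c)$ or in $\mathscr{S}$ and then extended, since for general $f\in H(mc)$ with $l\not\lesssim m$ the right-hand side need not be finite — but this imprecision is already present in the statement of the lemma, not introduced by you.
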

Notice that the condition $l\le m$ in \cite{Deng2020a} is unnecessary. 

\begin{Lem}\label{innerproduct}
	Let $m,c$ be $\Gamma$-admissible weight and $a^{1/2}\in S(m^{1/2})$.
	Assume $(a^{1/2})^w:H(mc)\to H(c)$ is invertible and $b\in S(m)$. Then 
	\begin{align*}
		(b^wf,f)_{L^2} = (\underbrace{((a^{1/2})^w)^{-1}b}_{\in S(m^{1/2})}f,(a^{1/2})^wf)_{L^2}\lesssim \|(a^{1/2})^wf\|^2_{L^2}.
	\end{align*}
\end{Lem}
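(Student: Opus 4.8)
The displayed identity in the statement already encodes the argument, so the plan is to justify the two facts it rests on: that $((a^{1/2})^w)^{-1}b^w$ is a Weyl quantization with symbol in $S(m^{1/2})$, and that such an operator is dominated in $L^2$ by $(a^{1/2})^w$. Throughout one may take $f\in\mathscr{S}$, the general statement following since $\mathscr{S}$ is dense in the relevant weighted Sobolev space. Because $a$ is a $\Gamma$-admissible weight, the symbol $a^{1/2}$ is real-valued, hence $(a^{1/2})^w$ is self-adjoint on $L^2$ with domain $\mathscr{S}$. By the invertibility hypothesis together with the Weyl--H\"ormander calculus --- this is exactly the inversion property invoked through Lemma 2.1 and 2.2 in \cite{Deng2020a} when $\tilde{a}^{1/2}$ is introduced after \eqref{11a} --- the inverse $((a^{1/2})^w)^{-1}$ is again a Weyl quantization, say $e^w$ with $e\in S(m^{-1/2})$.

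I would then compose: by the product formula \eqref{compostion}, $e^wb^w=(e\#b)^w$ with $e\#b\in S(m^{-1/2}\cdot m)=S(m^{1/2})$, and I set $\tilde{c}:=e\#b$. Using $b^w=(a^{1/2})^w\bigl(((a^{1/2})^w)^{-1}b^w\bigr)=(a^{1/2})^w\tilde{c}^w$ and the self-adjointness of $(a^{1/2})^w$,
\[
(b^wf,f)_{L^2}=\bigl((a^{1/2})^w\tilde{c}^wf,f\bigr)_{L^2}=\bigl(\tilde{c}^wf,(a^{1/2})^wf\bigr)_{L^2},
\]
so Cauchy--Schwarz gives $|(b^wf,f)_{L^2}|\le\|\tilde{c}^wf\|_{L^2}\,\|(a^{1/2})^wf\|_{L^2}$. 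To finish I would invoke Lemma \ref{inverse_bounded_lemma} with $m^{1/2}$ in the role of $m$, the auxiliary weight $c=1$, and $\tilde{c}\in S(m^{1/2})$ in the role of $b$ there; since $(a^{1/2})^w$ is invertible onto $L^2=H(1)$ this gives $\|\tilde{c}^wf\|_{L^2}\lesssim\|(a^{1/2})^wf\|_{L^2}$, and combining the two inequalities yields $(b^wf,f)_{L^2}\lesssim\|(a^{1/2})^wf\|_{L^2}^2$, as claimed.

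The main obstacle will be the assertion that $((a^{1/2})^w)^{-1}$ is a pseudo-differential operator whose symbol lies in $S(m^{-1/2})$; this is the standard fact that ellipticity in the Weyl--H\"ormander calculus is preserved under inversion, and it is precisely the property one already relies on (via \cite{Deng2020a}) to treat $\tilde{a}^w$ and $(\tilde{a}^{1/2})^w$ as invertible maps between the weighted spaces $H(\cdot)$. Granting that, the composition estimate from \eqref{compostion}, the self-adjointness of real Weyl quantizations, Cauchy--Schwarz, and Lemma \ref{inverse_bounded_lemma} are all routine, so no further technical input is required.
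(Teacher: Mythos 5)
Your argument is correct and is exactly the expansion of the one-line proof the paper encodes in the displayed identity itself (the paper states this lemma without further proof): self-adjointness of $(a^{1/2})^w$ to move it to the right slot, the Weyl--H\"ormander inversion/composition calculus to see $((a^{1/2})^w)^{-1}b^w\in Op(S(m^{1/2}))$, Cauchy--Schwarz, and Lemma \ref{inverse_bounded_lemma} to absorb the resulting factor into $\|(a^{1/2})^wf\|_{L^2}$. The only caveat worth noting is that the stated hypothesis ``$(a^{1/2})^w:H(mc)\to H(c)$ invertible'' should be read as $H(m^{1/2}c)\to H(c)$ (consistent with the paper's use of $(\tilde{a}^{1/2})^w:H(\tilde{a}^{1/2}c)\to H(c)$), which is what your application of Lemma \ref{inverse_bounded_lemma} with $c=1$ implicitly requires.
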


%
%
%


\paragraph{Acknowledgments:} The author would thank Prof. Tong Yang for the valuable comments on the manuscript. 

\small
\bibliographystyle{plain}
\bibliography{1}

\end{document}